\documentclass[a4paper, 11pt]{amsart}
\usepackage{graphicx}
\usepackage[utf8]{inputenc}
\usepackage{amsmath, amssymb, mathrsfs}
\usepackage{bbm}
\usepackage{mathtools}
\usepackage{csquotes}
\usepackage{tikz-cd}
\usepackage{cite}
\usepackage{amsthm}

\usepackage[all]{xy}
\usepackage{ stmaryrd }
\usepackage{graphicx}
\usepackage{tikz}
\usetikzlibrary{babel}
\usepackage{longtable}
\usepackage{ stmaryrd }
\usetikzlibrary{decorations.pathmorphing,arrows.meta,decorations.markings,positioning,calc}

\usepackage{comment}
\usepackage{float}
\newfloat{myquote}{t}{} 
\usepackage[top=3cm, bottom=3cm, left=2cm, right=2cm]{geometry}

\usetikzlibrary{decorations.pathmorphing}

\usepackage{imakeidx}
\usepackage[initsep=0pt]{idxlayout}
\makeindex
\usepackage{hyperref}
\hypersetup{
	colorlinks=true,
	linkcolor=blue,
	filecolor=magenta,      
	urlcolor=cyan,
}
\usepackage{cleveref}
\usepackage{thmtools}

\newcommand{\bA}{\mathbb{A}}

\newcommand{\bG}{\mathbb{G}}

\newcommand{\bP}{\mathbb{P}}

\newcommand{\bZ}{\mathbb{Z}}

\newcommand{\cB}{\mathcal{B}}

\newcommand{\cG}{\mathcal{G}}
\newcommand{\cH}{\mathcal{H}}

\newcommand{\cM}{\mathcal{M}}

\newcommand{\cO}{\mathcal{O}}

\newcommand{\cU}{\mathcal{U}}

\newcommand{\cX}{\mathcal{X}}
\newcommand{\cY}{\mathcal{Y}}
\newcommand{\cZ}{\mathcal{Z}}

\def\Spec{\operatorname{Spec}}

\newcommand{\spec}{\operatorname{Spec}}

\newcommand{\Hom}{\operatorname{Hom}}

\newcommand{\Ga}{\bG_{{\rm a}}}
\newcommand{\GL}{\operatorname{GL}}

\newcommand{\Aut}{\operatorname{Aut}}

\newcommand{\sgn}{\operatorname{sgn}}

\newcommand{\Stab}{\operatorname{Stab}}

\newcommand{\Def}{{\rm Def}}
\newcommand{\cHom}{\mathcal{H}\kern -.5pt om}
\newcommand{\cExt}{\mathcal{E}\kern -.5pt xt}

\newcommand{\ST}{\overline{\operatorname{ST}}}
\newcommand{\Gm}{\mathbb{G}_m}

\newtheoremstyle{thmcite}
  {}
  {}
  {\itshape}
  {}
  {\bfseries}
  {.}
  { }
  {\thmname{#1}\thmnumber{ #2}\thmnote{ \normalfont\checkcite{#3}}}

\newtheoremstyle{defcite}
  {} 
  {} 
  {\normalfont} 
  {} 
  {\bfseries} 
  {.} 
  { } 
  {\thmname{#1}\thmnumber{ #2}\thmnote{ \normalfont\checkcite{#3}}}

\ExplSyntaxOn
\NewDocumentCommand{\checkcite}{m}
 {
  \regex_if_match:nnTF { \A \cB\{* \c{cite} [^\c{cite}]* \Z } {#1}
   {
    #1
   }
   {
    (#1)
   }
 }
\ExplSyntaxOff

\theoremstyle{thmcite}
\newtheorem{theorem}{Theorem}[section]
\newtheorem*{theorem*}{Theorem}
\newtheorem*{maintheorem}{Main Theorem}
\newtheorem{lemma}[theorem]{Lemma}
\newtheorem{corollary}[theorem]{Corollary}
\newtheorem{proposition}[theorem]{Proposition}

\theoremstyle{defcite}
\newtheorem{remark}[theorem]{Remark}
\newtheorem{example}[theorem]{Example}
\newtheorem{definition}[theorem]{Definition}

\newtheorem{question}[theorem]{Question}

\theoremstyle{definition}

\newtheoremstyle{modusstyle}
  {3pt}{3pt}{\itshape}{}{\bfseries}{.}{.5em}{}

\theoremstyle{modusstyle}
\newtheorem*{modusoperandi}{Modus Operandi}

\newcommand{\labelmodus}[1]{\hypertarget{#1}{}}
\newcommand{\refmodus}[1]{\hyperlink{#1}{\textit{Modus Operandi}}}

\let\oldtocsection=\tocsection

\let\oldtocsubsection=\tocsubsection

\let\oldtocsubsubsection=\tocsubsubsection

\renewcommand{\tocsection}[2]{\hspace{0em}\oldtocsection{#1}{#2}}
\renewcommand{\tocsubsection}[2]{\hspace{1em}\oldtocsubsection{#1}{#2}}
\renewcommand{\tocsubsubsection}[2]{\hspace{2em}\oldtocsubsubsection{#1}{#2}}

\DeclareRobustCommand{\SkipTocEntry}[5]{} 

\title{Obstructions to the Existence of\\ Good Moduli Spaces of $A_r$-Stable Curves}

\subjclass[2020]{14D23, 14H10, 14H20, 14B07}

\keywords{moduli of curves, good moduli spaces, algebraic stacks}

\author{Davide Gori}
\address{Universität Duisburg-Essen, Essen, Germany}
\email{davide.gori@uni-due.de}
\author{Ludvig Modin}
\address{Leibniz Universität Hannover, Institut für Algebraische Geometrie, Welfengarten 1, 30167 Hannover}
\email{modin@math.uni-hannover.de}
\author{Michele Pernice}
\address{Department of Mathematics, University of Washington, 4110 E Stevens Way NE, Seattle, WA 98195}
\email{mpernice@uw.edu}

\begin{document}

\begin{abstract}
We study obstructions to the existence of separated good moduli spaces for open substacks of the moduli stack $\mathcal{M}_{g,n}^r$ of $A_r$-stable curves. Our approach is based on an analysis of families of curves over $\Theta_R$ and $\overline{\text{ST}}_R$, building on prior work on the local geometry of $\mathcal{M}_{g,n}^r$. We prove that $\mathcal{M}_{g,n}^r$ is neither $\Theta$- nor $\textsf{S}$-complete. We then construct an open substack $\mathcal{U}_{g,n}^r \subset \mathcal{M}_{g,n}^r$ and show that the counterexamples identified in $\mathcal{M}_{g,n}^r$ do not occur within this substack. Moreover, we prove that $\mathcal{U}_{g,n}^r$ cannot be strictly contained in any other substack of $\mathcal{M}_{g,n}^r$ that admit a separated good moduli space. Furthermore, we show that the inclusion $\mathcal{U}_{g,n}^r \subset \mathcal{M}_{g,n}^r$ is both $\Theta$- and $\textsf{S}$-complete. These results will be used in a forthcoming paper to prove that $\mathcal{U}_{g,n}^r$ admits a separated, and indeed proper, good moduli space for $r \leq 5$.
\end{abstract}

\maketitle
\tableofcontents

\section{Introduction}
This paper is the second in a series dedicated to the problem of constructing modular compactifications using intrinsic techniques within the framework of the ``Beyond GIT'' program introduced in \cite{StructureOfInstability}. For a comprehensive presentation of this recent approach to moduli theory, see \cite{AlpHalLei}. Specifically, we aim to apply the criteria introduced in \cite{ExistenceOfModuli} and verify $\Theta$- and $\textsf{S}$-completeness in order to study new modular compactifications of $\cM_{g,n}$ that admit a proper good moduli space. In the first paper of the series \cite{GMSArI}, we studied the local geometry of the algebraic stack $\cM_{g,n}^r$ parametrizing $A_r$-stable curves, i.e.\ curves with at worst $A_r$-singularities and such that the log-dualizing line bundle is ample. As in \cite{GMSArI}, we work over a field of characteristic $0$; the results generalize to fields of \emph{characteristic big enough with respect to $(g,n,r)$}, similarly to what is done in \cite[Definition~4.1.1]{AltCompClustAlg}.

Building on this previous work, in the present paper we produce counterexamples to $\Theta$-completeness and $\textsf{S}$-completeness for $\cM_{g,n}^r$, see for instance \Cref{rem:not-S-complete} (as was already announced in \cite[Section~6.3]{AlpHalLei}). Using these counterexamples, we define an open substack $\cU_{g,n}^r \subset \cM_{g,n}^r$ and prove the following result.

\begin{maintheorem}\label{theo:main}
The open embedding $\cU_{g,n}^r \subset \cM_{g,n}^r$ is $\Theta$-complete and $\textsf{S}$-complete. Suppose now that $g \geq 2$ and $r \leq 2g - 5$, then if $\cU_{g,n}^r \subset \cU \subset \cM_{g,n}^r$ is an open substack admitting a separated good moduli space, then $\cU_{g,n}^r = \cU$.
\end{maintheorem}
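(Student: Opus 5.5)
The proof splits into two independent parts: the relative completeness of $\cU_{g,n}^r\subset\cM_{g,n}^r$, and the maximality of $\cU_{g,n}^r$. Throughout I use the local description of $\cM_{g,n}^r$ from \cite{GMSArI}. Families over $\Theta_R$ and $\ST_R$ with fixed generic part correspond, via the Rees-type description, to $\Gm$-equivariant degenerations. Near a closed point these are governed by the miniversal deformation of the $A_k$-singularities, on which $\Gm$ acts with explicit weights, together with the automorphism group of the curve (notably the $\Ga$ and $\Gm$ factors coming from rational tails, bridges and cusps).

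\textbf{Relative completeness.} For $\Theta$-completeness of the inclusion, take a DVR $R$ and a morphism $\Theta_R\setminus\{0\}\to\cM_{g,n}^r$ with a lift to $\cM_{g,n}^r$ over all of $\Theta_R$, whose restriction to $\Theta_R\setminus\{0\}$ lands in $\cU_{g,n}^r$. I must show that the closed point $0$ also maps into $\cU_{g,n}^r$. The image of $0$ is a curve $C_0$ with a $\Gm$-action, fixed by a cocharacter $\lambda$ of $\Aut(C_0)$. The rest of $\Theta_R$ maps to curves that $\lambda$-degenerate to $C_0$, either directly or through the special fiber over the residue field. The $\Gm$-equivariant structure makes $C_0$ a union of $\lambda$-fixed components and $\Gm$-orbit closures. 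The latter are rational tails or bridges glued along $A_k$-singularities, and $\lambda$ acts on their local equations with definite weights. The defining conditions of $\cU_{g,n}^r$ are presumably phrased as forbidding certain configurations: specific rational subcurves attached at $A_k$-points of prescribed type, precisely those appearing in the counterexamples of \Cref{rem:not-S-complete} and its $\Theta$-analogue. I would show combinatorially that if $C_0$ contains a forbidden configuration, then so does the generic $\lambda$-degeneration. The point is that the weights force the forbidden subcurve to already be present, or to be smoothed only in a way that is itself forbidden. The $\textsf{S}$-complete case works the same way using $\ST_R$: the two families meet in a $\Gm$-curve $C_0$ over the origin that is a common degeneration of both special fibers, and the same weight analysis applies. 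I expect this weight-by-weight case analysis to be the main obstacle. It requires understanding exactly which $A_k$-singularities with which tails can appear in a $\Gm$-fixed limit.

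\textbf{Maximality.} Suppose $\cU_{g,n}^r\subsetneq\cU$ with $\cU$ open and admitting a separated good moduli space. By \cite{ExistenceOfModuli}, $\cU$ is then $\Theta$- and $\textsf{S}$-complete, and its closed points have reductive stabilizers. Take a point $[C]\in\cU\setminus\cU_{g,n}^r$, so $C$ contains a forbidden configuration. Using the counterexample constructions from earlier in the paper, I would build an $\ST_R$-family, or a $\Theta_R$-family, whose generic part lies in $\cU_{g,n}^r$ and passes through $C$ on one branch. It must also be non-extendable within $\cU$: its only possible extension passes through a curve whose stabilizer is non-reductive, or through a curve forced out of $\cU$ by openness. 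Alternatively, I would exhibit two non-isomorphic $R$-families agreeing generically in $\cU_{g,n}^r$ whose closed points are not S-equivalent, which contradicts separatedness. The hypotheses $g\ge2$ and $r\le 2g-5$ should be used exactly here. They guarantee enough genus to attach the auxiliary curves, with arithmetic genus large enough to host an $A_r$-configuration, so that a forbidden tail of any type up to $A_r$ can be completed to a stable curve of genus $g$ realizing the counterexample. Finally, openness of $\cU$ lets me pass from an arbitrary point to one where the family is generic enough to apply the construction, giving the contradiction and hence $\cU=\cU_{g,n}^r$.
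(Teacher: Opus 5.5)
\textbf{Relative completeness: the key step fails.} You propose to show that if the image of the closed point contains a forbidden configuration, then so does ``the generic $\lambda$-degeneration''. This is false.

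Take the curve $C$ of \Cref{ex:easy-warm-up} as that image: a smooth curve $C_0$ with an $A_1$-attached even atom of genus $h<r/2$. Let $\lambda$ have positive weight on the node.
\begin{itemize}
\item The attracting basin is $T^1_p\oplus\Def^{nh}_{(\Gamma,p)}$. Its generic member (node smoothed, $A_{2h}$-singularity kept) is admissible.
\item Indeed $Z_{[C]}=(T_{[C]}\cM^r_{g,n})^{\leq}$ does not contain $T^{>}$.
\end{itemize}
So in a family over $\Theta_R$ with $0\mapsto C$, both the point $1\in\Theta_k$ and the generic point of $\Theta_R$ can map to admissible curves. What forces $\Theta_R\setminus 0$ to meet $\cZ$ is something else: the $\lambda$-fixed part $T^0=\Def_{(C_0,p)}$ lies in $Z_{[C]}$. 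Hence the point $B\mathbb{G}_{m,Q}$, which reaches $0$ through the DVR rather than through $\lambda$, lands in $\cZ$.

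The missing idea is the local criterion \Cref{prop:local-Theta-complete}. Since $\cM^r_{g,n}$ is smooth with affine diagonal and stabilizers that are finite extensions of tori, you can pass to the coherent completion $[T_x/G_x]$. For every $x\in\cZ$ and every cocharacter, the two criteria become:
\begin{itemize}
\item $\Theta$-completeness: $Z_x\supseteq T_x^{>}$ or $Z_x\supseteq T_x^{0}$;
\item $\textsf{S}$-completeness: $Z_x\supseteq T_x^{>}$ or $Z_x\supseteq T_x^{<}$, i.e.\ one of the two opposite basins, not a single ``common degeneration''.
\end{itemize}
Jointly this is condition $(\star)$ of \Cref{rmk:condition_star}. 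Without this reduction, the DVR directions of $\Theta_R$ and $\ST_R$ are not governed by any weights. Then ``the weights force the forbidden subcurve to be present'' has no content.

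\textbf{The deferred case analysis is the actual proof.} Even after the reduction, you give no mechanism for the step you call ``the main obstacle''. The forbidden objects are hyperelliptic tails and chains of small genus, and self-attached hyperelliptic curves, not rational configurations. A minimal forbidden chain can be flanked by strings of rosaries, so $\Aut(C)^\circ\cong\Gm^k$ and the cocharacter is an arbitrary $\mathbf a\in\bZ^k$. For each $\mathbf a$ you must find a single linear subspace $V_{i,j}(l)\subset Z_x$ containing $T^{>}$ or $T^{\leq}$. These subspaces are the deformations keeping $\bigcup_{i\le s\le j}\Gamma_s$ a hyperelliptic chain, indexed by odd-minimal triplets. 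That a suitable one always exists is \Cref{theo:combi}, and \Cref{theo:cyc-combi} in the closed case.

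\textbf{Maximality.} Your outline here is closer in spirit, but it omits the two-step mechanism that makes it work.
\begin{itemize}
\item First, exclude the $\Gm$-fixed atom configurations. Glue a DVR family inside $\cU^r_{g,n}$ to an isotrivial degeneration of its generic fiber onto the atom configuration. The DVR family is, e.g., generic non-separating $A_{2h+1}$ specializing to generic $A_{2h+2}$, or generic $A_{2h}$ specializing to non-separating $A_{2h+1}$. The resulting $\Theta_R\setminus 0$ family does not extend even to $\cM^r_{g,n}$, by \Cref{prop:alp-fund}.
\item Second, at that fixed point, use $\textsf{S}$-completeness. The opposite basin is generically in $\cU^r_{g,n}$, so the whole basin containing the honest hyperelliptic tail or bridge is excluded from $\cU$.
\item Condition $(d_r)$, longer chains and $A_{2k+1}$-attached tails of genus $(r-1)/2$ need auxiliary $\Gm^2$-curves. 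Condition $(e_r)$ uses the classification for cyclic covers (\Cref{lem:self-attached-condition}).
\end{itemize}
Finally, $r\le 2g-5$ is not about making room for an $A_r$-configuration. For $n=0$ it ensures that the auxiliary curves are not self-attached hyperelliptic, so they lie in $\cU^r_{g,n}$.
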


The maximality result clearly does not imply that $\cU_{g,n}^r$ has a separated good moduli space. Nevertheless, the $\Theta$-completeness and $\textsf{S}$-completeness of the open embedding are reassuring properties: indeed, if $\cU_{g,n}^r$ has a separated good moduli space, then the open embedding satisfies the two valuative criteria. In the third paper of the series, we will prove that $\cU_{g,n}^r$ has a proper good moduli space for $r\leq 5$, which is non-projective for $r\geq 5$ (if it exists) and non-schematic if $n=0$ and $g \geq 9$. We would like to emphasize that in the proofs of $\Theta$-completeness and $\textsf{S}$-completeness in the third paper, we will need the good moduli space properties of the open embedding.

Using the restrictions on the geometry of the isotrivial degenerations proved in \cite{GMSArI}, we identified several counterexamples to $\Theta$-completeness and $\textsf{S}$-completeness that guided us to the definition of an open substack $\cU_{g,n}^r\subset \cM_{g,n}^r$ that does not contain these counterexamples. We summarize our choices in \Cref{def:admin}. We then use those same counterexamples to prove the maximality statement.

In \cite[Proposition 2.12]{GMSArI} (see also \Cref{prop:local-Theta-complete}), the good moduli space properties of the open embedding are proven to be equivalent to a combinatorial condition on the closed complement of the open substack $\cU_{g,n}^r$ \'etale-locally around any of its points. The combinatorial condition has its roots in the following two properties:
\begin{itemize}
    \item the stabilizer group at every point is linearly reductive (indeed a finite extension of a torus),
    \item $A$-singularities are unobstructed;
\end{itemize}
The problem can thus be interpreted using toric representation theory. We then solve the combinatorial problem in \Cref{sec:combinatorics}.

\addtocontents{toc}{\SkipTocEntry}
\subsection*{Obstruction to families of curves over \texorpdfstring{$\Theta_R$}{Theta\_R} and \texorpdfstring{$\ST_R$}{ST\_R}}
The underlying idea for finding the right open substack, showing that it is maximal, and proving that the inclusion \(\cU_{g,n}^r\subset \cM_{g,n}^r\) is \(\Theta\)- and \(\textsf{S}\)-complete is to understand the interplay between the two valuative criteria and the local geometry of \(\cM_{g,n}^r\). To explain this in more detail, we briefly recall the geometry of the test objects $\Theta_R$ and $\ST_R$, cf.~\cite[Sections~3.3, 3.5]{ExistenceOfModuli}. Both of these have a unique closed point, which we denote by $0$. For a morphism of algebraic stacks $f \colon \cX \to \cY$, being $\Theta$- and $\textsf{S}$-complete corresponds to the existence of a unique lift for any diagram
\begin{equation}
\label{eq:thetascpl}
    \begin{tikzcd}
    \Theta_R \setminus \{0\} \arrow[r] \arrow[d, hook] & \cX \arrow[d, "f"] \\
    \Theta_R \arrow[r] \arrow[ru, dashed] & \cY
    \end{tikzcd}
    \qquad \qquad \qquad
    \begin{tikzcd}
    \ST_R \setminus \{0\} \arrow[r] \arrow[d, hook] & \cX \arrow[d, "f"] \\
    \ST_R \arrow[r] \arrow[ru, dashed] & \cY
    \end{tikzcd}.
\end{equation}
 In the discussion that follows, we explain how to visualize these test objects (and thus the valuative criteria) using specializations over either $\Theta:=[\bA^1/\Gm]$ or $\Spec R$, with $R$ a dvr. We explain the strategy in the case when $\cY$ is a point; the relative case works in the same way. We start with the description of $\Theta_R \coloneq \Theta \times \spec R$: this has four topological points, and we can visualize the specialization of points in the diagram below, where we specify whether the degenerations are over the dvr or over $\Theta$:
\begin{figure}[H]
    \centering
    \begin{tikzpicture}[
    >=Stealth,
    decoration={snake, amplitude=.4mm, segment length=2mm, pre length=1mm, post length=2mm},
    dot/.style={circle, fill, inner sep=1.3pt},
    wavy/.style={->, decorate, line width=0.9pt, shorten >=3pt, shorten <=3pt},
    straightarr/.style={->, line width=0.6pt, shorten >=3pt, shorten <=3pt}
]

\node[dot] (k1) at (-0.2,2) {};
\node[dot] (q1) at (1.8,2) {};
\node[dot] (b1) at (1.8,0) {};

\node[above=2pt of k1] {$\Bbbk$};
\node[above=2pt of q1] {$Q$};
\node[below=2pt of b1] {$B\mathbb G_{m,Q}$};

\draw[wavy] (q1) -- (k1) node[midway, above] {$R$};
\draw[straightarr] (q1) -- (b1) node[midway, right] {$\Theta_{Q}$};

\node[blue] at (0.8,1) {$\Theta_{R}\setminus \{0\}$};

\node at (3.4,1) {\Large $\subset$};

\node[dot] (k2) at (5,2) {};
\node[dot] (q2) at (7,2) {};
\node[dot] (c2) at (5,0) {};
\node[dot] (d2) at (7,0) {};

\node[above=2pt of k2] {$\Bbbk$};
\node[above=2pt of q2] {$Q$};
\node[below=2pt of c2, red] {$B\mathbb G_{m,\Bbbk}$};
\node[below=2pt of d2] {$B\mathbb G_{m,Q}$};

\draw[wavy] (q2) -- (k2) node[midway, above] {$R$};
\draw[straightarr, red] (k2) -- (c2) node[midway, left] {$\Theta_{\Bbbk}$};
\draw[straightarr] (q2) -- (d2) node[midway, right] {$\Theta_{Q}$};
\draw[wavy, red] (d2) -- (c2) node[midway, above] {$R$};

\node[blue] at (6,1) {$\Theta_{R}$};

\end{tikzpicture}
    \caption{Diagram of specializations for $\Theta$-completeness}
    \label{fig:theta-complete}
\end{figure}
where the diagram is to be read as ``for all degenerations in $\cX$ as in the leftmost diagram, there exists a unique filling as in the rightmost diagram''. We stress that a map $\Theta_R \setminus \{0\} \to \cU$ is exactly the datum of two maps $\spec R \to \cU$ and $\Theta_Q \to \cU$, together with an isomorphism of their images over the generic point. On the contrary, the requirement of lifting the map in \eqref{eq:thetascpl} is in general stronger than the existence of a point together with the two required specializations in the diagram above. In future work of David Rydh, similar ideas are used to formulate existence criteria for topological moduli spaces (defined there); we also use a version of these criteria in the third part of this series of paper for the proof of \(\textsf{S}\)-completeness.

Similarly, the \(S\)-completeness criterion for $\cX$ can be rephrased as the following diagram of specializations
\begin{figure}[H]
    \centering
    \begin{tikzpicture}[
    >=Stealth,
    decoration={snake, amplitude=.4mm, segment length=2mm, pre length=1mm, post length=2mm},
    dot/.style={circle, fill, inner sep=1.3pt},
    wavy/.style={->, decorate, line width=0.9pt, shorten >=5pt, shorten <=5pt},
    straightarr/.style={->, line width=0.7pt, shorten >=3pt, shorten <=3pt}
]

\node[dot] (kt1) at (0,1.4) {};
\node[dot] (kb1) at (0,-1.4) {};
\node[dot] (q1) at (1.9,0) {};

\node[above=2pt of kt1] {$\Bbbk$};
\node[below=2pt of kb1] {$\Bbbk$};
\node[right=2pt of q1] {$Q$};

\draw[wavy] (q1) -- (kt1) node[midway, above right] {$R$};
\draw[wavy] (q1) -- (kb1) node[midway, below right] {$R$};

\node[blue] at (0.3,0) {$ST_R$};

\node at (3.1,0) {\Large $\subset$};

\begin{scope}[shift={(6,0)}]

\node[dot] (kt2) at (0,1.4) {};
\node[dot] (kb2) at (0,-1.4) {};
\node[dot] (q2) at (1.9,0) {};
\node[dot] (bg2) at (-0.9,0) {};

\node[above=2pt of kt2] {$\Bbbk$};
\node[below=2pt of kb2] {$\Bbbk$};
\node[right=2pt of q2] {$Q$};
\node[left=2pt of bg2] {$B\mathbb G_{m,\Bbbk}$};

\draw[wavy] (q2) -- (kt2) node[midway, above right] {$R$};
\draw[wavy] (q2) -- (kb2) node[midway, below right] {$R$};
\draw[straightarr, red] (kt2) -- (bg2) node[midway, above left] {$\Theta_{\Bbbk}$};
\draw[straightarr, red] (kb2) -- (bg2) node[midway, below left] {$\Theta_{\Bbbk}$};

\node[blue] at (0.2,0) {$\overline{ST_R}$};

\end{scope}

\end{tikzpicture}
    \caption{Diagram of specializations for $S$-completeness}
    \label{fig:S-complete}
\end{figure}
to be read as described in the $\Theta$-completeness case. In particular, whenever one can show that such fillings cannot exist, then \(\mathcal{X}\) does not admit a separated good moduli space.
It is therefore clear that, in order to study these criteria, one of the key tools is to understand maps $\nu \colon \Theta \to \cX$. Such a map is called an \emph{isotrivial degeneration} of the topological point corresponding to the image of $1 \in |\Theta|$. Moreover, such a map induces a cocharacter $\Gm \to \Aut_{\cX}(p)^{\circ}$, where $p \in |\cX|$ is the topological point corresponding to $\nu(0)$. For this reason, studying the \emph{basin of attraction} of all pairs \((p,\Gm\to \Aut_{\cX}(p)^{\circ})\) is crucial for this analysis; see \cite[Definition~2.5]{GMSArI}. A further motivation for studying basins of attraction comes from considering the restriction of a map $\ST_R \to \cX$ to the closed substack obtained by removing the generic point. This corresponds to two copies of $\Theta$ glued at the origin, such that the one-parameter subgroups induced on the closed point of $\ST_R$ on the two copies are opposite.

The study of such isotrivial degenerations for $\cM_{g,n}^r$ is part of the content of \cite{GMSArI}. Furthermore, in our case, since \(\cM_{g,n}^r\) is a smooth stack with \emph{all} stabilizers being extensions of a torus by a finite group (with the exception of \(r\geq 2g+1\) and \(n=0\)), this makes it possible to linearize the problem, reducing it to understanding the \(\Aut_{\cX}(p)^{\circ}\)-equivariant deformation theory of $p$; see, for example, \cite[Lemma 2.10]{GMSArI}.

In this way, we can describe the basin of attraction of $(p,\lambda:\Gm\to \Aut_{\cX}(p)^{\circ})$ as the positive-weight summand of the space of first-order deformations of $p$ seen as a $\Gm$-representation through $\lambda$.

\begin{modusoperandi}\labelmodus{rem:modus-operandi}
To define \(\cU_{g,n}^r\) (see \Cref{def:admin}), we investigate which diagrams as above can be constructed in \(\cM_{g,n}^r\), and subsequently remove curves that would obstruct their filling. In this analysis, the natural stratification of $\cM_{g,n}^r$ provided by the combinatorial type of the curves plays a crucial role, although we do not formally define it here in order to keep the notation lighter (for a precise definition cf.~\cite[Section~3.2]{AltCompClustAlg}). More precisely, our \emph{modus operandi} in defining $\cU_{g,n}^r$ is the following:
\begin{itemize}
    \item[(1)] we find counterexamples to $\Theta$-completeness in $\cM_{g,n}^r$;
    \item[(2)] we decide what to remove from the stack so that the counterexamples cannot appear, preferably avoiding the removal of curves with positive-dimensional automorphism groups;
    \item[(3)] if we remove curves with positive-dimensional automorphism groups, we check $\textsf{S}$-completeness to determine whether one of the basins of attraction of the removed curve must also be removed.
\end{itemize}
\end{modusoperandi}

Note that in this process we made choices: there is no unique maximal open substack of curves in $\cM_{g,n}^r$ admitting a proper good moduli space. For example, $\overline{\cM}_{g,n}\subset \cM_{g,n}^r$ (parametrizing nodal stable curves) is a compactification of $\cM_{g,n}$ that is not contained in $\cU_{g,n}^r$ for $r \geq 3$. Moreover, there is no containment among these $\cU_{g,n}^r$ for $r \leq 2g+1$. In our approach, we always choose to keep the largest possible class of curves with singularities of type $A_i$ for $i \leq r$ while defining $\cU_{g,n}^r$. 

This motivates the definition of an open immersion $\cU_{g,n}^r \subset \cM_{g,n}^r$ that satisfies the maximality part of the main theorem. It should also be added that, in the case of small \(r\), we compared our curves with, and were inspired by, those appearing in the first flips of the Hassett--Keel program. For a thorough comparison, see \Cref{rem:HKP-comparison} and the next section.

\addtocontents{toc}{\SkipTocEntry}
\subsection*{Relation with previous works and Hasset--Keel program}
Classically, many compactifications of $\operatorname{M}_{g,n}$ arise from Deligne--Mumford stacks that, thanks to the Keel--Mori theorem, admit a coarse moduli space. In this sense, there are many famous compactifications. The first to appear was $\overline{\operatorname{M}}_{g,n}$ in \cite{DeligneMumford}, established using nodal curves, followed by the Schubert compactification \cite{PseudostableSchubert} (given by curves with $A_1$ and $A_2$ singularities), and later the compactifications constructed in \cite{smyth2011modular} using more general singularities, which are therefore not included in our stack. Among other constructions, we mention the possibility of generalizing the notion of nodal curves by assigning weights to marked points, leading to the construction in \cite{hassett2003moduli}, which goes in a different direction from the one we pursue here.

The literature is less abundant on compactifications arising from non-Deligne--Mumford stacks. In particular, the known compactifications for arbitrary genus are related to the Hassett--Keel program for $\overline{\operatorname{M}}_{g,n}$. Specifically, the first steps of the Hassett--Keel program were constructed in \cite{hassett2009log, hassett2013log, AlpFedSmyWyck, AlpFedSmyExistence, AlpFedSmyProjectivity}, and further compactifications were later established in \cite{Viviani, AltCompClustAlg}. All these compactifications arise from stacks classifying curves with singularities of type $A_i$ for $i \leq 4$. Our work is deeply inspired by these compactifications and aims to understand compactifications with $A_i$-singularities for arbitrary $i$. We briefly compare them (cf.~\Cref{rem:HKP-comparison}):
\begin{itemize}
    \item if $r=0$, we have $\cU_{g,n}^0 = \cM_{g,n}$.
    \item $\cU_{g,n}^1 = \overline{\cM}_{g,n}$, the stack classifying stable nodal curves.
    \item $\cU_{g,n}^2 = \overline{\cM}_{g,n}^{wps}$, the stack of weakly pseudostable curves, cf.~\cite{hassett2009log}.
    \item $\cU_{g,n}^3 = \overline{\cM}_{g,n}(7/10)$. This stack was constructed in \cite{hassett2013log}, and admits a good moduli space that, for $n=0$, is the log canonical model for $K_{\overline{\cM}_g} + \frac{7}{10}\delta$, where $\delta$ is the total boundary divisor. The construction was generalized to the case of marked points in \cite[Definition 2.5]{AlpFedSmyWyck}.
    \item $\overline{\cM}_{g,n}(2/3) \subsetneq \cU_{g,n}^4$. In the third article of this series, we will prove that this induces a birational proper morphism between the good moduli spaces. We recall that $\overline{\cM}_{g,n}(2/3)$ was first constructed in \cite{AlpFedSmyWyck}.
\end{itemize}

\addtocontents{toc}{\SkipTocEntry}
\subsection*{Maximal opens admitting good moduli spaces}
    We emphasize the interest in finding an open substack that admits a separated good moduli space and is maximal. This significantly simplifies the classification of all open substacks with separated good moduli spaces: namely, if $\cU \subset \cX$ and $\cX$ admits a separated good moduli space, then $\cU$ has a good moduli space if and only if the inclusion is $\Theta$- and $\textsf{S}$-complete, which is a weaker statement and usually easier to prove (e.g., using \Cref{prop:local-Theta-complete}).
    
    This strategy follows the methods used to prove that the stacks of semistable vector bundles over a curve admit a separated good moduli space in \cite[Propositions 3.8 and 3.10]{GMS-VectBun}, where it is noted that the stack of coherent sheaves is \(\Theta\)- and \(\textsf{S}\)-complete and that the inclusion of the stack of semistable vector bundles as a substack is \(\Theta\)- and \(\textsf{S}\)-complete as well. In the case of curves, this method has been used in \cite{AltCompClustAlg} to classify stacks of curves with singularities of type $A_i$, $i \leq 3$, admitting a projective good moduli space.

    Finally, we highlight that the maximality of the open substack implies some ``minimality'' phenomena at the level of the good moduli space. Namely, if some of the points added have non-trivial basins of attraction, all points in these basins will be mapped to the same point in the good moduli space. For example, the inclusion of stacks \(\mathbb{P}^n\subset [\mathbb{A}^{n+1}/\mathbb{G}_m]\) corresponds to the structure map \(\mathbb{P}^n\to \Spec \kappa\) on the level of good moduli spaces.

    The presence of many points with non-trivial basins of attraction explains the absence of non-torsion line bundles on the good moduli space of $\cU_{g,n}^r$, $g,n$ for $r \geq 5$ (when it exists), which we prove in the third and last part of this series. In particular, such a moduli space is not projective and can therefore not be obtained via GIT methods.

\addtocontents{toc}{\SkipTocEntry}
\subsection*{Outline of the paper}
In \Cref{sec:Ar-stable-curves} we recall several definitions and results developed in \cite{GMSArI} that will be used throughout the paper. We roughly retrace the results in \emph{loc.\ cit.}: in \Cref{sub:equivariant-geometry-A-singularities} we discuss the crimping datum for $A$-type singularities; in \Cref{sub:hyper-A-stable} we review cyclic covers of degree $2$; in \Cref{sub:pointwise geometry} we recall the classification of automorphism groups in $\cM_{g,n}^r$; and in \Cref{sub:local-geometry} we describe the equivariant deformation theory and how it induces natural geometric constraints on possible isotrivial degenerations. Finally, in \Cref{sub:global geometry} we briefly explain why the local results arising from the study of deformation theory can be globalized, yielding the classification.

\Cref{sec:counterexamples} is devoted to the substack $\cU_{g,n}^r \subset \cM_{g,n}^r$ parametrizing admissible $n$-pointed $A_r$-stable curves. We begin by defining the locus of admissible curves and proving that it is open. In the remainder of the section, we prove the maximality of $\cU_{g,n}^r$ (see \Cref{theo:maximal-open}), where each subsection addresses the conditions defining $\cU_{g,n}^r$, clarifying why they are imposed in the first place.

Finally, in \Cref{sec:open-theta} we prove that the embedding $\cU_{g,n}^r \subset \cM_{g,n}^r$ is $\Theta$- and $\textsf{S}$-complete (see \Cref{theo:local-completeness}). We emphasize that we translate the lifting of morphisms into a purely combinatorial condition, which is then addressed in \Cref{sec:combinatorics}.

\addtocontents{toc}{\SkipTocEntry}
\subsection*{Future directions}
 As we already mentioned, one of our initial motivations was to understand how to interpret the valuative criteria for $\textsf{S}$-completeness and $\Theta$-completeness in the case of moduli of worse-than-nodal curves.
 
 As we already pointed out, in the case of curves it seems that we are missing an analogue of the stack of coherent sheaves, i.e. an ambient algebraic stack which is $\textsf{S}$-complete and $\Theta$-complete, in which to search for our compactifications. This would make the study of modular compactifications much easier: instead of needing to work our way up from the nodal case (as we do), we could simply produce open substacks such that the corresponding embeddings are $\textsf{S}$-complete and $\Theta$-complete. We believe that any classification will have to start with the difficult problem of finding the open substacks that are maximal among those having a separated good moduli space. Once that is done, one can try to establish a classification by combinatorially parametrizing closed subsets whose embedding of the open complement is $\textsf{S}$-complete and $\Theta$-complete. Notice that, in the case of linearly reductive stabilizers at every point, one can use \Cref{prop:local-Theta-complete} to actually translate the problem into a representation-theoretic one, as we do in \Cref{sec:open-theta}. An instance of this idea is in the first named author's PhD thesis \cite{AltCompClustAlg}, which approaches the classification of all modular compactifications inside $\cM_{g,n}^3$ (thus up to tacnodal curves), establishing three maximal open substacks that are both $\textsf{S}$- and $\Theta$-complete using similar techniques. The classification then proceeds within these maximal stacks, where more combinatorics enters the picture, and a dictionary with the combinatorics of cluster algebras is established. 

 Thus, the question remains: 

\begin{question}
     Is there a good equivalent of the moduli stack of coherent sheaves for the moduli problem of curves?
\end{question}

 Although we have no clear answer, one natural candidate would be $\cG_{g,n}$, the moduli stack of reduced connected Gorenstein curves with ample (log)-dualizing line bundle. Nevertheless, the counterexamples we produce may still be valid in $\cG_{g,n}$. To understand this, we formulate the following more precise question.
 
 \begin{question}\label{conj:open}
     Is the open embedding $\cM_{g,n}^r \subset \cG_{g,n}$ $\textsf{S}$-complete and $\Theta$-complete?
 \end{question}

 An affirmative answer would imply that our counterexamples to the valuative criteria will remain valid in the bigger stack $\cG_{g,n}$. At the time of writing, we see two main obstacles to the proof of an affirmative answer to \Cref{conj:open}. First of all, to handle the problem one would like to reduce to the local picture, where deformation theory can be a powerful tool. Nevertheless, the \'etale local structure theorem is still a mystery for non-linearly reductive stabilizers (even though future work of David Rydh on topological moduli spaces could shed light on the question), which may appear depending on the choice of $(g,n,r)$. Secondly, even assuming the local reduction step, it is not true that, given a Gorenstein singularity with a $\Gm$-action, all weights in the deformation space have the same sign (e.g., the planar singularity $y^4-x^5$). Although seemingly unrelated, it is not too hard to convince oneself that such a property would imply that the answer to \Cref{conj:open} is affirmative. Since such a property holds for $A$-singularities, it implies that the open embedding $\cM_{g,n}^r \subset \cM_{g,n}^s$ is $\textsf{S}$-complete and $\Theta$-complete for any $r\leq s$: this is the reason why the problem of classifying isotrivial degenerations for $A$-singularities felt difficult, but not out of reach.

\addtocontents{toc}{\SkipTocEntry}
\subsection*{Acknowledgments}

The authors are tremendously indebted to Jarod Alper, Luca Battistella, Andrea Di Lorenzo, Andres Fernandez Herrero, Jochen Heinloth, Giovanni Inchiostro, David Rydh, Filippo Viviani and Dario Wei{\ss}mann for helpful discussions along the way. We further want to thank Jochen Heinloth for the opportunity to work one week in person on the project during a research visit of the first and third named authors in Essen. The first named author was partially supported by INdAM group GNSAGA. The second named author was supported by DFG-Research Training Group 2553 at Universität Duisburg-Essen during the first phase of this project and is currently a member of the Institut für Algebraische Geometrie in Leibniz Universität Hannover. The third named author was supported by the Knut and Alice Wallenberg foundation 2021.0291.
\section{\texorpdfstring{$A_r$}{A\_r}-stable curves and their moduli stack}\label{sec:Ar-stable-curves}
In this section, we will recall the notion of $A_r$-stable curves and the local geometry of the moduli stack $\cM_{g,n}^r$ of $A_r$-stable curves. This study is carried out thoroughly in the first paper of the series, \cite{GMSArI}, thus here we only recall what is needed, without proof. 

The first paper provided a \emph{combinatorial classification of isotrivial degenerations} of $A_r$-stable curves. This is achieved by: 
\begin{itemize}
    \item[1)] classifying all the curves with positive-dimensional stabilizers (see \cite[Corollary 4.26]{GMSArI});
    \item[2)] studying their equivariant deformation theory, which allows us to control how the combinatorics of the curve changes along $\Gm$-equivariant deformations (reported in this paper in \Cref{prop:defor-rational-chain} and \Cref{prop:iso-deg-rosary});
    \item[3)] globalizing the classification, which allows us to determine whether an isotrivial degeneration between two curves exists or not, based only on the combinatorics of the curves (see the results in \cite[Section 4.4]{GMSArI}).
\end{itemize}

The combinatorial classification of isotrivial degenerations will also be crucial to deal with $\Theta$-completeness and $\textsf{S}$-completeness in the future paper of the series, and it relies heavily on the strong connection between $A$-singularities and (reduced) $2\!:\!1$ covers of the projective line, which we call honestly hyperelliptic curves (following \cite{Cat}). More precisely, given a generically smooth family of curves over a dvr whose special fiber contains an $A$-singularity, we can locally modify the family by replacing such a singularity with a hyperelliptic curve (up to a dvr extension). This process is called semistabilization and can be done explicitly with $ADE$ singularities; see for instance \cite{Fedor} and \cite{CasmarLaza}. In the case of an $A_r$-singularity, the semistabilization procedure can be described depending on the parity of $r$:

\begin{itemize}
    \item[(e)] if $r$ is even, then a blowup in a smoothing family of the singularity replaces the singularity with an honestly hyperelliptic curve of genus $r/2$ (with at worst $A_{r-1}$-singularities) attached to the rest of the curve at a Weierstrass point;
    \item[(o)] if $r$ is odd, then an appropriate blowup (described in loc.~cit.) in a smoothing family of the singularity replaces the singularity with an honestly hyperelliptic curve of genus $(r-1)/2$ (with at worst $A_{r-1}$-singularities) attached to the rest of the curve at two points exchanged by the hyperelliptic involution.
\end{itemize}
The semistable replacement can be obtained by repeating the process in order to end up with only nodal singularities. This strong connection is also reflected in the deformation space of $A_r$-singularities with a nontrivial $\Gm$-action, which we call \emph{hyperelliptic} $A$-singularities (see \Cref{def:hyp-even-sing}). Informally, the $\Gm$-action induces a $\Gm$-equivariant decomposition of the deformation space of the singularity: the positively weighted isotypic component corresponds to deformations that preserve the singularity but change the crimping datum, whereas the negatively weighted isotypic component corresponds to deformations that deform the singularity but produce an honestly hyperelliptic component as a result (similarly to how it occurs in the semistabilization procedure described above). The precise statement is reported in this paper in \Cref{rem:alternating-def}.

After introducing the classical notation, we will recall the pointwise geometry of $\cM_{g,n}^r$, namely the description of the stabilizer groups of curves. Secondly, we will recall the equivariant \'etale local geometry of the stack, determining isotrivial generalizations. Finally, we will complete the study with the geometry of isotrivial degenerations.

Let us start with the classical definitions.

\begin{definition}
    A curve $C$ over an algebraically closed field $k$ is a connected, reduced, one-dimensional scheme proper over $k$. An $n$-pointed curve $(C,p_1,\dots,p_n)$ over a field $k$ is a curve $C$ together with $n$ rational smooth distinct points $p_1,\dots,p_n$. 
\end{definition}

\begin{definition}
    A closed point $p \in C$ over an algebraically closed field $k$ is called an \emph{$A_h$-singularity} if 
    $$ \widehat{\cO_{C,p}} \simeq k[[x,y]]/(y^2-x^{h+1}).$$
    If we do not need to emphasize the integer $h$, we will drop the subscript and write \emph{$A$-singularity}. An $A$-singularity $p$ of $C$ is \emph{separating} if the partial normalization of $C$ at $p$ is disconnected.
\end{definition}
    
\begin{definition}[Partial Normalization]
    Let $C$ be a curve over an algebraically closed field $k$, and let
    $S \subset \operatorname{Sing}(C)$ be a subset of its singular points.
    A finite morphism
    \[
    \nu \colon \widetilde{C} \to C
    \]
    is called the \emph{partial normalization of $C$ along $S$} if $\nu$ is an isomorphism over $C \setminus S$ and the induced map $\widetilde{C} \setminus \nu(S^c) \to C\setminus S^c$ is the normalization, where $S^c = \operatorname{Sing}(C) \setminus S$.
\end{definition}

\begin{definition}[Pointed partial normalization]
    \index{Pointed partial normalization}
    Let $(C,\{p_i\}_{i\in I})$ be a pointed curve, and let $S \subset \operatorname{Sing}(C)$ be a finite subset.
    The \emph{pointed partial normalization of $(C,\{p_i\})$ along $S$} is the triple $(\widetilde{C},\{q_j\}_{j\in J}, \nu)$ where 
    \[
    \nu \colon \widetilde{C} \to C
    \]
    is the partial normalization of $C$ along $S$ and $\{q_j\}_{j\in J} \coloneq \nu^{-1}\!\left(S \cup \{p_i \mid i \in I\}\right)$. If $S = \operatorname{Sing}(C)$, then $\nu$ is called the \emph{pointed normalization}.
\end{definition}

Notice that the normalization morphism is not a morphism between pointed curves; indeed, it sends markings to special points, which are either markings or singularities.

\begin{definition}[$A_r$-(pre)stable curve]\index{$A_r$-stable curve}
    Fix three nonnegative integers $g, n$, and $r$. Let $k$ be an algebraically closed field and $(C,p_1,\dots,p_n)/k$ be an $n$-pointed curve. We say that $(C,p_1,\dots,p_n)$ is $A_r$-prestable if every singular point is an $A_h$-singularity for some $h\leq r$. We will say that $(C,p_1,\dots,p_n)$ is \emph{$A_r$-stable} if it is $A_r$-prestable and the line bundle $\omega_C(p_1+\dots+p_n)$ is ample.
    If $(C,p_1,\ldots,p_n)$ is $A_{r}$-(pre)stable for some $r$, then we say that it is $A$-(pre)stable.
\end{definition}

\begin{remark}
    Notice that an $A$-prestable curve $C$ is l.c.i. by definition; therefore, the dualizing complex $\omega_C$ is in fact a line bundle. 
\end{remark}

\begin{remark}\label{rem:genus-count}
    Let $C$ be a curve over an algebraically closed field and let $p$ be an $A_r$-singularity.  We denote by $b:\widetilde{C}\rightarrow C$ the partial normalization at the point $p$ and by $J_b$ the conductor ideal of $b$. Then a classical computation shows that 
    \begin{enumerate}
        \item if $r=2h$, then $g(C)=g(\widetilde{C})+h$;
        \item if $r=2h+1$ and $\widetilde{C}$ is connected, then $g(C)=g(\widetilde{C})+h+1$,
        \item if $r=2h+1$ and $\widetilde{C}$ is not connected, that is $p$ is a separating singularity, then $g(C)=g(\widetilde{C})+h$.
    \end{enumerate}
    Furthermore, Noether's conductor formula gives us that $b^*\omega_C \simeq \omega_{\widetilde{C}}(J_b^{\vee})$. See for instance \cite[Proposition 1.2]{Cat}.
\end{remark}

We can define $\cM_{g,n}^r$ as the fibered category over $\kappa$-schemes whose objects are the data of $A_r$-stable curves over $S$ with $n$ distinct smooth sections $p_1,\dots,p_n$ such that every geometric fiber over $S$ is an $n$-pointed $A_r$-stable curve. These families are called \emph{$n$-pointed $A_r$-stable curves} over $S$. Morphisms are just isomorphisms of $n$-pointed curves.

We recall the following description of $\cM_{g,n}^r$. See \cite[Theorem 2.2]{Per1} for the proof. 

\begin{theorem}\label{theo:descr-quot}
    $\cM_{g,n}^r$ is a smooth connected algebraic stack of finite type over $\kappa$. Furthermore, it is a quotient stack: that is, there exists a smooth quasi-projective scheme X equipped with an action of $\GL_N$ for some positive integer $N$, such that 
    $ \cM_{g,n}^r \simeq [X/\GL_N]$.
\end{theorem}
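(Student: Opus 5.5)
The plan is to realize $\cM_{g,n}^r$ as an open substack of a stack already known to be a smooth quotient stack, and then read off the remaining properties from deformation theory. Concretely, since $\omega_C(p_1+\dots+p_n)$ is ample on every $A_r$-stable curve, a fixed power $\omega_C(\sum p_i)^{\otimes m}$ with $m\geq 3$ (depending only on $g,n$) is very ample with vanishing $H^1$. This holds uniformly because $A$-singularities are Gorenstein, and for Gorenstein curves with ample dualizing sheaf the standard Catanese/Deligne--Mumford estimates give very ampleness for $m\geq 3$. Hence every such curve, together with a choice of basis of $H^0(C,\omega_C(\sum p_i)^{\otimes m})$, embeds in $\bP^{N-1}$, where $N=(2m-1)(g-1)+mn$ by Riemann--Roch.

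I then consider the Hilbert scheme $H$ of curves in $\bP^{N-1}$ with this Hilbert polynomial, together with the product of $n$ copies of the universal curve for the sections. Inside it I take the locus $X$ where:
\begin{itemize}
\item the fiber is reduced and connected, with at worst $A_h$-singularities for $h\leq r$;
\item the sections are distinct and smooth;
\item the embedding is given by the complete linear system of $\omega(\sum p_i)^{\otimes m}$.
\end{itemize}
Each of these conditions is open: having $A_{\leq r}$-singularities is open because the Milnor/Tjurina number is upper semicontinuous and planar singularities of small Tjurina number deform only to $A$-singularities of lower index; the last condition is open because it is the locus where two line bundles agree, by semicontinuity and cohomology and base change. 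Thus $X$ is a quasi-projective scheme carrying a $\GL_N$-action, and the standard argument identifies $\cM_{g,n}^r\simeq[X/\GL_N]$, since $\GL_N$-torsors over $S$ correspond to frames of the pushforward of $\omega(\sum p_i)^{\otimes m}$. Finite type follows.

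Smoothness I would prove via deformation theory. Obstructions to deforming $C$ lie in $\operatorname{Ext}^2(\Omega_C,\cO_C)$. The local-to-global spectral sequence reduces these to $H^1(\cExt^1(\Omega_C,\cO_C))$, which vanishes since this sheaf is supported at finitely many points, and to $\cExt^2$, which vanishes since $C$ is lci of embedding dimension two. Hypersurface singularities $y^2-x^{h+1}$ are unobstructed, and adding smooth markings introduces no obstruction, so the stack is smooth.

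Connectedness is the step I expect to be the main obstacle. I would argue that $\cM_{g,n}$ is dense: every $A_r$-stable curve admits a smoothing, because the local versal deformations of the singularities are smooth and the global-to-local map of deformations is surjective (the obstruction group $H^2$ vanishes on a curve), so a generic deformation smooths all singularities while keeping ampleness, which is an open condition. Then connectedness of $\cM_{g,n}$ (Deligne--Mumford) gives connectedness of its closure, which is all of $\cM_{g,n}^r$. The delicate parts are this surjectivity onto the local deformation spaces and the uniform very-ampleness bound, for which I would cite Catanese's results on Gorenstein curves, as the paper defers to \cite[Theorem 2.2]{Per1}.
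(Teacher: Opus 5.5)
Your outline is essentially the standard argument behind \cite[Theorem~2.2]{Per1}, which the paper cites without proof. In that argument, a uniformly very ample power of $\omega_C(\sum p_i)$ (via Catanese-type results for Gorenstein curves) gives a Hilbert-scheme presentation and finite type, the vanishing of $\operatorname{Ext}^2(\Omega_C,\cO_C)$ for these lci curves gives smoothness, and smoothability of $A$-singularities makes $\cM_{g,n}$ dense, whence connectedness.

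Two details need correcting.
\begin{enumerate}
\item The condition $\cO(1)|_C\cong\omega_C(\sum p_i)^{\otimes m}$ is not open in the Hilbert scheme. It is only locally closed, since nearby embedded curves can be cut out by other line bundles of the same degree. Your locus is therefore locally closed, which still gives quasi-projectivity.
\item $\GL_N$ acts on that Hilbert-scheme locus through $\mathrm{PGL}_N$, so its quotient by $\GL_N$ is a $\Gm$-gerbe over $\cM_{g,n}^r$, not $\cM_{g,n}^r$ itself. You should take $X$ to be the frame bundle of $\pi_*\omega(\sum p_i)^{\otimes m}$, i.e.\ the $\Gm$-torsor of isomorphisms $\cO(1)|_C\cong\omega_C(\sum p_i)^{\otimes m}$ over that locus. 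This $X$ is quasi-projective, and it is smooth because $X\to\cM_{g,n}^r$ is a $\GL_N$-torsor over a smooth stack.
\end{enumerate}
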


\begin{remark}\label{rem: max-sing}
Recall that we have an open embedding $\cM_{g,n}^r  \subset \cM_{g,n}^s$ for every $r\leq s$. Notice that $\cM_{g,n}^r=\cM_{g,n}^{2g+1}$ for every $r\geq 2g+1$, because of \Cref{rem:genus-count}. Thus, we can consider $\cM_{g,n}^{2g+1}$ as the moduli stack of all $A$-stable curves (with no restriction on the singularities).
\end{remark}

\subsection{Equivariant geometry of \texorpdfstring{$A$}{A}-singularities}\label{sub:equivariant-geometry-A-singularities}

In this subsection, we describe the crimping datum needed to construct an $A_r$-stable curve starting from its normalization. This is a short summary of what has been proven in Section 3.1 and Section 3.2 of \cite{GMSArI}, and we hope it will help the reader who is not familiar with \emph{loc. cit.} to at least gain an intuition of what is needed for the main results of this work.

Contrary to the nodal and cuspidal case, it is not enough to remember only the closed points mapping to a given singularity if it is of type $A_l$ with $l\geq 3$. Depending on whether $l$ is even or odd, the equivariant geometry of $A_l$-singularities and their relation to the crimping datum varies, thus we treat the two cases separately in what follows.

\subsubsection*{Even $A_r$-singularities}

 We start by discussing the local geometry of even $A$-singularities, and more specifically the data one needs to construct an $A_{2h}$-singularity \emph{crimping} a smooth point of a curve, over an algebraically closed field, with $h\geq 1$. If $C$ is a curve with an $A_{2h}$-singularity $p$, then there exists a Cartesian diagram
 $$
 \begin{tikzcd}
\spec D_{2h} \arrow[d, "e"] \arrow[r, hook] & \widetilde{C} \arrow[d, "\nu"] \\
\spec D_h \arrow[r, hook]                   & C                             
\end{tikzcd}
$$
which is also a pushout, where $\nu$ is the partial normalization at $p$, the image of the lower horizontal morphism is $p$ in $C$ and $D_n:= k[t]/(t^n)$ for every $n\geq 0$. To construct $C$ from $\spec D_{2h} \subset \widetilde{C}$, we need to give a finite flat extension $q_h:D_h \hookrightarrow D_{2h}$ of degree $2$, up to isomorphisms of $D_h$. This is the same as giving a polynomial $q(t) \in k[t]/(t^{2h-2})$ such that $q(t)$ is odd: the extension $e$ of degree $2$ is determined by the association $t\mapsto t^2(1+q(t))$. This odd polynomial constitutes the datum needed to \emph{crimp} the point $\widetilde{p}$ in $\widetilde{C}$ to recover $C$; this is why it is called a \emph{crimping datum}. The space of crimping data will be denoted by ${\rm Cr}_{p}$: it is an affine space of dimension $h-1$.

From the partial normalization, we can relate the automorphisms of $(\widetilde{C},\widetilde{p})$ to the automorphisms of $(C,p)$: there is a natural injective group homomorphism $\Aut(\nu)^{\circ}:\Aut(C,p)^{\circ} \hookrightarrow \Aut(\widetilde{C},\widetilde{p})^{\circ}$. Fixing a crimping datum $e$ (and thus an induced curve $C$), we have that $\Aut(C,p)^{\circ}$ coincides with the elements in $\Aut(\widetilde{C},\widetilde{p})^{\circ}$ which commute with the crimping datum: indeed $\Aut(\widetilde{C}, \widetilde{p})^{\circ}$ acts naturally on ${\rm Cr}_p$ (by acting on $D_{2h}$) and $\Aut(C,p)^{\circ}$ is the stabilizer group of the point in ${\rm Cr}_p$ represented by the extension $e$. This can be upgraded to the following statement: 
\begin{itemize}
    \item[$(\star)$] the moduli stack of curves $C$ with an $A_{2h}$-singularity $p$ and fixed partial pointed normalization $(\widetilde{C},\widetilde{p})$ is isomorphic to $[{\rm Cr}_p/\Aut(\widetilde{C},\widetilde{p})^{\circ}]$.
\end{itemize}

This tells us a great deal about the possible automorphisms of $(C,p)$: for instance, if $p$ does not lie in an irreducible component $\widetilde{\Gamma}$ of $C$ whose normalization is of genus $0$, then the automorphism group does not change through the normalization, independently of which crimping datum we choose. On the contrary, suppose that there is a cocharacter $\Gm \rightarrow \Aut(\widetilde{C},\widetilde{p})^{\circ}$ which induces a non-trivial cocharacter for $\Aut(\widetilde{\Gamma},\widetilde{p})^{\circ}\simeq \Gm \ltimes \Ga$. It is not difficult to show that the $\Gm$-action induced on ${\rm Cr}_p$ has all positive (or negative) weights. Therefore, there exists a unique crimping datum, called the $\Gm$-equivariant crimping datum, which allows us to descend the non-trivial $\Gm$-action to $C$. The $\Gm$-equivariant crimping datum corresponds to the $0$-section of the crimping space ${\rm Cr}_p$.

\subsubsection*{Odd $A_r$-singularities}

 We proceed by discussing the local geometry of odd $A$-singularities. If $C$ is a curve with an $A_{2h+1}$-singularity $p$ for $h \geq 1$, then there exists a Cartesian diagram
 $$
 \begin{tikzcd}
\spec D_{h}\bigsqcup\spec D_h \arrow[d, "o"] \arrow[r, hook] & \widetilde{C} \arrow[d, "\nu"] \\
\spec D_h \arrow[r, hook]                   & C                             
\end{tikzcd}
$$
which is also a pushout, where $\nu$ is the partial normalization at $p$, the image of the lower horizontal morphism is $p$ in $C$ and $D_n:= k[t]/(t^{n+1})$ for every $n\geq 0$. To construct $C$ from $\spec D_{h} \bigsqcup\spec D_h \subset \widetilde{C}$, we need to give a finite \'etale extension $q_h:D_h \hookrightarrow D_h\times D_h$, or equivalently an automorphism of $D_h$. This is the same as giving a polynomial $q(t) \in k[t]/(t^{h})$ such that $q(0) \neq 0$: the morphism $o$ is determined by the association $t\mapsto tq(t)$. The polynomial $q(t)$ constitutes the datum needed to \emph{glue} the infinitesimal thickenings of the points $\widetilde{p}_1$ and $\widetilde{p}_2$ in $\widetilde{C}$ to recover $C$, and it is called a \emph{crimping datum}. The space of crimping data will be denoted by ${\rm Cr}_{p}$: it is of the form $\Gm \times \bA^{h-1}$.

From the partial normalization, we can relate the automorphisms of $(\widetilde{C},\widetilde{p}_1,\widetilde{p}_2)$ to the automorphisms of $(C,p)$: as in the even case, there is a natural injective group homomorphism $\Aut(\nu)^{\circ}:\Aut(C,p)^{\circ} \hookrightarrow \Aut(\widetilde{C},\widetilde{p}_1,\widetilde{p}_2)^{\circ}$. If we fix a crimping datum $o$ (and thus an induced curve $C$), we have that $\Aut(C,p)^{\circ}$ coincides with the elements in $\Aut(\widetilde{C},\widetilde{p}_1,\widetilde{p}_2)^{\circ}$ which commute with the crimping datum. This can be upgraded to the following statement:
\begin{itemize}
    \item[$(\star\star)$] the moduli stack of curves $C$ with an $A_{2h-1}$-singularity $p$ and fixed partial normalization $(\widetilde{C},\widetilde{p}_1,\widetilde{p}_2)$ is isomorphic to $[{\rm Cr}_p/\Aut(\widetilde{C},\widetilde{p}_1,\widetilde{p}_2)^{\circ}]$, where $(\widetilde{C},\widetilde{p}_1,\widetilde{p}_2)$ is the partial normalization of $C$ at $p$.
\end{itemize} 
We can again understand the possible automorphisms of $(C,p)$. Let $\widetilde{\Gamma}$ be the normalization of the minimal subcurve of $\widetilde{C}$ containing $\widetilde{p}_1$ and $\widetilde{p}_2$ and suppose that there is a cocharacter $\Gm \rightarrow \Aut(\widetilde{C},\widetilde{p}_1,\widetilde{p}_2)^{\circ}$ which induces a non-trivial cocharacter of $\Aut(\widetilde{\Gamma},\widetilde{p}_1,\widetilde{p}_2)^{\circ}$. Then $\Gm$ acts non-trivially on the factor $\Gm$ of ${\rm Cr}_p$ if and only if the $\Gm$-action does not descend to $C$. As in the even case, a crimping datum is called $\Gm$-equivariant if the cocharacter action descends to $C$. Notice that, contrary to the even case, the $\Gm$-equivariant crimping datum is not unique. Nevertheless, different crimping data may give the same curve (see for instance \cite[Lemma~3.37]{GMSArI}). 

\subsection{Hyperelliptic \texorpdfstring{$A_r$}{A\_r}-stable curves}\label{sub:hyper-A-stable}

After introducing the crimping datum for $A$-singularities, we shift our focus to hyperelliptic $A_r$-stable curves. Although seemingly unrelated, there exists a strong connection between $A$-singularities and (honestly) hyperelliptic $A_r$-stable curves (see \Cref{sub:local-geometry} for a more detailed discussion).

\begin{definition}
    A hyperelliptic $n$-pointed curve is a tuple $(C,p_1,\dots,p_n,\sigma)$ where $(C,p_1,\dots,p_n)$ is an $n$-pointed curve and $\sigma$ is a hyperelliptic involution, i.e. a two-torsion element in $\Aut(C)$ such that 
    \begin{itemize}
        \item the scheme of fixed points of $\sigma$ is finite;
        \item the geometric quotient $Z:=C/\sigma$ is a nodal curve of genus $0$.
    \end{itemize}
    We say that a point $q \in C$ is a Weierstrass (possibly singular) point if it is fixed by the involution. If $Z$ is irreducible, then we say that $C$ is an $n$-pointed \emph{honestly hyperelliptic} $A_r$-stable curve, namely a cyclic cover of $\bP^1$ of degree $2$.
\end{definition}

The moduli stack parametrizing $n$-pointed hyperelliptic $A_r$-stable curves will be denoted by $\cH_{g,n}^r$. In \cite{Per2}, it is proven that it is a closed substack of $\cM_{g,n}^r$ and coincides with the closure of the locus of smooth hyperelliptic curves. The substack of $\cH_{g,n}^r$ classifying honestly hyperelliptic curves $\cH_{g,n}^{r,\circ}$ is a dense open inside $\cH_{g,n}^r$. It is important to keep in mind the following statement: 
\begin{itemize}
    \item[(h)] every honestly hyperelliptic curve can be identified with the branching divisor of length $2g+2$ in $\bP^1$ associated with the cyclic cover of degree $2$.
\end{itemize}
A point of length $h$ in this divisor corresponds to an $A_{h-1}$-singularity in the (honestly) hyperelliptic curve. This simple observation helps us describe stacks of pointed cyclic covers of $\bP^1$. 

Let us start by considering the case $n=1$. We introduce a stratification of $\cH_{g,1}^{r,\circ}$. We denote by $\cH_{g,w}^{r,\circ} \subset \cH_{g,1}^{r,\circ}$ the closed substack parametrizing pairs $(C,p,\sigma)$ such that $p$ is a smooth Weierstrass point, i.e. a point fixed by the involution. We denote by $\cH_{g,g_1^2}^{r,\circ}$ its open complement, which can be identified with the moduli stack parametrizing $A_r$-stable honestly hyperelliptic curves of genus $g$ with two smooth and distinct markings which are exchanged by the involution. Indeed, if the marked point is not Weierstrass, then its orbit under the involution contains exactly one more point.
\begin{proposition}\label{prop:w-global-descr}
    If $r\geq 2g$, the moduli stack $\cH_{g,w}^{r,\circ}$ is isomorphic to the quotient $[\bA^{2g-1}/\Gm]$, where $\Gm$ acts on $\bA^{2g-1}$ with positive weights. 
\end{proposition}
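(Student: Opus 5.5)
The plan is to use the description (h) of an honestly hyperelliptic curve by its branch divisor, and to rigidify the marked Weierstrass point by moving it to $\infty \in \bP^1$. Let $(C,p,\sigma)$ be a geometric point of $\cH_{g,w}^{r,\circ}$ and let $\pi\colon C\to Z\simeq \bP^1$ be the quotient by $\sigma$. Since $p$ is a smooth Weierstrass point, $\pi(p)$ is a point of multiplicity exactly one of the branch divisor $B$, which has length $2g+2$.

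\textbf{Step 1: normal form for a single curve.} Choose a coordinate on $Z$ with $\pi(p)=\infty$. Then $C$ is given by $y^2=f(x)$ with $\deg f = 2g+1$. The hypothesis $r\geq 2g$ guarantees that every effective divisor of length $2g+1$ on $\bA^1$ is allowed, since its points of multiplicity at most $2g+1$ give $A_{\leq 2g}$-singularities. Conversely, any such $f$ produces an honestly hyperelliptic $A_r$-stable curve with a smooth Weierstrass point over $\infty$. Ampleness of $\omega_C(p)$ is automatic for $g\geq 1$ from the degree.

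We then use the group $\Aut(\bP^1,\infty)=\Gm\ltimes\Ga$ together with the scaling of $y$:
\begin{itemize}
\item[(a)] rescale $y$ (and $x$) to make $f$ monic;
\item[(b)] translate $x\mapsto x+b$, which is possible since $\operatorname{char}=0$, to kill the subleading coefficient.
\end{itemize}
The normalized polynomials $f=x^{2g+1}+\sum_i a_i x^i$ form an affine space $V$. The residual symmetry is the torus $\lambda\cdot(x,y)=(\lambda^2x,\lambda^{2g+1}y)$, which acts on the coefficient $a_i$ with weight $2(2g+1-i)>0$. This gives the claimed positivity of the weights.

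\textbf{Step 2: globalize over an arbitrary base $S$.} For a family $(C\to S,p,\sigma)$, the quotient $C/\sigma$ is a $\bP^1$-bundle with a section $\pi(p)$. The data $\pi_*\cO_C=\cO\oplus L^{-1}$ and the branch section of $L^{2}$ let me build a $\Gm$-torsor of normalized coordinates, namely the choices of trivializations compatible with (a) and (b). This gives a morphism $\cH_{g,w}^{r,\circ}\to[V/\Gm]$. The inverse sends $f\in V$ to the curve $y^2=f$ closed up over $\infty$ (the weighted projective completion), marked at the point over $\infty$.

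I then check that the two functors are quasi-inverse and that the residual stabilizers match. Here the subgroup $\mu_2\subset\Gm$ acting by $y\mapsto -y$ corresponds to the hyperelliptic involution, which is part of the data, so no extra quotient appears.

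\textbf{Main obstacle: matching the dimension $2g-1$ in the statement.} The step I expect to be delicate is this bookkeeping. The naive count above, with $2g+1$ coefficients, minus one for monicity, minus one for translation, must be compared with the dimension of $\cH_{g,w}^{r,\circ}$ computed from the normalization data. The stack $\cH_{g,w}^{r,\circ}$ has dimension $\dim\cH_g=2g-1$ as a stack, and the presentation has to be chosen so that $[V/\Gm]$ has exactly this dimension.

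So the first thing I will do is recompute carefully which coefficients survive the normalizations (a) and (b), including whether the scaling in (a) is partly absorbed by the torus. The aim is to identify $V$ with the $\bA^{2g-1}$ of the statement. Only then will I run the globalization of Step 2 with that $V$.
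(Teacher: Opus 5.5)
\textbf{The target exponent.} Your normal-form construction is the natural proof: put $\pi(p)$ at $\infty$, make $f$ monic, center it, quotient by the residual torus $\lambda\cdot(x,y)=(\lambda^2x,\lambda^{2g+1}y)$, then globalize via the torsor of normalized coordinates. The paper gives no argument of its own here; it only recalls the result from the companion paper. The genuine problem is the step you flag as the main obstacle. You plan to identify $V$ with $\bA^{2g-1}$, and that cannot succeed.

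Your count ``$2g+1$ coefficients, minus one for monicity, minus one for translation'' is off by one, because a polynomial of degree $2g+1$ has $2g+2$ coefficients. The monic centered polynomials $x^{2g+1}+\sum_{i=0}^{2g-1}a_ix^i$ therefore form $V\simeq\bA^{2g}$. Your own weight list $2(2g+1-i)$, $0\le i\le 2g-1$, already has $2g$ entries, namely $4,6,\dots,4g+2$.

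Nor can the scaling in (a) be ``partly absorbed'' to cut one more coordinate. The coordinate changes $(x,y)\mapsto(ax+b,cy)$ preserving monic centered forms are exactly those with $b=0$ and $c^2=a^{2g+1}$. This group is $\Gm$ via $\lambda\mapsto(\lambda^2,\lambda^{2g+1})$, which uses $\gcd(2,2g+1)=1$; you should include that check. This whole $\Gm$ fixes the origin $y^2=x^{2g+1}$, so no further slice exists. Your dimension bookkeeping in fact settles the question: $\dim[\bA^m/\Gm]=m-1$, so $\dim\cH_{g,w}^{r,\circ}=2g-1$ forces $m=2g$, whereas $[\bA^{2g-1}/\Gm]$ would have dimension $2g-2$.

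\textbf{What you should conclude.} The exponent in the printed statement is a misprint for $2g$. Your argument should conclude $\cH_{g,w}^{r,\circ}\simeq[\bA^{2g}/\Gm]$ with weights $4,6,\dots,4g+2$. Two independent sanity checks confirm this:
\begin{itemize}
\item For $g=1$ one gets $\cM_{1,1}^2\simeq[\bA^2/\Gm]$ with weights $4,6$, from the Weierstrass form $y^2=x^3+ax+b$; this is not a quotient of $\bA^1$.
\item In \Cref{sub:local-geometry} the paper identifies the tangent space of $\cH_{h,w}^{2h,\circ}$ at the even atom with $T^1_p$ of the $A_{2h}$-singularity. That space is $k[x]/(x^{2h})$, of dimension $2h$.
\end{itemize}
With the target corrected, the rest of your plan goes through as sketched. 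That includes the torsor over the $\bP^1$-bundle with a section, the inverse via the weighted completion, and $\mu_2=\langle\sigma\rangle$ as generic stabilizer. The later uses of this proposition in the paper rely only on the weights being positive, with the atom as the unique closed point.
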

The $0$-section of the $\Gm$-representation corresponds to the unique closed point of $\cH_{g,w}^{r,\circ}$, which, by construction, is the curve obtained as a cyclic cover of degree $2$ of $\bP^1$ with the branching divisor of length $1$ at $0$ and $2g+1$ at $\infty$: by the discussion in \Cref{sub:equivariant-geometry-A-singularities}, this is the curve obtained by pinching $\bP^1$ at $\infty$ using a $\Gm$-equivariant crimping datum, and the connected component of the identity of the automorphism group of such a curve is a one-dimensional torus.

For the non-Weierstrass case, we have a similar result.
\begin{proposition}\label{prop:g12-global-descr}
    If $r\geq 2g+1$, the moduli stack $\cH_{g,g_1^2}^{r,\circ}$ is isomorphic to the quotient $[\bA^{2g+1}/\Gm]$, where $\Gm$ acts on $\bA^{2g+1}$ with positive weights. 
\end{proposition}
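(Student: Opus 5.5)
We identify honestly hyperelliptic curves with their branch divisors, as in (h). Let $(C,p_1,p_2,\sigma)$ be an honestly hyperelliptic $A_r$-stable curve of genus $g$ with two smooth markings exchanged by $\sigma$. Here $r\geq 2g+1$, so every branch divisor of degree $2g+2$ is allowed. The quotient $\pi\colon C\to\bP^1$ sends $p_1,p_2$ to a single point $q=\pi(p_1)$. Since $p_1$ is not Weierstrass, $q$ does not lie in the branch divisor $B$. We normalize $q=\infty$, so $B\subset\bA^1$ is a divisor of degree $2g+2$. The residual automorphisms of $\bP^1$ fixing $\infty$ form the affine group $\Ga\rtimes\Gm$. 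Translations act freely, so we kill them by requiring the coefficient of $x^{2g+1}$ in the defining polynomial $f(x)$ of $B$ to vanish. This leaves monic polynomials
\[
f(x)=x^{2g+2}+a_{2g}x^{2g}+\dots+a_0 ,
\]
which form an affine space $\bA^{2g+1}$. The remaining $\Gm$ acts by $x\mapsto\lambda x$, so $a_i$ has weight $2g+2-i>0$. This gives the candidate isomorphism $\cH_{g,g_1^2}^{r,\circ}\simeq[\bA^{2g+1}/\Gm]$.

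\textbf{Step 1: the family over $\bA^{2g+1}$.} The curve is $y^2=f(x)$, compactified over $\infty$. Because $\deg f$ is even, the cover is unramified over $\infty$ and has two points there, given by $y/x^{g+1}=\pm1$. Monicity of $f$ is exactly what lets us label these two points as $p_1$ and $p_2$: the sign of $y/x^{g+1}$ tells us which is which. This also explains why the group is $\Gm$ and not $\Gm\times\mu_2$. The ordered markings absorb the hyperelliptic involution, and the only scalings of $y$ compatible with a monic $f$ and fixed labels are the ones determined by $\lambda$.

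\textbf{Step 2: the map is an isomorphism.} We check this on $S$-points by the usual torsor argument. Given a family over $S$:
\begin{enumerate}
\item The quotient $C/\sigma$ is a $\bP^1$-bundle with a section $q$.
\item The double cover is given by a line bundle $L$ and a section of $L^{\otimes 2}$ vanishing on the branch divisor.
\item Étale locally we can trivialize the $\bP^1$-bundle with $q=\infty$ and choose the coordinate $x$. The choices form a torsor under $\Ga\rtimes\Gm$ (up to the $\mu_2$ absorbed by the markings). Fixing the translation by the vanishing of the subleading coefficient reduces this to a $\Gm$-torsor.
\item The choice of $y$ is then fixed up to the $\Gm$-twist by requiring $f$ monic and $p_1$ to be the point with $y/x^{g+1}=1$.
\end{enumerate}
This gives inverse functors between $\cH_{g,g_1^2}^{r,\circ}$ and $[\bA^{2g+1}/\Gm]$.

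\textbf{Step 3: stability.} We must check that each such curve is $A_r$-stable and that all $A_r$-singularities occur. Singularities come from multiple roots of $f$ and have type at most $A_{2g+1}$, so the hypothesis $r\geq 2g+1$ allows all of them. Ampleness of $\omega_C(p_1+p_2)$ holds because $\omega_C=\pi^*\cO(g-1)$. For $g\geq 1$ this pulls back a line bundle of degree $\geq 0$, and adding the two markings makes the total bundle ample. For $g=0$ we get degree $-2+2$, which is an edge case that we exclude or handle separately.

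\textbf{Main obstacle.} The delicate part is getting the stabilizer bookkeeping exactly right. We must confirm that the $\mu_{2g+2}$ and $\mu_2$ factors combine into the single $\Gm$ with the stated weights, with no extra finite group left over. The same bookkeeping must work in families, where the $\Gm$ combines the scalings of $x$ and of $y$. The zero section is $y^2=x^{2g+2}$. Its automorphism group $\Gm$ acts by $(x,y)\mapsto(\lambda x,\lambda^{g+1}y)$, which fixes both points at $\infty$. This matches the description of the unique closed point, in analogy with \Cref{prop:w-global-descr}.
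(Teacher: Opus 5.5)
Your argument is correct and is essentially the construction behind the paper's statement. The paper only quotes the result from the first paper of the series, but it identifies the closed point through the branch-divisor description (h), exactly as your normal form $y^2=x^{2g+2}$ does, with the roles of $0$ and $\infty$ swapped. In your route, the labelling of the two points over the non-branch point absorbs the hyperelliptic $\mu_2$, which leaves $\Gm$ acting by $(x,y)\mapsto(\lambda x,\lambda^{g+1}y)$ with weights $2,\dots,2g+2$.

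Your flagged ``main obstacle'' is already settled by your own computation: monicity forces $c=\pm\lambda^{g+1}$, and the labelling forces the plus sign. The only thing to add is for $g=1$. There, ampleness of $\cO_C(p_1+p_2)$ on a reducible $C$ (when $f$ is a square) uses the fact that $p_1$ and $p_2=\sigma(p_1)$ lie on different components. This is also why, for $g=1$, one must use the two-pointed description rather than the open substack of $\cH_{g,1}^{r,\circ}$.
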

The $0$-section of the $\Gm$-representation corresponds to the unique closed point of $\cH_{g,g_1^2}^{r,\circ}$, which, by construction, is the curve obtained as a cyclic cover of degree $2$ of $\bP^1$ with branching divisor of length $2g+2$ at $\infty$: by the discussion in \Cref{sub:equivariant-geometry-A-singularities}, this is the curve obtained by gluing two copies of $\bP^1$ at $\infty$ using a $\Gm$-equivariant crimping datum (although the crimping datum is not unique, the curve obtained in this case is), and the connected component of the identity of the automorphism group of such a curve is a one-dimensional torus.

Finally, we briefly recall the case $n=0$. This is more complicated: it was shown in \cite[Theorem 3.44]{GMSArI} that, to have a good moduli space, one needs to avoid cyclic covers of $\bP^1$ with branch divisors having a single point of length $>g+1$. In the search for a maximal open substack with separated good moduli space, this forces us to remove what we denote as $A_h$-self-attached hyperelliptic curves of genus $g<h$, as explained in \Cref{lem:self-attached-condition}. Indeed, the problem comes from the cyclic cover of $\bP^1$ whose branch divisor is concentrated at one point: the curve has a non-reductive automorphism group, thus it needs to be removed. With it, we are forced to remove some of its basins of attraction, the self-attached hyperelliptic curves. 

\subsection{Pointwise geometry}\label{sub:pointwise geometry}

In this section, we recall the structure of the curves that have positive-dimensional stabilizers. They will play a crucial role in the rest of the paper. We start by recalling the following definition.
\begin{definition}\label{def:hyp-even-sing}
    An $A_m$-singularity $p$ of an $n$-pointed $A_r$-stable curve $(C,p_1,\dots,p_n)$ with $m\geq 2$ is called \emph{hyperelliptic} if there exists a cocharacter $\Gm \rightarrow \Aut(C,p_1,\dots,p_n)^{\circ}$ such that $\Gm$ acts non-trivially on $T_pC$.
\end{definition}

The reason why we call the singularity \emph{hyperelliptic} will be explained in what follows. We will start by describing the minimal $A_r$-stable curves that contribute to the dimension of the stabilizer group, which we call an \emph{atom}. The term \emph{atom} was first used in \cite{AlpFedSmyWyck}. 

Let $r\geq 2h$ with $h$ a positive integer. Consider $(\bP^1,0,\infty)$ and define $(C,q)$ to be the $1$-pointed $A_r$-stable curve obtained by crimping $\bP^1$ at $\infty$ using the $\Gm$-equivariant crimping datum forming an $A_{2h}$-singularity (so that the action of $\Gm\simeq \Aut(\bP^1,0,\infty)$ descends to $C$), and denote by $p$ the $A_{2h}$-singularity and by $q$ the image of $0$. Notice that the singularity $p$ is by definition hyperelliptic. As a motivation for such a definition, this curve coincides with the cyclic cover of degree $2$ of $\bP^1$ described by the branch divisor with a point of length one (corresponding to the Weierstrass marking) and a point of length $2g+1$ (see \cite[Lemma 4.6]{GMSArI}). 
\begin{definition}[Even atom]
    \label{def:even-atoms}\index{even atom}
    The curve constructed above will be called the $1$-pointed \emph{even atom} of genus $h$.
\end{definition}
\begin{figure}[H]
        \centering
        \includegraphics[width=0.4\textwidth]{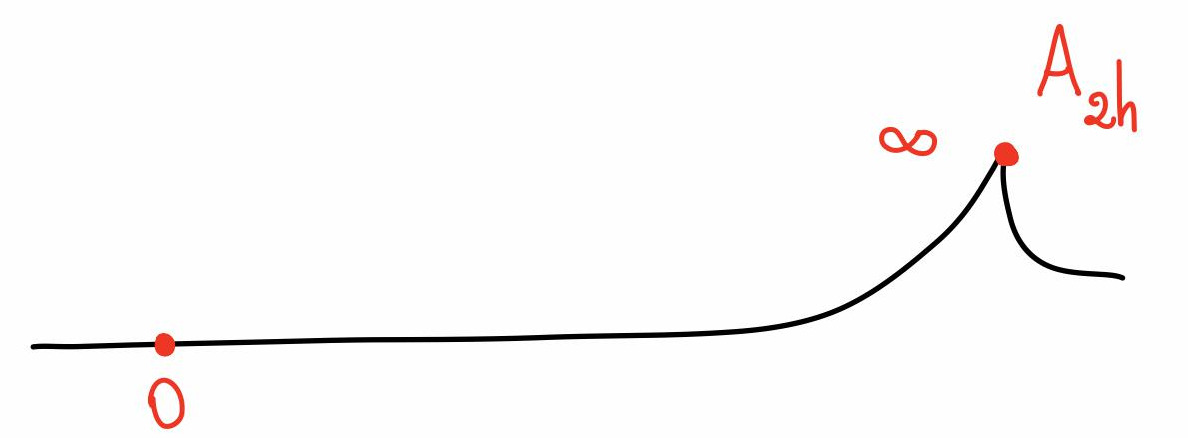}
        \caption{Even atom}
        \label{fig:even-atom}
\end{figure}

We deal now with the odd case. Consider two copies of $(\bP^1,0,\infty)$ and define $(C,q_1,q_2)$ to be the $2$-pointed $A_r$-stable curve obtained by identifying the two infinitesimal thickenings of length $h+1$ of the two copies of $\bP^1$ at $\infty$ using a $\Gm$-equivariant crimping datum forming an $A_{2h+1}$-singularity (so that the action of $\Gm\simeq \Aut(\bP^1,0,\infty)$ descends to $C$), and denote by $p$ the $A_{2h+1}$-singularity and by $q_1,q_2$ the images of the two $0$-sections of the two copies of $\bP^1$. Thanks to \cite[Lemma~3.37]{GMSArI}, we have that such a curve is unique up to isomorphism. Notice that the singularity $p$ is by definition hyperelliptic. As a motivation for such a definition, this curve coincides with the cyclic cover of degree $2$ of $\bP^1$ described by the branch divisor with a point of length $2g+2$ (see \cite[Lemma 4.9]{GMSArI}). 

\begin{definition}[Odd atom]\index{Odd atom}
    \label{def:odd-atoms}
    The curve constructed above will be called the $2$-pointed \emph{odd atom} of genus $h$. 
\end{definition}
\begin{figure}[H]
        \caption{}
        \centering
        \includegraphics[width=0.4\textwidth]{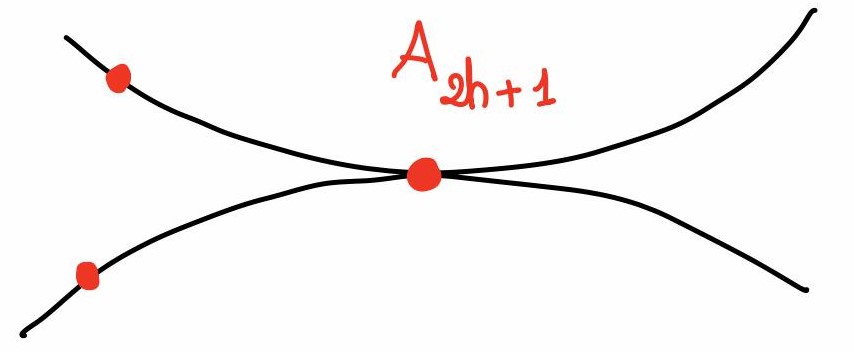}
        \label{fig:odd-atom}
    \end{figure}

\begin{remark}
    Both in the even and odd cases, we can similarly define a $1$-pointed atom or a $0$-pointed atom. We leave the details on how to do this to the interested reader; see \cite[Section 4.1]{GMSArI}.
\end{remark}
Before recalling the classification of positive-dimensional stabilizer curves in $\cM_{g,n}^r$, we recall some classical notions related to the combinatorics of curves.

\begin{definition}[Subcurve]\index{subcurve}
    Let $C$ be a curve over $k$. A \emph{subcurve} $\Gamma$ of $C$ is a proper, reduced, one-dimensional subscheme of $C$, i.e. the union of some irreducible components of $C$ with the reduced scheme structure. 
    
    Let $(C,p_1,\dots,p_n)$ be an $n$-pointed $A$-prestable curve over an algebraically closed field $k$ and let $\Gamma\subset C$ be a subcurve. We denote by $C-\Gamma$ the subcurve of $C$ induced by the closure of $C\setminus \Gamma$ in $C$. Moreover, we denote by $I_{\Gamma}$ the set of points $\Gamma \cap (C-\Gamma)$ of $\Gamma$ and by $P_{\Gamma}$ the subset of the markings $\{p_1\dots,p_n\}$ that lie in the subcurve $\Gamma$. Notice that every point in $I_{\Gamma}$ is smooth in $\Gamma$ because there are only $A$-singularities. 
\end{definition}

\begin{definition}[Tails and bridges]\index{tails}\index{bridges}
     Let $(C,p_1,\dots,p_n)$ be an $n$-pointed $A$-prestable curve over an algebraically closed field $k$ and let $\Gamma\subset C$ be a connected subcurve. We say that $(\Gamma, I_{\Gamma}\cup P_{\Gamma})$ is a \emph{tail} if $\vert I_{\Gamma}\cup P_{\Gamma} \vert =1$, whereas we say that it is a \emph{bridge} if $\vert I_{\Gamma}\cup P_{\Gamma} \vert =2$.   We say that a bridge $\Gamma$ is separating if the curve $C-\Gamma$ is disconnected; otherwise, we say it is non-separating.
\end{definition}

Notice that in the previous definition of a subcurve, we do not require $\Gamma$ to be properly contained in $C$. Therefore, given a $1$-pointed curve $(C,p)$, we have that $(C,p)$ itself is a tail. The same is true for bridges and $2$-pointed curves.

\begin{definition}\index{Outer singularity} \index{Inner singularity} \index{Separating singularity} \index{Lonely singularity}
    We say that an $A$-singularity $p$ of $C$ is:
    \begin{itemize}
        \item[(a)]  \emph{outer} if it lies in the intersection of (exactly) two irreducible components of $C$;
        \item[(b)] \emph{lonely} if it is outer and it is the unique (set-theoretic) intersection of the two irreducible components;
        \item[(c)] \emph{separating} if the partial normalization at $p$ is disconnected;
        \item[(d)] \emph{inner} if it is not outer.
    \end{itemize}
    Clearly, $(c)\implies (b)\implies (a)$ and $p$ has to be an odd $A$-singularity, i.e. an $A_{2i+1}$-singularity for some $i\geq 0$.
\end{definition}

It will be useful to study degenerations of $A$-prestable curves, specifically when they are generically outer. The following proposition will play a fundamental role in the paper, thus we report it here.

\begin{proposition}[{\cite[Prop.~2.10]{AlpFedSmyWyck}}]\label{prop:alp-fund}
Let $C\rightarrow \spec R$ be a family of $A$-prestable curves over a dvr $R$ and let $p$ be a section of the family. Suppose that the (geometric) generic fiber $p_{Q}$ is an outer $A_{2k_1+1}$-singularity. Then 
\begin{itemize}
    \item the (geometric) special fiber $p_k$ is an outer $A_{2m+1}$-singularity;
    \item each singularity of the (geometric) generic curve $C_Q$ that approaches $p_k$ must be outer as well and must lie on the same two irreducible components of $C_Q$ as $p_Q$;
    \item the collection of such (generic) singularities approaching $p_k$ is necessarily of the form\newline $\{A_{2k_1+1},\dots, A_{2k_s+1}\}$
    where $m=s-1+\sum_{i=1}^sk_i$;
\end{itemize}
\end{proposition}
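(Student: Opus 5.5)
The plan is to argue étale-locally around the section $p$, where an $A$-singularity has a versal deformation that we can write down explicitly. After a finite extension of $R$ (harmless, since every conclusion concerns geometric fibers), we may assume all singular points of $C_Q$ specializing to $p_k$ are rational sections. The completed local ring of the total space at $p_k$ is then $R[[x,y]]/(y^2-f(x))$, with $f$ a monic-up-to-unit polynomial of degree $m'+1$ over $R$ reducing to $x^{m'+1}$, where $p_k$ is an $A_{m'}$-singularity. This uses Weierstrass preparation together with the fact that in characteristic $0$ the miniversal deformation of $y^2=x^{m'+1}$ is $y^2=x^{m'+1}+a_{m'-1}x^{m'-1}+\dots+a_0$. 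Over $Q$, the singular points of the fiber near $p_k$ correspond to the multiple roots of $f_Q$, and a root of multiplicity $e$ gives an $A_{e-1}$-singularity.

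\textbf{Branches at $p_k$.} Since $p_Q$ is outer, $y^2=f_Q(x)$ is reducible near $p_Q$ in a way that globalizes: the two branches at $p_Q$ lie on two distinct irreducible components of $C_Q$. Their closures in $C$ restrict, in the local ring at $p_k$, to two distinct components of $\operatorname{Spec} R[[x,y]]/(y^2-f)$. Hence $y^2-f$ factors over $R[[x]]$, so $f=g^2\cdot u$ with $u$ a unit square. After absorbing $u$, we get $f=g^2$ with $g\in R[x]$ of degree $(m'+1)/2$ reducing to $x^{(m'+1)/2}$. This gives the first bullet: $m'+1$ is even, so $p_k$ is an $A_{2m+1}$-singularity with $m+1=\deg g$. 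Moreover $p_k$ lies on the two components $y=\pm g$, and these are distinct, so $p_k$ is outer.

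\textbf{Generic singularities approaching $p_k$.} Over $\overline{Q}$ write $g=\prod_{i=1}^s (x-\alpha_i)^{k_i+1}$ with distinct roots $\alpha_i$, all specializing to $0$. The singular points of $C_Q$ near $p_k$ are $x=\alpha_i$, $y=0$, and each is $y^2=(x-\alpha_i)^{2k_i+2}\cdot(\text{unit})$, i.e.\ an $A_{2k_i+1}$-singularity. Its two branches are $y=\pm g$, which are precisely the two local components through $p_Q$. These local components are the germs of the same two global irreducible components of $C_Q$: the closures above pass through $p_k$, and the decomposition of the germ at $p_k$ into the two pieces $y=\pm g$ is compatible with the global components. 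So every such singularity is outer and lies on the same two components. The degree count $\sum_i (k_i+1)=\deg g=m+1$ gives $m=s-1+\sum_i k_i$, which is the third bullet; $p_Q$ itself is one of the $\alpha_i$.

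\textbf{Main obstacle.} The delicate step is the factorization step: passing from ``outer at the generic point'' to the factorization $f=g^2$ over the complete local ring $R[[x]]$, rather than only over $Q$. I would take the normalization of $C$ and note that the two generic components through $p_Q$ have closures $Z_1,Z_2$ in $C$, both flat over $R$. Each is then locally cut out at $p_k$ by a height-one prime of $R[[x,y]]/(y^2-f)$, so the ideal $(y^2-f)$ in $R[[x,y]]$ decomposes into two primes. Since $R[[x,y]]$ is a UFD, $y^2-f$ factors into two factors, each linear in $y$, which forces the form $(y-g)(y+g)$ and hence $f=g^2$.
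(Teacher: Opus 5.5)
The paper contains no proof of this statement. It is quoted from \cite[Prop.~2.10]{AlpFedSmyWyck} and used as a black box, so there is no internal argument to compare yours with. Your route is sound: write the completed local ring at $p_k$ as $\widehat R[[x,y]]/(y^2-f)$ via the miniversal deformation, show that outerness of $p_Q$ forces $f=g^2$, and read everything off from the roots of $g$. It has the merit of producing all three bullets from one explicit local model. The parity of $p_k$ falls out of $\bar g^{\,2}=x^{m'+1}$, the singularities approaching $p_k$ are exactly the roots of $g$, and the count $m+1=\deg g=\sum_i(k_i+1)$ is immediate.

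A few steps need one more line each.

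\begin{enumerate}
\item Work over $\widehat R$ rather than $R$ throughout.
\item In the factorization step, $Z_1,Z_2$ are not ``height-one primes of the quotient''. Since $C$ is $R$-flat, every irreducible component dominates $\spec R$, so $Z_1,Z_2$ are irreducible components of $C$. They give distinct minimal primes of $\cO_{C,p_k}$.
\item Because $\cO_{C,p_k}\to\widehat{\cO}_{C,p_k}$ is faithfully flat and satisfies going-down, every minimal prime of the completion contracts to a minimal prime of $\cO_{C,p_k}$, and every minimal prime of $\cO_{C,p_k}$ is hit. Hence $y^2-f$ has at least two non-associate prime factors in the UFD $\widehat R[[x,y]]$.
\item Weierstrass preparation in $y$, with its uniqueness, then gives $y^2-f=(y-g)(y+g)$ on the nose. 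Preparation in $x$ makes $g$ a polynomial reducing to $\pm x^{(m'+1)/2}$.
\item The same counting shows that exactly $Z_1,Z_2$ pass through $p_k$, and that going-down identifies their completions with $V(y-g)$ and $V(y+g)$ respectively. This is precisely what your phrase ``compatible with the global components'' in the second bullet needs.
\end{enumerate}

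For the first bullet there is one more missing step. Distinctness of the two germs $y=\pm g$ of the total space does not by itself make $p_k$ outer in $C_k$. You also need the branches $y=\pm\bar g$ of $C_k$ at $p_k$ to lie on different irreducible components of $C_k$. This follows because $(Z_1)_k$ and $(Z_2)_k$ share no component. At the generic point $\xi$ of any component of the reduced fiber $C_k$, the maximal ideal of $\cO_{C,\xi}$ is generated by the uniformizer, which is a nonzerodivisor by flatness. So $\cO_{C,\xi}$ is a DVR, and $\xi$ lies on only one irreducible component of $C$.

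With these additions your argument is a complete, self-contained proof of the cited proposition.
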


\begin{remark}
    With the notation of \Cref{prop:alp-fund}, if $p_Q$ is lonely (respectively separating), then $p_k$ will also be lonely (respectively separating). 
\end{remark}

We are finally ready to recall the classification of positive-dimensional stabilizer curves in $\cM_{g,n}^r$. It is crucial that the reader becomes familiar with all the different cases, as they are going to appear repeatedly in the rest of the paper. 

First of all, notice that if $(C,p_1,\dots,p_n)$ is an $n$-pointed $A_r$-stable curve and $q$ is a node, then the pointed partial normalization does not change the connected component of the identity of the automorphism group: this is due to the fact that in the push-out construction of the node, there are no infinitesimal thickenings, and thus no balancing condition is needed to show that the automorphisms descend through the normalization. Therefore, given an $n$-pointed $A_r$-stable curve $(C,p_1,\dots,p_n)$, we can normalize along the outer nodes to get a disjoint union of connected components $(\Gamma_i, P_{\Gamma_i} \cup I_{\Gamma_i})$ such that
$$ \Aut(C,p_1,\dots,p_n)^{\circ} \simeq \prod_{i \in I} \Aut(\Gamma_i, P_{\Gamma_i} \cup I_{\Gamma_i})^{\circ}. $$

The subcurves $\Gamma_i$ that have positive-dimensional automorphism groups are the ``molecules'' we can construct using our atoms and $A$-singularities. The combinatorics of the molecules is not too complicated, depending on the special points lying in $\Gamma_i$. For instance, if $r\leq 2g$, \cite[Proposition 4.23]{GMSArI} gives us that to have a non-trivial positive-dimensional automorphism group, $\Gamma_i$ has to verify $n_i:=|P_{\Gamma_i} \cup I_{\Gamma_i}|\leq 2$. Moreover, the same result tells us that $\Aut(\Gamma_i,  P_{\Gamma_i} \cup I_{\Gamma_i})^{\circ}$ is a one-dimensional torus (if not trivial). Notice that this implies that, except when $r=2g+1$, the automorphism group of an $A_r$-stable curve is an extension of a finite \'etale group by a torus. This is one of the fundamental properties of the stack $\cM_{g,n}^r$, used extensively throughout the 
three papers in the series.

Let us now show the geometry of such subcurves, which will be called \emph{rosaries} of hyperelliptic $A$-singularities: they are a straightforward generalization of \cite[Def.~6.1]{hassett2013log}. Notice that by construction, they do not have outer nodes. Let us start with the $2$-pointed case, namely $n_i=2$. 

Consider a $2$-pointed odd atom and recall that the connected component of the identity of the automorphism group is $\Gm$. We now glue the atom together with a $2$-pointed $\bP^1$ at one of the markings using an odd (worse-than-nodal) $A$-singularity with a $\Gm$-equivariant crimping datum, so that the automorphism group of the $2$-pointed curve obtained is an extension of a finite group by $\Gm$. Although the choice of such a crimping datum is not unique, the curve obtained is. We can continue to attach $2$-pointed $\bP^1$ with $\Gm$-equivariant crimping data to obtain a chain of $2$-pointed $\bP^1$ glued together by worse-than-nodal odd $A$-singularities. 

\begin{definition}
    The curve constructed above will be called a $2$-pointed \emph{rosary} of hyperelliptic $A$-singularities. The length of the rosary is the number of irreducible components. A similar definition holds for $1$-pointed and $0$-pointed rosaries. An $A_r$-stable curve $C$ is said to be a closed rosary of $A$-singularities if there exists a worse-than-nodal odd $A$-singularity $p$ such that the pointed partial normalization of $C$ at $p$ is a $2$-pointed rosary of hyperelliptic $A$-singularities.
\end{definition}

Notice that the singularities of the closed rosary are not necessarily hyperelliptic. Indeed, if they are, the length of the rosary should be even, otherwise the $\Gm$-action of the $2$-pointed rosary does not descend. 

\begin{figure}[H]
        \centering
        \includegraphics[width=0.7\textwidth]{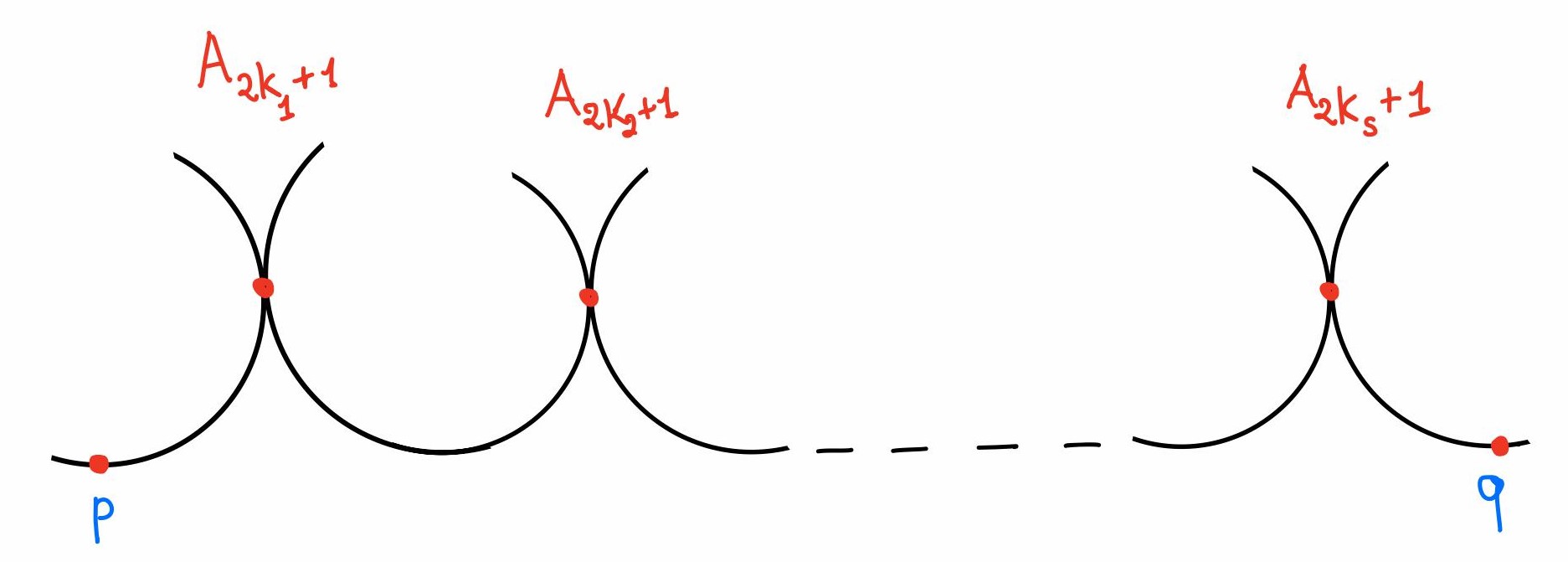}
        \label{fig:attached-rosary}
        \caption{Example of a $2$-pointed rosary of $A$-singularities}
    \end{figure}

Now that we have defined the molecules, it is important that we understand how they lie inside the curve. For instance, suppose $\Gamma \subset C$ is a subcurve such that $P_{\Gamma}=\emptyset$ and $I_{\Gamma}$ consists of two (outer) nodes, and assume $(\Gamma,I_{\Gamma})$ is a $2$-pointed rosary of hyperelliptic $A$-singularities. Then we say that $\Gamma$ is an $A_1/A_1$-attached rosary of (hyperelliptic) $A$-singularities of $C$. 

\begin{definition}
    We say that a subcurve $\Gamma \subset C$ is an $A_{k_1}/A_{k_2}$-attached rosary of $A$-singularities
    \begin{itemize}
        \item[1)] if $k_1$ and $k_2$ are odd, then $P_{\Gamma} = \emptyset$ and $I_{\Gamma}$ consists of an $A_{k_1}$-singularity and an $A_{k_2}$-singularity of $C$ (possibly equal) and $(\Gamma,I_{\Gamma}\cup P_{\Gamma})$ is a (possibly closed) rosary of hyperelliptic $A$-singularities;
        \item[2)] if $k_1$ is even and $k_2$ is odd, then there exists an $A_{k_1}$-singularity $q \in \Gamma$ such that the pointed partial normalization of $(\Gamma, I_{\Gamma} \cup P_{\Gamma}) $ at $q$ is a rosary of $A$-singularities;
        \item[3)] if $k_1$ and $k_2$ are both even, there exists an $A_{k_1}$-singularity $q_1\in \Gamma$ (respectively an $A_{k_2}$-singularity $q_2 \in \Gamma$) such that the pointed normalization of $(\Gamma, I_{\Gamma} \cup P_{\Gamma})$ at $q_1$ and $q_2$ is a $2$-pointed rosary of $A$-singularities. 
    \end{itemize}
    By abuse of notation, we extend the definition to the case $k_1=0$ by defining an $A_0$-singularity as a smooth marking. If the two attaching singularities are hyperelliptic, then we say that $\Gamma$ is an $A_{k_1}/A_{k_2}$-attached rosary of hyperelliptic $A$-singularities.
\end{definition}

\begin{remark}
Notice that
    \begin{itemize}
        \item if $C$ has an $A_{k_1}/A_{k_2}$-attached rosary of hyperelliptic $A$-singularities with $k_1$ and $k_2$ even, then $C=\Gamma$;
        \item the $1$-pointed even atom of genus $h$ is an $A_0/A_{2h}$-attached rosary of hyperelliptic $A$-singularities of length $1$;
        \item the $2$-pointed odd atom of genus $h$ is a $2$-pointed rosary of hyperelliptic $A$-singularities of length $2$.
    \end{itemize}
\end{remark}

We finally recall the following result; for a complete description of stabilizer of points in $\cM_{g,n}^r$ see \cite[Corollary~4.26]{GMSArI}.
\begin{proposition}\label{prop:gm-decomp-aut}
    Let $(C,p_1,\dots,p_n)$ be an $n$-pointed $A_r$-stable curve and suppose $r \leq 2g$. Then $\Aut(C,p_1,\dots,p_n)^{\circ}$ is an extension of a finite group by a torus. Every \ $A_i/A_j$-attached rosary of hyperelliptic $A$-singularities contributes with a copy of $\Gm$, whenever $i, j \in \{0, 1\}$.
\end{proposition}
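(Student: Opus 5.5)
Let $(C,p_1,\dots,p_n)$ be $A_r$-stable with $r\le 2g$, and write $G=\Aut(C,p_1,\dots,p_n)^{\circ}$. The plan is to reduce \Cref{prop:gm-decomp-aut} to the molecular decomposition recalled just before it, and then analyse each molecule separately.

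\emph{Reduction to molecules.} Normalize $C$ along all outer nodes to get connected components $(\Gamma_i,P_{\Gamma_i}\cup I_{\Gamma_i})$. As explained above, a node involves no infinitesimal crimping datum, so every automorphism of the pieces fixing the preimages of the nodes descends to $C$. This gives
$$G\simeq \prod_i \Aut(\Gamma_i,P_{\Gamma_i}\cup I_{\Gamma_i})^{\circ},$$
and it suffices to show that each factor is trivial or a one-dimensional torus.

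\emph{Structure of each factor.} By \cite[Proposition~4.23]{GMSArI}, when $r\le 2g$ each factor is either trivial or $\Gm$, and it is nontrivial only if $n_i=|P_{\Gamma_i}\cup I_{\Gamma_i}|\le 2$. Therefore the whole group is connected, linear, and isomorphic to a product of copies of $\Gm$, so it is a torus. The full automorphism group $\Aut(C,p_1,\dots,p_n)$ is a linear algebraic group whose identity component is this torus. Its component group is finite, since $\Aut$ is of finite type. This yields the extension statement.

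\emph{Identifying the $\Gm$-factors.} We must show that an $A_i/A_j$-attached rosary of hyperelliptic $A$-singularities with $i,j\in\{0,1\}$ is exactly one such factor and carries a $\Gm$. Such a $\Gamma$ has $P_\Gamma\cup I_\Gamma$ consisting of outer nodes or smooth markings, so it is one of the components $\Gamma_i$ above. The rosary is constructed by gluing atoms and $2$-pointed $\bP^1$'s through $\Gm$-equivariant crimping data. By $(\star)$ and $(\star\star)$, the $\Gm$ of the atom extends step by step to the whole chain: at each new singularity the crimping datum is the equivariant one, so the action descends. Hence the factor $\Aut(\Gamma,I_\Gamma\cup P_\Gamma)^{\circ}$ contains, and by the previous step equals, a copy of $\Gm$.

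\emph{Main obstacle.} The delicate point is checking that the attaching points are fixed by this action. Markings and nodes at the ends of the rosary are the images of $0\in\bP^1$, which are $\Gm$-fixed, so the torus acts on the pointed component and glues trivially across the nodes to the rest of $C$. The other subtlety, that different rosaries give independent factors, is automatic from the product decomposition.
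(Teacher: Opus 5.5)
Your argument is correct and follows the same route as the paper. You normalize along outer nodes to get $\Aut(C,p_1,\dots,p_n)^{\circ}\simeq\prod_i\Aut(\Gamma_i,P_{\Gamma_i}\cup I_{\Gamma_i})^{\circ}$, invoke \cite[Proposition~4.23]{GMSArI} to see that each factor is trivial or $\Gm$, and note that a rosary attached at nodes or markings is itself one of these pieces and carries the $\Gm$ built into its equivariant crimping data. The paper does not reprove the statement; it recalls it from \cite[Corollary~4.26]{GMSArI}, and the discussion just before it gives exactly this sketch.
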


If $r=2g+1$, then the automorphism group may be non-reductive, see for instance \cite[Proposition 4.20]{GMSArI}. The only problematic curves appear in the case $n=0$: for instance, the automorphism group of the $0$-pointed odd atom of genus $g$ is isomorphic to $(\Gm\ltimes \Ga)\ltimes C_2$. As explained already in \Cref{sub:hyper-A-stable}, this forces us to remove the curves mentioned in condition $(e_r)$ of \Cref{def:admin}.

\subsection{Local geometry}\label{sub:local-geometry}

In this section, we will discuss the equivariant deformation theory of $A_r$-stable curves, as previously done in \cite[Section 4.3]{GMSArI}. It will provide yet another strong connection between $A$-singularities and honestly hyperelliptic curves, i.e., cyclic covers of degree $2$ of $\bP^1$.

In \Cref{sub:pointwise geometry}, we recalled the classification of curves with positive-dimensional stabilizers. 
We are now going to recall the equivariant deformation theory of these curves. The goal is to explain briefly the classification of isotrivial degenerations (carried out in \cite[Section 4.3]{GMSArI}). Before we explain the classification, we recall the following notation for $\Gm$-representations.

\begin{definition}\label{def:multi-rep-notation}
    Let $V$ be a $\bG_m$-representation. We denote by $V^>$ (respectively $V^{\geq}$, $V^<$, $V^{\leq}$) the subrepresentation of $V$ defined as the sum of the irreducible subrepresentation of $V$ with positive (respectively non negative, negative and non positive) weights. We denote by $V^0$ the subspace of $\bG_m$-invariants vectors inside $V$.
\end{definition}

Let us start with the even case. Let $(C,q)$ be a $1$-pointed even atom of genus $h$, and let $p$ be the $A_{2h}$-singularity. We have the following $\Gm$-equivariant decomposition:
$$ \Def_{(C,q)} \simeq T^1_p \oplus {\rm Cr}_p $$
where $T^1_p$ is the first order deformation space of the singularity $p$ and ${\rm Cr}_p$ is its crimping space, as introduced in \Cref{sub:equivariant-geometry-A-singularities} (see \cite[Proposition~4.30]{GMSArI}). The decomposition is not only $\Gm$-equivariant but also \emph{sign-preserving}, meaning that, if $\Gm=\Aut(C,q)^{\circ}$ acts positively on the tangent space of $q$, then  $T_p^1$ (respectively ${\rm Cr}_p$) coincides with the positively graded (respectively negatively graded) isotypic component of $\Def_{C,q}$. This determines the two basins of attraction of the $1$-pointed even atom: given an isotrivial degeneration with special point $(C,q)$, we have that the generic point lies in $T_p^1\oplus 0$ or in $0\oplus {\rm Cr}_p$. This alternating behavior is the reason why the geometry of isotrivial degenerations is complicated, but not out of reach.

Finally, \cite[Proposition 4.29]{GMSArI} gives the promised bridge between (deformations of) $A_r$-singularities and honestly hyperelliptic curves. Indeed, it states that the tangent space of $\cH_{h,w}^{2h,\circ}$ in $\cM_{2h,1}^{2h}$ at $(C,p)$ is identified with the sub-representation $T_p^1\oplus 0$. Therefore, if we smoothen the singularity $p$ along an isotrivial degeneration, the curve remains a cyclic cover of $\bP^1$ of degree $2$ with a smooth Weierstrass marking. A similar result holds for the $2$-pointed odd atom of genus $h$, with the generic point of the isotrivial degeneration being a cyclic cover of $\bP^1$ where the two markings are exchanged by the involution.

Now that we have dealt with the case of the atoms, we can focus on understanding the deformation theory of the curves with positive-dimensional automorphism group, specifically the subcurves that contribute to its dimension.
Let $C_0$ be a $2$-pointed rosary of hyperelliptic $A$-singularities of length $3$ and denote by $q_1,q_2$ the two singularities of type $A_{2h_1+1}$ and $A_{2h_2+1}$ respectively. Moreover, suppose there is an isotrivial degeneration $C_{\Theta}\rightarrow \Theta$ whose special fiber is isomorphic to $C_0$. Then the combinatorial structure of the generic fiber $C_1$ is controlled by that of $C_0$. In fact, we have a decomposition 
$$\Def_C = T^1_{q_1}\oplus T^1_{q_2}\oplus {\rm Cr}_{q_1}\oplus {\rm Cr}_{q_2}$$
of the deformation space of $C$. Furthermore, the isotrivial degeneration induces a cocharacter $\Gm \rightarrow \Aut(C_0)\simeq \Gm$ which also induces a sign decomposition of $\Def_{C_0}$. Because of the explicit description of the action of $\Aut(C_0)^{\circ}$ on $C_0$, we have that the $\Gm$-action on the tangent space of $q_1$ has opposite signs compared to the one on the tangent space of $q_2$:
$$ (\Def_C)^> =T^1_{q_1}\oplus{\rm Cr}_{q_2} $$
and 
$$ (\Def_C)^<=T^1_{q_2}\oplus {\rm Cr}_{q_1}$$
or vice versa. Therefore, if the isotrivial degeneration deforms one singularity, then the other one needs to be preserved (with a possibly different crimping datum) in the generic fiber $C_1$. Moreover, the deformation of the odd atom of genus $h_2$ in the chain will be an (honestly) hyperelliptic curve of genus $h_2$. 
    \begin{figure}[H]
        \caption{}
        \centering
        \includegraphics[width=0.8\textwidth]{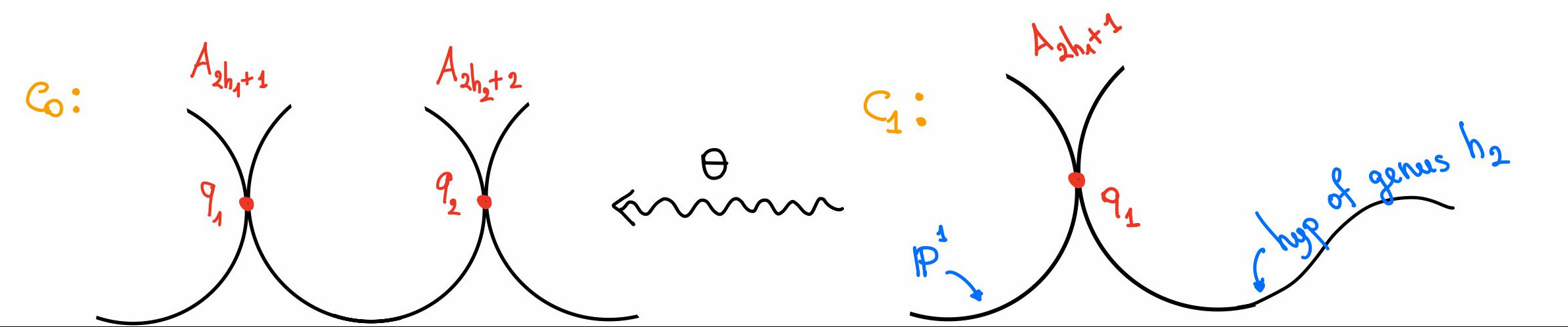}
        \label{fig:deg-hyp-chain}
    \end{figure} 

The same alternating strategy works if the length of the chain is greater than $3$. Suppose that $C_0$ is a rosary of hyperelliptic $A$-singularities $q_1,\dots,q_n$ where $q_i$ and $q_{i+1}$ share one irreducible component for every $i=1,\dots, n-1$. Moreover, suppose that the isotrivial degeneration deforms $q_{i_0}$ where $i_0$ is odd. Then the singularities $q_2,q_4,\dots,q_{2k}$ for $k=\lfloor n/2 \rfloor$ will also appear in the generic fiber $C_1$ of the isotrivial degeneration (possibly with a different crimping datum), while the odd atom of genus $h_{2i+1}$ associated with the singularity $q_{2i+1}$ will deform to an (honestly) hyperelliptic curve of genus $h_{2i+1}$ where the two attaching points are exchanged by the involution. A similar example can be constructed starting with an $A_0/A_{2k}$-attached rosary of hyperelliptic $A$-singularities of length $3$ (see \cite[Example 4.34]{GMSArI}).

Before going further, it is crucial to point out that the following remark is a key point in our study of isotrivial degenerations of $A$-stable curves. Indeed, it gives a significant constraint to the geometry of isotrivial degenerations, and it will play a fundamental role in all that follows.

\begin{remark}\label{rem:alternating-def}
    The discussion above implies that any isotrivial degeneration cannot deform two adjacent $A$-singularities. If one is deformed, then the ones adjacent to it have to be preserved, and the curve between them is honestly hyperelliptic. 
    
    This is true also for nodes, with the extra hypothesis that the $\Gm$-action is trivial on one of the two components meeting at the node. Namely, suppose we are given a curve $C_0$ with a subcurve $\Gamma\subset C_0$ that is an $A_1$-attached even atom (or equivalently an $A_1/A_{2h}$-attached rosary of length $1$), and suppose we have an isotrivial degeneration to $C_0$ such that the cocharacter $\Gm \rightarrow \Aut(C_0)$ acts trivially on the component $\Gamma'\subset C_0-\Gamma$ that intersects $\Gamma$ at the node. Then, if the singularity of the atom is deformed, the node has to be preserved, and $\Gamma$ deforms to an (honestly) hyperelliptic tail attached to the rest of the curve at a node. 
\end{remark}

\begin{definition}[Hyperelliptic tails and bridges]\label{def:tails and bridges}
Let $(C,p_1,\dots,p_n)$ be an $n$-pointed $A$-prestable curve over an algebraically closed field $k$, and let $\Gamma\subset C$ be a connected subcurve.
    \begin{itemize}
        \item we say that $\Gamma$ is an $A_{2k+1}$-attached hyperelliptic tail if $\Gamma$ is a hyperelliptic curve of positive genus, $P_{\Gamma}=\emptyset$, and $I_{\Gamma}$ consists of a single $A_{2k+1}$-singularity on $C$ which corresponds to a smooth Weierstrass point on $\Gamma$ (i.e., a point fixed by the hyperelliptic involution of $\Gamma$); we say that it is $A_0$-attached if $I_{\Gamma}=\emptyset$ (thus $\Gamma=C$) and $P_{\Gamma}$ consists of a single point (thus $n=1$);

        \item  we say that $\Gamma$ is an $A_{2h+1}/A_{2k+1}$-attached hyperelliptic bridge if $\Gamma$ is a hyperelliptic curve of positive genus, $P_{\Gamma}=\emptyset$, and $I_{\Gamma}$ consists of an $A_{2h+1}$-singularity and an $A_{2k+1}$-singularity on $C$ which correspond to two smooth points on $\Gamma$ exchanged by the involution.  
    \end{itemize}
\end{definition}
Exactly as for the rosaries, we need to define the notion of $2$-pointed hyperelliptic chains.
\begin{definition}
We say that $(\Gamma,p_1,p_2)$ is a $2$-pointed hyperelliptic chain if it is constructed by gluing $2$-pointed hyperelliptic bridges at odd $A$-singularities, creating a chain of attached hyperelliptic bridges. If the attached bridges are honestly hyperelliptic, we will say that $(\Gamma,p_1,p_2)$ is a \emph{chain of honestly hyperelliptic curves}. The same definition can be made for $1$-pointed or $0$-pointed chains.
We say that the hyperelliptic chain $(\Gamma,p_1,p_2)$ has \emph{genus bounded by $g$} if every subcurve that is a hyperelliptic bridge has genus \textbf{strictly less} than $g$. Moreover, the number of $2$-pointed hyperelliptic bridges is called the \emph{length} of $\Gamma$.
\end{definition}

\begin{definition}[Attached hyperelliptic chain]
\label{def:attached-hyp-chain}
    Let $(C,p_1,\dots,p_n)$ be an $A_r$-stable curve and $\Gamma\subset C$ a subcurve. We say that $\Gamma$ is an $A_{k_1}/A_{k_2}$-attached hyperelliptic chain if
    \begin{itemize}
        \item[1)] if $k_1$ and $k_2$ are odd, then $P_{\Gamma} = \emptyset$ and $I_{\Gamma}$ consists of an $A_{k_1}$-singularity and an $A_{k_2}$-singularity of $C$ (possibly equal), and $(\Gamma,I_{\Gamma}\cup P_{\Gamma})$ is a (possibly closed) hyperelliptic chain;
        \item[2)] if $k_1$ is even and $k_2$ is odd, then there exists an $A_{k_1}$-singularity $q \in \Gamma$ such that the pointed partial normalization of $(\Gamma, I_{\Gamma} \cup P_{\Gamma}) $ at $q$ is a hyperelliptic chain;
        \item[3)] if $k_1$ and $k_2$ are both even, there exists an $A_{k_1}$-singularity $q_1\in \Gamma$ (respectively, an $A_{k_2}$-singularity $q_2 \in \Gamma$) such that the pointed normalization of $(\Gamma, I_{\Gamma} \cup P_{\Gamma})$ at $q_1$ and $q_2$ is a $2$-pointed hyperelliptic chain. 
    \end{itemize}
    By abuse of notation, we extend the definition to the case $k_1=0$ by defining an $A_0$-singularity as a smooth marking. The attached subcurve is said to be \emph{separating} if $C-\Gamma$ is disconnected, otherwise it is called \emph{non-separating}.
\end{definition}

\begin{remark}
    If the attached hyperelliptic chain has length $1$, we interchangeably call it a bridge (even though the attachments may be even singularities).
\end{remark}

The following proposition (see \cite[Proposition 4.38]{GMSArI}) explains the geometry of the generic point of an isotrivial degeneration to a $2$-pointed rosary of hyperelliptic $A$-singularities.
\begin{proposition}\label{prop:defor-rational-chain}
    Let $(C_0,p_1^0,p_2^0)$ be a $2$-pointed rosary of hyperelliptic (odd) $A$-singularities and $(C_1,p_1^1,p_2^1)$ be a $2$-pointed $A_r$-stable curve. If $(C_1,p_1^1,p_2^1)$ admits an isotrivial degeneration to the curve $(C_0,p_1^0,p_2^0)$, then it is one of the curves described below, depending on the parity of the length of the rosary. Suppose that the length of the rosary is even, then $(C_1,p_1^1,p_2^1)$ is either
    \begin{itemize}
        \item[e1)] a $2$-pointed chain of honestly hyperelliptic curves;
        \item[e2)] an $A_{2h+1}/A_{2k+1}$-attached chain of honestly hyperelliptic curves, which is attached to two rational $1$-pointed smooth curves;
    \end{itemize}
    moreover, if the length is odd, we have that
    \begin{itemize}
        \item[o)] $(C_1,p_1^1,p_2^1)$ is a $1$-pointed chain of honestly hyperelliptic curves attached to a $1$-pointed rational smooth curve at an odd $A$-singularity.
    \end{itemize} 
\end{proposition}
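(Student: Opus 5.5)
We would prove \Cref{prop:defor-rational-chain} by linearizing around the special fiber and then reading off the generic fiber from the sign decomposition of the deformation space. Since $\cM_{g,n}^r$ is smooth and $\Aut(C_0,p_1^0,p_2^0)^{\circ}\simeq\Gm$ is linearly reductive, \cite[Lemma 2.10]{GMSArI} identifies an \'etale neighbourhood of $(C_0,p_1^0,p_2^0)$ with $\Gm$-equivariant first-order deformations. Under this identification an isotrivial degeneration to $C_0$ is a point of $(\Def_{C_0})^{>}$ for the induced cocharacter, up to replacing it by its inverse. Let $q_1,\dots,q_m$ be the worse-than-nodal odd singularities of the rosary, ordered along the chain, with $m+1$ equal to the length. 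Then
\[
\Def_{C_0}\simeq \bigoplus_{i=1}^m \bigl(T^1_{q_i}\oplus {\rm Cr}_{q_i}\bigr),
\]
because the components are $2$-pointed rational curves with no moduli and the markings are fixed.

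First we compute the signs. On each $\bP^1$-component $\Gm$ acts with weights of opposite sign at its two special points. The $\Gm$-equivariant crimping at $q_i$ forces the tangent weights on the two branches at $q_i$ to agree. So the weight on the tangent space at $q_i$ alternates in sign with $i$. By the sign-preserving statement recalled for atoms (\cite[Propositions 4.29, 4.30]{GMSArI}), for a fixed choice of cocharacter we get
\[
(\Def_{C_0})^{>}=\bigoplus_{i\ \mathrm{odd}}T^1_{q_i}\oplus\bigoplus_{i\ \mathrm{even}}{\rm Cr}_{q_i},
\]
and the inverse cocharacter swaps odd and even. This is the formalization of \Cref{rem:alternating-def}: the generic fiber $C_1$ smooths exactly the singularities of one parity class, keeps the others (with possibly new crimping data), and does so with generic parameters on the positive part.

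Next we identify the geometry of $C_1$. Smoothing $q_i$ inside its $T^1_{q_i}$-direction, with the neighbouring singularities preserved, replaces the odd atom at $q_i$, i.e.\ the two adjacent $\bP^1$'s, by a genus-$h_i$ curve. This curve is honestly hyperelliptic, and the two attaching points are exchanged by the involution. This follows from the identification of the $T^1$-summand with the tangent space to $\cH^{\circ}_{h,g_1^2}$, i.e.\ \Cref{prop:g12-global-descr} applied to the localized atom. A general point of that summand gives an honestly hyperelliptic curve with at worst $A_{2h_i}$-singularities and with the two points still exchanged, rather than a degenerate hyperelliptic curve. The preserved singularities $q_j$ remain odd and glue consecutive such bridges, since $T^1_{q_j}$ is untouched.

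Finally we count by parity.
\begin{itemize}
\item If the length $m+1$ is even, then $m$ is odd. Smoothing the odd-indexed $q_i$ consumes every component into hyperelliptic bridges, and we get a $2$-pointed honestly hyperelliptic chain with the markings on the end bridges. This is case e1).
\item If the length is even and we smooth the even-indexed $q_i$, the two extremal $\bP^1$'s survive as $1$-pointed rational curves. They are attached at $q_1$ and $q_m$, which are odd $A$-singularities, to an attached chain in between. This is case e2).
\item If the length is odd, then either parity choice leaves exactly one end component rational. This gives case o).
\end{itemize}

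The main obstacle is the middle step: showing that a generic point of the summand $T^1_{q_i}$, simultaneously with arbitrary crimping changes at the neighbours, really produces an honestly hyperelliptic bridge. One must also check that the localization to each atom is compatible with the product decomposition. We would handle this by restricting to a neighbourhood of the atom and using that the global deformation space is the direct sum of the local ones, so the statement reduces to the atom case recalled after \Cref{prop:g12-global-descr}.
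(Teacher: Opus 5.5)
Your approach is the paper's own. The paper cites \cite[Proposition~4.38]{GMSArI}, and the argument sketched in \Cref{sub:local-geometry} and \Cref{rem:alternating-def} is exactly your alternating-sign decomposition of $\Def_{C_0}=\bigoplus_i(T^1_{q_i}\oplus{\rm Cr}_{q_i})$ combined with the atom case. Your sign computation and your parity count are correct.

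The gap is in which curves you actually classify. The proposition concerns \emph{every} $C_1$ admitting an isotrivial degeneration to $C_0$, that is, every nonzero $v\in(\Def_{C_0})^{>}$. You only handle a general $v$: you assert that $C_1$ ``smooths exactly the singularities of one parity class \dots\ with generic parameters'', and that a general point of $T^1_{q_i}$ gives an honestly hyperelliptic curve ``rather than a degenerate hyperelliptic curve''. For special $v$ two things can happen.
\begin{itemize}
\item Some $T^1_{q_i}$-coordinates of the deformed parity vanish. Then the odd atom at $q_i$ survives in $C_1$, possibly reglued to its neighbours with new crimping data.
\item These coordinates are special. This produces, for instance, the reducible cover $y^2=g(x)^2$, i.e.\ two $\bP^1$'s meeting in several odd $A$-singularities.
\end{itemize}
These curves lie in the basin. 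Your argument says nothing about them, and your wording suggests you would rule them out.

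The missing observation is that no genericity is needed. Combine \Cref{prop:g12-global-descr} with the identification of $T^1_q$ with the tangent space of $\cH^{\circ}_{h,g_1^2}$ at the atom. This shows that the \emph{entire} positive basin of the $2$-pointed odd atom of genus $h$ is $\cH^{\circ}_{h,g_1^2}\simeq[\bA^{2h+1}/\Gm]$. Hence every point of $T^1_{q_i}$, zero included, gives a $2$-pointed honestly hyperelliptic curve with the two points exchanged. Here ``honestly hyperelliptic'' means a double cover of an irreducible $\bP^1$ with arbitrary branch divisor. The odd atom itself qualifies, which is why even-length rosaries count as hyperelliptic chains. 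If you replace ``smooth'' by ``deform'', your parity count then works for every $v$.

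For the localization step you flag, it is cleaner to argue on the $\Theta$-family than in the slice. The singularities $q_j$ of the preserved parity persist along the whole family. Indeed, the equivariant map to the miniversal deformation of $q_j$ has target $T^1_{q_j}$, whose weights all have sign opposite to those on $(\Def_{C_0})^{>}$. So this map sends the attracting set to $0$. Partially normalizing along these equisingular sections reduces you to isotrivial degenerations of the individual pieces, namely to odd atoms and to the end $\bP^1$'s. The atom case then applies directly.
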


\begin{figure}[H]
        \caption{}
        \centering
        \includegraphics[width=0.6\textwidth]{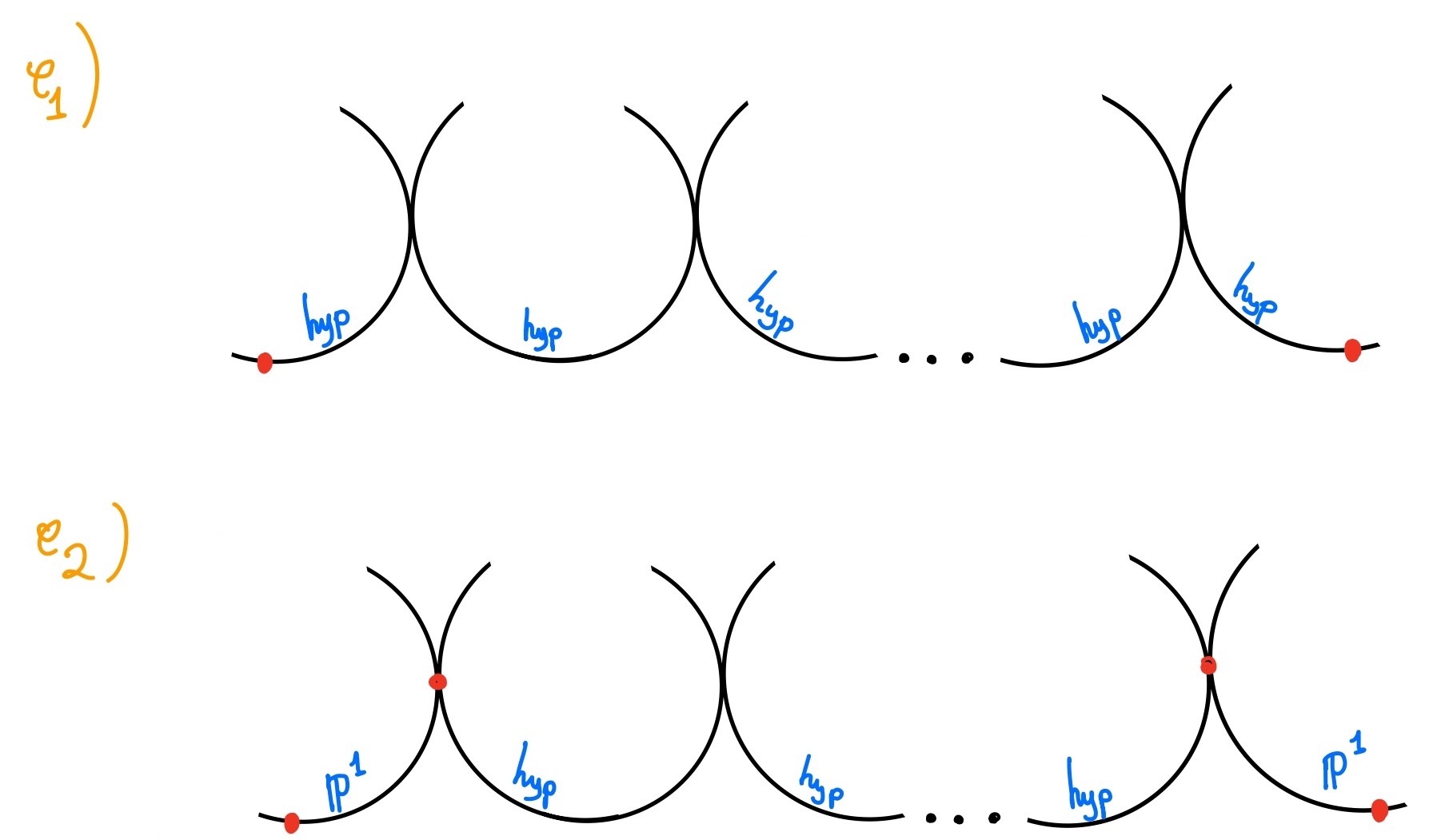}
    \end{figure} 
\begin{figure}[H]
        \caption{}
        \centering
        \includegraphics[width=0.6\textwidth]{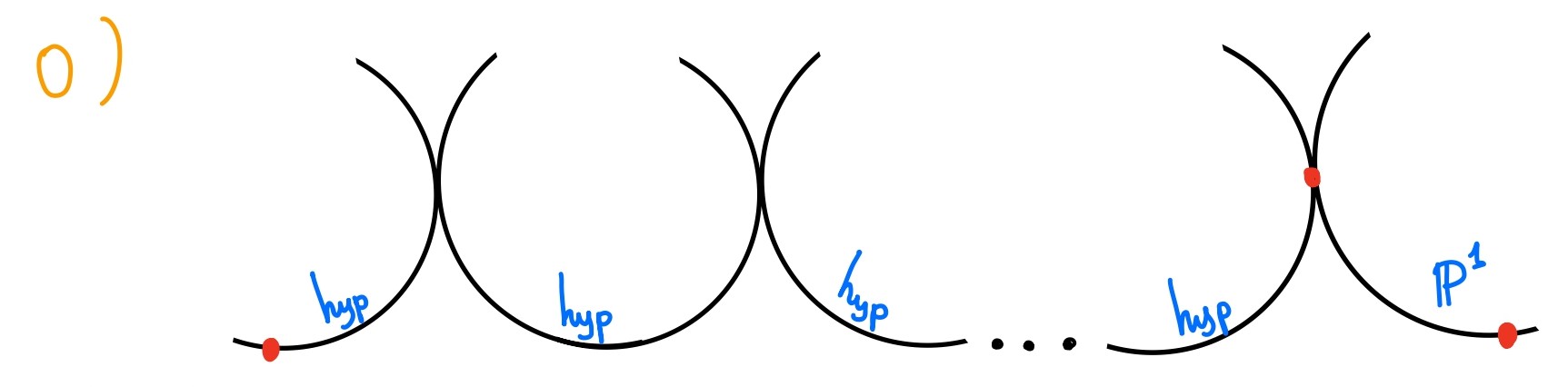}
    \end{figure}
    
Finally, we note that the same statement works for closed rosaries. 
\begin{definition}
     We say that $C$ is a \emph{closed chain of hyperelliptic curves} of genus bounded by $g$ if there exists an $A_{2k+1}$-singularity with $k\geq 1$ such that the (pointed) normalization $(\widetilde{C},p_1,p_2)$ is a $2$-pointed chain of hyperelliptic curves of genus bounded by $g$.
\end{definition}

We then have a similar result.
\begin{proposition}\label{prop:iso-deg-rosary}
    Let $(C_0,p_1^0,p_2^0)$ be a closed rosary of hyperelliptic $A$-singularities (of even length) and $(C_1,p_1^1,p_2^1)$ be an $A_r$-stable curve which admits an isotrivial degeneration to $C_0$. Then $(C_1,p_1^1,p_2^1)$ is a closed chain of hyperelliptic curves. 
\end{proposition}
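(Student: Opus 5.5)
The plan is to reduce to \Cref{prop:defor-rational-chain} by cutting the closed rosary open at one of its singularities. Write $C_0$ as a closed rosary of even length $2\ell$. By definition there is a worse-than-nodal odd singularity $q \in C_0$ such that the pointed partial normalization at $q$ is a $2$-pointed rosary $(\widetilde{C}_0,\widetilde{q}_1,\widetilde{q}_2)$ of hyperelliptic $A$-singularities. Because the length is even, the $\Gm$-action on $\widetilde{C}_0$ descends to $C_0$, and $q$ is itself hyperelliptic. Let $\lambda\colon \Gm \to \Aut(C_0)^{\circ}$ be the cocharacter induced by the isotrivial degeneration $C_\Theta \to \Theta$. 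By \Cref{prop:gm-decomp-aut}, $\Aut(C_0)^{\circ}\simeq\Gm$, so $\lambda$ is a nonzero multiple of the rosary action.

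The deformation space decomposes $\Gm$-equivariantly as $\Def_{C_0}=\bigoplus_i (T^1_{q_i}\oplus {\rm Cr}_{q_i})$, the sum running over all $2\ell$ singularities $q_i$ of $C_0$. The signs alternate around the cycle, as in the length-$3$ discussion preceding \Cref{rem:alternating-def}. Since there are evenly many singularities, this alternation is consistent all the way around. The generic fiber $C_1$ corresponds to a point in the positive (or negative) part, so by \Cref{rem:alternating-def} the set of deformed singularities is exactly one of the two alternating classes, or a subset of it.

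Next I would choose the cut point so that the reduction works. I take $q$ to lie in the class of preserved singularities, that is, a $q_i$ whose $T^1$-summand has the wrong sign. This is always possible, since each parity class is nonempty. The crimping coordinate ${\rm Cr}_q$ may still vary along the family, but the singularity $q$ survives as an $A$-singularity of the same type. It therefore extends to a section $q_\Theta$ of the family along which the fibers are locally equisingular. Taking the simultaneous partial normalization along $q_\Theta$ produces a $\Gm$-equivariant family of $2$-pointed curves over $\Theta$, whose special fiber is $(\widetilde{C}_0,\widetilde{q}_1,\widetilde{q}_2)$. Concretely, this is the forgetful map from $[{\rm Cr}_q/\Aut]$ in $(\star\star)$, applied in families. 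The result is an isotrivial degeneration of $(\widetilde{C}_1,\widetilde{q}_1,\widetilde{q}_2)$ to a $2$-pointed rosary of even length.

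\Cref{prop:defor-rational-chain} then says that $\widetilde{C}_1$ is of type (e1) or (e2). Re-gluing $\widetilde{q}_1$ and $\widetilde{q}_2$ with the generic crimping datum gives $C_1$. I also have to rule out, or absorb, case (e2). The two rational $1$-pointed tails at the ends contain $\widetilde{q}_1,\widetilde{q}_2$, so in that case $q$ would be a deformed singularity, contradicting the choice of $q$. Equivalently, the cut point's neighbours on both sides are honestly hyperelliptic. Hence case (e1) holds, and $C_1$ is a closed chain of hyperelliptic curves. Genus boundedness comes for free, because each bridge is a deformation of an odd atom of genus $h_i < g$.

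The main obstacle is the family-level normalization step: justifying that the preserved singularity $q$ forms a flat equisingular section over all of $\Theta$, and that partial normalization along it commutes with the $\Gm$-action. I would first try to deduce this from the $\Gm$-equivariant splitting $\Def_{C_0}=T^1_q\oplus(\text{rest})$. Since the family is pulled back from the negative-weight part, it is locally trivial in the $T^1_q$-direction, so it lies étale-locally in the equisingular stratum $[{\rm Cr}_q/\Aut]$ of $(\star\star)$.
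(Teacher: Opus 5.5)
Your argument is correct in substance, but it is organized differently from the paper. The paper does not reprove this statement. It presents it as the closed analogue of \Cref{prop:defor-rational-chain}, obtained by running the same $\Gm$-equivariant analysis directly on the cycle. There one has $\Def_{C_0}=\bigoplus_i (T^1_{q_i}\oplus {\rm Cr}_{q_i})$, with signs alternating from one singularity to the next, consistently around the cycle precisely because the length is even. The attracting part therefore deforms every other singularity along $T^1$, turning the corresponding odd atoms into honestly hyperelliptic bridges, and only moves the crimping data of the remaining ones.

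You instead cut the cycle at a singularity $q$ with repelling $T^1_q$ and show it persists along the whole degeneration: an attracting point maps, under the equivariant smooth map $\Def_{C_0}\to T^1_q$, to an attracting point, which must be $0$. You then normalize the family along the resulting equisingular section and apply the $2$-pointed statement. What makes this work is that the definition of a closed chain imposes nothing on the crimping at the closing singularity, so the change of ${\rm Cr}_q$ along the family is harmless. Your route treats the open case as a black box and makes clear why the cycle adds nothing new, at the cost of a family-level simultaneous normalization. The paper's route stays entirely in the linearized deformation space and never normalizes families.

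One step should be tightened. \Cref{prop:defor-rational-chain} as stated does not say which sign of the cocharacter yields (e1) and which yields (e2). The two cases are also not exclusive: for the trivial deformation, $\widetilde C_0$ itself is of both types. So ``in case (e2) $q$ would be a deformed singularity'' does not follow from the statement alone. What you really use is the sign bookkeeping. Label the singularities of $\widetilde C_0$ as $s_1,\dots,s_{2\ell-1}$ starting from $\widetilde q_1$. With your choice of $q$, the even-indexed $s_i$ have repelling $T^1$, so they persist (possibly with new crimping). The odd-indexed ones have repelling crimping, so they can only be deformed along $T^1$. Hence the pieces between consecutive persisting singularities are $T^1$-deformations of odd atoms, i.e.\ honestly hyperelliptic bridges, which gives (e1) directly. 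Stated this way, the appeal to \Cref{prop:defor-rational-chain} is essentially the direct analysis again.

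Finally, \Cref{prop:gm-decomp-aut} does not literally cover closed rosaries. Still, $\Aut(C_0)^{\circ}\simeq\Gm$ follows from balancing the tangent weights at each odd singularity around the cycle.
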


\subsection{Global geometry}\label{sub:global geometry}

We end the section by globalizing the study of isotrivial degenerations. To be more specific, in the previous section we described all the possible isotrivial degenerations to a rosary of hyperelliptic $A_r$-singularities, giving a local description of the basins of attraction. In this section, we are going to make the description global, namely, producing isotrivial degenerations from a fixed generic point.

Recall that if $\Gamma\subset C$ is an $A_1$-attached even atom of genus $h$, any isotrivial degeneration either preserves the $A_{2h}$-singularity, or the generic point is an $A_1$-attached honestly hyperelliptic tail of genus $h$.
\begin{question}
    Given a curve lying in one of the two \emph{local} strata, can we find an isotrivial degeneration to the curve with the $A_1$-attached even atom?
\end{question} 
The answer is yes, see \cite[Lemma 4.42]{GMSArI} and \cite[Lemma 4.43]{GMSArI}. This gives us a full description of the possible isotrivial degenerations to an $A_1$-attached even atom: for instance, every curve with an $A_1$-attached hyperelliptic tail of genus $h$ admits an isotrivial degeneration to the same curve where we replaced the $A_1$-attached hyperelliptic tail of genus $h$ with the $A_1$-attached even atom of genus $h$. The same is true if we have a curve $C$ with an even $A$-singularity.

Similarly to the even case, recall that if $\Gamma\subset C$ is an $A_1/A_1$-attached odd atom of genus $h$, any isotrivial degeneration to \(C\) either preserves the $A_{2h+1}$-singularity, or the generic point has an $A_1/A_1$-attached honestly hyperelliptic bridge of genus $h$. The globalization holds: every curve with an $A_1/A_1$-attached hyperelliptic bridge of genus $h$ admits an isotrivial degeneration to the same curve where we replaced the bridge with an $A_1/A_1$-attached odd atom of genus $h$. The same is true if we start with a curve $C$ with an odd $A$-singularity. For a more detailed statement, see \cite[Lemma 4.44]{GMSArI} and \cite[Lemma 4.45]{GMSArI}.

Finally, we describe the basins of attraction of $2$-pointed closed rosaries of hyperelliptic $A$-singularities. This is the globalization of \Cref{prop:defor-rational-chain}. 

\begin{proposition}\label{prop:iso-deg-chain}
    Let $(C_1,p_1^1,p_2^1)$ be a $2$-pointed $A_r$-stable curve. Then $(C_1,p_1^1,p_2^1)$ admits an isotrivial degeneration to a $2$-pointed rosary of hyperelliptic $A$-singularities if it is one of the curves described below, depending on the parity of the length of the rosary. Suppose that the length of the rosary is even; then $(C_1,p_1^1,p_2^1)$ is either
    \begin{itemize}
        \item[e1)] a $2$-pointed chain of (honestly) hyperelliptic curves;
        \item[e2)] an $A_{2h+1}/A_{2k+1}$-attached chain of (honestly) hyperelliptic curves, which is attached to two rational $1$-pointed smooth curves;
    \end{itemize}
    moreover, if the length is odd, we have that
    \begin{itemize}
        \item[o)] $(C_1,p_1^1,p_2^1)$ is a $1$-pointed chain of (honestly) hyperelliptic curves attached by an odd $A$-singularity to a $1$-pointed rational smooth curve.
    \end{itemize} 
\end{proposition}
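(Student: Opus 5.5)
The argument will be constructive. For each curve of type e1), e2) or o) I will write down an explicit isotrivial degeneration $\Theta\to\cM_{g,2}^r$ whose generic fiber is $(C_1,p_1^1,p_2^1)$ and whose special fiber is a $2$-pointed rosary of hyperelliptic $A$-singularities. The construction is local on the components: I will degenerate each honestly hyperelliptic piece separately and then glue the resulting families along the attaching singularities by crimping in families.

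The basic building block is the bridge case, i.e.\ a single $2$-pointed honestly hyperelliptic curve $(B,x_1,x_2)$ of genus $h$ with $x_1,x_2$ exchanged by the involution. By \Cref{prop:g12-global-descr} (applied with $r\ge 2h+1$, which is allowed since only singularities produced by collapsing the branch divisor matter), $(B,x_1,x_2)$ is a point of $[\bA^{2h+1}/\Gm]$ with positive weights. The orbit map $t\mapsto t\cdot v$ therefore extends over $0$, giving a map $\Theta\to\cH_{h,g_1^2}^{2h+1,\circ}$. Its special fiber is the $2$-pointed odd atom of genus $h$, which is a rosary of length $2$. Concretely, this is the cyclic cover whose branch divisor is scaled by $\lambda\mapsto t\lambda$ until it concentrates at $\infty$. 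Dually, using the negative-weight direction (the crimping space of the atom's singularity, as in \Cref{rem:alternating-def}), a $2$-pointed rational curve glued at an odd $A$-singularity also degenerates to the atom. This case needs separate treatment: here the generic fiber is two smooth rational tails meeting at an odd $A$-singularity, and the degeneration only changes the crimping datum.

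For a chain with bridges $B_1,\dots,B_\ell$ glued at odd singularities $q_1,\dots,q_{\ell-1}$, I will choose on each $B_i$ the opposite cocharacter relative to its neighbours. After the degeneration, adjacent atoms meet at the $q_i$ with tangent weights of opposite sign, which is exactly the alternating pattern of rosaries. I then need the crimping data at $q_i$ to extend equivariantly over $\Theta$. By $(\star\star)$, a family of crimping data is a map to ${\rm Cr}_{q_i}\cong\Gm\times\bA^{k-1}$. I will twist each $\Gm$-action by a suitable power so that the data have non-negative weights, and take the limit; if needed I will pass to a $\Gm$-cover $t\mapsto t^N$, which is harmless for isotrivial degenerations. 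The limit crimping datum is then $\Gm$-equivariant, so the glued special fiber is a rosary of even length. In case e2), the extra rational $1$-pointed tails attached at $A_{2h+1},A_{2k+1}$ contribute one additional rational component each after degeneration, which keeps the length even. In case o) there is exactly one such component, giving odd length. This matches \Cref{prop:defor-rational-chain}, which should be read as the converse statement.

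The main obstacle is the compatibility of the weights at each gluing point: a single cocharacter $\Gm\to\prod_i\Aut(\text{atom}_i)^\circ$ has to act on every crimping space with weights of one sign, so that the limit exists and is equivariant. I will try to reduce this to the local result, namely the deformation decomposition $\Def_{C_0}=\bigoplus_i T^1_{q_i}\oplus{\rm Cr}_{q_i}$ with alternating signs. The plan is to place $C_1$ in the positive part $(\Def_{C_0})^{>}$ for the diagonal cocharacter of the rosary. Since the stack is smooth with torus stabilizer, the local model of \cite[Lemma 2.10]{GMSArI} shows that every point of $(\Def_{C_0})^{>}$ isotrivially degenerates to $C_0$. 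Explicitly, I choose $C_0$ to be the rosary built from the genera of the bridges of $C_1$, and then check that $C_1$ lies in the corresponding Białynicki-Birula stratum by realizing it as the gluing constructed above. This reduction is the same mechanism as in \cite[Lemmas 4.44, 4.45]{GMSArI}.
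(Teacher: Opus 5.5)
Your architecture is the right one, and it matches the globalization the paper cites from \cite{GMSArI} rather than reproving. You degenerate each honestly hyperelliptic bridge $B_i$ to an odd atom along the positive-weight orbit of \Cref{prop:g12-global-descr}, glue the families by crimping over $\Theta$, and take the limit of the crimping data. But the step you yourself call the main obstacle is resolved incorrectly, and as written the construction fails.

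\textbf{Balancing at the gluing points.} Consider a gluing point $q_i$ of type $A_{2k_i+1}$ with $k_i\geq 1$. This is the only relevant case: by \Cref{prop:alp-fund} a separating node of $C_1$ stays a node in every limit, so it can never produce a rosary. At such a point a $\Gm$-action descends only if the tangent weights on the two branches are \emph{equal} (the balancing of \cite[Lemma 3.37]{GMSArI}, used in the proof of \Cref{lem:theta-tails}). Concretely, write the datum as $v=u\,q(u)$ with $q=c_0+c_1u+\dots$, and let the torus scale $u,v$ with weights $a,b$. Then $c_j$ is rescaled by $\lambda^{b-a-ja}$. The unit coefficient $c_0$ therefore has weight $b-a$, so a limit in ${\rm Cr}_{q_i}\cong\Gm\times\bA^{k_i-1}$ exists only if $b=a$. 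This has three consequences for your plan.
\begin{itemize}
\item ``Adjacent atoms meet at $q_i$ with tangent weights of opposite sign'' is exactly the configuration with no equivariant limit.
\item Twisting to ``non-negative weights'' is not enough, because the $\Gm$-factor needs weight exactly $0$.
\item Neither twisting nor base change $t\mapsto t^N$ can flip a sign.
\end{itemize}
In fact you have no choice of sign at all. $B_i$ lies in the summand $T^1$ of the atom's singularity, so the cocharacter must be the one making that summand positive; equivalently, the orbit in $[\bA^{2h_i+1}/\Gm]$ has a limit in only one direction. The only freedom is $t\mapsto t^{N_i}$ with $N_i>0$. The alternation in a rosary is between consecutive singular points (each $\bP^1$ has opposite weights at its two special points), not between the two branches of a single singularity.

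\textbf{The repair.} It is short, but it is where the content lies.
\begin{itemize}
\item All bridges degenerate with the same sign; rescale by the $N_i$ so that the weights at the markings agree.
\item Work in the model $y^2=f(x)$ with coordinate $u=1/x$ at the markings. Transporting the crimping datum of $C_1$ to the fibre over $\lambda$ gives $v=u\,q(\lambda^N u)$, which converges to the equivariant datum $v=c_0u$.
\item This is the concrete form of the fact that the $T^1$ of the atoms' singularities and the ${\rm Cr}$ of the gluing points lie in the same half of $\Def_{C_0}$. It has to be checked, not assumed.
\item For the rational tails in e2) and o) you do have a free $\Gm$. Use it to match the weight at the attaching point; the same computation then applies.
\end{itemize}

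\textbf{The fallback.} Your fallback via the local basin of attraction is circular as written, since membership in the stratum is to be checked ``by realizing it as the gluing constructed above''. To make it independent, note that the miniversal deformation $y^2=x^{2h+2}+a_{2h}x^{2h}+\dots+a_0$ of an atom's singularity is globally the family of \Cref{prop:g12-global-descr}. Hence the positive part of $\Def_{C_0}$ already contains every bridge of genus $h_i$, and, modulo the torus, every crimping datum at the gluing points.
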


\begin{remark}
    Notice that a rosary of hyperelliptic $A$-singularities of even length is a chain of hyperelliptic curves. 
\end{remark}
\section{The open substack of admissible curves}\label{sec:counterexamples}

This section is dedicated to studying how $\Theta$-completeness and $\textsf{S}$-completeness fail for the moduli stack $\cM_{g,n}^r$. Moreover, by analyzing the possible counterexamples to $\Theta$-completeness and $\textsf{S}$-completeness, we define an open substack $\cU_{g,n}^r$ of $\cM_{g,n}^r$ which we conjecture has a separated good moduli space. Furthermore, we prove the following maximality result, showing that the list of counterexamples we found produces a maximal open substack.

For the rest of the paper, we will assume $r\leq 2g+1$, since \Cref{rem: max-sing} shows that nothing new happens if $r$ is greater than $2g+1$.

\begin{theorem}\label{theo:maximal-open}
    Let $g,r,n$ be three nonnegative integers, and suppose that $g\geq 2$. Let $\cU \subset \cM_{g,n}^r$ be an open substack which admits a separated good moduli space. If $r\leq 2g-5$ and $\cU_{g,n}^r\subset \cU$, then $\cU=\cU_{g,n}^r$.
\end{theorem}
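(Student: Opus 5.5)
We argue by contradiction. Suppose $\cU_{g,n}^r\subsetneq\cU$ with $\cU$ open in $\cM_{g,n}^r$ and admitting a separated good moduli space. Then $\cU$ is $\Theta$-complete and $\textsf{S}$-complete, and every closed point of $\cU$ has linearly reductive stabilizer. Pick a point $[C]\in|\cU|\setminus|\cU_{g,n}^r|$. Since $\cU_{g,n}^r$ is defined in \Cref{def:admin} by a finite list of conditions, $C$ violates at least one of them. The proof is therefore a case analysis indexed by these conditions, and each subsection of this section treats one condition. In each case the aim is to construct a map $\Theta_R\setminus\{0\}\to\cU$ or $\ST_R\setminus\{0\}\to\cU$ that has no extension to $\cU$.

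We first reduce to curves with positive-dimensional stabilizer. Since $\cU$ is open and $\Theta$-complete, we may replace $C$ by the closed point of its closure in $\cU$. This is legitimate provided the conditions of \Cref{def:admin} are stable under the isotrivial specializations used. We then use \Cref{prop:iso-deg-chain} and the globalization lemmas of \Cref{sub:global geometry} to isotrivially degenerate the offending configuration. For example, an $A_1$-attached hyperelliptic tail or bridge of forbidden type degenerates to the corresponding attached even or odd atom, and a forbidden chain degenerates to a rosary of hyperelliptic $A$-singularities. The result is a curve $C_0\in\cU$ (it lies in $\cU$ because $\cU$ is open and contains the generalization $C$) carrying a cocharacter $\lambda$ in $\Aut(C_0)^\circ$.

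Next we build the obstruction. By the linearization \cite[Lemma 2.10]{GMSArI}, the deformation space $\Def_{C_0}$ splits into positive and negative weight parts. \Cref{rem:alternating-def} and \Cref{prop:defor-rational-chain} describe the two basins of attraction: $(\Def_{C_0})^>$ smooths some of the singularities, and $(\Def_{C_0})^<$ smooths the alternate ones. We choose curves $C_+\in(\Def_{C_0})^>$ and $C_-\in(\Def_{C_0})^<$ so that one of them lies in $\cU_{g,n}^r$. This gives a map $\ST_R\setminus\{0\}\to\cU$: a generic smoothing over $\spec R$ with two special fibers $C_\pm$. For the $\Theta$ case we take $\spec R$ specializing to a point of $\cU_{g,n}^r$, glued to an isotrivial degeneration.

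\textsf{S}-completeness of $\cU$ would force the filling to be the unique curve $C_0$ (up to choice of $\lambda$). So we must arrange that the canonical filling is forced to be a curve not in $\cM_{g,n}^r$, or one with non-reductive stabilizer. The model is the $0$-pointed odd atom of genus $g$ with stabilizer $(\Gm\ltimes\Ga)\ltimes C_2$, as in \Cref{lem:self-attached-condition}. Alternatively, we arrange that two different closed points are forced in the same fiber of the good moduli space, contradicting separatedness. The latter is done by exhibiting two non-isomorphic polystable curves (for instance an atom with two different attachments, or two rosaries of different parities) that both arise as limits of the same family over $\spec R$.

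The main obstacle is producing, for each defining condition, an explicit family whose two limits are both forced. Concretely, we need a smoothing over a dvr whose stable reductions inside $\cU$ differ by more than an isotrivial degeneration. For each condition we would first try semistable replacement (\Cref{prop:alp-fund} together with the blow-up description (e)/(o)) to replace an $A_{2h}$ or $A_{2h+1}$-singularity by a hyperelliptic tail or bridge of genus $h$ in one limit. We then check that the resulting curve is in $\cU_{g,n}^r$ while the other limit is $C$. The hypothesis $r\le 2g-5$ is used here: it guarantees enough genus left over on the rest of the curve, so that these replacement curves are $A_r$-stable and not degenerate in the sense of the excluded self-attached cases.
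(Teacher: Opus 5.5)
Your proposal has a genuine gap, and it sits at the pivot of the argument. You assert that the isotrivial limit $C_0$ (the curve with a nodally attached atom or rosary obtained from \Cref{prop:iso-deg-chain}) lies in $\cU$ ``because $\cU$ is open and contains the generalization $C$''. Open substacks are stable under generization, not under specialization. The closed point of $\overline{\{C\}}\cap\cU$ does exist, but it need not be $C_0$. In fact the paper proves the opposite: curves with an attached even or odd atom are \emph{not} in $\cU$.

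This is not cosmetic. If $C_0\in\cU$, then the map $\ST_R\setminus\{0\}\to\cU$ you build from $C_\pm$ in the two basins has its unique filling $C_0$ inside $\cU$, so nothing is contradicted. Your fallback routes do not rescue this. For $r\le 2g-5$ every stabilizer in $\cM_{g,n}^r$ is linearly reductive (\Cref{prop:gm-decomp-aut}); the $(\Gm\ltimes\Ga)\ltimes C_2$ example needs an $A_{2g+1}$-singularity. Nor do you ever produce a family whose filling lies outside $\cM_{g,n}^r$, or one with two non-isomorphic polystable limits. You call the construction of such families ``the main obstacle'', but that construction is the whole content of the theorem.

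The missing ingredient is a two-step mechanism.

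\textbf{Step 1: a $\Theta$-completeness obstruction from \Cref{prop:alp-fund}.} Take a family over $\Spec R$ lying in $\cU_{g,n}^r$. Its generic fiber is the generic curve with a non-separating $A_{2h+1}$ (resp.\ an $A_{2h}$) singularity. Its special fiber is the generic curve with an $A_{2h+2}$ (resp.\ a non-separating $A_{2h+1}$) singularity. Glue it to the isotrivial degeneration of the generic fiber to the curve with a nodally attached odd (resp.\ even) atom. Suppose some curve with such an attached atom were in $\cU$. By openness, used in the correct direction, and irreducibility of the stratum (\Cref{rem:irreducibility}), the generic such curve would also be in $\cU$. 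The glued family would then be a map $\Theta_R\setminus\{0\}\to\cU$, which does not extend by \Cref{prop:alp-fund}. This contradicts $\Theta$-completeness. This is where $r\le 2g-5$ is actually used: for $n=0$ it guarantees that the special fiber over $R$ satisfies $(e_r)$.

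\textbf{Step 2: the basin argument.} Since $C_0\notin\cU$, $\textsf{S}$-completeness together with the affine diagonal forces $\cU$ to miss one of the basins $(\Def_{C_0})^{>}$, $(\Def_{C_0})^{<}$. The opposite basin generically lies in $\cU_{g,n}^r\subset\cU$. Hence the basin containing the honestly hyperelliptic tail or bridge misses $\cU$.

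\textbf{Bootstrapping the remaining conditions.}
\begin{itemize}
\item For $(d_{r,1})$, $(d_{r,2})$, chains of length $\ge 2$, and $A_{2k+1}$-attached tails of genus $(r-1)/2$, the paper uses curves with $\Aut(C)^{\circ}\simeq\Gm^2$ that already contain a previously excluded configuration. It chooses a cocharacter whose positive basin is the new forbidden configuration and whose negative basin lies in $\cU_{g,n}^r$.
\item A variant of Step 1 handles $A_{2k+1}$-attached tails of smaller genus.
\item Condition $(e_r)$ follows from the classification of opens with good moduli spaces in the stack of cyclic covers \cite[Theorem~3.44]{GMSArI}, together with \cite[Lemma~2.10]{GMSArI}.
\end{itemize}
None of these ingredients appears in your outline.
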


\begin{proof}
    The result is a consequence of \Cref{lem:self-attached-condition}, \Cref{cor:maximality-chains}, \Cref{ex:proble-tail-last} and \Cref{ex:n=1-hyp}.
\end{proof}
If $\cU_{g,n}^r$ has a separated good moduli space, then \Cref{theo:maximal-open} shows that $\cU_{g,n}^r$ is maximal among open substacks of $\cM_{g,n}^r$ that have a separated good moduli space. In the final paper of the series, we prove that $\cU_{g,n}^r$ is $\Theta$-complete for all $r$ and that it is $\textsf{S}$-complete for $r \leq 5$. Moreover, we prove that such a good moduli space (when it exists) does not have ample line bundles if $r\geq 5$, thus giving an example (specifically for $r=5$) where GIT cannot be applied.

\begin{remark}\label{rem:not-S-complete}
 Before going forward, we invite the reader to familiarize themselves with the strategy used to find $\cU_{g,n}^r$ outlined in \refmodus{rem:modus-operandi}. Very similar considerations go into the proof of \Cref{theo:maximal-open} in the coming subsections. As the aforementioned strategy seems to force us to remove a closed subset, always starting with the failure of $\Theta$-completeness, one may wonder whether the stack $\cM_{g,n}^r$ is $\textsf{S}$-complete. We claim that this is not the case. Let $(\Gamma,p)$ be a $1$-pointed even atom of genus $h$ with singularity $q$ and $(\Gamma',p,q')$ be a $2$-pointed smooth hyperelliptic curve of genus $k\geq 1$ such that $p$ and $q'$ are both Weierstrass points. We denote by $(C_0,q')$ the $1$-pointed $A_r$-stable curve obtained by nodally gluing $(\Gamma,p)$ with $(\Gamma',p,q')$ at $p$.
 \begin{figure}[H]
		\caption{}
		\centering
		\includegraphics[width=0.5\textwidth]{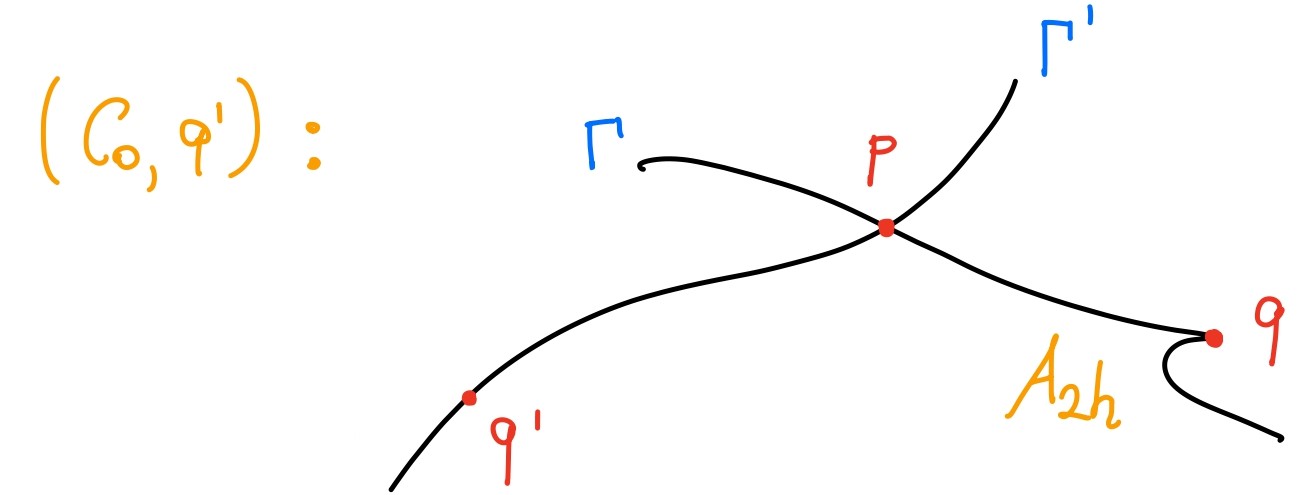}
		\label{fig:Chapter 4_1}
	\end{figure} 
 We have $\Aut(C_0,q')^{\circ}\simeq \Gm$. Moreover, we have a decomposition 
 $$\Def_{(C,p)} \simeq T_q^1 \oplus {\rm Cr}_q \oplus T_p^1 \oplus \Def_{(\Gamma',p,q')}$$ where the $\Gm$ acts positively on $T_q^1$, negatively on ${\rm Cr}_q\oplus T_p^1$ and trivially on $\Def_{(\Gamma',p,q')}$. Consider the isotrivial degeneration given by the line $(0\oplus0\oplus T_p^1\oplus 0)\subset \Def_{(C,p)}$, which corresponds to the degeneration that deforms the node $p$, but fixes the singularity $q$. The generic point of the degeneration corresponds to an honestly hyperelliptic curve $(C_1,q_1')$ of genus $h+k$ with an $A_{2h}$-singularity and a Weierstrass marking $q_1'$, therefore an object of $\cH_{h+k,w}^{r,\circ}$ with $r\geq 2h$. \Cref{prop:w-global-descr} implies that there exists an isotrivial degeneration with generic point $(C_1,q_1')$ and with special point $(C_0',q_0')$, the $1$-pointed even atom of genus $h+k$, which is a closed point.
\begin{figure}[H]
		\caption{}
		\centering
		\includegraphics[width=0.7\textwidth]{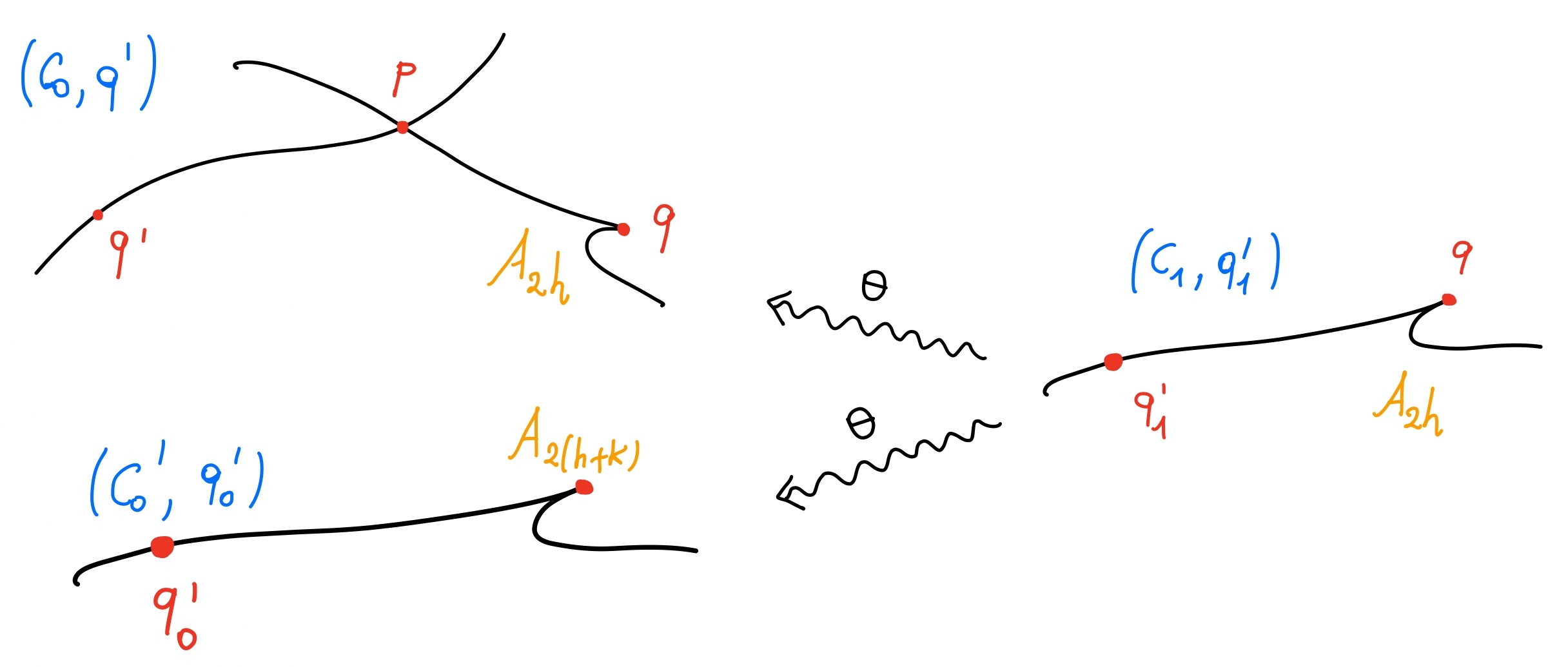}
		\label{fig:Chapter 4_2}
	\end{figure}

 If we apply \Cref{prop:alp-fund} to the separating node of $(C_0,q')$, one can prove that there are no degenerations from $(C_0,q')$ to $(C_0',q_0')$. This contradicts $\Theta$-completeness and $\textsf{S}$-completeness: specifically, it contradicts the filling criterion for morphisms from the open embedding $\Theta^2\setminus 0 \subset \Theta^2$, which is a weaker condition than both $\textsf{S}$-completeness and $\Theta$-completeness. This in particular shows that $\cM_{g,n}^r$ is neither $\textsf{S}$- nor $\Theta$-complete, as previously announced in \cite[Section~6.3]{AlpHalLei}.
\end{remark}

\begin{remark}
     In this remark, we will use definitions and notations as they will appear in a future work by David Rydh, where the author introduces the notion of a \emph{topological moduli space}. The $\Theta^2$-filling condition can be regarded as a valuative criterion for $N$-separatedness (for locally reductive stacks). We conjecture that this is the only condition in the existence result for topological moduli spaces that we are missing, i.e., $\cM_{g,n}^r$ is $M$-separated and $N$-closed, but not $N$-separated. In the third paper of the series, we discuss a valuative criterion for $M$-separatedness, which we call \emph{weak} \textsf{S}-completeness.
\end{remark}

Before discussing the counterexamples, we introduce the conditions on $A_r$-stable curves in \Cref{def:admin}, depending on the integer $r$. Then we explain why the conditions we provide are needed for the existence of the separated good moduli space, referring to the strategy above. 
 
\begin{definition}[The stack $\cU_{g,n}^r$ of admissible curves]
\label{def:admin}\index{Admissible curve}
Given non-negative integers $r, g, n$, let $C$ be an $n$-pointed $A_r$-stable curve\index{$A_r$-stable curve} of genus $g$ over an algebraically closed field $k$. We list the following conditions on $C$:    
    \begin{itemize}
        \item[($t_r$)] $C$ does not contain $A_0$-attached or $A_{2k+1}$-attached hyperelliptic tails of genus $1\leq h<r/2$ for any $k\geq 0$;
        \item[($c_r$)] $C$ does not contain non-separating $A_1/A_1$-attached  hyperelliptic chains of genus bounded by $(r-1)/2$;
        \item[($d_{r,1}$)] if \(r\) is odd, then $C$ does not contain non-separating $A_1/A_r$-attached hyperelliptic chains of genus bounded by  $(r-1)/2$;
        \item[($d_{r,2}$)] if \(r\) is odd, then $C$ does not contain non-separating $A_r/A_r$-attached hyperelliptic chains of genus bounded by  $(r-1)/2$;
        \item[$(e_r)$] $C$ is not an $A_h$-self-attached hyperelliptic curve with $h>g$ (see \Cref{def:self-attached}).
    \end{itemize}
We will often refer to conditions $(d_{r,1})+(d_{r,2})$ as condition $(d_r)$.

If an \(A_r\)-stable curve satisfies conditions $(t_r)$, $(c_r)$, $(d_r)$ and $(e_r)$, we call it an \emph{admissible} curve.
We denote by $\cU_{g,n}^r$ the moduli stack classifying $n$-pointed admissible curves of genus $g$, unless \((g,n)=(1,1)\), in which case we set $\cU_{1,1}^r\coloneq \cM_{1,1}^r(=\cM_{1,1}^2)$ for all $r\geq 2$ (see \Cref{rem:def-self-attached}).

\end{definition}

\begin{definition}[$A_h$-self-attached hyperelliptic curves]\label{def:self-attached}
    Let $g, h$ be positive integers with $h>g$. We say that an $A_r$-stable curve $C$ of genus $g$ is an \emph{$A_h$-self-attached hyperelliptic curve} if one of the following is true:
\begin{itemize}
    \item[(1)] $h=2h_0$ and $C$ is obtained from a hyperelliptic $A_r$-prestable $1$-pointed curve $(C_0,p)$,  where $p$ is a Weierstrass point, by pinching $p$ and thus creating an $A_{2h_0}$-singularity; we also include the case where $(C_0,p)$ has genus $0$;
    \item[(2)] $h=2h_0+1$ and $C$ is obtained from a hyperelliptic $A_r$-prestable $2$-pointed curve $(C_0,p_1,p_2)$, where $p_1$ and $p_2$ are exchanged by the hyperelliptic involution, by gluing $p_1$ and $p_2$ and thus creating an $A_{2h_0+1}$-singularity; we also include the case where $(C_0,p_1,p_2)$ has genus $0$;
    \item[(3)] $h=2g+1$ and $C$ is obtained by gluing two $\bP^1$'s along $\infty$ to create an $A_{2g+1}$-singularity.
\end{itemize}
\end{definition}

\begin{remark}
    Notice that we do not require the curve $C$ to be hyperelliptic, and case (3) above only appears for $r=2g+1$.
\end{remark}

\begin{remark}\label{rem:def-self-attached}
    We note that condition $(e_r)$ only appears for $n=0$ and $r>g$. Moreover, notice that if $C$ has an $A_0$-attached hyperelliptic tail $\Gamma$, it means that $\Gamma=C$, $n=1$, and thus if $C$ does not verify $(t_r)$, then $g<r/2$, which implies $r\geq 2g+1$. Furthermore, we should mention that if $(g,n)=(1,1)$, condition $(t_r)$ would imply $\cU_{1,1}^3= \emptyset$. This is a pathology which only occurs in this case, as \Cref{ex:n=1-hyp} produces a counterexample as soon as $g\geq 2$.
\end{remark}

\begin{remark}\label{rem:HKP-comparison}
    As the reader may notice, \Cref{def:admin} bears a strong resemblance to the definition of $\alpha_c$-stable curves appearing in the Hassett-Keel program (see \cite{AlpFedSmyWyck}). Since the last explicit step of the program uses $A_h$-singularities with $h\leq 4$, we can compare the two definitions for $r\leq 4$. 
    
    If $r=0$ (respectively $r=1$), the two definitions coincide and they are just the usual definitions of smooth (respectively stable) curves. If $r=2$, then our open corresponds to $\alpha=9/11$-stable curves \cite[Definition 2.5]{AlpFedSmyWyck}. If $r=3$, our open corresponds to \(\alpha=7/10\)-stable curves. Indeed, conditions $(c_r)$ and $(d_r)$ are empty for $r\leq 3$. Condition $(t_r)$ is empty for $r\leq 2$, and condition $(t_3)$ states that there are no $A_0$-, $A_1$-, and $A_3$-attached elliptic tails. Notice that if an $A_r$-stable curve has an $A_0$-attached elliptic tail, then the curve itself is a $1$-pointed genus $1$ curve, which is in $\cM_{1,1}^r$ (see the discussion in \Cref{rem:def-self-attached}). Moreover, condition $(e_r)$ is empty if $r\leq 2$, and $(e_3)$ states that $C$ is not an $A_3$-self-attached hyperelliptic curve of genus $g<3$. Nevertheless, the definition of $\alpha$-stable curves is stated in genus $\geq 3$ (see \cite[Remark 1.1]{AlpFedSmyWyck}).

    If $r=4$, then every $2/3$-stable curve is actually admissible, but the converse is not true, as we now show. First, let us recall the definition in the $2/3$-stable case: they do not allow
    \begin{itemize}
        \item[(1)] $A_1$-, $A_3$-, and $A_4$-attached elliptic tails;
        \item[(2)] $A_1/A_1$-, $A_1/A_4$-, and $A_4/A_4$-attached elliptic chains.
    \end{itemize}
    Condition $(t_4)$ forces us to remove $A_1$- and $A_3$-attached elliptic tails. Condition $(d_4)$ is trivial, whereas condition $(c_r)$ forces us to remove $A_1/A_1$-attached \emph{non-separating} elliptic chains. Moreover, condition $(e_r)$ forces us to remove $A_4$-attached elliptic tails (and nothing else in genus $\geq 3$).

\end{remark} 
\begin{lemma}\label{lem:cyclic-covers}
    If $r\geq  2g-1+n$, then the intersection $\cH_{g,n}^r \cap \cU_{g,n}^r$ consists of honestly hyperelliptic curves. 
\end{lemma}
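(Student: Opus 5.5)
Fix $(C,p_1,\dots,p_n)\in\cH_{g,n}^r\cap\cU_{g,n}^r$ with hyperelliptic involution $\sigma$ and quotient $\pi\colon C\to Z=C/\sigma$, a nodal curve of genus $0$. We want to show that $Z$ is irreducible. Suppose instead that $Z$ is reducible. Since $Z$ is a tree of $\bP^1$'s, it has at least two leaves, that is, components $Z_v$ meeting the rest of $Z$ at exactly one node $z_v$. We aim to show that the preimage of a suitable leaf, or of a suitable chain of components of $Z$, is a subcurve of $C$ forbidden by $(t_r)$, $(c_r)$ or $(d_r)$. The numerical hypothesis $r\ge 2g-1+n$ will be used to make sure the genus bounds in those conditions are met.

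\textbf{Step 1: local structure over nodes of $Z$.} Over a node $z$ of $Z$ there are two possibilities. In the first, $\sigma$ swaps two branches, so $\pi^{-1}(z)$ consists of two nodes of $C$. In the second, $\pi^{-1}(z)$ is a single point fixed by $\sigma$, and $\sigma$ preserves the two local branches. Since the singularities are of type $A$ and $C\to Z$ is a double cover, the second case gives an odd $A_{2k+1}$-singularity whose two branches lie over the two components of $Z$. I would prove this by a local computation with the branch divisor, in the spirit of (h): a branch point of length $m$ of $\pi$ at a smooth point of $Z$ yields an $A_{m-1}$-singularity, and at a node of $Z$ the two branches are glued.

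\textbf{Step 2: preimages of leaves.} Let $\Gamma_v=\pi^{-1}(Z_v)$. In the first case $\Gamma_v$ is either two copies of $\bP^1$, each meeting $C-\Gamma_v$ in one node, or an irreducible curve meeting $C-\Gamma_v$ in two nodes exchanged by $\sigma$. In the second case $\Gamma_v$ is a hyperelliptic curve attached at a single Weierstrass $A_{2k+1}$-point; if it has positive genus, it is an $A_{2k+1}$-attached hyperelliptic tail. Next we use stability, namely that $\omega_C(\sum p_i)$ is ample, to rule out unmarked rational tails. This shows that every leaf either carries a marking or has positive genus.

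\textbf{Step 3: genus count.} The key estimate is that $g(\Gamma_v)<r/2$. Since there are at least two leaves, $g(\Gamma_v)\le g-1$ for at least one of them. Refining this with the markings, each marked leaf or rational piece consumes one of the $n$ points. So there is a leaf whose preimage has genus $h$ with $2h\le 2g-2<2g-1+n\le r$. Then $(t_r)$ excludes every tail of the second case.

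\textbf{Step 4: two-point attachments.} In the first case, with a connected preimage, the leaf gives a subcurve attached at two nodes. Following the chain of components of $Z$ until a branching component is reached, its preimage is an $A_1/A_1$-attached (or $A_1/A_r$- or $A_r/A_r$-attached) hyperelliptic chain. This chain is non-separating when its preimage is connected, and it is forbidden by $(c_r)$ or $(d_r)$ once the genus bound $(r-1)/2$ holds, which again follows from $r\ge 2g-1+n$. When the preimage is disconnected, the two rational components must carry markings or meet further components; marked rational tails are allowed, so here I would instead pass to the adjacent components of $Z$ and repeat the argument.

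\textbf{Main obstacle.} The hardest step is this last case analysis: rational pieces with markings, and the disconnected case where $\sigma$ swaps two halves. It also requires checking that the resulting chains are really non-separating and have every bridge of genus strictly below $(r-1)/2$. This is where the inequality $r\ge 2g-1+n$ is used sharply, and I would first test it on the borderline configurations with $n\le 2$.
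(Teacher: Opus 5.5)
Your route is the paper's. You take a leaf $Z_v$ of the tree $Z=C/\sigma$ and describe the fibre over its attaching node; the paper cites \cite[Proposition~3.2]{Per2}, which says this fibre is a node, a tacnode, or two nodes swapped by $\sigma$, so neither $(d_r)$ nor longer chains are ever needed. You then conclude with $(t_r)$, $(c_r)$ and stability.

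\textbf{The gap is in Step~3.} Conditions $(t_r)$ and $(c_r)$ only apply to subcurves with $P_\Gamma=\emptyset$, so you need an \emph{unmarked} leaf. The observation ``one of two leaves has genus $\le g-1$'' does not supply one, because the low-genus leaf may be the marked one (for instance a $\bP^1$ with one marking attached at a tacnode). The missing observation is that, under the standing assumption $r\le 2g+1$, the hypothesis forces $n\le 2$.

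For $n\le 1$ this closes the argument. Markings are smooth points of $C$, so they lie over smooth points of $Z$, and hence some leaf $Z_v$ is unmarked. Stability then rules out three things: $\Gamma_v=\pi^{-1}(Z_v)$ being rational, the two-copies case, and a disconnected complement in the bridge case. If $\Gamma_v$ is a tail, then $h\le g-1<r/2$ unless $h=g$; in that case $C-\Gamma_v$ is a genus-$0$ tree with one attaching node and at most one marking, which is unstable. If $\Gamma_v$ is a bridge, then $g=h+g(C-\Gamma_v)+1$. For $n=0$ and $r=2g-1$ you need $g(C-\Gamma_v)\ge 1$, again by stability, to get $h<(r-1)/2$. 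Your remark that this ``follows from $r\ge 2g-1+n$'' skips this step.

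\textbf{For $n=2$ the obstacle is real.} Here $r=2g+1$. If $Z$ has exactly two leaves, each carrying one marking, nothing in \Cref{def:admin} applies, and the statement as written actually fails. Take $g\ge 2$ and smooth hyperelliptic curves $E_1,E_2$ of genera $h_1,h_2\ge 1$ with $h_1+h_2=g$. Glue them nodally at Weierstrass points and put a marking $p_i\in E_i$.
\begin{itemize}
\item The two involutions glue to $\sigma$ with $C/\sigma$ equal to two $\bP^1$'s meeting in a node, so $C\in\cH_{g,2}^{2g+1}$.
\item Every connected subcurve carries a marking and $(e_r)$ is vacuous, so $C\in\cU_{g,2}^{2g+1}$.
\item A finite degree-$2$ map $C\to\bP^1$ would have degree $1$ on each $E_i$, forcing $E_i\cong\bP^1$, so $C$ is not honestly hyperelliptic.
\end{itemize}

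The paper's proof tacitly makes the same choice: it treats $\pi^{-1}(\Gamma)$ as an unmarked tail or bridge. This is harmless where the lemma is used, namely $n=0$ in \Cref{lem:open} and $n=1$ in \Cref{lem:theta-tails}. So the correct target is $n\le 1$, and there your argument, once you choose an unmarked leaf, is exactly the paper's.
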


\begin{proof}
    First, we know that every hyperelliptic curve in $\cH_{g,n}^r$ is a curve $C$ with an involution $\sigma$ and $n$ markings. Recall that the quotient $Z:=C/\sigma$ is a genus $0$ nodal curve, thus a tree of $\bP^1$'s. We want to prove that the quotient $Z$ has only one irreducible component. If $Z$ is not irreducible, we can consider a leaf of our tree of rational curves, namely a component $\Gamma$ which intersects the rest of the curve $Z-\Gamma$ in one unique point $q$. Let \(\pi:C\to Z\) be the quotient morphism. Notice that the fiber $\pi^{-1}(q)$ is either a node (which corresponds to a Weierstrass point in $\pi^{-1}(\Gamma)$), a tacnode (which corresponds to a Weierstrass point in $\pi^{-1}(\Gamma)$), or two nodes (which correspond to a pair of points in $\pi^{-1}(\Gamma)$ exchanged by the involution). See Proposition 3.2 of \cite{Per2}. Thus, $\pi^{-1}(\Gamma)$ is either a non-separating $A_1/A_1$-attached bridge of genus $h\leq g-1$ or an $A_1$- or $A_3$-attached tail of genus $h\leq g$. Because $r\geq 2g-1$, conditions $(t_r)$ and $(c_r)$ imply that $h=g$, $r\leq 2g$ (thus $n\leq 1$), and $\Gamma$ is attached to $C-\Gamma$ at a simple node. Finally, the stability condition yields the desired contradiction.
\end{proof}

\begin{remark}\label{rem:cyclic-covers}
    Notice that in the previous proof, we have only used condition $(t_r)$ for $A_1$- and $A_3$-attached hyperelliptic tails and condition $(c_r)$.
\end{remark}

Finally, we end up with a straightforward but useful lemma, which will be crucial in the inductive proof of \textsf{S}-completeness.

\begin{lemma}\label{lem:r-implies-r-1}
    Let $C$ be an $n$-pointed $A_r$-stable curve of genus $g$ over an algebraically closed field $k$. Then 
    \begin{itemize}
        \item if $C$ satisfies $(t_r)$, then it satisfies $(t_{r-1})$;
        \item if $C$ satisfies $(c_r)$, then it satisfies $(c_{r-1})$;
        \item if $C$ satisfies $(e_r)$, then it satisfies $(e_{r-1})$.
    \end{itemize}
    Moreover, $(d_r)$ is an empty condition if $r$ is even.
\end{lemma}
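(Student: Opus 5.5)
The plan is to check each condition separately by contraposition: if $C$ violates $(t_{r-1})$, $(c_{r-1})$ or $(e_{r-1})$, we exhibit the same offending subcurve as a witness that $C$ violates the corresponding condition with index $r$. Throughout we use that an $A_{r-1}$-singularity is a fortiori admissible in an $A_r$-stable curve. The only thing to verify is that the numerical bounds in \Cref{def:admin} are monotone in $r$.

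\textbf{Condition $(t)$.} Suppose $C$ contains an $A_0$- or $A_{2k+1}$-attached hyperelliptic tail of genus $h$ with $1\leq h<(r-1)/2$. Then $h<r/2$ as well, and the attaching singularity type is unrestricted by $r$ in $(t_r)$. Hence the same tail violates $(t_r)$.

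\textbf{Condition $(c)$.} A non-separating $A_1/A_1$-attached hyperelliptic chain of genus bounded by $(r-2)/2$ has every bridge of genus strictly less than $(r-2)/2<(r-1)/2$. It is therefore also a chain of genus bounded by $(r-1)/2$, and so violates $(c_r)$.

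\textbf{Condition $(e)$.} Condition $(e_r)$ as written in \Cref{def:admin} forbids $A_h$-self-attached hyperelliptic curves with $h>g$. The parameter $r$ enters only through the ambient stack. A self-attached curve $C$ in $\cM_{g,n}^{r-1}$ has its self-attaching singularity of type $A_h$ with $h\leq r-1$, so $C$ is also an object of $\cM_{g,n}^r$. The same structure $(C_0,p)$ or $(C_0,p_1,p_2)$ of \Cref{def:self-attached} then witnesses failure of $(e_r)$. The case $h=2g+1$ (item (3) of \Cref{def:self-attached}) needs no separate argument, since the curve is literally the same.

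\textbf{Condition $(d_r)$.} Both $(d_{r,1})$ and $(d_{r,2})$ are stated only when $r$ is odd. For $r$ even they impose nothing, by definition.

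No step is a real obstacle. The only care needed is noting that the genus bounds $r/2$ and $(r-1)/2$ are increasing in $r$ and that "genus bounded by" means a strict inequality, so the inclusions go the right way. We also observe that no analogous statement is claimed for $(d_r)$: passing from odd $r$ to $r-1$ changes which attaching type $A_r$ is forbidden, which is why the lemma only records that $(d_r)$ is empty for even $r$.
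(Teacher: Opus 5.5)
Your proof is correct and is exactly the direct unwinding of definitions the paper intends; its own proof is just ``it follows straightforwardly from the definitions.'' For each condition you check that the bounds $r/2$ and $(r-1)/2$ increase with $r$, that ``genus bounded by'' is a strict inequality, and, for $(e)$, that an $A_{r-1}$-prestable witness $(C_0,p)$ or $(C_0,p_1,p_2)$ is a fortiori $A_r$-prestable. In the $(e)$ step you do not need to place $C$ in $\cM_{g,n}^{r-1}$, since $C$ is $A_r$-stable from the outset.
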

\begin{proof}
    It follows straightforwardly from the definitions.
\end{proof}

\subsection{Openness of the stack of admissible curves}\label{sub:openness}
In this subsection, we prove that the locus of admissible curves is an open substack of $\cM_{g,n}^r$. The proof is similar to \cite[Proposition~2.15]{AlpFedSmyWyck}: indeed, the main contribution is the openness of condition $(e_r)$.

\begin{lemma}\label{lemma:open_t_c_d}
    The locus defined by conditions $(t_r)$, $(c_r)$, and $(d_r)$ is open in $\cM_{g,n}^r$.
\end{lemma}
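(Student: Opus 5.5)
Since $\cM_{g,n}^r$ is of finite type, I will show that the complement $\mathcal{Z}$ of the locus cut out by $(t_r)$, $(c_r)$ and $(d_r)$ is closed under specialization and constructible; together these imply closedness. The model is \cite[Proposition~2.15]{AlpFedSmyWyck}. Constructibility is routine. The combinatorial type of an $A_r$-stable curve (dual graph, singularity types, markings) takes finitely many values and gives a locally closed stratification of $\cM_{g,n}^r$. Within a stratum, the condition that a given subcurve is hyperelliptic, with its attaching points either Weierstrass or exchanged by the involution, is closed, because $\cH^r$ is closed by \cite{Per2}. So $\mathcal{Z}$ is a finite union of locally closed pieces.

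The main step is closure under specialization. Let $R$ be a dvr and $C\to\Spec R$ a family whose geometric generic fiber $C_{\bar Q}$ contains a forbidden subcurve $\Gamma_Q$. After a finite base change, I will assume the irreducible components of $C_{\bar Q}$ and the singular points on $I_{\Gamma_Q}$ are defined over $Q$. Then $\Gamma_Q$ has a flat limit $\Gamma_0\subset C_0$, and I need to show that $C_0$ contains a forbidden subcurve of the same kind.

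I will first treat the attaching points using \Cref{prop:alp-fund}. Every odd attaching singularity $A_{2k+1}$ of $\Gamma_Q$ is outer, so it specializes to an outer $A_{2m+1}$-singularity of $C_0$ lying between $\Gamma_0$ and $C_0-\Gamma_0$, with $m\geq k$. For $(d_r)$ the attaching type is $A_r$, which is maximal, so it remains $A_r$. An $A_0$-marking stays a marking.

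Next I will show that $\Gamma_0$, or a suitable subcurve of it, is again of the required shape. The involution of $\Gamma_Q$ extends over $\Spec R$ by taking the closure of the graph, since $\cH$ is closed. Hence $\Gamma_0$ is hyperelliptic, its attaching points are Weierstrass or exchanged, and the genus is preserved by flatness. However, $\Gamma_0$ may fail to be honestly hyperelliptic: its quotient $Z_0$ can be a tree of $\bP^1$'s. I will handle this exactly as in the proof of \Cref{lem:cyclic-covers}. Decomposing $Z_0$ along its nodes writes $\Gamma_0$ as a chain of hyperelliptic bridges, possibly with tails, attached at odd singularities. Each bridge has genus at most that of $\Gamma_Q$, so the genus bound $<(r-1)/2$ in $(c_r)$ and $(d_r)$, respectively $<r/2$ in $(t_r)$, is inherited. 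For $(t_r)$, if $\Gamma_0$ is not irreducible over its quotient, a leaf gives an $A_1$- or $A_3$-attached hyperelliptic tail of smaller genus, which is still forbidden.

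Non-separatedness in $(c_r)$ and $(d_r)$ also persists. The specialization is a finite type map of dual graphs, and by \Cref{prop:alp-fund} a non-separating generic singularity cannot become separating. So a cycle through the chain in $C_{\bar Q}$ gives a cycle in $C_0$.

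I expect the main obstacle to be this last step, with new singularities appearing in $\Gamma_0$. Singularities of $\Gamma_0$ that come from the limit of generic singularities merging with the attaching points must be shown to sit only at Weierstrass or conjugate loci. My plan is to use the equivariance of the limit involution together with the merging rule $m=s-1+\sum k_i$ of \Cref{prop:alp-fund}, so that the combinatorics closes up into the chain shape of \Cref{def:attached-hyp-chain}.
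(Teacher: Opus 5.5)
There is a genuine gap in the specialization step, at the attaching points.

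From \Cref{prop:alp-fund} you only extract that an attaching $A_{2k+1}$-singularity specializes to some $A_{2m+1}$ with $m\geq k$. That is harmless for $(t_r)$, which forbids tails attached at any odd singularity; in fact a separating attaching point is equisingular. But $(c_r)$ and the $A_1$-end in $(d_{r,1})$ only forbid chains attached at \emph{nodes}, so you need $m=0$.

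The point you are missing is that \Cref{prop:alp-fund} only lets singularities on the same two irreducible components merge. The only candidates are therefore the other attaching points of $\Gamma_Q$. For chains of length $\geq 2$ all attaching and gluing singularities are lonely, hence equisingular. In $(d_r)$ a merge would create an $A_{r+2}$- or $A_{2r+1}$-singularity, which is impossible in $\cM_{g,n}^r$.

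What remains is a bridge whose two attaching nodes lie on the same pair of components, and these can collide into a single $A_3$-singularity. Then $\Gamma_0$ is attached to $C_0-\Gamma_0$ at one separating tacnode, sitting at a Weierstrass point (the limit of a conjugate pair). So $\Gamma_0$ is an $A_3$-attached hyperelliptic tail of genus $h<(r-1)/2<r/2$. In this case $C_0$ contains no forbidden chain, and your goal of finding ``a forbidden subcurve of the same kind'' fails. The curve has to be excluded via $(t_r)$ instead, which is exactly the extra step in the paper's proof. Your non-separatedness argument breaks at the same place: two non-separating nodes merge into a separating singularity, and the cycle through the bridge in the dual graph collapses to a single edge.

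A second, smaller problem concerns chains of length $\geq 2$. There $\Gamma_Q$ is not itself hyperelliptic, since the involutions of the bridges do not glue. So there is no involution of $\Gamma_Q$ to extend and no quotient $Z_0$ to decompose. You have to work bridge by bridge. Use the equisingularity of the lonely gluing points to partially normalize along these sections. Then apply closedness of the locus of $2$-pointed hyperelliptic curves with conjugate markings (respectively of $\cH^r_{h,w}$ for tails) to each piece, taking care that the normalized pieces need not be stable.

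Conversely, your reduction to honestly hyperelliptic curves is unnecessary: hyperelliptic tails and bridges only require a nodal genus-$0$ quotient. The ``main obstacle'' you anticipate, interior singularities merging into attaching points, is already ruled out by \Cref{prop:alp-fund}. The real obstacle is the collision described above, which your plan leaves unaddressed.
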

\begin{proof}
    The locus can be realized as the finite union of substacks, described in \cite[Theorem~1.105]{VanDerWyck}, determined by the combinatorial types of the curves, therefore the constructibility of the locus follows by applying Chevalley's theorem (cf. \cite[Théorème~5.9.4]{LauMorBai}). Hence, it suffices to prove that its complement is closed under specialization.
    
    We start with condition $(t_r)$. Suppose we are given a family $C\rightarrow \spec R$ of $n$-pointed $A_r$-stable curves over a dvr $R$ such that the generic fiber $C_Q\rightarrow \spec Q$ contains an $A_{2k+1}$-attached hyperelliptic tail $\Gamma_Q$ of genus $1\leq h<r/2$. The $A_{2k+1}$-singularity $p_Q$ (which exists up to passing to a finite extension of dvrs) is separating by definition. Thus, it gives rise to an equisingular separating section $p_R$, because \Cref{prop:alp-fund} ensures that it remains a separating $A_{2k+1}$-singularity.

    Let $\widetilde{C}\rightarrow \spec R$ be the partial normalization along $p_R$, and denote by $\Gamma_R\subset \widetilde{C}$ the connected component which restricts to $\Gamma_Q$ on the generic fiber. If $\Gamma_R$ is $A_r$-stable, then $\Gamma_R$ is also a hyperelliptic curve because $\cH_{h,w}^r$ is closed inside $\cM_{h,1}^r$, and $p_R$ is a Weierstrass point. If $\Gamma_R$ is not stable, it suffices to note that the moduli stack of partial normalizations is isomorphic to $\cM_{g-k,n+1}^r \times [\bA^1/\Gm]$, and the same closure property holds. Notice that the same argument implies that being an $A_0$-attached hyperelliptic tail is a closed condition. Thus, \((t_r)\) is an open condition. 

    Suppose now that we are given a family $C\rightarrow \spec R$ of $n$-pointed $A_r$-stable curves over a dvr $R$ such that $C_Q$ does not satisfy condition $(c_r)$. There are two possibilities. 

    First, if $C_Q$ contains a non-separating $A_1/A_1$-attached hyperelliptic chain of length two or more (with genus bounded by $(r-1)/2$), then any attaching singularity of the hyperelliptic bridges is lonely. We can then apply \Cref{prop:alp-fund} analogously to the $(t_r)$ case. In this situation, it is enough to prove that the property of being a $2$-pointed hyperelliptic curve (with the two markings exchanged by the involution) is closed. 

    It remains to address the case of length 1, i.e., the case of a bridge. Here, the two attaching singularities can either be preserved or can degenerate through the family $C\rightarrow \spec R$ to the same singularity $p$, which will be an $A_3$-singularity by \Cref{prop:alp-fund}. In the first case, we conclude as before. In the second case, we conclude because $p$ is now an attaching Weierstrass point of a hyperelliptic tail of genus $h<(r-1)/2<r/2$ (see, for instance, \cite[Theorem~3.42]{GMSArI}), which violates condition $(t_r)$. Condition $(d_r)$ follows by the same argument.
\end{proof}

\begin{theorem}\label{lem:open}
    The stack $\cU_{g,n}^r$ is an open substack of $\cM_{g,n}^r$.
\end{theorem}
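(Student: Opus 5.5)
By \Cref{lemma:open_t_c_d}, the locus cut out by $(t_r)$, $(c_r)$ and $(d_r)$ is already open in $\cM_{g,n}^r$. Since $\cU_{g,n}^r$ is the intersection of this locus with the locus satisfying $(e_r)$, it suffices to show that the locus of $A_h$-self-attached hyperelliptic curves with $h>g$ is closed in $\cM_{g,n}^r$. By \Cref{rem:def-self-attached}, this locus is empty unless $n=0$ and $r>g$, so we may assume both. The exceptional case $(g,n)=(1,1)$ needs no argument, because there $\cU_{1,1}^r$ is defined to be all of $\cM_{1,1}^r$.

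First, constructibility. The self-attached locus is a finite union of substacks, one for each combinatorial type of the pointed partial normalization $(C_0,p)$ or $(C_0,p_1,p_2)$. Each of these is the image of a stack of pointed hyperelliptic $A_r$-prestable curves of lower genus under the crimping or gluing map of \Cref{sub:equivariant-geometry-A-singularities}, with $(\star)$ and $(\star\star)$ describing the fibres as quotients $[{\rm Cr}_p/\Aut]$. These stacks are of finite type, and the hyperelliptic ones are closed by \cite{Per2}, so the locus is constructible by Chevalley's theorem. It therefore remains to show that the locus is stable under specialization.

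For specialization, take a family $C\to\spec R$ over a dvr whose generic fibre $C_Q$ is $A_h$-self-attached with $h>g$, and, after a finite extension of $R$, mark the distinguished singularity $p_Q$. Since $h>g$, \Cref{rem:genus-count} shows that $p_Q$ carries more than half of the genus. Upper semicontinuity of the singularity type, together with $h\le r\le 2g+1$, should then force $p_Q$ to specialize equisingularly to an $A_h$-singularity $p_k$: any other singular point merging with it would produce an $A_{h'}$-singularity with $h'>h$, and the genus count bounds how far this can go. This step is the main obstacle, so let me say what I would try. In the odd case with $p_Q$ non-separating, I would compare $\delta$-invariants: the total $\delta$ of the special fibre near $p_k$ equals the sum of the generic $\delta$'s. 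If $p_k$ were an $A_{h'}$-singularity with $h'>h$, then normalizing at $p_k$ would leave genus $g-\lceil h'/2\rceil<g-\lceil h/2\rceil$ on the rest of the curve, and this must be reconciled with the hyperelliptic structure. If merging genuinely occurs, I expect the special fibre to still be self-attached for the larger index $h'>g$. Indeed, the merged singularity is again attached at Weierstrass points, or at conjugate points, of the limit of the hyperelliptic normalization; this is the same mechanism as the $A_3$ case in the proof of \Cref{lemma:open_t_c_d}. So in either case the special fibre lies in the locus.

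Once the distinguished singularity is known to specialize to a single $A_{h'}$-singularity with $h'\ge h>g$, I take the partial normalization along that section. This is a family of $1$- or $2$-pointed curves of genus $g-\lceil h'/2\rceil$, possibly genus $0$, where the points are Weierstrass or conjugate on the generic fibre. As in the proof of \Cref{lemma:open_t_c_d}, stability may fail, so I would work in $\cM^r_{g',m}\times[\bA^1/\Gm]$ or allow genus $0$ directly. Closedness of $\cH_{g',w}^{r}$ and of its $g^1_2$ analogue, the latter via the closed image of the hyperelliptic locus with conjugate markings, then shows that the special normalization is hyperelliptic with the points still Weierstrass or conjugate. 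The degenerate case (3), two $\bP^1$'s glued along $\infty$, arises as the limit when the normalization breaks into two rational components, and I would check it directly. Altogether this shows that the special fibre is $A_{h'}$-self-attached with $h'>g$, so the complement of $(e_r)$ is closed and $\cU_{g,n}^r$ is open.
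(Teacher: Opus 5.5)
There is a genuine gap in the specialization step, exactly in the non-equisingular case you flag as the main obstacle.

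First, that case cannot be ruled out by semicontinuity or a genus count. Already for honestly hyperelliptic $C_Q$, the branch point of multiplicity $h+1$ can collide with another branch point and produce a strictly worse $A_{h'}$. This is why the paper's proof treats $k>h$ as a separate case. Your fallback, ``I expect the special fibre to still be self-attached'', is the statement to be proved. The comparison with the $A_3$ case of \Cref{lemma:open_t_c_d} does not supply it: there the conclusion, via \cite[Theorem~3.42]{GMSArI}, is a tail violating $(t_r)$, not a self-attached curve.

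Second, the tool you then use fails. Partially normalizing the total space along a non-equisingular section gives a flat family whose generic fibre is the normalization of $C_Q$ at $p_Q$, of genus $g-\lceil h/2\rceil$. Its special fibre is therefore not the normalization of $C_k$ at $p_k$, which has genus $g-\lceil h'/2\rceil$, unless $h$ is odd and $h'=h+1$. In general a residual singularity survives: for $y^2=x^2(x-t)^2$, normalizing along $x=y=0$ leaves a node where the tacnode was. In the exceptional $\delta$-constant case $A_{2a+1}\rightsquigarrow A_{2a+2}$, the two conjugate sections collide instead. So closedness of $\cH^r_{g',w}$ and of its conjugate-points analogue cannot be applied to that family.

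Note also that the paper proves less than you aim for. It only shows that the failure of $(e_r)$ is closed inside the open locus of \Cref{lemma:open_t_c_d}. It may therefore assume both fibres satisfy $(t_r)$, $(c_r)$, $(d_r)$, and it uses these hypotheses essentially. For $k>h$ it proceeds as follows:
\begin{itemize}
\item it applies the iterated blowup of \cite[Definition~4.3]{Per1} to get $\widetilde{C}\to C$ resolving the generic fibre along the $A_h$-singularity;
\item it uses $(c_r)$, and $(t_r)$ on $C_k$, to exclude unstable components of $\widetilde{C}_Q$ and $\widetilde{C}_k$ (in the odd case $h=2h_0+1$ these would produce an $A_1/A_1$-attached hyperelliptic piece of genus $g-h_0-1<(r-1)/2$);
\item it obtains hyperellipticity of $\widetilde{C}_k$ from closedness of the hyperelliptic locus, and honest hyperellipticity from \Cref{lem:cyclic-covers}, again relying on $(t_r)$ and $(c_r)$;
\item only then does it conclude that $C_k$ is $A_k$-self-attached.
\end{itemize}
By reducing to closedness of the self-attached locus in all of $\cM_{g,n}^r$, you discard exactly these hypotheses. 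Even a corrected merging argument would then need an independent way to control unstable components and to identify the special fibre of the resolved family, and the proposal provides none.
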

\begin{proof}
    For $n>0$, condition $(e_r)$ is vacuous, hence the claim follows directly from \Cref{lemma:open_t_c_d}. Assume now that $n=0$. For the same reason as in the previous lemma, the locus is constructible by Chevalley's theorem, so it remains to prove that the complement is closed under specialization. Thanks to \Cref{lemma:open_t_c_d}, we can reduce the problem to proving that if a family of curves $C \to \spec R$ (where $R$ is a dvr) satisfies $(t_r)$, $(c_r)$, and $(d_r)$, and its generic fiber does not satisfy $(e_r)$, then the special fiber also does not satisfy $(e_r)$.

    Suppose that the generic fiber $C_Q$ is an $A_h$-self-attached hyperelliptic curve where $h>g$. The $A_h$-singular section $p_Q$ extends to a section of $C$ that lands in an $A_k$-singularity in the special fiber, with $k\geq h$.

    If $k=h$, then $p_R$ is an equisingular section. We can normalize along it, and the result follows in the same way as \Cref{lemma:open_t_c_d}. Thus, we can assume $k>h$. For simplicity, we deal with the odd case $h=2h_0+1$, leaving it to the reader to verify that the same proof works in the even case.

    First, consider the iterated blowup procedure described in \cite[Definition~4.3]{Per1}, which yields a curve $\widetilde{C}\rightarrow C$ over $\spec R$ where we have resolved the generic fiber of $C$ along the $A_h$-singularity. Suppose that the genus of $\widetilde{C}$ is greater than or equal to $1$. We claim that the (non-pointed) curve $\widetilde{C}$ is an $A_r$-stable hyperelliptic curve. First of all, the generic fiber $\widetilde{C}_Q$ is a hyperelliptic $A_r$-prestable curve. Since it is the partial normalization of an $A_r$-stable curve $C_Q$, the unstable components in the odd case must be two $A_1$-attached $\bP^1$'s exchanged by the involution (by the definition of self-attached). Therefore, the rest of the curve would be an $A_1/A_1$-attached hyperelliptic curve of genus $g-h_0-1 < h_0 = (h-1)/2 \leq (r-1)/2$ that also appears in $C_Q$ (since the morphism $\widetilde{C}\rightarrow C$ is an isomorphism outside the section $p$). This is impossible since $C_Q$ satisfies $(c_r)$.

    Therefore, the curve $\widetilde{C}_Q$ is a hyperelliptic $A_r$-stable curve with two smooth markings $q_1$ and $q_2$ exchanged by the involution. A priori, the special fiber $\widetilde{C}_k$ over the closed point may only be $A_r$-prestable rather than $A_r$-stable. We aim to prove that it is, in fact, $A_r$-stable.

    Consider the stabilization $\widetilde{C} \rightarrow C'$. Then $C'$ is an $A_r$-stable curve over $R$ whose generic fiber is hyperelliptic. Since the stack of hyperelliptic \(A_r\)-stable curves is a closed substack of \(\cM_{g}^r\) (see \cite[Theorem~3.42]{GMSArI}), we obtain that $C'_k$ is a hyperelliptic $A_r$-stable curve. If $\widetilde{C}_k$ had unstable components, this would again imply that it contains an $A_1/A_1$-attached hyperelliptic curve of genus $g-h_0-1 < (r-1)/2$, which would then also appear in $C_k$. This is impossible since we assume that $C_k$ satisfies $(c_r)$ and $(t_r)$.

    Thus, $\widetilde{C}$ is $A_r$-stable. Using the closedness of hyperelliptic curves, we deduce that $\widetilde{C}$ is hyperelliptic and $A_r$-stable of genus $g-h_0$. Since $g-h_0-1 < (r-1)/2$, we can apply \Cref{lem:cyclic-covers} to conclude that $\widetilde{C}$ is honestly hyperelliptic. Therefore (up to taking finite extensions), $\widetilde{C}_k$ has a $2\!:\!1$ morphism to $\bP^1$ and is connected, thus it is an honestly hyperelliptic curve. 

    By definition, the curve $C_k$ is an $A_k$-self-attached curve of genus $g < h < k$, which completes the proof for this case. We leave the genus $0$ case to the interested reader; it is simpler to handle since the section $p_R$ is equisingular.
\end{proof}

\subsection{Self-attached hyperelliptic curves}\label{sub:e_r}
Let us start with the condition due to the honestly hyperelliptic locus in $\cM_{g,n}^r$ not having a good moduli space: as discussed at the end of \Cref{sub:pointwise geometry}, when $n=0$ and $r\geq 2g+1$, there is a closed point in $\cH_g^r$ which has stabilizers of the form $\Gm \ltimes \Ga$. The study of the basins of attraction of this point leads to the definition of \emph{self-attached curves}, see \Cref{def:self-attached}. Note that these curves are not marked, therefore that condition is trivially satisfied if \(n\neq 0\). 

First, we construct isotrivial degenerations for self-attached hyperelliptic curves.
\begin{lemma}\label{lem:deg-self-attached}
   Let $g, h$ be positive integers and let $C_1$ be an $A_h$-self-attached honestly hyperelliptic curve of genus $g$, with singularity $q_1$. Then there exists an isotrivial degeneration $C\rightarrow \Theta$ such that
   \begin{itemize}
       \item the generic fiber is isomorphic to $C_1$,
       \item the section $q_1$ can be extended to an equisingular section $q$,
       \item the $A_h$-singularity $q_0$ is hyperelliptic.
    \end{itemize}
    Moreover, $C_0$ is a $2:1$-cover of $\bP^1$ with $q_0$ a branching point of multiplicity $h+1$.
\end{lemma}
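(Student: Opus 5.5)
The plan is to reduce to the pointed partial normalization and use the explicit $\Gm$-quotient descriptions of the hyperelliptic loci in \Cref{prop:w-global-descr} and \Cref{prop:g12-global-descr}. I treat the odd case $h=2h_0+1$ in detail; the even case $h=2h_0$ follows the same argument with the Weierstrass locus in place of the $g^1_2$ locus.

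First, let $(\widetilde C_1,p_1,p_2)$ be the pointed partial normalization of $C_1$ at $q_1$. By \Cref{def:self-attached}, it is an honestly hyperelliptic $2$-pointed curve of genus $g-h_0-1$ whose two markings are exchanged by the involution. Since $h>g$, its genus satisfies $g-h_0-1<h_0$. Via (h), I identify $\widetilde C_1$ with a branch divisor $B_1\subset\bP^1$ of length $2(g-h_0)$, and the markings with the fiber over a non-branch point, which I place at $\infty$. The $\Gm$-action $t\mapsto\lambda t$ on $\bP^1$, fixing $0$ and $\infty$, is the one underlying \Cref{prop:g12-global-descr}. I then take the family of divisors $\lambda\cdot B_1$ and let $\lambda\to\infty$, so that all branch points flow to $\infty$. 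Over $\bA^1$ (after the inversion $\lambda\mapsto\lambda^{-1}$), this gives a $\Gm$-equivariant family $B_t$ with $B_0=2(g-h_0)\cdot[\infty]$. Taking the double cover gives a $\Gm$-equivariant family of $2$-pointed curves over $\bA^1$, with markings over $0$. Its special fiber is honestly hyperelliptic, with a single $A_{2(g-h_0)-1}$-singularity over $\infty$ and markings over $0$; this is exactly the closed point described after \Cref{prop:g12-global-descr}.

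Second, I glue the two markings to form the $A_h$-singularity. The markings are disjoint $\Gm$-stable sections, so relative crimping along them is possible. By the discussion of odd crimping data in \Cref{sub:equivariant-geometry-A-singularities}, the stack of crimpings over a fixed pointed normalization is $[{\rm Cr}_q/\Aut^\circ]$, and the family of crimping data $t\mapsto t\cdot c_1$ with $c_1$ the datum of $C_1$ needs to be made $\Gm$-equivariant. Concretely, I would choose coordinates so that $\Gm$ acts on ${\rm Cr}_q\simeq\Gm\times\bA^{h_0}$ with weights that allow a limit. On the special fiber the action scales the tangent directions at the two markings oppositely to the hyperelliptic involution. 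I would then pick the $\Gm$-equivariant crimping datum, so that the action descends and $q_0$ becomes hyperelliptic in the sense of \Cref{def:hyp-even-sing}.

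\textbf{Main obstacle.} The main obstacle is making this step honest. The crimping datum of $C_1$ need not be in the closure of the equivariant one along the chosen cocharacter: the $\Gm$-factor of ${\rm Cr}_q$ may carry nonzero weight. To handle this, I would first use \cite[Lemma~3.37]{GMSArI} together with the hyperelliptic involution, which swaps $p_1,p_2$, to replace $c_1$ by an isomorphic datum. The aim is a datum whose $\Gm$-coordinate is the equivariant one and whose remaining $\bA^{h_0}$-coordinates have positive weight. If needed, I would compose the cocharacter with the $\Aut$ of the hyperelliptic cover to rescale the weights. The result is a map $\Theta\to\cM_g^r$ with generic fiber $C_1$. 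The singular section is equisingular because it is obtained by gluing along a flat family of length-$(h_0+1)$ thicknesses.

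Finally, the special fiber $C_0$ is the gluing of the $2$-pointed closed point over $\infty$ at its two markings. The hyperelliptic involution is compatible with the equivariant gluing, so $C_0$ is a $2\!:\!1$ cover of $\bP^1$. Its branch divisor is concentrated at $\infty$ with length $2(g-h_0)$, plus the contribution $2h_0+2=h+1$ at the glued point over $0$. Hence $q_0$ is a branch point of multiplicity $h+1$, since an $A_h$-singularity corresponds to a point of length $h+1$ in the branch divisor by (h). This gives the last assertion of the statement.
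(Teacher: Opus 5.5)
Your first step, degenerating the branch divisor of the normalization, is fine. The gluing step has a genuine gap, however, and your diagnosis of it points the wrong way.

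Along that degeneration the torus acts on $T_{p_1}$ and $T_{p_2}$ with the \emph{same} weight. Both markings lie over $0\in\bP^1$, and the pulled-back coordinate $x$ is a relative coordinate at both of them, uniformly in $s$. The fibre over $s$ is identified with $\widetilde C_1$ via $x'=sx$. Hence the $\Gm$-factor of ${\rm Cr}_q\simeq\Gm\times\bA^{h_0-1}$ has weight $0$. The datum $x_2=x_1q_1(x_1)$ of $C_1$ is transported to $x_2=x_1q_1(sx_1)$, which extends over $s=0$ with limit $x_2=c_0x_1$, where $c_0=q_1(0)$. So the limit always exists and the section is automatically equisingular, but the linear coefficient $c_0$ never changes. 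Your proposed repair cannot change it either. If the normalization has positive genus, $c_0$ (measured against $d\iota$) is an isomorphism invariant of the self-attached curve up to $c_0\mapsto c_0^{-1}$, and that inversion is all the involution can do. No change of cocharacter moves a weight-$0$ coordinate.

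This is exactly what the last assertion hinges on, and you only assert it. The limit $C_0$ is two copies of $\bP^1$, glued at $x=\infty$ by the identity on thickenings (an $A_{2g-h}$-singularity) and at $x=0$ by $x_2=c_0x_1$. When the normalization has positive genus, a degree-$2$ map $C_0\to\bP^1$ restricts to isomorphisms on both components compatible with both gluings, and this forces $c_0=1$. So for $c_0\neq 1$ your family does not end at a double cover, and the claim that ``the hyperelliptic involution is compatible with the equivariant gluing'' fails.

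The missing input is that $C_1$ \emph{itself} is honestly hyperelliptic. Then its crimping datum is the identity in the coordinate pulled back from $\bP^1$, and every fibre of your family is a double cover. Simpler still, skip the normalization. Write $C_1$ as the double cover branched along $(h+1)[0]+B'$ with $0\notin B'$, and push $B'$ to $\infty$ via $t\mapsto\lambda t$. The $(h+1)$-fold point is fixed, so the $A_h$-singularity is equisingular. The limit is branched along $(h+1)[0]+(2g+1-h)[\infty]$, and the torus acts non-trivially on $T_{q_0}C_0$.

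In the even case your route does work once the limit is actually computed: every crimping coordinate is multiplied by a positive power of $s$, so it tends to the involution-compatible datum $q=0$. Also fix the inconsistency of placing the markings first over $\infty$ and then over $0$.
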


\begin{proof}
    The proof of this result is similar to the one of \cite[Lemma~4.46]{GMSArI}.
\end{proof}

\begin{lemma}\label{lem:self-attached-condition}
    Let $\cU \subset \cM_{g}^r$ be an open substack which admits a separated good moduli space. If $\cU_{g}^r\subset \cU$, then every curve in $\cU$ satisfies condition $(e_r)$ of \Cref{def:admin}.
\end{lemma}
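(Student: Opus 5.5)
We argue by contradiction. Suppose $\cU$ contains a curve $C_1$ violating $(e_r)$, i.e.\ an $A_h$-self-attached hyperelliptic curve of genus $g$ with $h>g$ (so $n=0$ and $r\geq h>g$). The strategy is to produce, inside $\cU$, a point whose stabilizer is not linearly reductive, or a failure of $\Theta$-/$\textsf{S}$-completeness. Either contradicts the existence of a separated good moduli space via \cite{ExistenceOfModuli}.

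First we reduce to the honestly hyperelliptic case. If $C_1$ is self-attached but the underlying $(C_0,p)$ (resp.\ $(C_0,p_1,p_2)$) is not honestly hyperelliptic, the quotient tree has a leaf. The argument of \Cref{lem:cyclic-covers}, combined with the bound $h>g$ (which forces the remaining genus to be $<(r-1)/2$ or $<r/2$), shows that $C_1$ then contains a hyperelliptic tail violating $(t_r)$ or a non-separating $A_1/A_1$-chain violating $(c_r)$. Such curves are not in $\cU$ either. To see this, we would use that $\cU\supset\cU^r_g$ and apply the (already established, or to be treated in parallel) maximality arguments for $(t_r)$ and $(c_r)$. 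Alternatively, we simply isotrivially degenerate the hyperelliptic tail to an atom, which stays in the closure, and reduce to the honestly hyperelliptic case. Since $\cU$ is open and admits a good moduli space, closures of points in $\cU$ under isotrivial degeneration stay in $\cU$ (a good moduli space forces $\Theta$-completeness, so limits of $\Theta$-maps exist in $\cU$). We will use this repeatedly.

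Next, we apply \Cref{lem:deg-self-attached} to $C_1$. This gives an isotrivial degeneration to $C_0$, a $2\!:\!1$ cover of $\bP^1$ whose branch divisor has a point of multiplicity $h+1>g+1$ at $q_0$, with $q_0$ hyperelliptic. By the $\Theta$-completeness of $\cU$ (equivalently, the filling for $\Theta_R$ applied to a constant family), $C_0\in\cU$. Now $C_0$ corresponds to a branch divisor $D=(h+1)[\infty]+D'$ with $\deg D'=2g+1-h<g+1$. Inside $\cH_g^{r,\circ}$, the $\Gm\ltimes\Ga$-action fixing $\infty$ (translations and scalings of $\bA^1$) lets us isotrivially degenerate further. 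Using the $\Gm$ scaling that pushes $D'$ toward $\infty$ gives the degeneration $D\rightsquigarrow(2g+2)[\infty]$, i.e.\ to the closed point $Z$ with branch divisor concentrated at one point (the odd atom of genus $g$ glued to itself, or its even analogue). This requires $r\geq 2g+1$ for $Z$ to lie in $\cM^r_g$. If $r<2g+1$, we instead degenerate $D'$ toward a single point $0\neq\infty$ using the scaling centered there. Since $h+1>g+1>\deg D'$, the resulting curve has only $A_{\le r}$ singularities.

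The key step is to show this leads to a contradiction. Case $r=2g+1$: $Z\in\cU$ by $\Theta$-completeness, but $\Aut(Z)\supset\Gm\ltimes\Ga$ is not linearly reductive, so $\cU$ cannot have a good moduli space. Case $r\le 2g$: we use the $\Ga$-translation to move $D'$ while keeping $h+1$ at $\infty$, together with \cite[Theorem~3.44]{GMSArI}. The latter states that branch divisors with a point of length $>g+1$ obstruct the good moduli space. Concretely, we would build a map $\ST_R\setminus 0\to\cU$ (or $\Theta_R\setminus 0$) from two $\Gm$-degenerations of $C_0$ with opposite weights, relative to translation. The filling would force a curve whose stabilizer contains $\Ga$, or whose branch divisor is concentrated at $\infty$ with length $2g+2$, which lies outside $\cM^r_g$; hence the filling cannot exist. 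The main obstacle will be this last step: verifying that the required $\ST_R$ or $\Theta_R$ diagram genuinely lands in $\cU$ and admits no filling. I expect to reduce it to the quotient description $[\bA^{N}/\Gm\ltimes\Ga]$ of divisors with a fixed point of multiplicity $h+1$, and to argue as in \cite[Theorem~3.44]{GMSArI}.
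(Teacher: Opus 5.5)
Your proposal has a genuine gap at its central step. You claim that, because $\cU$ admits a good moduli space and is therefore $\Theta$-complete, the special fiber $C_0$ of the isotrivial degeneration from \Cref{lem:deg-self-attached} lies in $\cU$. More generally, you assume that isotrivial degenerations in $\cM_g^r$ of points of $\cU$ stay in $\cU$. This is false.

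$\Theta$-completeness only extends maps $\Theta_R\setminus 0\to\cU$. Your ``constant family'' $\Theta_R\to\Theta\to\cM_g^r$ already sends the origin of $\Theta_Q$, which is a point of $\Theta_R\setminus 0$, to $C_0$. So it factors through $\cU$ only if you already know $C_0\in\cU$. The example $\bP^n=[(\bA^{n+1}\setminus 0)/\Gm]\subset[\bA^{n+1}/\Gm]$ from the introduction shows the principle fails: $\bP^n$ has a proper good moduli space, yet each of its points degenerates isotrivially in the ambient stack to the origin, which is not in $\bP^n$.

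A symptom is that, for honestly hyperelliptic $C_1$, your argument never uses the hypothesis $\cU_g^r\subset\cU$, and without it the conclusion fails locally. In the model $[\Def_{C_0}/\Gm]$, take the open locus where the component in the basin containing $C_1$ is nonzero. It is affine over a weighted projective stack, hence has a separated good moduli space, contains curves like $C_1$, and misses $C_0$. The same flawed principle underlies your second reduction (``degenerate the tail to an atom, which stays in the closure'') and the step $Z\in\cU$ for $r=2g+1$. Your first reduction leans on the $n=0$ cases of the tail and bridge lemmas, which carry extra numerical hypotheses. The case $r\le 2g$ is left as a sketch.

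The missing idea is to apply \textsf{S}-completeness to the two basins of attraction at $C_0$. The $\Gm$-stabilizer of $C_0$ splits its deformation space into two basins:
\begin{itemize}
\item the directions preserving $q_0$, a basin which contains $C_1$;
\item the directions preserving $p_0$ while smoothing $q_0$, whose generic point is an admissible curve and so lies in $\cU_g^r\subset\cU$.
\end{itemize}
Since $\cU$ meets both basins, \cite[Lemma~2.10]{GMSArI} gives $C_0\in\cU$. This is exactly where the hypothesis $\cU_g^r\subset\cU$ enters.

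No further degeneration to the non-reductive point $Z$ is then needed. The substack $\cU\cap\cH_g^r$ is closed in $\cU$. The substack $\cU\cap\cH_g^{r,\circ}$ is the complement of the Cartier divisor $\Delta^{\rm sep}$ in it, hence affine over it, so it too has a separated good moduli space. \cite[Theorem~3.44]{GMSArI} then excludes double covers of $\bP^1$ whose branch divisor has a point of multiplicity $h+1>g+1$. So $C_0\notin\cU$, a contradiction. This is the paper's proof, and it treats all $r$ uniformly.
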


\begin{proof}
     Notice that because $\cH_g^r$ is closed inside $\cM_g^r$, every obstruction for $\Theta$-completeness (or $\textsf{S}$-completeness) of $\cH_g^r$ is preserved in $\cM_g^r$. The same is true if we restrict to study the open $\cH_g^{r,\circ}\subset \cH_g^r$ because the open embedding is affine, as it is the complement of a Cartier divisor, namely $\Delta^{\rm sep} \cap \cH_g^r \subset \cH_g^r$, where $\Delta^{\rm sep}$ is the closed locus parametrizing curves with a separating node. Therefore, the classification of open substacks admitting separated good moduli spaces in the moduli stack of cyclic covers of $\bP^1$ (see \cite[Theorem 3.44]{GMSArI}) forces us to remove from $\cU$ all the curves $C$ in $\cM_g^r$ which appear as $2:1$-covers of $\bP^1$ with branching divisor with one root of multiplicity $> g+1$. Now suppose $C_1 \in \cU$ is an $A_h$-self-attached hyperelliptic curve with $h>g$. We can then use \Cref{lem:deg-self-attached} to find an isotrivial specialization whose special fiber $C_0$ is a $2:1$-cover of $\bP^1$ with branching divisor with two roots: $q_0$ of multiplicity $h+1> g+1$ and $p_0$ of multiplicity $k+1< g+1$. The two basins of attraction of the $\Gm$-action on the deformation space of $C_0$ are respectively the directions that preserve $q_0$ and the ones that preserve $p_0$ (notice that the generic point of this basin is contained in $\cU_{g}^r$). Finally, we get that $\cU$ intersects both basins non-trivially (since $C_1\in \cU$ and $\cU_{g}^r\subset \cU$), and therefore, by \cite[Lemma 2.10]{GMSArI} applied around $C_0$, we have that $C_0 \in \cU$. We reach a contradiction since we already proved that $C_0 \notin \cU$, thus $C_1 \notin \cU$.
\end{proof}

\subsection{Attached hyperelliptic chains}\label{sub:problematic-chain}

Before explain how conditions $(c_r)$ and $(d_r)$ come up, we make the following observation which will be used throughout the rest of this section.

\begin{remark}\label{rem:irreducibility}
    Let $P$ be a property of $A_r$-stable curves which is locally closed, i.e. it defines a locally closed substack in $\cM_{g,n}^r$. We say that $P$ is irreducible if the corresponding substack is. The following properties are instances of irreducible properties:
    \begin{itemize}
        \item[(a)] having an $A_{2h}$-singularity;
        \item[(b)] having a non-separating  \(A_{2h+1}\)-singularity;
        \item[(c)] having a $A_i$-attached hyperelliptic tail (or even atom) of fixed genus;
        \item[(d)] having a non-separating \(A_i/A_j\)-attached hyperelliptic bridge (or odd atom) of fixed genus.
    \end{itemize}   
    Notice that for $(b)$ and $(d)$ we only write down the non-separating case, since it is the one we are going to need. A similar statement holds in the separating case, provided that we take into account the genera and markings of the different irreducible components obtained by normalizing at the singularities.
    
    The irreducibility of property (a) follows from combining \cite[Theorem~3.8, Propositions~3.12 and 3.13]{GMSArI} whereas the irreducibility of (b) follows from corresponding results in the odd case in \cite[Theorem 3.8, Propositions 3.17 and 3.21]{GMSArI}. The irreducibility of Property (c) and Property (d) follows by using the irreducibility of Property (a) and Property (b) together with the irreducibility of being hyperelliptic (or being an atom).
\end{remark}

\begin{lemma}\label{ex:prob-bridges}
    Let $n,r,g,h$ be four nonnegative integers. Let $\cU \subset \cM_{g,n}^r$ be an open substack which admits a separated good moduli space. If $\cU_{g,n}^r\subset \cU$, then every curve in $\cU$ does not contain a non-separating $A_1/A_1$-attached hyperelliptic bridge of genus $h$, where
    \begin{itemize}
        \item $1\leq h <(r-1)/2$ if $n>0$;
        \item $4\leq 2h+2\leq \min(r,2g-4)$ if $n=0$.
    \end{itemize}
\end{lemma}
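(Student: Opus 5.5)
We argue as in \Cref{lem:self-attached-condition} and \Cref{rem:not-S-complete}. We exhibit a curve $C_0$ with positive-dimensional stabilizer such that $C_0\notin\cU$, yet $\cU$ meets both basins of attraction of a cocharacter of $\Aut(C_0)^{\circ}$. Then \cite[Lemma~2.10]{GMSArI} applied around $C_0$ forces $C_0\in\cU$, a contradiction.

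Let $C_1\in\cU$ contain a non-separating $A_1/A_1$-attached hyperelliptic bridge $\Gamma$ of genus $h$ in the given range. By \Cref{rem:irreducibility}(d) the locus of such curves is irreducible. Since $\cU$ is open, we may therefore assume that $C_1$ is general in it: $\Gamma$ is a smooth honestly hyperelliptic bridge and $C_1-\Gamma$ is general. By the globalization of \Cref{sub:global geometry} (\cite[Lemmas~4.44, 4.45]{GMSArI}) there is an isotrivial degeneration $C_1\rightsquigarrow C_0$, where $C_0$ is obtained from $C_1$ by replacing $\Gamma$ with an $A_1/A_1$-attached odd atom of genus $h$, with singularity $q$ of type $A_{2h+1}$. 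In $C_0$ the odd atom is then $A_1/A_1$-attached on both sides, and $\Gm$ acts trivially on $C_0-\Gamma_0$.

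Next we analyse $\Def_{C_0}$ as in \Cref{rem:alternating-def}. One sign piece contains $T^1_q$, whose generic direction is $C_1$. The other contains the crimping space ${\rm Cr}_q$ together with $T^1$ of the two attaching nodes. Deforming a node while preserving $q$ produces a curve $C_2$ in which one component of the atom has been absorbed, or in which the atom is glued along an $A_{2h+1}$-singularity to $C_0-\Gamma_0$. More precisely, smoothing both nodes yields a curve whose normalization at $q$ is $C_0-\Gamma_0$ with two points identified by an $A_{2h+1}$-singularity, that is, a non-separating $A_{2h+1}$-singularity. We must check that $C_2\in\cU_{g,n}^r\subset\cU$. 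Since $2h+1\le r$, and since $C_0-\Gamma_0$ is general (of positive genus, and carrying the markings if $n>0$), $C_2$ has no hyperelliptic tails or chains of small genus. The condition $(d_r)$ is the delicate one: if $2h+1=r$ one must avoid $A_r$-attached chains, and this is why the case $n>0$ requires the strict inequality $h<(r-1)/2$. In the case $n=0$, condition $(e_r)$ must also be checked. This is where the bound $2h+2\le 2g-4$ enters: it ensures that the remaining genus is large enough that $C_2$ is not self-attached hyperelliptic, and that $C_0-\Gamma_0$ can be taken non-hyperelliptic.

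Finally, $C_0\notin\cU$. Indeed $C_0$ itself violates $(c_r)$: the odd atom is a non-separating $A_1/A_1$-attached hyperelliptic chain of genus $h\le(r-1)/2$. Violating $(c_r)$ only shows $C_0\notin\cU_{g,n}^r$, however, so the argument needs a separate reason for $C_0\notin\cU$. We plan to get it from a second application of the two-basin argument: the curve $C_1$ (which violates $(c_r)$) already lies in $\cU$, so it is cleaner to assume, for contradiction, that some such $C_1\in\cU$, and then derive $C_0\in\cU$ from $C_1,C_2\in\cU$. From there we reach a contradiction with separatedness of the good moduli space via the $\Theta^2$-filling failure, exactly as in \Cref{rem:not-S-complete}: $C_2$ has a further isotrivial degeneration, through \Cref{prop:iso-deg-chain}, to a closed point not reachable from $C_0$ by \Cref{prop:alp-fund}. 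I expect this last step, namely producing the incompatible second degeneration and checking that its intermediate curves are admissible under the numerical bounds, to be the main obstacle.
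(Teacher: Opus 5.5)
\paragraph{Verdict.} There is a genuine gap: you never establish that the atom curve $C_0$ is outside $\cU$, and the proof cannot close without it.

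\paragraph{What is correct.} Your basin argument is right and matches the second half of the paper's proof. You degenerate the general $C_1$ isotrivially to the curve $C_0$ with a nodally attached odd atom of genus $h$. You then note that the generic point $C_2$ of the opposite basin $0\oplus({\rm Cr}_q\oplus T^1_{n_1}\oplus T^1_{n_2})$ lies in $\cU_{g,n}^r$; this is the general curve with a non-separating $A_{2h+1}$-singularity. Finally you invoke $\textsf{S}$-completeness.

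\paragraph{The gap.} As you observe yourself, this only gives a contradiction once you know independently that $C_0\notin\cU$, and your proposal does not supply that step. The suggested $\Theta^2$-route is not substantiated. You do not say which second isotrivial degeneration of $C_2$ you would use, and there is no evident candidate. The general curve with a single non-separating $A_{2h+1}$-singularity sits in the basin of $C_0$. The other rosary basins in \Cref{prop:iso-deg-chain} have generic fibres containing hyperelliptic or rational bridges, not an irreducible curve with one non-separating $A_{2h+1}$-singularity.

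\paragraph{The missing idea.} What you need is a $\Theta$-completeness obstruction built from a \emph{non-isotrivial} specialization.
\begin{enumerate}
\item First show that $\cU$ contains no curve with a non-separating $A_1/A_1$-attached odd atom of genus $h$. By openness and \Cref{rem:irreducibility}, it suffices to treat the generic such curve $E$, which is your $C_0$.
\item Over a dvr $R$, let the general curve $C_Q$ with a non-separating $A_{2h+1}$-singularity specialize to the general curve $C_k$ with an $A_{2h+2}$-singularity. Geometrically, the two preimages of the singular point in the normalization collide.
\item Glue this $R$-family with the isotrivial degeneration $C_Q\rightsquigarrow E$. If $E\in\cU$, this gives a morphism $\Theta_R\setminus\{0\}\to\cU$.
\item This morphism does not extend to $\Theta_R$. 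The central fibre would have to be both an isotrivial degeneration of $C_k$, whose $A_{2h+2}$-singularity is inner, and an $R$-specialization of $E$. In the latter, \Cref{prop:alp-fund} forces the two outer nodes attaching the atom to persist as outer singularities on distinct pairs of components. These two requirements are incompatible.
\end{enumerate}

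\paragraph{Where the numerical hypotheses enter.} Your explanation of them should be corrected; they are used in the step above.
\begin{itemize}
\item The condition $h<(r-1)/2$ is exactly $2h+2\le r$. This is what makes $C_k$ $A_r$-stable and a member of $\cU_{g,n}^r$. It has nothing to do with $(d_r)$ for $C_2$: for $n>0$ the general such $C_2$ is admissible even when $2h+1=r$.
\item For $n=0$, the bound $2h+2\le 2g-4$ makes the pointed normalization of $C_k$ a general $1$-pointed curve of genus $g-h-1\ge 2$. This guarantees that $C_k$ satisfies $(e_r)$.
\end{itemize}
Once this first step is proved, your basin argument finishes the proof as in the paper.
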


\begin{proof}
First, we show that $\cU$ does not contain curves with a non-separating $A_1/A_1$-attached odd atom of genus $1\leq h<(r-1)/2$ (or $4\leq 2h+2\leq \min(r,2g-4)$ if $n=0$). Suppose by contradiction that $\cU$ contains such a curve; then, by openness, the generic point (see \Cref{rem:irreducibility}) of such a locus is also contained in $\cU$, and we can assume this generic point corresponds to a curve $E \to \spec K$, which only has singularities that lie in the attached atom. We show that there exists a family $\Theta_R \setminus 0 = \Spec R \cup_{\Spec Q} \Theta_Q$ that does not extend to a family over $\Theta_R$, where $R$ is a dvr with fraction field $Q$. This is constructed by gluing two $A_r$-stable curves over $\Spec Q$: a family $C \to \spec R$ and an isotrivial family $D \to \Theta_Q$ such that the two generic fibers over $\spec Q$ are isomorphic, as follows (let $k$ be the residue field of $R$):
\begin{itemize}
\item the generic fiber $C_Q$ (and $D_Q$) is the generic point (see \Cref{rem:irreducibility}) of the locus of $A_r$-stable curves with a non-separating $A_{2h+1}$-singularity;
\item the closed fiber $C_k$ is the generic point (see \Cref{rem:irreducibility}) of the locus of $A_r$-stable curves with an $A_{2h+2}$-singularity (which lies in $\cU_{g,n}^r$ thanks to the inequality constraints);
\item the family $D \to \Theta_Q$ is constructed via an isotrivial degeneration of $C_Q$ as in \cite[Proposition 4.47]{GMSArI} (see also \Cref{sub:global geometry}), where the $A_{2h+1}$-singularity of $C_Q$ degenerates to an odd atom of genus $h$ attached to the rest of the curve at two nodes. Up to a field extension $K \subset Q$, we can assume the central fiber to be $E$.
\end{itemize}
By construction, these glue to a family over $\Theta_R \setminus \{0\}$ as in \Cref{fig:ex4.7(1)}, whose topological points land in $\cU$. Indeed, if $n>0$, then $C \to \spec R$ is a family in $\cU_{g,n}^r$ exactly when $r\geq 2h+2$, and $E/K$ lies in $\cU$ by assumption. If $n=0$, we claim that condition $(e_r)$ ensures that $C_k$ is in $\cU_{g,n}^r$ if and only if $g-h-1 \geq 2$. Indeed, the pointed partial normalization $(C_0,p)$ of $C_k$ at the $A_{2h+2}$-singularity is a generic $1$-pointed smooth curve of genus $g-h-1$. Thus, if $g-h-1\geq 3$, the curve $C_0$ is not hyperelliptic (therefore $C_k$ is not a self-attached hyperelliptic curve), whereas if $g=h+3$, $C_0$ is smooth hyperelliptic, but $p$ is not a Weierstrass point.

This provides a morphism $\Theta \to \cU$ which does not extend to a morphism $\Theta_R \to \cM_{g,n}^r$ because of \Cref{prop:alp-fund}. This is a contradiction, since $\cU$ is $\Theta$-complete.
    
\begin{figure}[H]
    \centering
    \begin{minipage}{0.45\textwidth}
        \centering
        \includegraphics[width=0.85\textwidth]{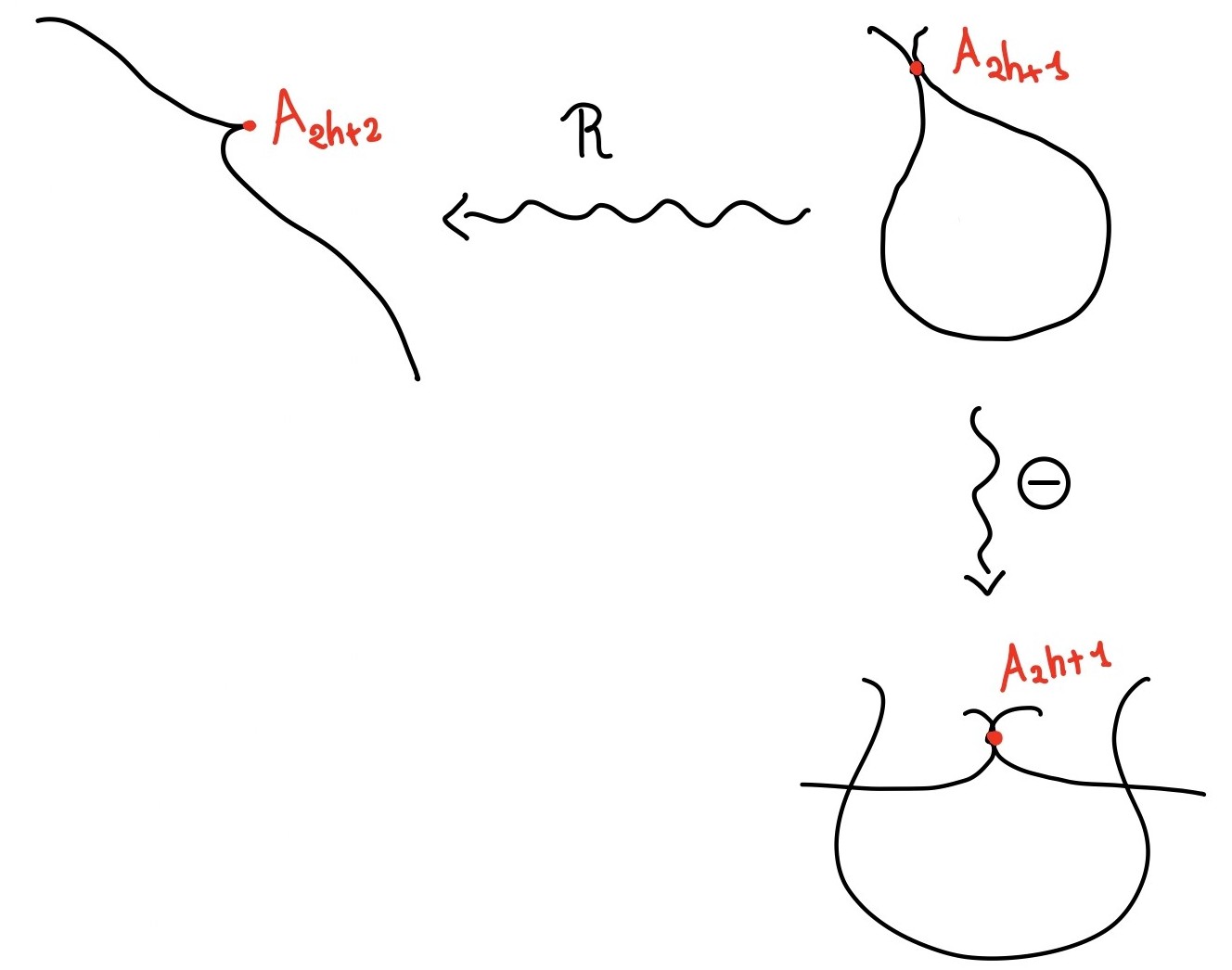}
        \caption{}
        \label{fig:ex4.7(1)}
    \end{minipage}
    \begin{minipage}{0.5\textwidth}
        \centering
        \includegraphics[width=0.96\textwidth]{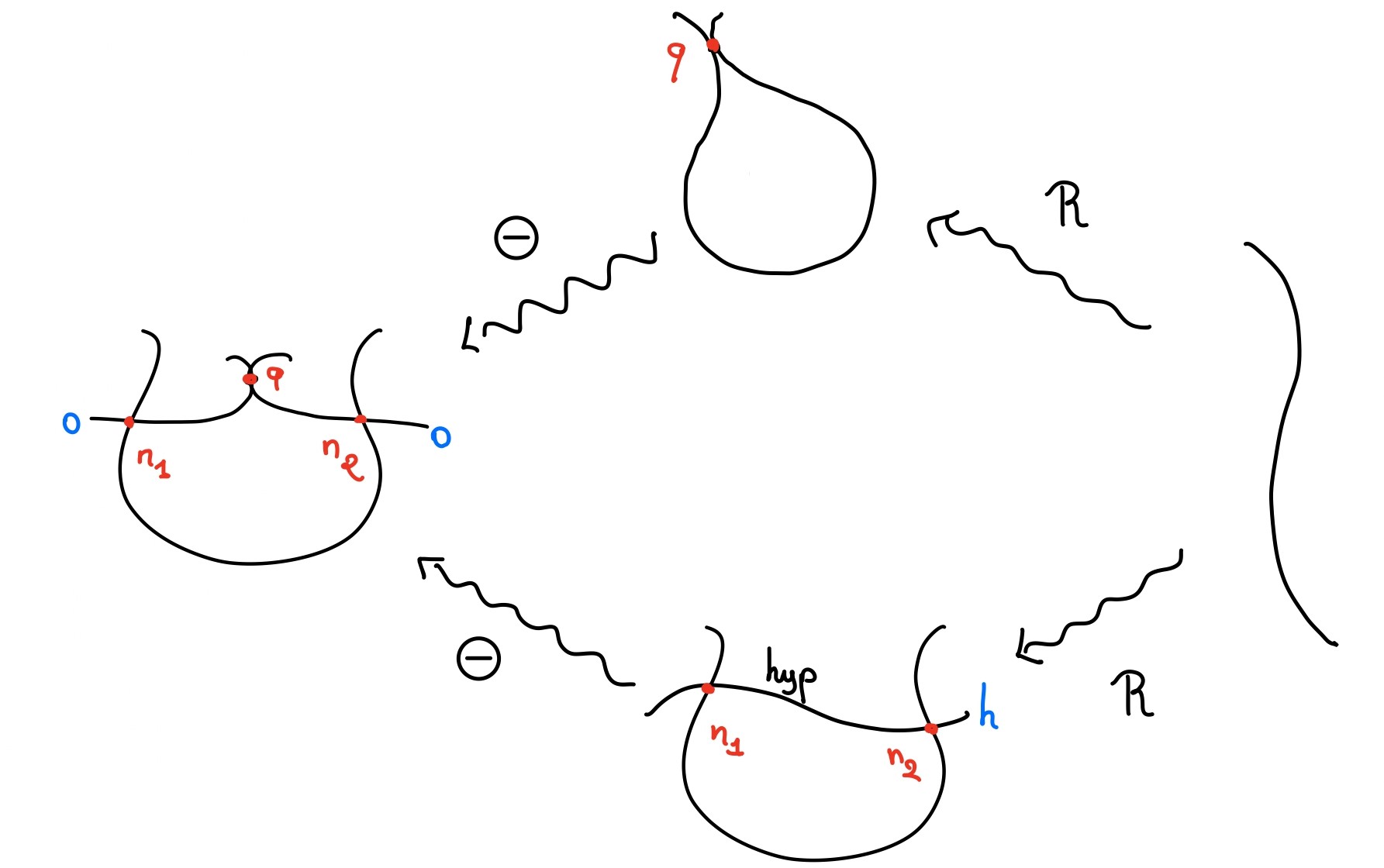}
        \caption{}
        \label{fig:ex4.7(2)}
    \end{minipage}
\end{figure}
Finally, assume by contradiction that $\cU$ contains a point corresponding to a curve $C_{1,0}$ that contains an $A_1/A_1$-attached hyperelliptic $\Gamma_{0,1} \subset C_{0,1}$ of genus $h<(r-1)/2$ (or $4\leq 2h+2\leq \min(r,2g-4)$ if $n=0$). Since $\cU$ is an open substack, we can assume that $\Gamma_{0,1}$ is honestly hyperelliptic and $C_{0,1}-\Gamma_{0,1}$ is smooth. By \Cref{prop:iso-deg-chain}, we can find an isotrivial degeneration of $C_{1,0}$ where the special fiber $C_{0,0}$ is obtained by nodally gluing the $2$-pointed odd atom $\Gamma_{0,1}$ of genus $h$ to $C_{0,0}-\Gamma$. From the discussion above, we know that $C_{0,0}$ is not in $\cU$. The $\Gm$-equivariant deformation space of $C_{0,0}$ in $\cM_{g,n}^r$ is
$$ \Def_{(C_{0,0}-\Gamma,n_1,n_2)}\oplus [T^1_q \oplus ({\rm Cr}_q \oplus T^1_{n_1}\oplus T^1_{n_2})/\Gm]$$
where $q$ is the odd singularity and $n_1,n_2$ are the two nodal attachments. Since $C_{0,0} \notin \cU$ by what we proved above, locally around $C_{0,0}$, $\cU$ is contained in the complement of the $0$-section of the deformation space. Since $\cM_{g,n}^r$ has affine diagonal (thus the local structure map is affine) and $\cU$ is $\textit{S}$-complete, we deduce that one of the two basins of attraction, i.e., either $T_q^1\oplus (0\oplus 0 \oplus 0)$ or $0 \oplus ({\rm Cr}_q \oplus T^1_{n_1} \oplus T^1_{n_2})$, does not intersect $\cU$ (otherwise we would have a diagram of specializations like the one in \Cref{fig:ex4.7(2)}). The isotrivial degeneration of $C_{0,1}$ with special fiber $C_{0,0}$ corresponds to a curve in $T_q^1\oplus (0\oplus 0 \oplus 0)$.

Since the generic point of $0 \oplus ({\rm Cr}_q \oplus T^1_{n_1} \oplus T^1_{n_2})$ is contained in $\cU_{g,n}^r\subset \cU$ via this local picture, we conclude that $C_{0,1} \notin \cU$, which completes the proof.
\end{proof}

\Cref{ex:prob-bridges} explains why we need condition $(c_r)$ in the case of chains of length $1$, also called bridges. We now explain why condition $(d_{r,1})$ must hold for hyperelliptic bridges.

\begin{lemma}\label{ex:cond-dr1}
    Let $n,r,g$ be three nonnegative integers. Let $\cU \subset \cM_{g,n}^r$ be an open substack which admits a separated good moduli space. If $\cU_{g,n}^r\subset \cU$, then every curve in $\cU$ does not contain a non-separating $A_1/A_r$-attached hyperelliptic bridge of genus bounded by $(r-1)/2$.
\end{lemma}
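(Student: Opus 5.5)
We will mimic the two-step structure of the proof of \Cref{ex:prob-bridges}, with the node $n_2$ replaced by an $A_r$-singularity. Throughout, $r=2m+1$ is odd; otherwise $(d_{r,1})$ is empty by \Cref{lem:r-implies-r-1}.

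\textbf{First step: excluding the attached odd atom.} We show that $\cU$ contains no curve with a non-separating $A_1/A_r$-attached odd atom of genus $h$ with $h<(r-1)/2=m$. Suppose it does. By openness and \Cref{rem:irreducibility} we may pass to the generic point $E$ of that locus, whose only singularities are those of the atom and of the two attachments.

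We then build a map $\Theta_R\setminus\{0\}\to\cU$ from the following data:
\begin{itemize}
\item a generic fiber $C_Q$, the generic curve with a non-separating $A_{2h+1}$-singularity and an $A_r$-singularity;
\item an isotrivial degeneration $D\to\Theta_Q$ of $C_Q$ to $E$, obtained from the globalization in \Cref{sub:global geometry} (\cite[Prop.~4.47]{GMSArI}), in which the $A_{2h+1}$-singularity bubbles off an odd atom attached at two nodes, and the $A_r$-singularity is kept;
\item a dvr family $C\to\Spec R$ in which the $A_{2h+1}$-singularity collides with one branch at the $A_r$-point.
\end{itemize}

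The delicate choice is the special fiber $C_k$ of the dvr family. The natural candidate is an $A_{2h+2}$-singularity, as in \Cref{ex:prob-bridges}, keeping the $A_r$-point. Since $2h+2\le r$, this is an $A_r$-stable curve. One then checks that $C_k$ is generic in its stratum and hence admissible: it has no hyperelliptic tails, chains or self-attachment, because all its singularities are isolated points on a generic curve.

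The filling over $\Theta_R$ fails by \Cref{prop:alp-fund}. The atom in $E$ is attached at an outer node, and that outer singularity would have to specialize to an outer odd singularity. But the $\Theta_k$-limit of $C_k$ must contain an $A_{2h+2}$-singularity or a tail, so it cannot contain it. This contradicts $\Theta$-completeness.

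\textbf{Second step: honest bridges via $\textsf{S}$-completeness.} Suppose $\cU$ contains a curve with an $A_1/A_r$-attached hyperelliptic bridge $\Gamma$ of genus $h$. By openness we may take $\Gamma$ honestly hyperelliptic and the rest of the curve generic. By \Cref{prop:iso-deg-chain} it isotrivially degenerates to $C_{0,0}$, in which $\Gamma$ is replaced by the odd atom attached at a node $n_1$ and at the $A_r$-point $n_2$.

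The equivariant deformation space of $C_{0,0}$ splits into two basins:
\[
T^1_q\qquad\text{and}\qquad {\rm Cr}_q\oplus T^1_{n_1}\oplus T^1_{n_2},
\]
where we must verify, via the alternating behaviour of \Cref{rem:alternating-def}, that $T^1_{n_2}$ carries the same sign as ${\rm Cr}_q$. The first step shows $C_{0,0}\notin\cU$. Then $\textsf{S}$-completeness, combined with the affine local structure as in the proof of \Cref{ex:prob-bridges}, forces one basin to miss $\cU$. The generic point of the second basin smooths $n_1$ and $n_2$, so it lies in $\cU_{g,n}^r$. Hence the first basin misses $\cU$, and our curve cannot lie in $\cU$.

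\textbf{Chains of longer length.} For genus bounded by $(r-1)/2$ we also need chains. We reduce to bridges by degenerating the chain to a rosary via \Cref{prop:iso-deg-chain} and repeating the basin argument bridge by bridge, or we defer this to the forthcoming \Cref{cor:maximality-chains}.

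\textbf{Main obstacle.} The hardest point is the first step: choosing $C_k$ so that it is admissible and the obstruction from \Cref{prop:alp-fund} still applies. Colliding with the $A_r$-point would create an $A_{>r}$-singularity, which is why we use the $A_{2h+2}$ option instead. The other point to check carefully is the sign bookkeeping for $T^1_{n_2}$ when $n_2$ is an $A_r$-singularity rather than a node.
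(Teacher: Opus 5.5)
There is a genuine gap, and it affects both of your steps. An odd atom attached to a generic remainder at a node $n_1$ and at an $A_r$-point $n_2$ with $r\geq 3$ carries no $\Gm$-action. A torus action descends through an $A_{2m+1}$-singularity with $m\geq 1$ only if it has the same weight on both branches; this is the balancing condition of \cite[Lemma~3.37]{GMSArI}, recalled in \Cref{sub:equivariant-geometry-A-singularities}. Here the remainder acts trivially at $n_2$, while the atom's $\Gm$ acts with non-zero weight at both of its markings. This is exactly why \Cref{prop:gm-decomp-aut} only produces a $\Gm$ for $A_i/A_j$-attached rosaries with $i,j\in\{0,1\}$.

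Consequently your curve $E$ in the first step and your $C_{0,0}$ in the second step have finite automorphism groups. This breaks the argument in three places:
\begin{enumerate}
\item There is no non-constant isotrivial degeneration $D\to\Theta_Q$ with special fiber $E$.
\item \Cref{prop:iso-deg-chain} is a statement about $2$-pointed curves, i.e.\ nodal attachments, so it gives no degeneration of an $A_1/A_r$-bridge to $C_{0,0}$.
\item At a point with finite stabilizer the only cocharacter is trivial. So there are no two basins $T^1_q$ and ${\rm Cr}_q\oplus T^1_{n_1}\oplus T^1_{n_2}$ for $\textsf{S}$-completeness to play against each other.
\end{enumerate}
The ``sign bookkeeping for $T^1_{n_2}$'' you flag is therefore not a technicality: there is no torus to compute signs with. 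Note also what the degeneration $D$ you actually describe produces: the $A_{2h+1}$-point bubbles off an atom attached at two nodes, with the $A_r$-point kept elsewhere. That gives an $A_1/A_1$-attached atom, which is not $E$ and is already covered by \Cref{ex:prob-bridges}.

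The missing idea is to put the $A_r$-point \emph{inside} a rosary, so that the torus exists. The $A_1/A_r$-bridge should then appear as the generic point of a basin, rather than as the source of a degeneration to an atom.

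The paper takes $C$ to be a chain of two odd atoms glued at a node $p_0$. One has genus $h$ with $2h+1<r$; the other has genus $(r-1)/2$, and its singularity is the $A_r$-point. The chain is nodally attached to a smooth $2$-pointed curve $C_0$ at $p$ and $q$, so that $\Aut(C)^\circ\simeq\Gm^2$.

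For a suitable cocharacter, $(\Def_C)^>$ preserves $p$ and the $A_r$-singularity while smoothing $p_0$, $q$ and the $A_{2h+1}$-point. Its generic point is therefore exactly a curve with a non-separating $A_1/A_r$-attached bridge of genus $h$. The generic point of $(\Def_C)^<$ is admissible.

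Since $C$ contains an $A_1/A_1$-attached odd atom of genus $h<(r-1)/2$, \Cref{ex:prob-bridges} gives $C\notin\cU$. $\textsf{S}$-completeness then forces $\cU$ to miss $(\Def_C)^>$. No new $\Theta_R$-counterexample is needed: the lemma follows from \Cref{ex:prob-bridges} plus one $\textsf{S}$-completeness step.
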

\begin{proof}
    Suppose $r$ is odd. Consider the curve $C$ constructed in the following way, cf. \Cref{fig:ex4.9(1)}:
    \begin{itemize}
        \item let $(\Gamma_0,p,p_0)$ (respectively $(\Gamma_1,q,p_0)$) be an odd atom of genus $h$ with $2h+1<r$ (respectively of genus $(r-1)/2$);
        \item we denote by $(\Gamma,p,q)$ the nodal gluing of $(\Gamma_0,p,p_0)$ with $(\Gamma_1,q,p_0)$ along $p_0$;
        \item we denote by $C$ the curve obtained by nodally gluing $(\Gamma,p,q)$ with a $2$-pointed smooth curve $(C_0,p,q)$ along $p$ and $q$.
    \end{itemize}
    \begin{figure}[H]
        \caption{}
        \centering
        \includegraphics[width=0.4\textwidth]{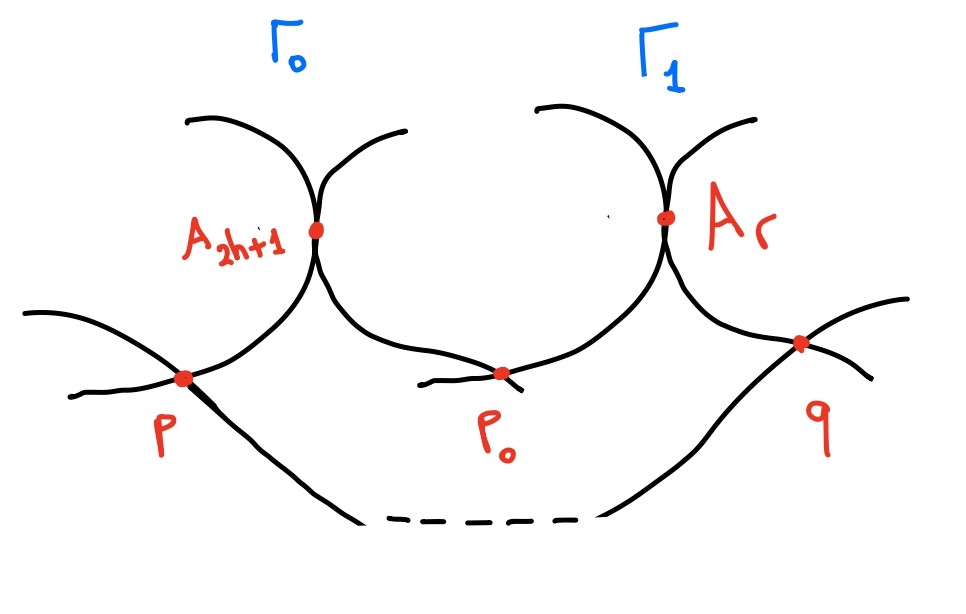}
        \label{fig:ex4.9(1)}
    \end{figure}
     We can describe the deformation space of $C$ as
$$ \Def_C= \Def_{(C_0,p,q)}\oplus T_p^1 \oplus \Def_{(\Gamma_0,p,p_0)}\oplus T_{p_0}^1 \oplus \Def_{(\Gamma_1,p_0,q)} \oplus T_{q}^1 $$
and \Cref{prop:gm-decomp-aut} gives us that $\Aut(C)^{\circ}\simeq \Gm^2$. Using \Cref{rem:alternating-def}, one can find a cocharacter $\Gm\rightarrow \Gm^2$ such that
\begin{itemize}
    \item[(a)] $(\Def_C)^>$ classifies deformations where $C_0$ is fixed, as well as the node $p$ and the $A_{r}$-singularity, while the other singularities deform;
    \item[(b)] $(\Def_C)^<$ classifies deformations where $C_0$ is fixed, as well as the node $q$, the node $p_0$, and the $A_{2h+1}$-singularity, while the other singularities deform.
\end{itemize}
\begin{figure}[H]
        \caption{}
        \centering
        \includegraphics[width=1\textwidth]{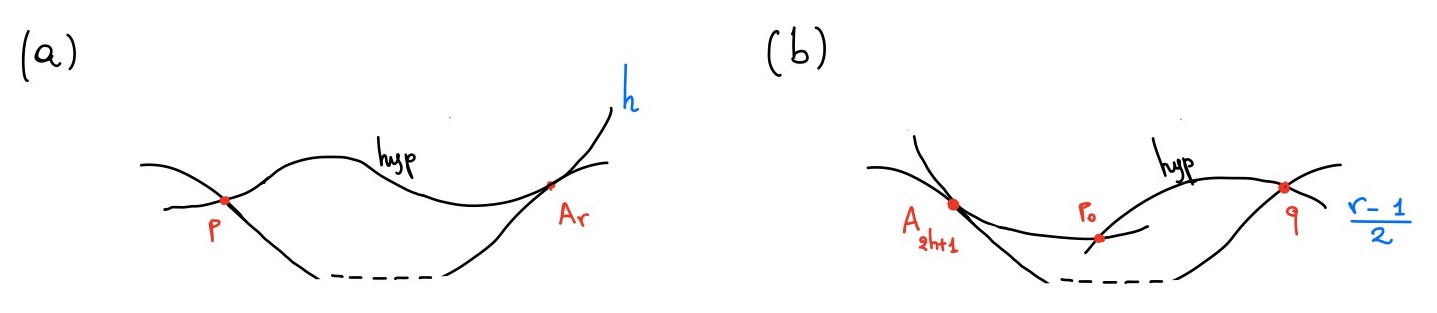}
        \label{fig:ex4.9(2)}
    \end{figure}
Notice that in case (a), we end up with an $A_1/A_r$-attached (non-separating) hyperelliptic bridge (a chain of length one) of genus $<(r-1)/2$. If $\cU$ intersects $(\Def_C)^>$ non-trivially, it means that it generically contains $(a)$. Since $(b)$ is in $\cU_{g,n}^r\subset \cU$, we deduce that $C$ must be in $\cU$ by $\textsf{S}$-completeness. This contradicts \Cref{ex:prob-bridges}, since $C$ has an $A_1/A_1$-attached hyperelliptic bridge of genus $h<(r-1)/2$.
\end{proof}

The next lemma deals with condition $(d_{r,2})$.
\begin{lemma}\label{ex:cond-dr2}
    Let $n,r,g$ be three nonnegative integers. Let $\cU \subset \cM_{g,n}^r$ be an open substack which admits a separated good moduli space. If $\cU_{g,n}^r\subset \cU$, then every curve in $\cU$ does not contain a non-separating $A_r/A_r$-attached hyperelliptic bridge of genus $<(r-1)/2$.
\end{lemma}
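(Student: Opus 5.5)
We mimic the proof of \Cref{ex:cond-dr1}, replacing the nodal attachment at $p$ by a second $A_r$-attachment so that one basin of attraction contains the forbidden $A_r/A_r$-configuration and the other lies in $\cU_{g,n}^r$. Assume $r$ is odd, since otherwise $(d_{r,2})$ is empty by \Cref{lem:r-implies-r-1}. We build a curve $C$ as follows. Let $(\Gamma_1,q,p_0)$ and $(\Gamma_2,p_0',q')$ be two odd atoms of genus $(r-1)/2$, and let $(\Gamma_0,p_0,p_0')$ be an odd atom of genus $h$ with $2h+1<r$. Nodally glue them in the chain $\Gamma_1-\Gamma_0-\Gamma_2$ along $p_0$ and $p_0'$. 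Then attach the ends $q,q'$ nodally to a generic $2$-pointed smooth curve $(C_0,q,q')$. By \Cref{prop:gm-decomp-aut}, $\Aut(C)^{\circ}\simeq\Gm^3$. As in \Cref{ex:cond-dr1}, $\Def_C$ splits as $\Def_{(C_0,q,q')}$ plus the $T^1$'s of the nodes and atom singularities plus the crimping spaces.

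Using the alternating behaviour of \Cref{rem:alternating-def}, we choose a cocharacter $\Gm\to\Gm^3$ with the following two basins. On the positive basin $(\Def_C)^>$, the nodes $q,q'$ and the $A_{2h+1}$-singularity of $\Gamma_0$ are deformed, while the $A_r$-singularities of $\Gamma_1,\Gamma_2$ and the nodes $p_0,p_0'$ are preserved. The generic point is then a curve with an $A_r/A_r$-attached... this is not quite right. Instead, we aim for the generic point to be $\Gamma_0$ deformed to an honestly hyperelliptic bridge of genus $h$ attached via the preserved $A_r$ singularities. For this we arrange the weights so that the nodes $p_0,p_0'$ are smoothed together with the $A_r$-atoms' $T^1$ being off; concretely, the $A_r$-singularities merge with neighbours via $p_0,p_0'$ into $A_r$ attachments. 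On the negative basin $(\Def_C)^<$, the $A_r$-singularities and the $A_{2h+1}$-singularity are smoothed while $q,q',p_0,p_0'$ are preserved. This gives $A_1/A_1$-attached hyperelliptic bridges of genus $(r-1)/2$, which are allowed by $(c_r)$ since the genus bound is strict. We must check that the negative generic point satisfies $(t_r),(c_r),(d_r),(e_r)$, so that it lies in $\cU_{g,n}^r\subset\cU$.

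The cleaner fallback, which I would actually try first, is to avoid the double chain. Take a non-separating $A_r/A_r$ odd atom setup: an odd atom $\Gamma_0$ of genus $h$ whose two ends are attached by $A_r$-singularities with $\Gm$-equivariant crimping to the rest. Here the rest is two rational bridges nodally joined to $C_0$, as in case e2) of \Cref{prop:iso-deg-chain}. The key point is that, by the rosary alternation, the $\Gm$ acting on $\Gamma_0$ forces opposite signs on $T^1$ of the inner singularity versus the attaching ones. So one basin deforms only the inner $A_{2h+1}$-singularity, giving an $A_r/A_r$-attached honestly hyperelliptic bridge of genus $h$, which is the configuration in question. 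The other basin deforms the $A_r$-attachments and keeps the $A_{2h+1}$-singularity, and its generic point is admissible.

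Assume $\cU$ contains a curve with an $A_r/A_r$-attached bridge. By openness and irreducibility (\Cref{rem:irreducibility}, property (d)), $\cU$ contains the generic such curve, and this curve isotrivially degenerates to $C$ by the globalization in \Cref{sub:global geometry}. Then $\cU$ meets both basins, so \cite[Lemma 2.10]{GMSArI} and $\textsf{S}$-completeness force $C\in\cU$. But $C$ contains an $A_1/A_1$-attached hyperelliptic configuration of genus $<(r-1)/2$, or a non-separating $A_1/A_r$-bridge, contradicting \Cref{ex:prob-bridges} or \Cref{ex:cond-dr1}. The main obstacle is the sign bookkeeping: we must show that a cocharacter realizing exactly these two basins exists, and that the admissible-side generic point really satisfies every condition of \Cref{def:admin}, with special care for $(e_r)$ when $n=0$.
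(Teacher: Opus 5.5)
Your proposal has a genuine gap. Each of your two constructions has one of the two properties the argument needs, but neither has both.

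\textbf{The fallback fails at the final step.} Here the special fiber is $C'=C_0\cup_{q,q'}(R_a\cup S_a\cup S_b\cup T_b)$. This is an $A_1/A_1$-attached rosary whose consecutive components meet at $A_r$, $A_{2h+1}$, $A_r$, and $\Aut(C')^{\circ}\simeq\Gm$. Your two basins are correct:
\begin{itemize}
\item one smooths $q,q'$ and the $A_{2h+1}$, giving the forbidden $A_r/A_r$-bridge of genus $h$;
\item the other smooths both $A_r$-singularities, giving an $A_1/A_1$-attached chain of two bridges of genus $(r-1)/2$, which is admissible.
\end{itemize}
But $\textsf{S}$-completeness then only yields $C'\in\cU$, and your closing claim about $C'$ is false. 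Since $R_a$ and $T_b$ are rational, $C'$ contains no $A_1/A_1$- or $A_1/A_r$-attached hyperelliptic bridge of genus $<(r-1)/2$. Its only offending subcurve is $S_a\cup S_b$, an $A_r/A_r$-attached odd atom of genus $h$. That is exactly the configuration you are trying to exclude. So neither \Cref{ex:prob-bridges} nor \Cref{ex:cond-dr1} applies, and the argument is circular: you would need an independent proof that $C'\notin\cU$, which you do not give.

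\textbf{The three-atom construction fails at admissibility.} Its special fiber does contain a non-separating $A_1/A_1$-attached odd atom of genus $h$, so the final step would work there. But the basin you call admissible is not.
\begin{itemize}
\item The forbidden basin needs the $T^1$ of the $A_{2h+1}$ to have sign opposite to the $T^1$'s of the two $A_r$-singularities. The basin you describe needs all three $T^1$'s in $(\Def_C)^<$. So these are not the two basins of a single cocharacter.
\item The curve you describe, with all inner singularities smoothed and all nodes kept, contains an $A_1/A_1$-attached bridge of genus $h$. It therefore violates $(c_r)$.
\item The true opposite basin smooths the two $A_r$'s and keeps all nodes and the $A_{2h+1}$, with new crimping. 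So the genus-$h$ piece stays $A_1/A_1$-attached. Already for $h=1$, where the $A_3$ has no crimping moduli, it remains a hyperelliptic bridge, and this basin is not in $\cU_{g,n}^r$.
\end{itemize}

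\textbf{The missing idea is the paper's asymmetric configuration, which has both properties.} Glue by a node $p_0$ an odd atom of genus $(r-1)/2$ (with ends $p,p_0$) to a length-$3$ rosary whose singularities are $A_{2h+1}$ (next to $p_0$) and then $A_r$ (with ends $p_0,q$). Then attach $p,q$ nodally to $C_0$. Now $\Aut(C)^{\circ}\simeq\Gm^2$, and:
\begin{itemize}
\item Since $p_0$ joins components moved by different tori, one cocharacter can smooth $p$, $p_0$, $q$ and the $A_{2h+1}$ simultaneously while keeping both $A_r$'s. This produces the $A_r/A_r$-bridge of genus $h$.
\item The opposite basin smooths both $A_r$'s and keeps all nodes and the $A_{2h+1}$, and this curve is admissible.
\item The special fiber contains the $A_1/A_r$-attached odd atom of genus $h$ formed by the first two components of the rosary. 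This is already excluded by \Cref{ex:cond-dr1}, which closes the argument.
\end{itemize}
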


\begin{proof}
     Consider the curve $C$ constructed in the following way, cf. \Cref{fig:Chapter 4_5}:
    \begin{itemize}
        \item let $(\Gamma_1,p_0,q)$ be a hyperelliptic rosary of length $3$ with singularities of types $A_{2h+1}$ and $A_r$ with\\ {$1\leq h< (r-1)/2$};
        \item let $(\Gamma_0,p,p_0)$ be an odd atom of genus $(r-1)/2$;
        \item we denote by $(\Gamma,p,q)$ the nodal gluing of $(\Gamma_1,p_0,q)$ with $(\Gamma_0,p,p_0)$ along $p_0$;
        \item we denote by $C$ the curve obtained by nodally gluing $(\Gamma,p,q)$ with a $2$-pointed smooth curve $(C_0,p,q)$ along $p$ and $q$.
    \end{itemize}
Notice that if $r+h=g$, then $(C_0,p,q)$ is isomorphic to $(\bP^{1},0,\infty)$, which is not $A_r$-stable. Nevertheless, the same idea discussed below can be applied to the stabilization of $C$ to obtain the result in this case as well.
    \begin{figure}[H]
        \caption{}
        \centering
        \includegraphics[width=0.6\textwidth]{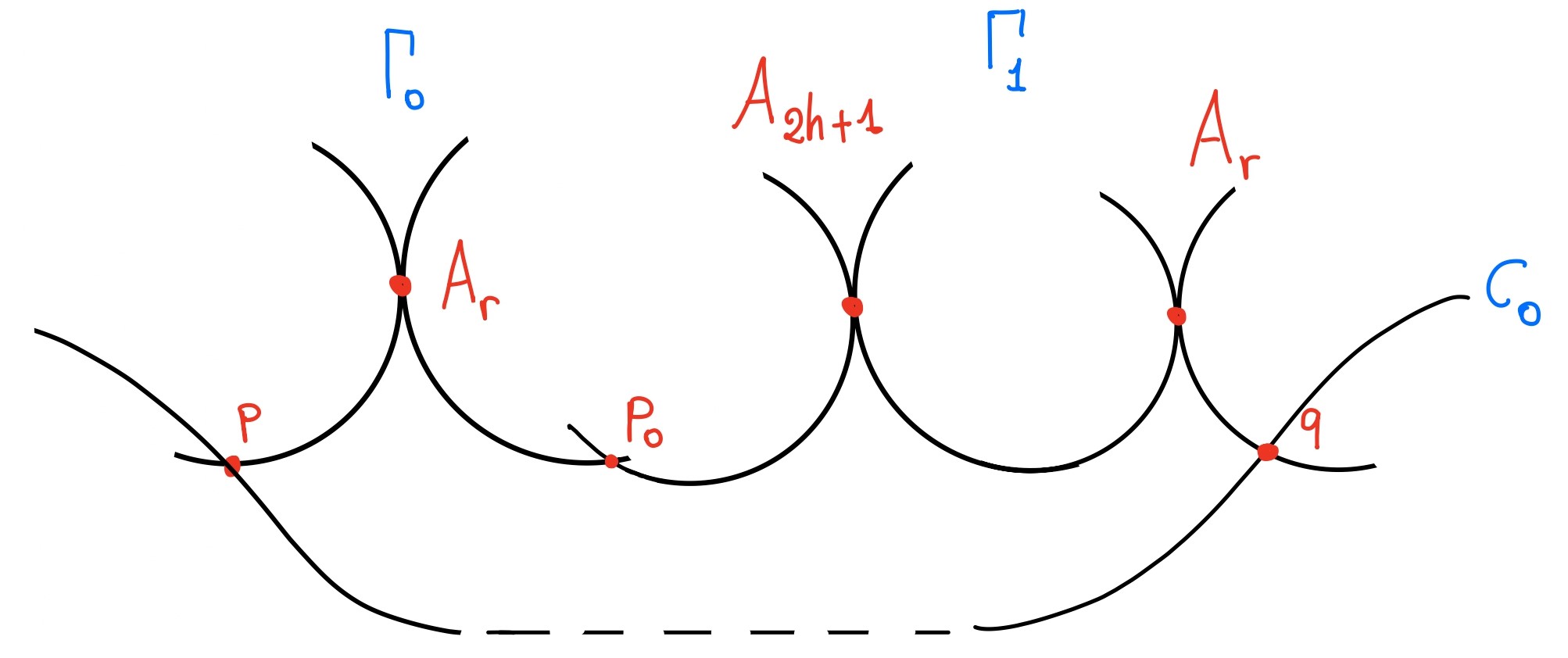}
        \label{fig:Chapter 4_5}
    \end{figure}
     We can describe the deformation space of $C$ as
$$ \Def_C= \Def_{(C_0,p,q)}\oplus T_p^1 \oplus \Def_{(\Gamma_0,p,p_0)} \oplus T_{p_0}^1 \oplus \Def_{(\Gamma_1,p_0,q)} \oplus T_{q}^1 $$
and \Cref{prop:gm-decomp-aut} gives us that $\Aut(C)^{\circ}\simeq \Gm^2$. Using \Cref{rem:alternating-def}, one can find a cocharacter $\Gm\rightarrow \Gm^2$ such that
\begin{itemize}
    \item[(a)] $(\Def_C)^>$ classifies deformations where $C_0$ is fixed, as well as the two $A_{r}$-singularities, while the other singularities deform;
    \item[(b)] $(\Def_C)^<$ classifies deformations where $C_0$ is fixed, as well as all the nodes and the $A_{2h+1}$-singularity, while the other singularities deform.
\end{itemize}

\begin{figure}[H]
        \caption{}
        \centering
        \includegraphics[width=0.9\textwidth]{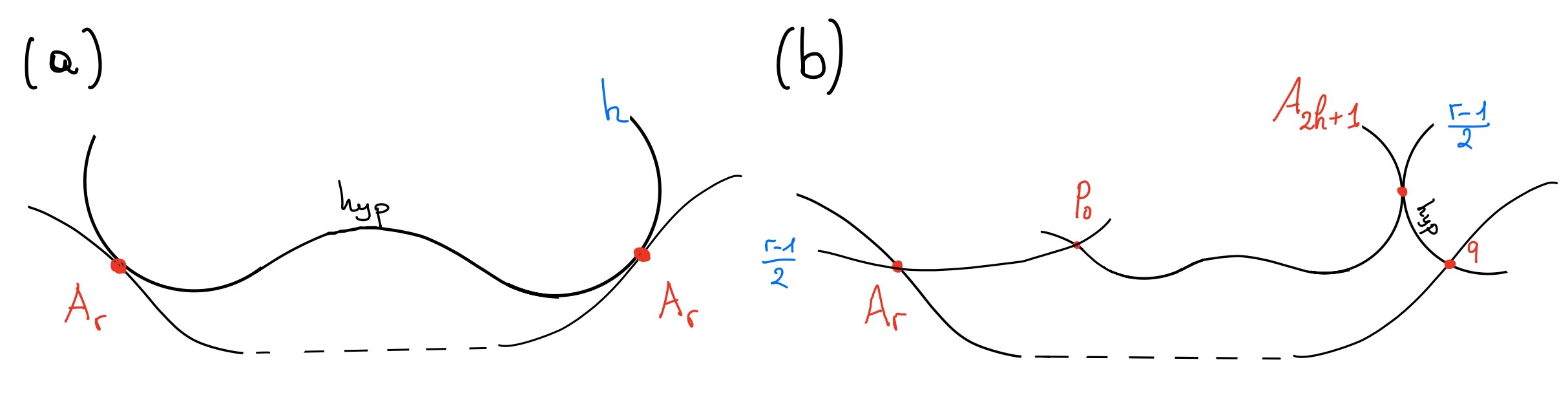}
        \label{fig:Chapter 4_5a}
    \end{figure}

Notice that in case (a), we end up with an $A_r/A_r$-attached (non-separating) hyperelliptic bridge (a chain of length one) of genus $<(r-1)/2$. If $\cU$ intersects $(\Def_C)^>$ non-trivially, it means that it generically contains $(a)$. Since $(b)$ is in $\cU_{g,n}^r\subset \cU$, we deduce that $C$ must be in $\cU$ by $\textsf{S}$-completeness. This is false because of \Cref{ex:cond-dr1}, since $C$ contains an $A_1/A_r$-attached hyperelliptic bridge of genus $h<(r-1)/2$.
\end{proof}

The next lemma explains why conditions $(c_r)$ and $(d_r)=(d_{r,1})+(d_{r,2})$ need to hold for chains, not just bridges.
\begin{lemma}\label{ex:hyp-chains}
    Let $n,r,g$ be three nonnegative integers. Let $\cU \subset \cM_{g,n}^r$ be an open substack which admits a separated good moduli space. If $\cU_{g,n}^r\subset \cU$, then every curve in $\cU$ does not contain a non-separating $A_1/A_1$-attached, $A_1/A_r$-attached, or $A_r/A_r$-attached hyperelliptic chain of genus $<(r-1)/2$ and length $\geq 2$.
\end{lemma}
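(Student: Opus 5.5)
We argue by induction on the length $\ell\geq 2$ of the chain, using the bridge case (length $1$) from \Cref{ex:prob-bridges}, \Cref{ex:cond-dr1} and \Cref{ex:cond-dr2} as base. The mechanism is the same $\textsf{S}$-completeness argument as in those proofs. We build a curve $C$ with a two-dimensional torus of automorphisms, choose a cocharacter $\Gm\to\Aut(C)^{\circ}$, and read off its two basins of attraction from \Cref{rem:alternating-def}. One basin should contain the forbidden chain of length $\ell$. The other should lie generically in $\cU_{g,n}^r$. The special point $C$ itself should contain a forbidden chain of length $\ell-1$ (or a forbidden bridge). If $\cU$ met the first basin, $\textsf{S}$-completeness together with the local linearization (\cite[Lemma 2.10]{GMSArI}, affine diagonal) would force $C\in\cU$, contradicting the induction hypothesis.

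By openness and the irreducibility in \Cref{rem:irreducibility}, it suffices to exclude the generic curve with a non-separating $A_{k_1}/A_{k_2}$-attached chain $\Gamma=\Gamma_1\cup\dots\cup\Gamma_\ell$ with $k_1,k_2\in\{1,r\}$. In this generic curve each $\Gamma_i$ is honestly hyperelliptic of genus $h_i<(r-1)/2$, the internal attachments are odd singularities $A_{2m_j+1}$, and $C-\Gamma$ is smooth. We then degenerate in two steps. First, by \Cref{prop:iso-deg-chain} and the globalization of \Cref{sub:global geometry}, degenerate the last bridge $\Gamma_\ell$ to a $2$-pointed odd atom of genus $h_\ell$. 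Second, degenerate the remaining chain $\Gamma_1\cup\dots\cup\Gamma_{\ell-1}$, viewed as $2$-pointed, to an even-length rosary, so that its internal singularities become hyperelliptic.

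We want the resulting special curve $C$ to have $\Aut(C)^{\circ}\simeq\Gm^2$ by \Cref{prop:gm-decomp-aut}, with the two $\Gm$-factors attached to two disjoint attached rosaries, exactly as in the proofs of \Cref{ex:cond-dr1} and \Cref{ex:cond-dr2}. Concretely, we separate the last atom from the rest by making the attaching singularity between $\Gamma_{\ell-1}$ and $\Gamma_\ell$ a node. If it was already a node we do nothing. If it was a worse odd singularity $A_{2m+1}$, we first use \Cref{prop:alp-fund} and the generic-point reduction to replace it by a configuration where the $A_{2m+1}$ sits inside a rosary of length $3$, mirroring the construction in \Cref{ex:cond-dr2}. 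We then pick the cocharacter $\Gm\to\Gm^2$ with weights of opposite signs on the two factors, chosen as follows.

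The positive basin should preserve the outer attachments $A_{k_1},A_{k_2}$ and the internal odd singularities, while smoothing the atom singularities into honestly hyperelliptic bridges; this recovers a chain of length $\ell$ of the forbidden type. The negative basin should smooth the separating node and the outer attachments, turning the pieces into curves with only $A_{2h_i+1}$ or $A_{2h_i+2}$ singularities attached to the smooth part. These curves should satisfy $(t_r),(c_r),(d_r),(e_r)$, hence lie in $\cU_{g,n}^r$. For $n=0$ we also need the genus inequalities to check $(e_r)$, as in \Cref{ex:prob-bridges}. Meanwhile, the special curve $C$ contains an attached chain of length $\ell-1$ (of type $A_{k_1}/A_1$ or $A_1/A_{k_2}$) of genus $<(r-1)/2$, which is excluded from $\cU$ by induction. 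This is where all three attachment types must be handled simultaneously, since $A_r/A_r$ reduces to $A_r/A_1$ and $A_1/A_r$ reduces to $A_1/A_1$.

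The main obstacle will be verifying the sign bookkeeping. We need to show that a single cocharacter realizes exactly this splitting, given the alternating behavior of weights along rosaries (\Cref{rem:alternating-def}). We also need the negative basin to genuinely avoid every forbidden configuration, including that no new non-separating chain of small genus appears when nodes are smoothed. We would first try to settle this on the explicit length-$2$ case by writing $\Def_C$ as a direct sum of $T^1$ and crimping spaces as in \Cref{ex:cond-dr1}, and then propagate to higher length by attaching further atoms alternately.
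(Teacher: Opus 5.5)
Your overall mechanism ($\textsf{S}$-completeness around a curve with $\Aut(C)^{\circ}\simeq\Gm^2$ split by a node) is the paper's, and inducting on $\ell$ is a legitimate variant. However, the special curve you describe breaks down for the $A_r$-attached ends.

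\textbf{The $A_r$-ends.} Suppose $k_1=r\geq 3$, and you degenerate $\Gamma_1\cup\dots\cup\Gamma_{\ell-1}$ as a $2$-pointed curve while it stays glued at the $A_r$-singularity to the generic, torus-trivial rest $C_0$. Then the resulting rosary carries no nontrivial $\Gm$.
\begin{itemize}
\item An equivariant crimping datum at an odd $A_{\geq 3}$-singularity forces equal tangent weights on the two branches, so the weight at that end is $0$.
\item This propagates through every odd, worse-than-nodal singularity of the rosary, so the action is trivial on all of it. This is why \Cref{prop:gm-decomp-aut} only produces a $\Gm$ for $A_i/A_j$-attached rosaries with $i,j\in\{0,1\}$.
\end{itemize}
So you do not get $\Aut(C)^{\circ}\simeq\Gm^2$. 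Moreover, all deformations of an $A_r$ glued directly to $C_0$ have weight $0$, so your negative basin cannot smooth it.

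The missing idea is the one in \Cref{ex:cond-dr1} and \Cref{ex:cond-dr2} (equivalently, case (e2) of \Cref{prop:iso-deg-chain}). The $A_r$ must appear as a \emph{hyperelliptic} singularity inside a rosary whose extra rational end component is attached to $C_0$ by a node. That node is smoothed in the positive basin, so the component is absorbed into the rest and the preserved $A_r$ becomes the outer attachment. The $A_r$ itself is smoothed in the negative basin.

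\textbf{The separating node.} If any internal attachment of the chain is a node, the original curve already contains a forbidden chain of smaller length, and induction applies directly. So the only relevant case is an internal $A_{2m+1}$ with $m\geq 1$. There your configuration is the right one: the even rosary from $\Gamma_1,\dots,\Gamma_{\ell-1}$, then a node, then a length-$3$ rosary $T_1\cup T_2\cup T_3$ with $A_{2m+1}=T_1\cap T_2$. But the node must be smoothed in the \emph{positive} basin, since it is what merges $T_1$ with the last atom of the even rosary to form $H_{\ell-1}$. It is preserved in the negative basin, contrary to what you wrote. Two smaller points: \Cref{prop:alp-fund} plays no role here, since this is simply a choice of special curve; and no even singularities occur in either basin.

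With both repairs your induction goes through, with the special curve containing a non-separating $A_{k_1}/A_1$-attached chain of length $\ell-1$.

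\textbf{Comparison with the paper.} The paper instead cuts right after the \emph{first} bridge. Its special curve is an odd atom of genus $h$ nodally glued to a rosary of odd length $2\ell-1$. The positive basin merges the atom with the adjacent rosary component into $H_1$ and pairs the remaining components into $H_2,\dots,H_\ell$. The atom is itself a non-separating $A_1/A_1$-attached bridge of the special curve. Hence only the bridge lemmas are needed (\Cref{ex:prob-bridges}, and \Cref{ex:cond-dr1}, \Cref{ex:cond-dr2} for the $A_r$-variants), and no induction on length.
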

\begin{proof}
    Let $C$ be a curve in $\cM_{g,n}^r$ constructed as follows, cf. \Cref{fig:Chapter 4_3}:
    \begin{itemize}
        \item let $(\Gamma_0,p,p_0)$ be a $2$-pointed odd atom of genus $h$ with $h<(r-1)/2$ and $(\Gamma_1,q,p_0)$ be a $2$-pointed rosary of hyperelliptic $A$-singularities of length $3$, namely one $A_{2k_1+1}$ (the one closer to $p_0$) and one $A_{2k_2+1}$ singularity; 
        \item let $(\Gamma,p,q)$ be the curve obtained by nodally gluing $\Gamma_0$ and $\Gamma_1$ along $p_0$;
        \item let $(C_0,p,q)$ be a $2$-pointed smooth curve; we denote by $C$ the curve obtained by nodally gluing $(C_0,p,q)$ and $(\Gamma,p_0,p_1)$ along $p$ and $q$.
    \end{itemize}
    \begin{figure}[H]
        \caption{}
        \centering
        \includegraphics[width=0.6\textwidth]{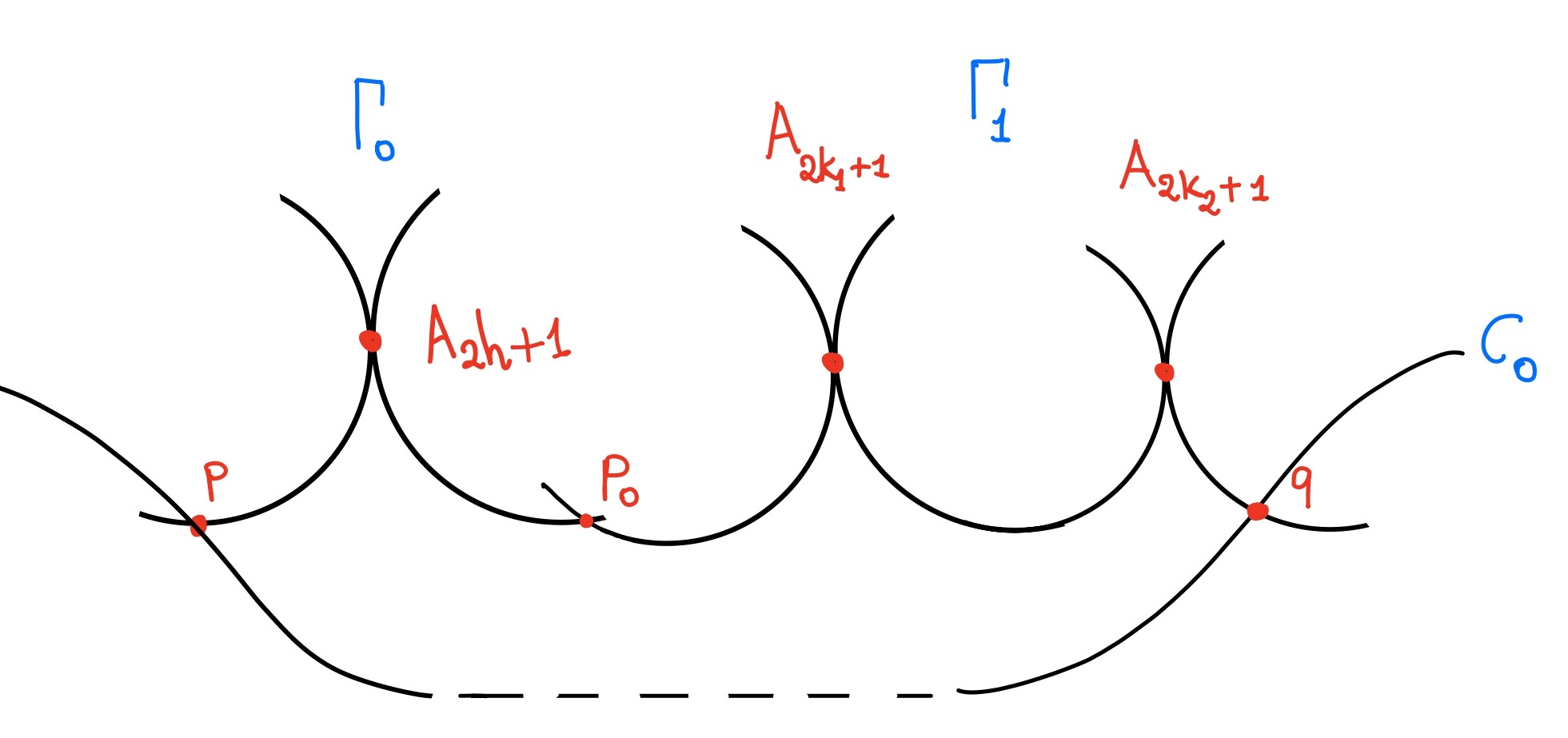}
        \label{fig:Chapter 4_3}
    \end{figure}
    
    We can describe the deformation space of $C$ as
$$ \Def_C= \Def_{(C_0,p,q)}\oplus T_p^1 \oplus \Def_{(\Gamma_0,p,p_0)}\oplus T_{p_0}^1 \oplus \Def_{(\Gamma_1,p_0,q)} \oplus T_{q}^1 $$
and \Cref{prop:gm-decomp-aut} gives us that $\Aut(C)^{\circ}\simeq \Gm^2$. Using the computations carried out in \Cref{rem:alternating-def}, it is easy to find a cocharacter $\Gm\rightarrow \Gm^2$ such that 
\begin{itemize}
    \item[(a)] $(\Def_C)^>$ classifies deformations where $C_0$ is fixed, as well as the nodes $p,q$ and the $A_{2k_1+1}$-singularity, while the three other singularities deform;
    \item[(b)] $(\Def_C)^<$ classifies deformations where $C_0$ is fixed, as well as the node $p_0$ and the $A_{2h+1}$ and $A_{2k_2+1}$ singularities, while the $A_{2k_1+1}$-singularity and the node $p$ deform.
\end{itemize}
\begin{figure}[H]
        \caption{}
        \centering
        \includegraphics[width=0.9\textwidth]{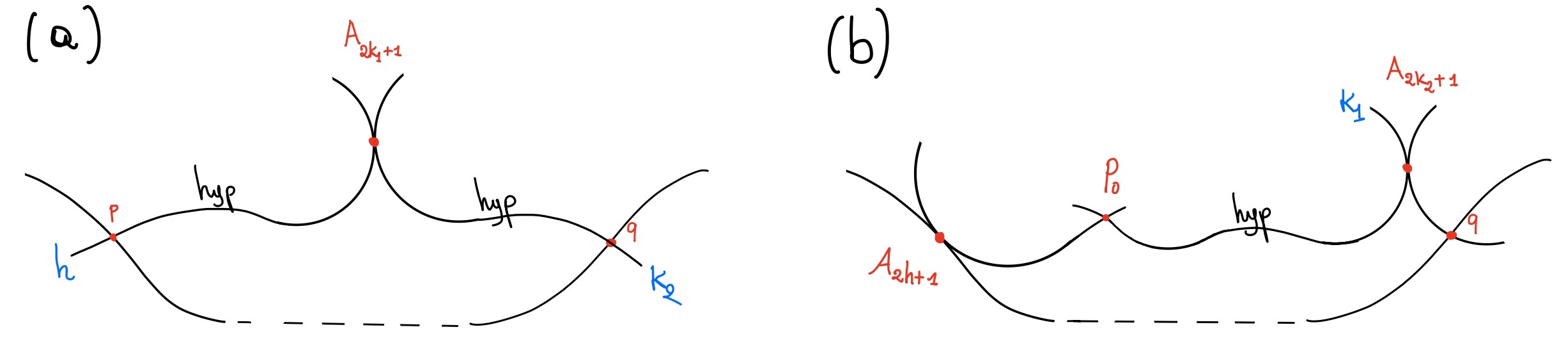}
        \label{fig:Chapter 4_4}
    \end{figure}
Notice that in case (a), we end up with an $A_1/A_1$-attached hyperelliptic chain. If $\cU$ intersects $(\Def_C)^>$ non-trivially, it means that it generically contains $(a)$. Since $(b)$ is in $\cU_{g,n}^r\subset \cU$, we deduce that $C$ must be in $\cU$ by $\textsf{S}$-completeness. This contradicts \Cref{ex:cond-dr1}, since $C$ contains an $A_1/A_1$-attached hyperelliptic bridge of genus $h<(r-1)/2$. Finally, notice that this proof can be upgraded to a proof for chains of arbitrary length by choosing $\Gamma_1$ to be a $2$-pointed rosary of odd length, and the same can be done with the proofs of \Cref{ex:cond-dr1} and \Cref{ex:cond-dr2}, which completes the proof.
\end{proof}
As a consequence of \Cref{ex:prob-bridges}, \Cref{ex:cond-dr1}, \Cref{ex:cond-dr2} and \Cref{ex:hyp-chains}, we obtain the maximality of $\cU$ for the chains.
\begin{corollary}\label{cor:maximality-chains}
     Let $n,r,g$ be three nonnegative integers and suppose that either $n>0$ or $r\leq 2g-4$. Let $\cU \subset \cM_{g,n}^r$ be an open substack which admits a separated good moduli space. If $\cU_{g,n}^r\subset \cU$, then every curve in $\cU$ satisfies conditions $(c_r)$ and $(d_r)$ of \Cref{def:admin}.
\end{corollary}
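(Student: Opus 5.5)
The corollary should follow by assembling the four preceding lemmas and matching their genus ranges against conditions $(c_r)$ and $(d_r)$ of \Cref{def:admin}. I would start from a curve $C\in\cU$ that violates $(c_r)$ or $(d_r)$. Then $C$ contains a non-separating $A_1/A_1$-, $A_1/A_r$- or $A_r/A_r$-attached hyperelliptic chain $\Gamma$ of genus bounded by $(r-1)/2$, and in the last two cases $r$ is odd. By definition each hyperelliptic bridge in $\Gamma$ then has genus $h<(r-1)/2$. I would split the argument according to the attaching type and the length of $\Gamma$.

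\textbf{Length one.} Here $\Gamma$ is a bridge. In the $A_1/A_1$ case I would invoke \Cref{ex:prob-bridges}. When $n>0$ the range $1\leq h<(r-1)/2$ is exactly the range excluded by that lemma. When $n=0$ I would use the hypothesis $r\leq 2g-4$: since $2h+2\leq r\leq 2g-4$, the constraint $4\leq 2h+2\leq \min(r,2g-4)$ holds whenever $h\geq 1$. The $A_1/A_r$ case is \Cref{ex:cond-dr1}, and the $A_r/A_r$ case is \Cref{ex:cond-dr2}; both apply directly because $h<(r-1)/2$ with $r$ odd.

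\textbf{Length at least two.} For every attaching type I would apply \Cref{ex:hyp-chains}, using the final sentence of its proof, which extends the argument to arbitrary length by taking odd-length rosaries.

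\textbf{Main obstacle.} The delicate point is the $n=0$ regime. I would check that every auxiliary curve appearing in those proofs, in particular the closed fibre with an $A_{2h+2}$-singularity, actually lies in $\cU_{g,n}^r$. This requires condition $(e_r)$ to hold, which is where $g-h-1\geq 2$ is used, and that inequality is guaranteed by $r\leq 2g-4$. I would also verify that the reduction steps in \Cref{ex:cond-dr1}, \Cref{ex:cond-dr2} and \Cref{ex:hyp-chains}, which feed back into \Cref{ex:prob-bridges}, only call on \Cref{ex:prob-bridges} within its allowed range when $n=0$. The bridge genera involved there are all $<(r-1)/2$, and $r\leq 2g-4$ makes the $n=0$ bound hold, so each lemma's hypotheses are satisfied and $C\notin\cU$, a contradiction.
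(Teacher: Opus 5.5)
Your proposal is correct and follows the paper's argument: the paper obtains the corollary directly by combining \Cref{ex:prob-bridges}, \Cref{ex:cond-dr1}, \Cref{ex:cond-dr2} and \Cref{ex:hyp-chains}. Your extra bookkeeping is exactly what the paper leaves implicit. It notes that $h<(r-1)/2$ means $2h+2\leq r$, so $r\leq 2g-4$ keeps every direct or indirect use of \Cref{ex:prob-bridges} within its $n=0$ range, which makes the auxiliary $A_{2h+2}$-curve satisfy $(e_r)$.
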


\subsection{Attached hyperelliptic tails}\label{sub:problematic-tails}

We now explain why condition $(t_r)$ appears.

The first result shows why we need to remove $A_1$-attached hyperelliptic tails of genus $<r/2$.
\begin{lemma}\label{ex:prob-tails}
    Let $n,r,g$ be three nonnegative integers. Let $\cU \subset \cM_{g,n}^r$ be an open substack which admits a separated good moduli space. If $\cU_{g,n}^r\subset \cU$, then every curve in $\cU$ does not contain an $A_1$-attached hyperelliptic tail of genus $h$, where
    \begin{itemize}
        \item $1\leq h <r/2$ if $n>0$;
        \item $3\leq 2h+1\leq \min(r,2g-5)$ if $n=0$.
    \end{itemize}
\end{lemma}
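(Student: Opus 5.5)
I will mirror the two-step structure of the proof of \Cref{ex:prob-bridges}, replacing the odd atom by an even atom and the non-separating odd singularity by a separating one.

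\emph{Step 1: atoms.} First I show that $\cU$ contains no curve with an $A_1$-attached even atom of genus $h$ in the stated range. Suppose it does. By openness and the irreducibility of property (c) in \Cref{rem:irreducibility}, $\cU$ then contains the generic point $E$ of that locus. I will build a morphism $\Theta_R\setminus\{0\}\to\cU$ that does not extend over $\Theta_R$. It is obtained by gluing a family $C\to\spec R$ and an isotrivial family $D\to\Theta_Q$ along their common generic fiber, chosen as follows.
\begin{itemize}
\item The generic fiber $C_Q$ is the generic curve with an $A_{2h}$-singularity.
\item The special fiber $C_k$ is the generic curve with an $A_{2h+1}$-singularity that is \emph{separating}. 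It splits off a $1$-pointed smooth genus $h$ component, i.e.\ an $A_{2h+1}$-attached tail of genus $h$. Such a tail is not hyperelliptic (or, if it is, the attaching point is not Weierstrass) once $h\geq 3$, and otherwise we use genericity. Hence $C_k$ satisfies $(t_r)$ provided $2h+1\leq r$, so that the singularity is allowed at all.
\item The family $D$ is the isotrivial degeneration of $C_Q$ given by \cite[Lemma 4.42/4.43]{GMSArI} (see \Cref{sub:global geometry}). It degenerates the $A_{2h}$-singularity to an $A_1$-attached even atom of genus $h$, with central fiber $E$ after a field extension.
\end{itemize}
A filling over $\Theta_R$ would yield a degeneration of $C_k$ to a curve with a separating node. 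By \Cref{prop:alp-fund}, applied to the separating $A_{2h+1}$-section, the separating singularity must remain a separating odd singularity of type at least $A_{2h+1}$. This contradicts $\Theta$-completeness of $\cU$.

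\emph{The case $n=0$.} Here I must also check that $C_k$ satisfies $(e_r)$, and this is where the bound $2h+1\leq 2g-5$ should enter. The other component of $C_k$ is a generic $1$-pointed smooth curve of genus $g-h-1$. When $g-h-1\geq 3$ it is not hyperelliptic. In the borderline case, where it is hyperelliptic, the marked point is not Weierstrass. So $C_k$ is not self-attached. I also need $C_k$ to avoid the other conditions, which holds generically.

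\emph{Step 2: general tails.} Let $C_{0,1}\in\cU$ contain an $A_1$-attached hyperelliptic tail $\Gamma$ of genus $h$. By openness I may take $\Gamma$ honestly hyperelliptic and $C_{0,1}-\Gamma$ smooth. The globalization in \Cref{sub:global geometry} gives an isotrivial degeneration to the curve $C_{0,0}$ in which $\Gamma$ is replaced by an $A_1$-attached even atom. By Step 1, $C_{0,0}\notin\cU$. The $\Gm$-equivariant deformation space of $C_{0,0}$ splits into $T^1_q$ (weight of one sign) and ${\rm Cr}_q\oplus T^1_{n}$ (opposite sign), plus invariant directions, as in \Cref{rem:not-S-complete}.
\begin{itemize}
\item The generic point of the second basin smooths the node $n$. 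It yields a curve whose $A_{2h}$-singularity survives, which is in $\cU_{g,n}^r\subset\cU$.
\item $C_{0,1}$ lies in the first basin.
\end{itemize}
Since $\cM_{g,n}^r$ has affine diagonal and $\cU$ is $\textsf{S}$-complete, \cite[Lemma 2.10]{GMSArI} shows that $\cU$ cannot meet both basins without containing $C_{0,0}$. Hence $C_{0,1}\notin\cU$.

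The main obstacle is the $n=0$ bookkeeping in Step 1. I have to verify that the generic special fiber really lies in $\cU_{g}^r$, namely that it satisfies $(e_r)$, $(t_r)$ and $(c_r)$. This is exactly where the inequality $2h+1\leq\min(r,2g-5)$ is used, and I would handle it by the genericity argument above.
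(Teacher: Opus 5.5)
\textbf{There is a genuine gap in Step 1.} Your Step 2 is the paper's argument, and so is the overall shape of Step 1. The problem is your choice of $C_k$ as the generic curve with a \emph{separating} $A_{2h+1}$-singularity; the paper uses a \emph{non-separating} one, and the difference matters.

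First, your $C_k$ is not always in $\cU_{g,n}^r$. For $h=1$, the split-off component is a $1$-pointed genus-$1$ curve. Such a curve is always hyperelliptic with the attaching point Weierstrass (use $x\mapsto -x$). So $C_k$ contains an $A_3$-attached elliptic tail and violates $(t_r)$ for every $r\geq 3$, which is every $r$ for which $h=1$ is in range. Genericity cannot rescue this. The same happens for $n=0$ and $g=2h+1$, which your bounds allow once $h\geq 2$: there the complementary component is an $A_{2h+1}$-attached elliptic tail. Note also that in your construction the complementary component has genus $g-2h$, not $g-h-1$ as in your $n=0$ paragraph; $g-h-1$ is the genus in the non-separating construction.

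Second, the contradiction you state does not follow. Nothing prevents the central curve $C_{00}$ of a filling from having both a separating node and a separating $A_{2h+1}$-singularity. In fact $C_k$ degenerates isotrivially to such a curve: replace the separating singularity by a separating odd atom of genus $h$ attached at two nodes $\nu_1,\nu_2$. Then $\nu_1$ separates an unmarked genus-$h$ piece from a marked genus-$(g-h)$ piece. This is exactly what \Cref{prop:alp-fund} requires of the specialization of the attaching node of $E$. Excluding this configuration needs an extra argument, which you do not give. For instance, the genus-$h$ piece would have to be the generic tail, on which $\Gm$ acts trivially, and at the same time a specialization of the even atom, on which the same $\Gm$ acts non-trivially.

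With the paper's non-separating choice, both problems disappear:
\begin{itemize}
\item $C_k$ has no tails at all.
\item For $n=0$, its partial normalization has genus $g-h-1\geq 2$, so $(e_r)$ holds generically.
\item Every isotrivial degeneration of $C_k$ is either $C_k$ itself or the curve with a non-separating $A_1/A_1$-attached odd atom, and neither has a separating singularity. Meanwhile \Cref{prop:alp-fund} forces the separating node of $E$ to specialize to a separating node, which gives an immediate contradiction.
\end{itemize}
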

\begin{proof}
    As in the proof of \Cref{ex:prob-bridges}, we handle the case $n>0$ and specify when the inequality is needed for $n=0$. First, we prove that $\cU$ does not contain an $A_1$-attached even atom of genus $h<r/2$.
    Let $R$ be a dvr with fraction field $Q$ and $C\rightarrow \spec R$ be a family of $A_r$-stable curves over $R$ such that
    \begin{itemize}
        \item the generic fiber $C_Q$ is the generic $A_r$-stable curve with an $A_{2h}$-singularity (see  (see \Cref{rem:irreducibility});
        \item the closed fiber $C_k$ is the generic $A_r$-stable curve with an non-separating $A_{2h+1}$-singularity  (see \Cref{rem:irreducibility}).
    \end{itemize}
    Thanks to \Cref{prop:iso-deg-chain}, we can construct an isotrivial degeneration $C_{\Theta} \rightarrow \Theta_Q$ such that the generic fiber coincides with $C_Q$ and the $A_{2h}$-singularity of $C_Q$ degenerates to an even atom of genus $h$ attached to the rest of the curve at a node. Suppose by contradiction that $\cU$ intersects non-trivially the irreducible closed substack parametrizing $A_1$-attached atoms of genus $h$ (see \Cref{rem:irreducibility}). Since $\cU$ is open, we have that the isotrivial degeneration $C_{\Theta} \rightarrow \Theta_Q$ factors through $\cU$. This is equivalent to a morphism $\Theta_R \setminus 0 \rightarrow \cM_g^r$ which does not extend to a morphism from $\Theta_R$ because of \Cref{prop:alp-fund} (see \Cref{fig:ex4.11(1)}). By construction, notice that the morphism from $\Theta_R\setminus 0$ factors through $ \cU$, since the family $C\rightarrow \spec R$ is also contained in $\cU_{g,n}^r\subset \cU$ (exactly as in the proof of \Cref{ex:prob-bridges}, we need the inequality in the case $n=0$ to hold due to condition $(e_r)$). This is a contradiction, since $\cU$ is $\Theta$-complete.
    \begin{figure}[H]
    \centering
    \begin{minipage}{0.45\textwidth}
        \centering
        \includegraphics[width=0.85\textwidth]{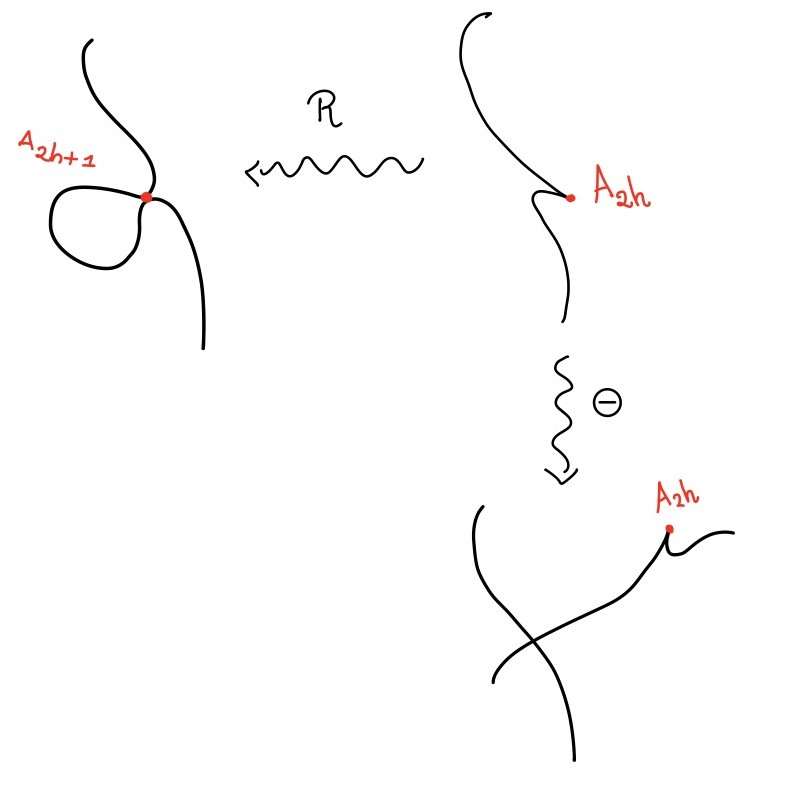}
        \caption{}
        \label{fig:ex4.11(1)}
    \end{minipage}
    \begin{minipage}{0.45\textwidth}
        \centering
        \includegraphics[width=0.85\textwidth]{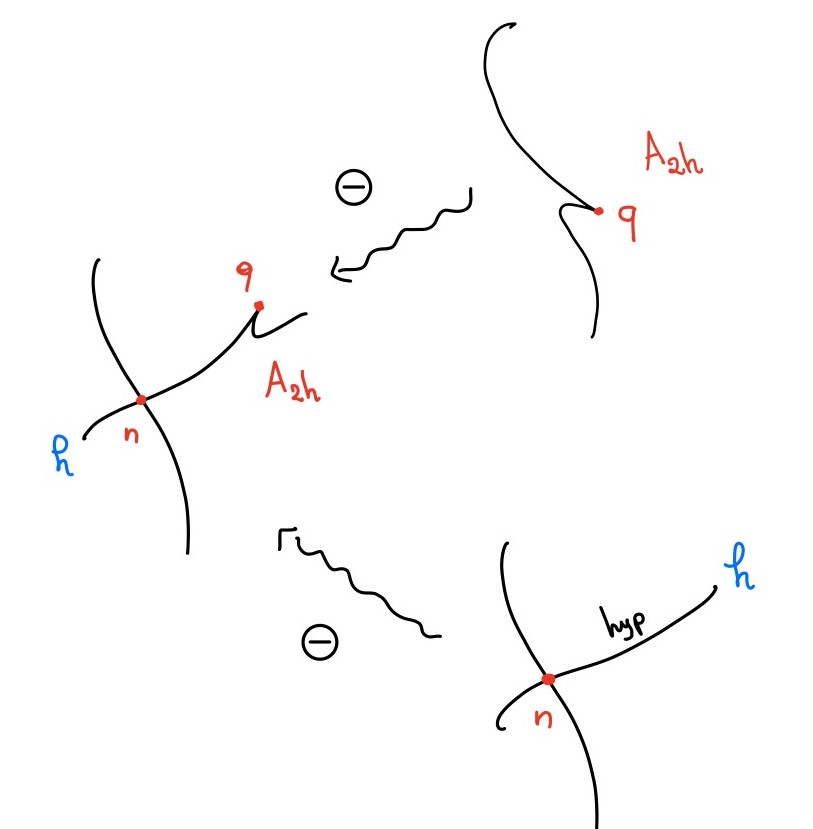}
        \caption{}
        \label{fig:ex4.11(2)}
    \end{minipage}
    \end{figure}
   Therefore, there are no curves in $\cU$ with atoms of the specified form. Let us now show that there are no $A_1$-attached hyperelliptic tails of genus $<r/2$. By contradiction, we can assume there exists a curve $C_{1,0}$ together with an $A_1$-attached hyperelliptic bridge $\Gamma_{0,1}\subset C_{0,1}$ of genus $<r/2$. Since $\cU$ is an open substack, we can assume that $\Gamma_{0,1}$ is honestly hyperelliptic and $C_{0,1}-\Gamma_{0,1}$ is smooth. As discussed in \Cref{sub:global geometry}, we can find an isotrivial degeneration of $C_{1,0}$ where the special fiber $C$ is obtained by nodally gluing the $1$-pointed even atom $\Gamma$ of genus $h$ to $C-\Gamma$; in particular, $C$ is not in $\cU$. The $\Gm$-equivariant deformation space of $C$ in $\cM_{g,n}^r$ is
    $$ \Def_{(C-\Gamma,n)}\oplus [T^1_q  \oplus ({\rm Cr}_q \oplus T^1_{n})/\Gm]$$
    where $q$ is the even singularity and $n$ is the nodal attachments. Since $C \notin \cU$ by what we proved above, locally around $C$, $\cU$ is contained in the complement of the $0$-section of the deformation space. Since $\cM_{g,n}^r$ has affine diagonal (thus the local structure map is affine) and $\cU$ is $\textit{S}$-complete, we deduce that one of the two \emph{basins of attraction}, see \Cref{fig:ex4.11(2)}, i.e., either $T_q^1\oplus (0\oplus 0)$ or $0 \oplus ({\rm Cr}_q \oplus T^1_{n})$, does not intersect $\cU$ (otherwise we would have a diagram of specializations like the one in \Cref{fig:ex4.11(2)}). Notice that $C_{0,1}$ corresponds to a point in $T_q^1\oplus (0 \oplus 0)$.
    Since $0 \oplus ({\rm Cr}_q \oplus T^1_{n})$ is contained in $\cU_{g,n}^r\subset \cU$, we conclude that $C_{0,1} \notin \cU$, which completes the proof.
\end{proof}

\begin{lemma}\label{ex:proble-tail-Ak}
    Let $n,r,g$ be three nonnegative integers. Let $\cU \subset \cM_{g,n}^r$ be an open substack which admits a separated good moduli space. If $\cU_{g,n}^r\subset \cU$, then every curve in $\cU$ does not contain an $A_{2k+1}$-attached hyperelliptic tail of genus $<(r-1)/2$.
\end{lemma}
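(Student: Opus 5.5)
We argue as in the proofs of \Cref{ex:cond-dr1} and \Cref{ex:cond-dr2}, reducing to the case of $A_1$-attached tails handled in \Cref{ex:prob-tails} by an $\textsf{S}$-completeness argument around a curve with a two-dimensional torus of automorphisms. Fix $1\leq h<(r-1)/2$ and $k\geq 1$ with $2k+1\leq r$; the case $k=0$ is \Cref{ex:prob-tails} itself.

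We build a curve $C$ as follows. Let $(\Gamma_0,p_0)$ be a $1$-pointed even atom of genus $h$, and let $(\Gamma_1,p_0,q)$ be a $2$-pointed odd atom of genus $k$. Glue them nodally along $p_0$, obtaining an $A_1/A_{2h}$-attached rosary. Then attach the marking $q$ nodally to a smooth pointed curve $(C_0,q)$ of the appropriate genus, chosen generic; if this produces an unstable component, we pass to the stabilization as in \Cref{ex:cond-dr2}. By \Cref{prop:gm-decomp-aut} we have $\Aut(C)^{\circ}\simeq\Gm^2$, and the deformation space decomposes as
$$\Def_C=\Def_{(C_0,q)}\oplus T^1_q\oplus \Def_{(\Gamma_1,p_0,q)}\oplus T^1_{p_0}\oplus \Def_{(\Gamma_0,p_0)}.$$
Using \Cref{rem:alternating-def}, we then choose a cocharacter $\Gm\to\Gm^2$, trivial on $C_0$, with two opposite basins:
\begin{itemize}
\item[(a)] $(\Def_C)^>$ preserves the $A_{2k+1}$-singularity and the node $p_0$, while it smooths the node $q$ and the $A_{2h}$-singularity. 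Generically this gives an honestly hyperelliptic tail of genus $h$ attached at an $A_{2k+1}$-singularity, i.e.\ exactly the configuration we want to exclude (after also smoothing into $C_0$ appropriately; the attaching singularity stays of type $A_{2k+1}$ by \Cref{prop:alp-fund}).
\item[(b)] $(\Def_C)^<$ preserves the node $q$ and the $A_{2h}$-singularity, and smooths the node $p_0$ and the $A_{2k+1}$-singularity. The resulting curve has an $A_{2h}$-singularity on a genus-$(h+k)$ honestly hyperelliptic tail attached at a node.
\end{itemize}

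Next we check that the generic point of (b) lies in $\cU_{g,n}^r$. The tail attached at the node $q$ has genus $h+k$, so condition $(t_r)$ can only fail if $h+k<r/2$. Hence a direct use of (b) works only when $h+k\geq r/2$. Otherwise we modify the construction: we let the $\Gm$-action smooth $q$ as well, by making $C_0$ part of the moving component, or we replace $\Gamma_1$ by a rosary of length $3$ as in \Cref{ex:cond-dr2}, so that the generic curve in (b) has no hyperelliptic tails at all. In that case the even singularity is absorbed into a non-tail component attached at two points. Granting this, if $\cU$ met $(\Def_C)^>$, then $\textsf{S}$-completeness (the diagram of \Cref{fig:ex4.7(2)}), together with the affine local structure, would force $C\in\cU$. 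But $C$ contains an $A_1$-attached even atom of genus $h<r/2$, which is excluded by the first part of the proof of \Cref{ex:prob-tails}. Therefore $\cU$ avoids basin (a). Finally, an arbitrary curve in $\cU$ with an $A_{2k+1}$-attached hyperelliptic tail of genus $h$ generizes, by openness and \Cref{rem:irreducibility}, to the generic such curve. That generic curve isotrivially degenerates into basin (a) by the globalization results of \Cref{sub:global geometry}, giving a contradiction.

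We expect the main obstacle to be arranging the cocharacter so that basin (b) really lands in $\cU_{g,n}^r$: it must avoid $(t_r)$, $(c_r)$ and $(d_r)$, and $(e_r)$ when $n=0$. This requires checking weight signs on the $\Gm^2$-action carefully and possibly lengthening the rosary.
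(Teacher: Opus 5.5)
Your curve $C$ is the right one; it is exactly the curve the paper uses for \Cref{ex:proble-tail-last}. The argument fails because of the cocharacter you chose. Write $s$ and $t$ for the $A_{2k+1}$- and $A_{2h}$-singularities, and give the cocharacter weight $a$ on $T_q\Gamma_1$ and $T_{p_0}\Gamma_1$ and weight $b$ on $T_{p_0}\Gamma_0$. The signs are then $\sgn(a)$ on $T^1_q$ and ${\rm Cr}_s$, $\sgn(-a)$ on $T^1_s$, $\sgn(a+b)$ on $T^1_{p_0}$, $\sgn(-b)$ on $T^1_t$, and $\sgn(b)$ on ${\rm Cr}_t$.

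Your basin (a) keeps $p_0$, which forces $b\leq -a<0$. Its generic curve is then $C_0$ (with the component of $\Gamma_1$ through $q$ absorbed), followed by $s$, the rational component of $\Gamma_1$ through $p_0$, the node $p_0$, and an honestly hyperelliptic genus-$h$ curve $H$ glued there at a Weierstrass point. So the tail at $s$ is not honestly hyperelliptic, contrary to what you write. Worse, these curves contain the $A_1$-attached hyperelliptic tail $H$ of genus $h<r/2$, so \Cref{ex:prob-tails} already excludes them; the $\textsf{S}$-completeness step proves nothing new. The generic curve $Y$ with an $A_{2k+1}$-attached hyperelliptic tail of genus $h$ is not in this basin, because degenerating it to $C$ requires a nonzero component in $T^1_{p_0}$, whose weight is $a+b\leq 0$. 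The curves in your basin are specializations of $Y$, and openness of $\cU$ only passes membership to generizations. So ``$\cU$ avoids basin (a)'' says nothing about $Y$, and your last sentence is exactly where the proof breaks. Your basin (b) is also misdescribed: a dimension count ($2k+h-1$ modulo the torus, against $2k-1$ for hyperelliptic genus-$(h+k)$ tails with a Weierstrass attaching point and an $A_{2h}$-singularity) shows its generic tail is not hyperelliptic. The obstacle you single out is therefore not the real one.

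If you take $a>-b>0$ instead, $(\Def_C)^{>}=T^1_q\oplus{\rm Cr}_s\oplus T^1_{p_0}\oplus T^1_t$ has as generic point the generic curve with an $A_{2k+1}$-attached honestly hyperelliptic tail of genus $h$. Meanwhile $(\Def_C)^{<}=T^1_s\oplus{\rm Cr}_t$ has as generic point a separating $A_1/A_1$-attached bridge of genus $k$ followed by a rational tail whose $A_{2h}$-singularity has non-equivariant crimping. That curve lies in $\cU_{g,n}^r$ with no condition on $h+k$, so no lengthened rosaries are needed. This is precisely the paper's proof of \Cref{ex:proble-tail-last}.

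It needs ${\rm Cr}_t\neq 0$, however, i.e.\ $h\geq 2$. For $h=1$ the tail at $p_0$ in basin (b) is the genus-one even atom, an $A_1$-attached elliptic tail, so (b) is not in $\cU_{g,n}^r$ and the argument collapses. The statement does include $h=1$, e.g.\ $A_3$-attached elliptic tails for $r\geq 4$.

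The paper proves this lemma by a $\Theta$-completeness argument that is uniform in the genus:
\begin{itemize}
\item glue a length-$3$ rosary $D$ nodally at two points to a smooth $(C_0,p_1,p_2)$;
\item by \Cref{prop:iso-deg-chain}, this curve is the isotrivial limit of one in which a hyperelliptic bridge meets the rest in an outer $A_{2k-1}$-singularity and a node;
\item colliding these two singularities along a dvr gives a separating $A_{2k+1}$, and hence the tail in question;
\item the resulting map $\Theta_R\setminus 0\to\cU$ cannot be filled, by \Cref{prop:alp-fund}.
\end{itemize}
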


\begin{proof}
    Let $(D,p_1,p_2)$ be a $2$-pointed rosary of hyperelliptic $A$-singularities of length $3$ and $(C_0,p_1,p_2)$ be a $2$-pointed smooth curve. We denote by $C$ the curve obtained by nodally gluing $(C_0,p_1,p_2)$ and $(D,p_1,p_2)$ along $p_1$ and $p_2$. \Cref{prop:iso-deg-chain} allows us to construct an isotrivial degeneration to $C$ which deforms one of the nodes and one of the $A$-singularities of $D$. Therefore, the generic fiber would consist of two smooth curves meeting in two outer singularities of type $A_{2k+1}$ and a node. If $2k+3\leq r$, we can degenerate these two singularities into an outer (in fact separating) $A_{2k+3}$-singularity. As before, if there exists a curve in $\cU$ which contains an $A_{2k+3}$-attached hyperelliptic tail of genus $<(r-1)/2$, then we can construct a morphism $\Theta_R\setminus 0 \rightarrow \cU$, see \Cref{fig:ex4.12}, which does not extend to a morphism from $\Theta_R$ because of Proposition 2.10 of \cite{AlpFedSmyWyck} (also restated in \Cref{prop:alp-fund}). A similar construction can be carried out if $(D,p_1,p_2)$ has length $3$ or length $1$.
    \begin{figure}[H]
        \caption{}
        \centering
        \includegraphics[width=0.6\textwidth]{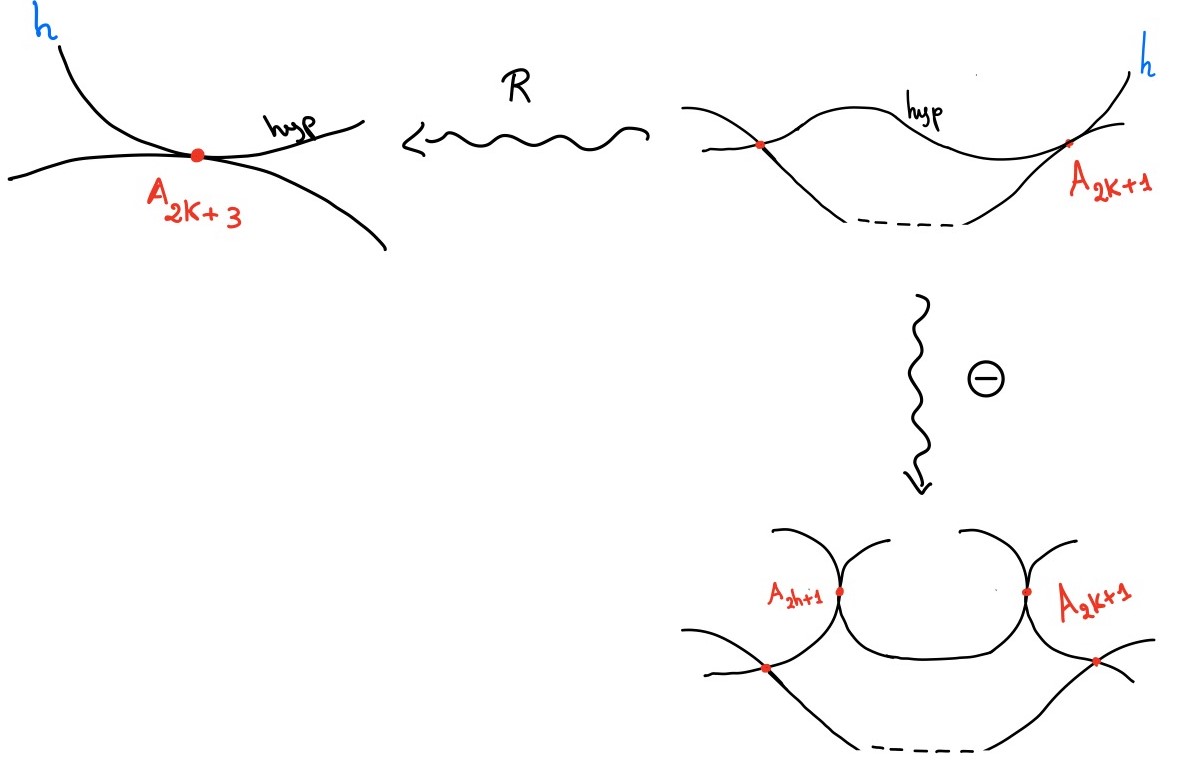}
        \label{fig:ex4.12}
    \end{figure}
    
\end{proof}

\begin{remark}\label{rem:missing-tails}
    Notice that this is not exactly condition $(t_r)$ as in \Cref{def:admin}: in fact, we have not yet removed the $A_{2k+1}$-attached hyperelliptic tails of genus $h=(r-1)/2$ when $r$ is odd for $k\geq 2$. This will follow from the next lemma.
\end{remark}

\begin{lemma}\label{ex:proble-tail-last}
    Let $n,r,g$ be three nonnegative integers. Let $\cU \subset \cM_{g,n}^r$ be an open substack which admits a separated good moduli space. If $\cU_{g,n}^r\subset \cU$, then every curve in $\cU$ does not contain an $A_{2k+1}$-attached hyperelliptic tail of genus $<r/2$.
\end{lemma}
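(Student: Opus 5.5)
By \Cref{ex:proble-tail-Ak}, $A_{2k+1}$-attached hyperelliptic tails of genus $h<(r-1)/2$ are already excluded. \Cref{ex:prob-tails} handles $A_1$-attached tails of genus $<r/2$. So, as \Cref{rem:missing-tails} indicates, only one case is left. This is $r$ odd, $h=(r-1)/2$ and $k\geq 1$: an $A_{2k+1}$-attached hyperelliptic tail of genus exactly $(r-1)/2$ with worse-than-nodal attachment. The plan is to exclude it with the same $\textsf{S}$-completeness argument used in \Cref{ex:cond-dr1} and \Cref{ex:cond-dr2}. We find a test curve $C$ with $\Aut(C)^\circ\simeq\Gm^2$ and a cocharacter such that one basin of attraction contains the tail in question, while the other basin lies in $\cU_{g,n}^r$. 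Moreover, $C$ itself will contain a configuration already excluded by the previous lemmas.

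For the test curve $C$, take a $2$-pointed rosary $(\Gamma_1,p_0,q)$ of length $2$, i.e.\ an odd atom, of genus $k$, so that its singularity is $A_{2k+1}$. Glue it along $q$, at a node, to a $1$-pointed even atom $(\Gamma_0,q)$ of genus $h=(r-1)/2$, whose singularity is $A_{r-1}$. Then attach $p_0$ nodally to a smooth curve $(C_0,p_0,\dots)$ carrying the markings. By \Cref{prop:gm-decomp-aut} we get $\Aut(C)^\circ\simeq\Gm^2$, and the deformation space splits as
$$\Def_C=\Def_{C_0}\oplus T^1_{p_0}\oplus T^1_{q_1}\oplus{\rm Cr}_{q_1}\oplus T^1_{q}\oplus T^1_{q_2}\oplus{\rm Cr}_{q_2},$$
where $q_1$ is the odd singularity and $q_2$ the even one. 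Using \Cref{rem:alternating-def}, choose the cocharacter so that the basins behave as follows.

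In $(\Def_C)^>$ the node $p_0$ and the odd singularity $q_1$ are smoothed, while $q$ and the even singularity $q_2$ are preserved. Alternatively, using the Weierstrass-tail globalization of \Cref{sub:global geometry}, arrange that the node $q$ is kept and $q_2$ is smoothed into an honestly hyperelliptic genus-$h$ tail. Then the $A_{2k+1}$-singularity $q_1$, when it is kept, becomes the attaching point of a genus-$h$ hyperelliptic tail. Concretely, one basin must contain curves with an $A_{2k+1}$-attached honestly hyperelliptic tail of genus $(r-1)/2$. The opposite basin smooths $q_1$ and keeps $p_0$, giving a curve with an $A_1$-attached tail. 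By \Cref{rem:genus-count}, this tail has genus $h+k$ or contains genus-$k$ hyperelliptic pieces. Since $h+k\geq r/2$ when $k\geq 1$, it is admissible, and we must also verify $(c_r)$ and $(d_r)$ for the rosary remnants. Hence this basin lies in $\cU_{g,n}^r\subset\cU$.

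If $\cU$ met the first basin, $\textsf{S}$-completeness (as in \Cref{ex:prob-bridges}) would force $C\in\cU$. But $C$ contains an $A_1$-attached even atom of genus $(r-1)/2<r/2$, which is excluded by the first half of the proof of \Cref{ex:prob-tails}. So $\cU$ misses that basin. By openness and the irreducibility statements of \Cref{rem:irreducibility}, together with the globalization in \Cref{sub:global geometry}, every curve with an $A_{2k+1}$-attached genus-$(r-1)/2$ hyperelliptic tail isotrivially degenerates into that basin, so none lies in $\cU$.

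The main obstacle is the weight bookkeeping. We need a single cocharacter of $\Gm^2$ whose positive part keeps $q_1$ and smooths $q_2$ while keeping the node $q$, and whose negative part is genuinely admissible. This requires checking signs carefully via \Cref{rem:alternating-def}; if it fails, I would replace the even atom with a length-$3$ rosary, as in \Cref{ex:proble-tail-Ak}. The $n=0$ case also needs care with $(e_r)$, which is where the bound $r\leq 2g-5$ enters.
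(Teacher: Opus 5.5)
Your test curve is the one the paper uses: an odd atom of genus $k$ joined at nodes to $C_0$ and to a $1$-pointed even atom of genus $h=(r-1)/2$. The final contradiction, via the $A_1$-attached even atom and \Cref{ex:prob-tails}, is also the paper's. But the step you defer as ``weight bookkeeping'' is where your write-up goes wrong, and both of your descriptions of the bad basin fail. If $q_1$ is smoothed, no $A_{2k+1}$-attachment is left. If the node $q$ is kept, the rational component of the odd atom through $q$ still sits between $q_1$ and the genus-$h$ tail, so the tail is attached at a node.

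Write $b$ for the weight of the odd atom on its tangent lines at $p_0$ and at $q$; these agree by balancing at $q_1$. Write $a$ for the weight of the even atom at $q$. Then $\sgn T^1_{p_0}=\sgn(b)$, $\sgn T^1_q=\sgn(a+b)$, $\sgn T^1_{q_1}=-\sgn{\rm Cr}_{q_1}=\sgn(-b)$ and $\sgn T^1_{q_2}=-\sgn{\rm Cr}_{q_2}=\sgn(-a)$. Taking $a<0<b$ with $a+b>0$ gives $(\Def_C)^>=T^1_{p_0}\oplus T^1_q\oplus{\rm Cr}_{q_1}\oplus T^1_{q_2}$. There both nodes and $q_2$ are smoothed while $q_1$ survives, which is the $A_{2k+1}$-attached genus-$h$ tail. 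The other basin is $(\Def_C)^<=T^1_{q_1}\oplus{\rm Cr}_{q_2}$.

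The missing idea is why this second basin lies in $\cU_{g,n}^r$. Its generic curve consists of three pieces:
\begin{itemize}
\item $C_0$;
\item a \emph{separating} genus-$k$ hyperelliptic bridge, so $(c_r)$ and $(d_r)$ are irrelevant;
\item attached at the node $q$, the rational curve carrying the $A_{2h}$-singularity.
\end{itemize}
The last piece is a nodally attached tail of genus $h<r/2$, so your check ``$h+k\geq r/2$'' looks at the wrong subcurve. This tail escapes $(t_r)$ only because ${\rm Cr}_{q_2}$ lies in the negative part. The crimping datum moves off the $\Gm$-equivariant one, and the directions at the even atom that keep it hyperelliptic with Weierstrass attaching point are exactly $T^1_{q_2}\oplus 0$. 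So the tail is no longer a hyperelliptic tail. This requires ${\rm Cr}_{q_2}\neq 0$, i.e.\ $h\geq 2$, i.e.\ $r\geq 5$.

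For $r=3$ (the $A_3$-attached elliptic tails) the negative basin is just $T^1_{q_1}$. Its generic curve has an $A_1$-attached cuspidal elliptic tail, so it is not in $\cU_{g,n}^3$, and the argument breaks down. The paper records this limitation and relies on \cite[Theorem~F]{AltCompClustAlg} for that case. With these two corrections your argument is the paper's, and the length-$3$ rosary fallback is unnecessary.
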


\begin{proof}
    Thanks to \Cref{ex:proble-tail-Ak}, we can assume $r$ is odd and $h=(r-1)/2$ (see also \Cref{rem:missing-tails}). Let $C$ be the curve constructed as follows, see \Cref{fig:ex4.14(1)}:
    \begin{itemize}
        \item Let $(\Gamma_0,p_0,p)$ be a $2$-pointed odd atom of genus $k$ with $k<(r-1)/2$ and $(\Gamma_1,p_0)$ be a $1$-pointed even atom of genus $h$;
        \item we denote by $(\Gamma,p)$ the $1$-pointed curve obtained by nodally gluing $(\Gamma_0,p_0,p)$ with $(\Gamma_1,p_0)$ along $p_0$;
        \item we denote by $C$ the curve obtained by nodally gluing $(\Gamma,p)$ with the $1$-pointed smooth curve $(C_0,p)$ along $p$;
        \begin{figure}[H]
        \caption{}
        \centering
        \includegraphics[width=0.4\textwidth]{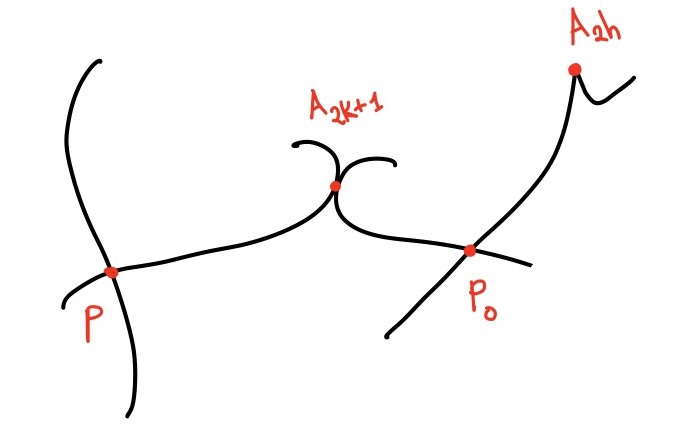}
        \label{fig:ex4.14(1)}
    \end{figure}
    \end{itemize}
     We can describe the deformation space of $C$ as
    $$ \Def_C= \Def_{(C_0,p)}\oplus T_p^1 \oplus \Def_{(\Gamma_0,p_0,p)}\oplus T_{p_0}^1 \oplus \Def_{(\Gamma_1,p_0)} $$
    and \Cref{prop:gm-decomp-aut} gives us that $\Aut(C)^{\circ}\simeq \Gm^2$. Similarly to previous cases where $\Aut(C)^{\circ}\simeq \Gm^2$, a simple computation using \Cref{rem:alternating-def} gives a cocharacter $\Gm\rightarrow \Gm^2$ such that
    \begin{itemize}
        \item[(a)] $(\Def_C)^>$ parametrizes curves with an $A_{2k+1}$-attached hyperelliptic tail of genus $h$;
        \item[(b)] $(\Def_C)^<$ parametrizes curves with an $A_1/A_1$-attached \emph{separating} bridge of genus $k$ with a rational tail that contains an $A_{2h}$-singularity;
    \end{itemize}
    \begin{figure}[H]
        \caption{}
        \centering
        \includegraphics[width=0.9\textwidth]{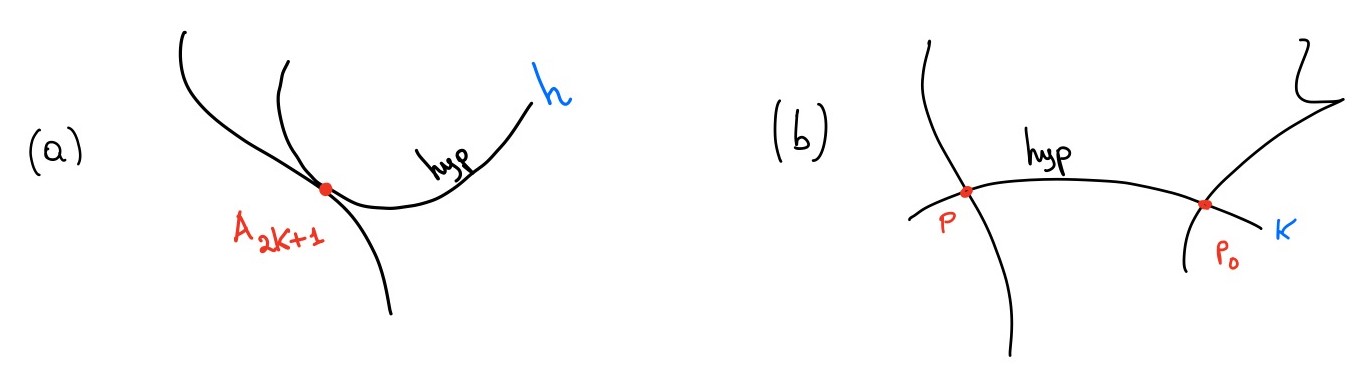}
        \label{fig:ex4.14(2)}
    \end{figure}
    Notice that in case (a), we end up with an $A_{2k+1}$-attached hyperelliptic tail of genus $h$. If $\cU$ intersects $(\Def_C)^>$ non-trivially, it means that it generically contains $(a)$. Since $(b)$ is in $\cU_{g,n}^r\subset \cU$, we deduce that $C$ must be in $\cU$ by $\textsf{S}$-completeness. This contradicts \Cref{ex:prob-tails}, since $C$ contains an $A_1$-attached hyperelliptic tail of genus $h<r/2$.
\end{proof}

\begin{remark}
    Notice that in the previous scenario, we need the $A_{2h}$-singularity to have a non-trivial crimping space, i.e., $r\geq 2h\geq 4$. Nevertheless, it was shown in \cite[Theorem~F]{AltCompClustAlg} that for $r=3$, we still have to remove $A_3$-attached elliptic tails if we want to construct modular compactifications that contain curves with non-separating tacnodes (see also \emph{Case 1} of the proof of \cite[Theorem~3.8.2]{AltCompClustAlg}).
\end{remark}

Finally, we prove the result for the $A_0$-attached tails. It only appears in a very specific ``asymmetric'' situation, and it is strictly related to the fact that the stack $\cH_{g,1}^{2g+1,\circ}$ of $1$-pointed double covers of $\bP^1$ of genus $g$ does not have a good moduli space, even though it has a stratification $\cH_{g,w}^{2g+1,\circ} \subset \cH_{g,1}^{2g+1,\circ}$ where both $\cH_{g,w}^{2g+1,\circ}$ and $\cH_{g,g_1^2}^{r,\circ}:=\cH_{g,1}^{2g+1,\circ}\setminus \cH_{g,w}^{2g+1,\circ}$ have good moduli spaces (which are points in both cases). We believe that these two are exactly the (images of non-trivial) $\Theta$-strata of $\cH_{g,1}^{2g+1,\circ}$ (see also \cite[Remark 3.49]{GMSArI}).

\begin{lemma}\label{ex:n=1-hyp}
Let $n,r,g$ be three nonnegative integers such that $g \geq 2$. Let $\cU \subset \cM_{g,n}^r$ be an open substack which admits a separated good moduli space. If $\cU_{g,n}^r\subset \cU$, then every curve in $\cU$ does not contain an $A_0$-attached hyperelliptic tail of genus $<r/2$.
\end{lemma}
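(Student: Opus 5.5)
The first step is to see what the statement really covers. An $A_0$-attached hyperelliptic tail $\Gamma$ means $\Gamma=C$ and $n=1$. Its genus $h=g$ then satisfies $g<r/2$. Combined with $r\leq 2g+1$, this forces $r=2g+1$. So the claim reduces to the following. Let $\cU\subset\cM_{g,1}^{2g+1}$ be open with a separated good moduli space and containing $\cU_{g,1}^{2g+1}$. Then $\cU$ contains no $1$-pointed hyperelliptic curve $(C,p)$ of genus $g$ with $p$ a smooth Weierstrass point.

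By openness of $\cU$ and irreducibility, I would reduce to showing that $\cU$ misses the closed point of $\cH_{g,w}^{2g+1,\circ}$. This is the $1$-pointed even atom $(C_0,q)$ of genus $g$. Indeed, if some such curve lies in $\cU$, then by \Cref{prop:w-global-descr} it isotrivially degenerates to the even atom. Also, the honestly hyperelliptic curves in the stratum are generic, since $\cH_{g,1}^r$ is closed and \Cref{lem:cyclic-covers} applies. Conversely, the even atom is the unique closed point of $[\bA^{2g-1}/\Gm]$ and lies in the closure of the whole stratum. So it suffices to show two things: the even atom is not in $\cU$, and the basin containing the Weierstrass stratum meets $\cU$ only if the atom does.

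To rule out the even atom, I would exhibit a failure of the $\Theta^2\setminus 0\subset\Theta^2$ filling, or directly of $\Theta$-completeness, in the spirit of \Cref{rem:not-S-complete}. Consider the odd atom's marked version: by \Cref{prop:g12-global-descr}, $\cH_{g,g_1^2}^{2g+1,\circ}$ corresponds, after forgetting one point, to $1$-pointed non-Weierstrass honestly hyperelliptic curves. These are admissible, because condition $(t_r)$ only concerns Weierstrass attachments and $(c_r),(d_r),(e_r)$ are vacuous here. So they lie in $\cU_{g,1}^{2g+1}\subset\cU$. Inside $\cH_{g,1}^{2g+1,\circ}$, a family over a dvr $R$ with generic fiber a non-Weierstrass pointed curve can specialize to a Weierstrass pointed curve, by moving the marking to a branch point. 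On the other hand, the generic fiber admits an isotrivial degeneration (over $\Theta_Q$) to the closed point of $\cH_{g,g_1^2}$. That closed point is the curve with branch divisor concentrated at $\infty$, marked at a non-Weierstrass point.

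Gluing gives $\Theta_R\setminus 0\to\cU$. A filling would produce a $\Theta_k$-degeneration of the special Weierstrass fiber. Its limit would have to be a specialization of the $\cH_{g,g_1^2}$ closed point over $R$, hence a $2{:}1$ cover with branch divisor a single point of length $2g+2$. This curve has non-reductive stabilizer $(\Gm\ltimes\Ga)$ once the marking is a Weierstrass point, or else it does not arise, by \Cref{prop:alp-fund} and the Weierstrass condition being closed. The expected contradiction is that the only $\Gm$-degeneration of a Weierstrass-pointed curve lands in the even atom stratum, not at that point. Working inside the closed substack $\cH_{g,1}^{2g+1}$ suffices, as in \Cref{lem:self-attached-condition}, since obstructions persist under closed (and affine open) embeddings. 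One can also invoke \cite[Theorem 3.44]{GMSArI}, which classifies opens of cyclic-cover stacks with good moduli spaces. Then only one of $\cH_{g,w}$ or $\cH_{g,g_1^2}$ may be kept, and the latter is forced by the containment $\cU_{g,1}^{2g+1}\subset\cU$.

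The main obstacle is making this last step rigorous: checking that the $\cH_{g,1}^{2g+1,\circ}$ analysis of \cite{GMSArI} indeed forbids keeping both strata, and that the admissible (non-Weierstrass) stratum is dense. The hypothesis $g\geq 2$ is where that density enters; it fails for $(g,n)=(1,1)$, cf.\ \Cref{rem:def-self-attached}. I would first try the direct $\Theta_R$-construction above and fall back on citing \cite[Theorem 3.44]{GMSArI} together with closedness of $\cH_{g,1}^r$.
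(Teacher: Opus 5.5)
Your third paragraph is the paper's proof. The paper takes a family $C\to\Spec R$ of $1$-pointed double covers whose generic marking is non-Weierstrass and whose special marking is Weierstrass. It glues this over $\Spec Q$ to the isotrivial degeneration of $C_Q$ to the $1$-pointed odd atom, and obtains $\Theta_R\setminus 0\to\cU$ with no extension. Your reduction to $r=2g+1$, $n=1$ is also correct. Two steps around this core are not right as written.

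\emph{The detour through the even atom is unjustified and unnecessary.} Openness makes $\cU$ stable under generization, not under specialization. So a Weierstrass-pointed curve in $\cU$ isotrivially degenerating to the even atom does not put the atom in $\cU$. Your second item (``the basin containing the Weierstrass stratum meets $\cU$ only if the atom does'') would need the two-basin $\textsf{S}$-completeness argument at the atom, using that the ${\rm Cr}_p$-basin has admissible generic point, as in the second half of \Cref{ex:prob-tails}. You never supply it. Your gluing ``$\Theta_R\setminus 0\to\cU$'' also silently needs the Weierstrass special fiber $C_k$ to lie in $\cU$. The fix is to skip the atom entirely. Assume some curve with an $A_0$-attached hyperelliptic tail is in $\cU$, use openness to take it general (honestly hyperelliptic), and make it the special fiber $C_k$ of your dvr family; this is what the paper does.

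\emph{The final contradiction is muddled and partly false.} The sentence about a non-reductive stabilizer ``once the marking is a Weierstrass point'' is wrong. On the double cover branched at a single point of length $2g+2$, the involution swaps the two rational components and fixes only the $A_{2g+1}$-singularity, so a smooth marking is never Weierstrass. The $1$-pointed odd atom has $\Aut^\circ\cong\Gm$; the group $(\Gm\ltimes\Ga)\ltimes C_2$ belongs to the unpointed odd atom and matters only for $(e_r)$. \Cref{prop:alp-fund} plays no role here.

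The correct argument runs as follows. A filling sends $0$ to an $R$-specialization of the $1$-pointed odd atom. That atom is a closed point of $\cM_{g,1}^{2g+1}$, so the image of $0$ is the atom itself, and hence $C_k$ isotrivially degenerates to it. Since $\cH_{g,1}^{2g+1}$ is closed and having a Weierstrass marking is a closed condition inside it, the limit would carry a smooth Weierstrass marking, which the odd atom does not have. This finishes the proof, so no fallback to \cite[Theorem 3.44]{GMSArI} is needed.

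The hypothesis $g\geq 2$ enters in two places. It makes the $1$-pointed odd atom $A_{2g+1}$-stable, since $\omega$ has degree $g-1$ on the unmarked component. It also makes non-Weierstrass pointed curves exist and be admissible; for $g=1$ every $(E,p)$ is an $A_0$-attached tail.
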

\begin{proof}
     If \(r\leq2g\), there is nothing to prove, so let us assume \(r>2g\). Moreover, if $\cU$ intersects the locus of $A_0$-attached hyperelliptic tails of genus $g$, then we know it intersects the locus of $A_0$-attached honestly hyperelliptic tails, as \(\cU\) is an open substack. Let $R$ be a dvr with fraction field $Q$ and $C\rightarrow \spec R$ be a family of $1$-pointed double covers of $\bP^1$ over $R$ such that the marking of the generic fiber $C_Q$ is not Weierstrass, but it degenerates through $R$ to a Weierstrass point. We can construct an isotrivial degeneration $C_{\Theta} \rightarrow \Theta_Q$ such that the generic fiber coincides with $C_Q$ and the special fiber is a $1$-pointed odd atom of genus $g$, see \Cref{fig:ex4.15}.
     \begin{figure}[H]
        \caption{}
        \centering
        \includegraphics[width=0.6\textwidth]{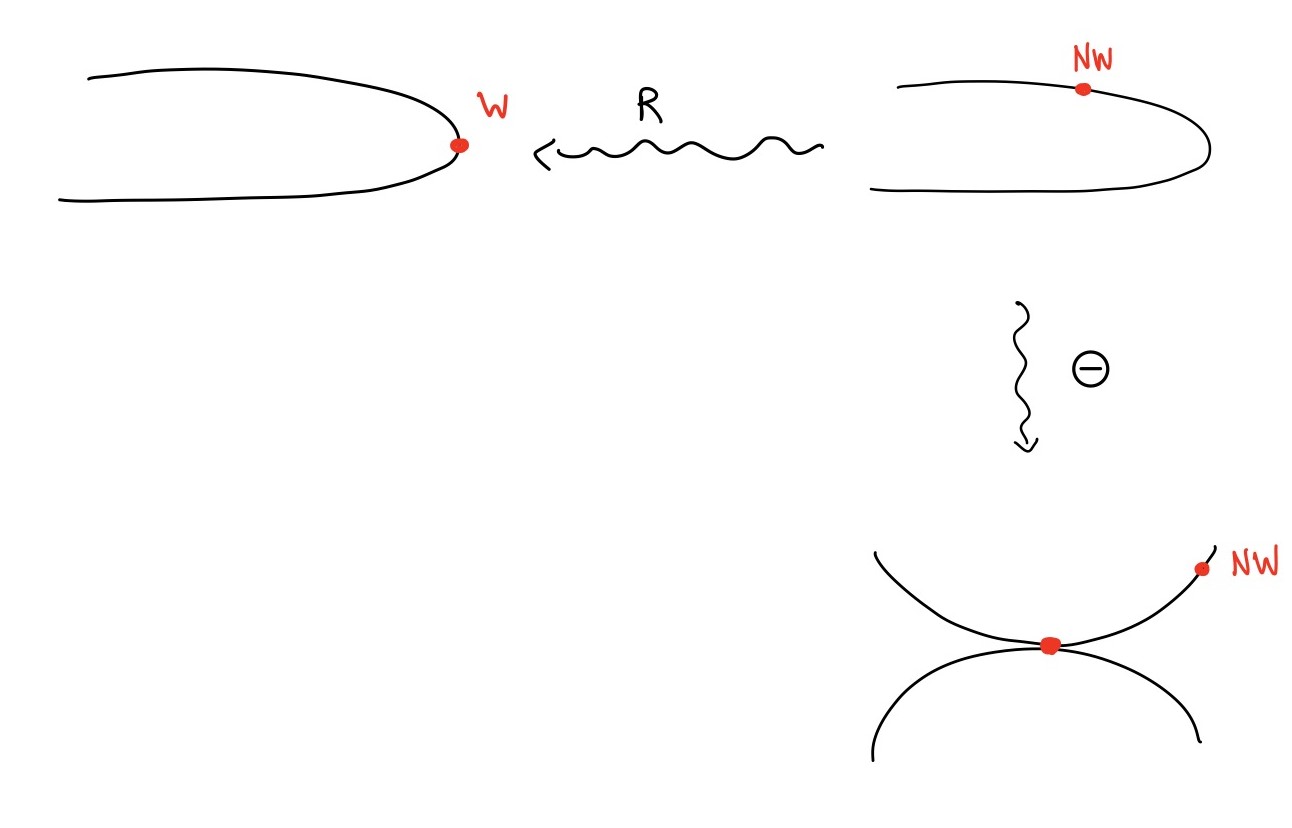}
        \label{fig:ex4.15}
    \end{figure}
      We constructed a morphism $\Theta_R \setminus 0 \rightarrow \cU$ which does not extend to a morphism from $\Theta_R$ because the $1$-pointed odd atom is a closed point in $\cM_{g,1}^{2g+1}$ and does not have smooth Weierstrass points, while marked points need to be smooth.
\end{proof}
\section{Good moduli properties of \texorpdfstring{$\cU_{g,n}^r \subset \cM_{g,n}^r$}{U C M}} \label{sec:open-theta}

In this section, we prove \Cref{theo:local-completeness} by applying \cite[Proposition~2.12]{GMSArI}, which we recall here:
\begin{proposition}[{\cite[Proposition~2.12]{GMSArI}}]\label{prop:local-Theta-complete}
    Let $\cX$ be an algebraic stack smooth over a field with an affine diagonal and linearly reductive stabilizers (at every point). Let $\cZ$ be a closed subset of $\cX$. The following are equivalent:
    \begin{itemize}
        \item[(i)]  for every geometric point $x:\spec k \rightarrow \cZ$ and every cocharacter $\bG_{m,k} \rightarrow G_x$, the (linearized) $G_x$-equivariant closed subset $Z_x \subset T_x \cX$ contains either $T_x \cX^{>}$ or $T_x \cX^0$ (respectively  $T_x \cX^{>}$ or $T_x \cX^{<}$);
        \item[(ii)] the open embedding $\cU:=\cX \setminus \cZ \subset \cX$ is $\Theta$-complete (respectively $\textsf{S}$-complete). 
    \end{itemize} 
\end{proposition}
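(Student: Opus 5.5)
The plan is to reduce both directions to a linear problem on $[T_x\cX/G_x]$ and then argue with weight vectors. The reduction uses the \'etale local structure theorem of Alper--Hall--Rydh. Since $\cX$ is smooth with affine diagonal and linearly reductive stabilizers, every closed point $x$ (after passing to geometric points) has an affine, \'etale, stabilizer-preserving morphism
$$f\colon([T_x\cX/G_x],0)\supset([W/G_x],w)\longrightarrow(\cX,x),$$
where $W\subset T_x\cX$ is a $G_x$-saturated affine open neighbourhood of $0$, and the map is compatible with the tangent space at $0$. I take this as the definition of the linearized subset $Z_x$: it is the closed $G_x$-stable subset whose germ at $0$ is $f^{-1}(\cZ)$, extended conically using the $\Gm$-actions. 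Both lifting problems in (ii), for $\Theta_R$ and for $\ST_R$, only concern the image of the closed point $0$. Indeed, a map $\Theta_R\to\cX$ (resp.\ $\ST_R\to\cX$) whose restriction to the punctured stack lands in $\cU$ must send $0$ into $\cU$. This is because $\cU\subset\cX$ is open and any lift into $\cU$ is unique, being a lift into $\cX$.

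The next step is to check that this ``closed point avoids $\cZ$'' condition is \'etale local on $\cX$ near $x$. A map from $\Theta_R$ or $\ST_R$ sending $0$ to $x$ lifts uniquely along a stabilizer-preserving \'etale affine map. This should follow from the fact that such maps are $\Theta$- and $\textsf{S}$-complete, and that $\Theta_R$ and $\ST_R$ are local around $0$. Hence (ii) is equivalent to the same statement for $[W/G_x]$ with the closed subset $Z_x\cap W$, for all $x\in\cZ$. Moreover, the map $\Theta_R\to[W/G_x]$ with $0\mapsto 0$ reduces to its cocharacter $\lambda$. Concretely, such a map is a $\lambda$-equivariant element of $V\otimes R[t]$, with $V=T_x\cX$, which I write as $F=\sum_{m\ge0} v_m t^m$, where $v_m\in V_m\otimes R$ has $\lambda$-weight $m$. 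The condition $0\mapsto 0$ forces $v_0\in\mathfrak m_R V_0$.

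For (i)$\Rightarrow$(ii), take such an $F$ whose punctured part lands in $V\setminus Z_x$. The special $\Theta_k$-branch is $\bar F=\sum_{m>0}\bar v_m t^m$, whose fiber at $t=1$ is a point of $V^{>}$ lying in $\cU$. Similarly, the $B\Gm_Q$-point of the $\Theta_Q$-branch is $v_0|_Q\in V^0$ and lies in $\cU$. If $Z_x\supseteq V^>$, the first point gives a contradiction; if $Z_x\supseteq V^0$, the second does. For $\textsf{S}$-completeness I use $\ST_R=[\Spec R[s,t]/(st-\pi)/\Gm]$. The two $\Theta_k$-branches at $s=0$ and $t=0$ produce points of $V^{>}$ and $V^{<}$ respectively, both lying in $\cU$, and the argument is identical. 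For (ii)$\Rightarrow$(i), suppose both $V^>\not\subseteq Z_x$ and $V^0\not\subseteq Z_x$. I choose general $v\in V^>$ and $w\in V^0$ such that $v\notin Z_x$, $w\notin Z_x$, and $v+cw\notin Z_x$ for general $c$; this is possible because $Z_x$ is closed, conical and $G_x$-stable. I then write down the family $F=\lambda(t)\cdot v+\pi w$. Its punctured part avoids $Z_x$, using conicity for the limit $\pi w$, while $0\mapsto 0\in Z_x$, which contradicts (ii). In the $\textsf{S}$-case I use $F=s\cdot v_+ + t\cdot v_-$ with $v_\pm\in V^{\gtrless}\setminus Z_x$. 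Both families are then transported back to $\cX$ through $f$.

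I expect the main obstacle to be the local reduction step rather than the weight bookkeeping. Two points need care there. First, $Z_x$ must be genuinely conical with respect to every cocharacter, so that closedness near $0$ in $W$ propagates to all of $V$; this is where smoothness and the $\Gm$-scaling on $T_x\cX$ enter. Second, lifting along $f$ must preserve both the hypothesis and the conclusion. I would first try to deduce this from the \'etale-local nature of $\Theta$- and $\textsf{S}$-completeness in \cite{ExistenceOfModuli}, applied to the stabilizer-preserving map $f$.
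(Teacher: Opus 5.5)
The gap is in the reduction from $\cX$ to the linear model, which is where the content of the proposition lies. Your weight analysis on the linear model ($V=T_x\cX$) is correct:
\begin{itemize}
\item the special $\Theta_k$-branch of $F=\sum_{m\ge 0}v_mt^m$ meets $V^{>}$, and its $B\mathbb{G}_{m,Q}$-point lies in $V^0$;
\item the two branches of $\ST_R$ meet $V^{>}$ and $V^{<}$;
\item your families realize any failure of (i).
\end{itemize}
Two steps in the reduction fail as written.

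First, there is in general no \'etale map from an open neighbourhood of $0$ in $[T_x\cX/G_x]$ to $\cX$. This is already impossible for $\cX$ a smooth curve of genus $\ge 2$. The local structure theorem only gives a roof $[W/G_x]\leftarrow[W'/G_x]\to\cX$ with $W\subset T_x\cX$ saturated, so families have to be lifted in both directions of the proof.

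Second, your justification for lifting does not work. $\Theta$- and $\textsf{S}$-completeness of an affine chart only extend a lift that is already given on $\Theta_R\setminus\{0\}$ (resp.\ $\ST_R\setminus\{0\}$). Moreover, the lifting claim is false for a general dvr. Take $\cX=\Gm$ over $\mathbb{C}$ with chart $z\mapsto z^2$, $R=\mathbb{C}[s]_{(s)}$, and the map $\Theta_R\to\Spec R\to\Gm$ given by $1+s$. A lift would be a square root of $1+s$ in $R$, which does not exist.

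The missing idea is completeness:
\begin{itemize}
\item Replace $R$ by $\widehat R$. This is harmless, since for an open immersion (ii) only asks whether $0$ lands in $\cZ$.
\item Use that $\Theta_{\widehat R}$ and $\ST_{\widehat R}$ are coherently complete along $0$, because their invariant rings are $\widehat R$.
\item Start from the lift of the closed point given by stabilizer preservation, lift order by order on the infinitesimal neighbourhoods of $0$ by formal \'etaleness, and conclude by Tannaka duality.
\end{itemize}
Equivalently, such maps factor through the coherent completion $\widehat{\cX}_x\cong\widehat{[T_x\cX/G_x]}_0$, which is exactly the tool the paper relies on.

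Your $Z_x$ is also not well defined. A germ at $0$ does not determine a closed $G_x$-stable subset of $T_x\cX$. Nor is $Z_x$ conical in general, because cocharacters of $G_x$ do not scale $V^{G_x}$. For example, take $\cX=[\bA^3/\Gm]$ with coordinates $w_1,w_2,v$ of weights $0,0,1$ and $\cZ=\{w_2^2=w_1^3\}$: the germ is a cusp times a line.

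No global or conical extension is needed, though. Condition (i) only tests the pullback of $\cZ$ to $\widehat{\cX}_x\cong\widehat{[T_x\cX/G_x]}_0$, which is what the paper builds $Z_x$ from:
\begin{itemize}
\item $V^{>}$ and $V^{<}$ lie in the fibre of $V\to V/\!\!/G_x$ over the image of $0$, since their $\lambda$-limits are $0$. Hence they lie in every saturated neighbourhood and are unchanged by completion.
\item Containing $V^0$ is a germ condition, because $V^0$ is irreducible.
\end{itemize}
In your family $\lambda(t)\cdot v+\pi w$ over $k[[\pi]]$, choose $w$ on a line through $0$ in $V^0$ whose germ is not contained in $Z_x$. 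The $Q$-point $\pi w$ is then the generic point of that germ, so it avoids $Z_x$ without any appeal to conicity. With these two corrections, your argument becomes the coherent-completion proof the paper indicates.
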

\begin{remark}
    In the above, \(Z_x\subset T_x\) is defined by pulling back \(\mathcal{Z}\subset\mathcal{X}\) along the coherent completion \(\eta:(\widehat{\mathcal{X}},x)\to(\mathcal{X},x)\), which exists by \cite[Corollary 1.12]{AlpHalRyd}, followed by taking the scheme theoretic
    image of \(\eta^{-1}(\mathcal{Z})\) under the completion map \((\widehat{\mathcal{X}},x)\to ([T_x/G_x],0)\) and then pulling back along the atlas \(T_x\to [T_x/G_x]\), see \cite[Remarks~2.11~and~2.14]{GMSArI}. The key fact used in this construction is that the completions of \((\mathcal{X},x)\) and \(([T_x/G_x], 0)\) are isomorphic (see \cite[Remark~2.9]{GMSArI}).  
\end{remark}

\begin{remark}[Condition $(\star)$]
    \label{rmk:condition_star}
    Under the hypotheses of the above proposition, we have that $\cU \subset \cX$ is $\Theta$- and $\textsf{S}$-complete if and only if for every geometric point $x \colon \spec k \to \cZ$, the following holds:
\begin{itemize}
        \item[$(\star)$] for every cocharacter $\Gm \rightarrow \Stab_{\cX} x$, we have that $Z_x$ contains either $T_ x \cX ^{\leq}$ or $T_ x \cX^{>}$.
\end{itemize}
\end{remark}

Throughout this section, \(\cZ\) denotes the complement of \(\cU_{g,n}^r\) in \(\cM_{g,n}^r\).

\begin{theorem}\label{theo:local-completeness}
    The open embedding $\cU_{g,n}^r \subset \cM_{g,n}^r$ is $\Theta$-complete and $\textsf{S}$-complete.
\end{theorem}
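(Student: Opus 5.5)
We will apply \Cref{prop:local-Theta-complete} in the form of condition $(\star)$ of \Cref{rmk:condition_star}. First we check the hypotheses on $\cX=\cM_{g,n}^r$. By \Cref{theo:descr-quot}, $\cM_{g,n}^r$ is smooth and a quotient $[X/\GL_N]$ with $X$ quasi-projective, so it has affine diagonal. For $r\leq 2g$, \Cref{prop:gm-decomp-aut} shows that every stabilizer is an extension of a finite group by a torus, hence linearly reductive in characteristic $0$. In the remaining case $r=2g+1$, $n=0$, the non-reductive stabilizers occur only at curves that fail $(e_r)$. So we will apply the proposition to the open substack $\cX'\subset\cM_{g,n}^r$ of points with linearly reductive stabilizer. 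We must also check that $\cU_{g,n}^r\subset\cX'$, and that $\Theta$/$\textsf{S}$-completeness can be tested after this restriction: any $\Theta_R\setminus 0\to \cU$ whose extension to $\cM$ hits the non-reductive locus can be excluded by \Cref{lem:self-attached-condition}-type arguments and \cite[Theorem 3.44]{GMSArI}.

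So fix a geometric point $x=(C,p_1,\dots,p_n)\in\cZ$ and a cocharacter $\lambda\colon\Gm\to\Aut(C)^\circ$. We must show that $Z_x$ contains $T_x^{\leq}$ or $T_x^{>}$; equivalently, a generic point of one of the two basins $T_x^{\leq}$, $T_x^{>}$ already violates admissibility. If $\lambda$ is trivial, then $T^{\leq}=T_x$ and the claim is immediate, so assume $\lambda$ is nontrivial. The decompositions of \Cref{sub:local-geometry} then give
\[
T_x\cong \Def_{\text{fixed part}}\oplus\bigoplus_\Gamma\Bigl(\bigoplus_{q}T^1_q\oplus{\rm Cr}_q\Bigr),
\]
where $\Gamma$ runs over the attached rosaries contributing $\Gm$-factors, $q$ runs over their attaching and inner singularities, and each summand has a definite sign determined by the weight of $\lambda$ on that rosary. \Cref{rem:alternating-def} together with \Cref{prop:defor-rational-chain}, \Cref{prop:iso-deg-rosary} and \Cref{prop:iso-deg-chain} then describe the generic point of each basin combinatorially: along each rosary, alternate singularities are preserved and the intermediate atoms become honestly hyperelliptic bridges or tails.

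We then argue by cases, according to which condition $x$ violates.
\begin{enumerate}
\item If $x$ violates $(t_r)$ or $(c_r)$ on a subcurve where $\lambda$ acts trivially, the violation persists on all of $T^{\leq}$.
\item If the offending tail or chain lies on a rosary acted on by $\lambda$, we use the openness and closedness arguments of \Cref{lemma:open_t_c_d}. On the basin that smooths the inner singularities, the hyperelliptic tail, bridge or chain of genus below the threshold reappears in honestly hyperelliptic form; on the opposite basin, the preserved singularities together with the crimping deformations keep an attached atom or rosary, which is itself a hyperelliptic chain of bounded genus. In either case the generic point lies in $\cZ$.
\item The $A_r$-attached conditions $(d_{r,1})$, $(d_{r,2})$ and the $A_{2k+1}$-tails need care. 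When an $A_r$-singularity is deformed on one basin, the generic point acquires an odd atom attached by lower singularities, which must be checked against $(c_r)$, using the bound of genus $\le (r-1)/2$. This matches the constructions in \Cref{ex:cond-dr1}–\Cref{ex:proble-tail-last}, run in reverse.
\item For $(e_r)$, we use \Cref{lem:deg-self-attached} and \Cref{prop:w-global-descr} or \Cref{prop:g12-global-descr}: both basins of a cyclic cover with a branch point of multiplicity greater than $g+1$ consist of self-attached curves or of the non-reductive point.
\end{enumerate}

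The main obstacle is the combinatorial case analysis. $\Aut(C)^\circ\cong\Gm^m$ can be high-dimensional, $\lambda$ can weight the various rosaries with arbitrary signs, and a single curve can violate several conditions at once. We also have to ensure that the chosen basin yields a violation simultaneously for all summands. We would reduce this to one rosary at a time: it suffices to find a single rosary $\Gamma$, witnessing $x\in\cZ$, such that every generic deformation in a fixed sign basin keeps some violation supported near $\Gamma$. This is because $Z_x$ is closed and $\Gm^m$-stable, and the basins are products over the rosaries. The resulting statement about sign patterns on a chain is purely combinatorial and would be isolated as a separate combinatorial lemma, to be handled in \Cref{sec:combinatorics}.
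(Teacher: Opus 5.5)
Your top-level reduction matches the paper's: verify condition $(\star)$ of \Cref{prop:local-Theta-complete} at each point of $\cZ$, split according to which of $(t_r)$, $(c_r)/(d_r)$, $(e_r)$ fails, and send the chain case to \Cref{sec:combinatorics}. However, the case analysis you sketch rests on false claims.

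In (2) and (4) you assert that the generic points of \emph{both} basins lie in $\cZ$. This already fails for an $A_1$-attached even atom with singularity $q$ and node $n$, the rest of the curve being smooth and general. The basin ${\rm Cr}_q\oplus T^1_n$ smooths the node and keeps an $A_{2h}$-singularity with generic crimping datum, so its generic point is admissible; \Cref{ex:prob-tails} relies on exactly this. Likewise, in the $(e_r)$ situation the basin preserving the branch point of small multiplicity has admissible generic point (see the proof of \Cref{lem:self-attached-condition}). What $(\star)$ requires is that, for each $\lambda$, one \emph{specific} subspace lies in $Z_x$: either $T^{>}$ or $T^{\leq}$, the latter containing every weight-zero direction. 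Which one works is dictated by the signs of $\lambda$.

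Your case (1) picks the wrong subspace. Suppose $\lambda$ is trivial on the offending tail $\Gamma$ and at its attaching point $p_0$. Then $T^{0}\subseteq T^{\leq}$ contains the smoothing of $p_0$ and all of $\Def_{(\Gamma,p_0)}$, so the violation does not persist on $T^{\leq}$; the right subspace here is $T^{>}$. Even $T^{>}$ fails when $\lambda$ is trivial on $\Gamma$ but acts with weight $b>0$ on the other branch at $p_0$, because then $T^{>}$ smooths $p_0$. In that case you need a second witness inside $Z_x$: keep the adjacent odd singularity $p_1$ of the neighbouring rosary, which produces an $A_{2k+1}$-attached tail (the paper's $V_2$). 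One then checks that the four sign conditions on $(a,b)$ are incompatible, as in \Cref{lem:theta-tails}.

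The reduction to ``one rosary at a time'' also fails, because the basins are not products over the rosaries. A node joining consecutive rosaries has smoothing direction of sign $\sgn(a_{i+1}-(-1)^{n_i}a_i)$, which couples adjacent $\Gm$-factors. Moreover, the violation surviving in a basin is in general a union $\Gamma_i\cup\dots\cup\Gamma_j$ of several consecutive rosaries whose interior connecting nodes may be smoothed. This is the actual content of \Cref{lem:theta-chain}, and your proposal contains none of its three ingredients:
\begin{itemize}
\item[(i)] Choose $\Gamma$ as a minimal violating chain, extended on both sides by the maximal strings of rosaries moved by $\Aut(C)^{\circ}$ and cut off at $A_r$-singularities. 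This gives property $(\bullet)$, so the boundary weights can be taken as $a_0=a_{k+1}=0$.
\item[(ii)] Exhibit the linear subspaces $V_{i,j}(l)\subset Z_x$ indexed by odd-minimal triplets.
\item[(iii)] Prove, via \Cref{theo:combi} and, for closed chains with $p_0=p_1$, \Cref{theo:cyc-combi}, that for every $\mathbf a\in\bZ^k$ some $V_{i,j}(l)$ contains $T^{>}$ or $T^{\leq}$.
\end{itemize}
Deferring this to an unformulated ``separate combinatorial lemma'', with a heuristic for it that is incorrect, leaves the main step unproved.

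A minor point: for trivial $\lambda$ the reason the claim is immediate is $T^{>}=0\subseteq Z_x$, not $T^{\leq}=T_x$.
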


\begin{proof}
    The idea is to verify condition $(\star)$. We denote by $\cZ$ the closed complement of $\cU_{g,n}^r$ in $\cM_{g,n}^r$, and suppose we are given a geometric point $x:\spec k \rightarrow \cZ$. We want to prove that for every cocharacter $\Gm\rightarrow G_x$, the linearized closed subset $Z_x\subset T_x\cX$ contains either $T_x\cX^{>}$ or $T_x\cX^{\leq}$. By contradiction, suppose it does not. Then:
    \begin{itemize}
        \item \Cref{lem:theta-tails} implies that the curve parametrized by $x$ satisfies condition $(t_r)$ of \Cref{def:admin};
        \item \Cref{lem:theta-chain} implies that the curve parametrized by $x$ satisfies conditions $(c_r)$ and $(d_r)$ of \Cref{def:admin};
        \item \Cref{lem:theta-n=0} implies that the curve parametrized by $x$ satisfies condition $(e_r)$ of \Cref{def:admin}.
    \end{itemize}
    This implies that $x$ factors through the admissible open substack $\cU_{g,n}^r$, which is a contradiction.
\end{proof}

\begin{remark}
    Given that $\cM_{g,n}^r$ has affine diagonal, \Cref{theo:local-completeness} would immediately follow if $\cU_{g,n}^r$ were $\Theta$- and $\textsf{S}$-complete (i.e., if $\cU_{g,n}^r$ admitted a separated good moduli space). Indeed, extensions of maps \(\Theta_R\setminus \{0\}\to \mathcal{X}\) to maps \(\Theta_R\to \mathcal{X}\) are unique if \(\mathcal{X}\) has affine diagonal.
\end{remark}

In the following subsections, we prove the lemmas used in the above proof. In particular, we make extensive use of \Cref{prop:local-Theta-complete}, which reduces the problem to studying the isotypical components of $\Gm$-representations (as cocharacters of automorphism groups acting on deformation spaces). For this purpose, we introduce the following notation:
\begin{definition}[Sign]
    \label{def:sign_of_rep}
    A $\Gm$-representation $V$ decomposes as $V = \bigoplus_{i \in \bZ} V_i$, where the action on $V_i$ has weight $i$. We say that $\sgn(V) = +1$ (resp. $-1$, $0$) if $V_i \neq 0$ implies $i >0$ (resp. $i <0$, $i=0$). Moreover, for $a\in \bZ$, we write $\sgn(a) \in \{+1,0,-1\}$ to denote the usual sign.
\end{definition}

\subsection{Condition on tails}

Before proceeding with the proof of \Cref{lem:theta-tails}, we illustrate the argument with a simple example.

\begin{example}\label{ex:easy-warm-up}
    Consider a point of $\cM_{g,n}^r$ corresponding to a curve $C/k$ that is constructed by attaching a $1$-pointed even atom $(\Gamma,p)$ of genus $h$ (where $2h<r$) to a smooth curve $C_0$ at a node $p$.

    Note that $\Gamma \subset C$ is a hyperelliptic tail of genus $h<r/2$, and thus $[C]$ is contained in $\cZ$. First, $\Aut(C)^{\circ}\simeq \Gm$, and we can describe the tangent space as 
    $$T_{[C]}\cM_{g,n}^r \simeq \Def_{(C_0,p)} \oplus T_p^1 \oplus \Def_{(\Gamma,p)}$$
    where $T_p^1$ is the deformation space of the node. Recall that we have the decomposition 
    $$ \Def_{(\Gamma,p)}\simeq \Def_{(\Gamma,p)}^h \oplus \Def_{(\Gamma,p)}^{nh} $$
    where we denote by $\Def_{(\Gamma,p)}^{h}:=T_q^1$ the deformation space of the $A_{2h}$-singularity $q$, which coincides with the deformations that keep $\Gamma$ hyperelliptic and $p$ a Weierstrass point (see \cite[Proposition 4.30]{GMSArI}). Furthermore, we identified the complement $\Def_{(\Gamma,p)}^{nh}$ with the crimping space of the singularity. Using the description of the tangent provided in \cite[Corollary~4.33]{GMSArI} (see also \Cref{rem:alternating-def}) and recalling condition $(t_r)$ in \Cref{def:admin}, we can describe $Z_{[C]}$ as the linear subspace
\[
Z_{[C]} = \Def_{(C_0,p)} \oplus 0 \oplus \Def_{(\Gamma,p)}^h,
\]
where we can assume that the automorphisms induced by the attached atom $\Gm \subset \Aut(C)$ act with weight one on $T_p^1$, thus negatively on $\Def_{(\Gamma,p)}^{h}$ and positively on $\Def_{(\Gamma,p)}^{nh}$.

Suppose there exists a cocharacter $\Gm \rightarrow \Gm\subset \Aut(C)$, i.e., an integer $a \in \bZ$, such that $Z_{[C]}$ contains neither $(T_{[C]}\cM_{g,n}^r)^{>}$ nor $(T_{[C]}\cM_{g,n}^r)^{\leq }$. Note that the action induced by the cocharacter will have weight $a$ on $T_p^1$, act via strictly positive multiples of $a$ on $\Def_{(\Gamma,p)}^{nh}$, act via strictly negative multiples of $a$ on $\Def_{(\Gamma,p)}^{h}$, and act trivially on $\Def_{(C_0,p)}$. Thus, if $a\leq 0$, then $$(T_{[C]}\cM_{g,n}^r)^{>}\subset 0\oplus 0\oplus \Def^h_{(\Gamma,p)}\subset Z_{[C]};$$
    therefore $a>0$. Similarly, the condition $(T_{[C]}\cM_{g,n}^r)^{\leq } \nsubseteq Z_{[C]} $ implies $a \leq 0$. Thus, we have a contradiction. For the sake of completeness, if $\lambda \colon \Gm \subset \Aut(C)$ acts with positive weight on $T^1_p$, we have the following decomposition:
    \begin{align*}
        (T_{[C]}\cM_{g,n}^r)^{>}& = 0\oplus T^1_p \oplus 0 \oplus \Def^{nh}_{(\Gamma,p)};\\
         (T_{[C]}\cM_{g,n}^r)^{<}& = 0\oplus 0\oplus \Def^h_{(\Gamma,p)} \oplus 0;\\
         (T_{[C]}\cM_{g,n}^r)^{0}& = \Def_{(C_0,p)} \oplus 0\oplus 0\oplus 0,
    \end{align*}
    Therefore $(T_{[C]}\cM_{g,n}^r)^{\leq} = Z_{[C]}$.
\end{example}

Following the previous example as a prototype, we now prove the following lemma.

\begin{lemma}\label{lem:theta-tails}
    If $C: \spec k \rightarrow \cZ\subset \cM_{g,n}^r$ is a curve that does not satisfy condition $(t_r)$ of \Cref{def:admin} (i.e., $C$ has a hyperelliptic tail of genus $h<r/2$), then $C$ does satisfy condition $(\star)$.
\end{lemma}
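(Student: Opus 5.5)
Following \Cref{ex:easy-warm-up}, we argue by contradiction: fix $x=[C]$ and a cocharacter $\lambda\colon\Gm\to\Aut(C)^{\circ}$, and suppose $Z_x$ contains neither $T_x\cM^{>}$ nor $T_x\cM^{\leq}$. It is enough to find one irreducible component $W\subset Z_x$, namely the locus where a fixed bad tail persists, that contains one of the two subspaces. By \Cref{prop:gm-decomp-aut}, $\Aut(C)^{\circ}$ is a torus with one $\Gm$-factor for each $A_i/A_j$-attached rosary ($i,j\in\{0,1\}$), and $\lambda$ is recorded by one integer per factor. Choose a hyperelliptic tail $\Gamma\subset C$ of genus $h<r/2$ attached at $p\in I_\Gamma$ (an $A_{2k+1}$-singularity, or a marking in the $A_0$ case). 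Decompose
\[
T_x\cM_{g,n}^r\simeq \Def_{(C-\Gamma,p)}\oplus T^1_p\oplus \Def_{(\Gamma,p)}\oplus V,
\]
where $V$ collects the crimping directions of $p$ when $k\geq1$. Any subcurve fixed by $\lambda$ contributes weight zero. Let $W$ be the linearised locus where $\Gamma$ stays a hyperelliptic tail of genus $h$ attached at an $A_{2k+1}$-singularity. Using the closedness argument of \Cref{lemma:open_t_c_d}, $W$ is the linear subspace $\Def_{(C-\Gamma,p)}\oplus 0\oplus \Def^{h}_{(\Gamma,p)}\oplus V$, where $\Def^h$ denotes the hyperelliptic, Weierstrass-preserving deformations. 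Since $W\subset Z_x$, it suffices to show that $W$ contains $T^{>}$ or $T^{\leq}$. That amounts to checking that, on the complement $T^1_p\oplus \Def^{nh}_{(\Gamma,p)}$, every weight is either $>0$ or $\leq 0$, with the same choice on all summands.

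We split into cases by how $\lambda$ acts on $\Gamma$. If $\lambda$ is trivial on $\Gamma$, then $\Def^{nh}_{(\Gamma,p)}$ has weight zero. Every weight on $T^1_p$ is then $\leq0$, and the complement lies in $T^{\leq}$, so $T^{>}\subset W$. When $T^1_p$ has positive weight, $\lambda$ must be nontrivial on the rosary containing $p$ in $C-\Gamma$. Because $\Gamma$ is Weierstrass-attached at $p$, \Cref{rem:alternating-def} then shows that the hyperelliptic side $\Def^h_{(\Gamma,p)}$ carries the opposite sign, while $\Def^{nh}$ stays fixed. In that situation we get $T^{\leq}\subset W$ after all, because the only positive directions lie in $T^1_p$ and $T^1_p\not\subset W$. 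Since this is not yet a complete argument, we refine the choice of component: we also take the component $W'$ where the singularity at $p$ is preserved but $\Gamma$ is allowed to move. This is still bad when $\lambda$ acts trivially on $\Gamma$, because the tail persists.

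If $\lambda$ acts nontrivially on $\Gamma$, then $\Gamma$ lies in an attached rosary, so it is an even atom, possibly sitting at the end of a rosary reaching $p$. As in the warm-up, the weights on $T^1_p$ and on $\Def^{nh}_{(\Gamma,p)}={\rm Cr}_q$ share the sign of $a$, and $\Def^h=T^1_q$ has the opposite sign. So $a\leq0$ gives $T^{>}\subset W$. For $a>0$ we instead take $W''$, the locus where $q$ stays singular; this is still bad, since an $A_{2h}$-singularity degenerating to a tail forces \ldots\ In fact, the locus where the atom persists, $0\oplus{\rm Cr}_q\oplus T^1_p$ complemented, carries the $A_1$-attached atom, which violates $(t_r)$. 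That locus contains $T^{\leq}$.

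The main obstacle is the bookkeeping when $p$ is itself a hyperelliptic odd singularity inside a longer rosary, or when several bad tails interact. We would handle it by passing to the partial normalisation at outer nodes, which splits $T_x$ and $\lambda$ factorwise, and then applying the alternating-sign rule of \Cref{rem:alternating-def} along the rosary, reducing to the two configurations above.
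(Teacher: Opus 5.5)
Your proposal has a genuine gap. It sits in the configuration that carries most of the content of the paper's proof: $p$ is a node and $\lambda$ acts nontrivially on the branch of $C-\Gamma$ through $p$. Write $a$ and $b$ for the weights on $T_p\Gamma$ and $T_p(C-\Gamma)$, and set $T=T_{[C]}\cM_{g,n}^r$. Then $T^1_p$ has sign $\sgn(a+b)$, while $\Def^{nh}_{(\Gamma,p)}$ and $\Def^{h}_{(\Gamma,p)}$ have signs $\sgn(a)$ and $\sgn(-a)$. So the complement $T^1_p\oplus\Def^{nh}_{(\Gamma,p)}$ of your $W$ straddles the split into weights $>0$ and $\leq 0$ whenever, e.g., $a=0<b$ or $a>0\geq a+b$. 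As soon as $\Def^{nh}_{(\Gamma,p)}\neq 0$ (e.g.\ $h\geq 2$), $W$ then contains neither $T^{>}$ nor $T^{\leq}$.

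Several of your steps fail for this reason. The claim ``every weight on $T^1_p$ is $\leq 0$'' is false when $b>0$. The claim ``$T^{\leq}\subset W$ after all'' is false, because the weight-zero summand $\Def^{nh}_{(\Gamma,p)}$ lies in $T^{\leq}$ but not in $W$. The claim ``$T^1_p$ and ${\rm Cr}_q$ share the sign of $a$'' holds only when $b=0$. Your substitute components are also not contained in $Z_x$ in general. Along $W'$ the tail leaves the hyperelliptic/Weierstrass locus, so it no longer violates $(t_r)$. The generic point of $W''$ is just a curve with an $A_{2h}$-singularity, which is admissible; this is the basin $0\oplus({\rm Cr}_q\oplus T^1_n)\subset\cU_{g,n}^r$ used in the proof of \Cref{ex:prob-tails}. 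Normalising at outer nodes does not help either, since $T^1_p$ is precisely the summand whose weight $a+b$ couples the two torus factors.

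The missing idea is a second component of $Z_x$ supplied by the neighbouring rosary. Let $\Gamma_1\subset C-\Gamma$ be the rosary through $p$ on which $\lambda$ is nontrivial, and let $p_1$ be the odd hyperelliptic singularity on the rational component of $\Gamma_1$ containing $p$. Smooth the node $p$ while preserving $p_1$ and keeping $\Gamma$ hyperelliptic. This merges that rational component with $\Gamma$ into an $A_{2k+1}$-attached hyperelliptic tail of genus $h$. So besides $V_1=W$ one has $V_2=R\oplus(0\oplus{\rm Cr}_{p_1})\oplus T^1_p\oplus\Def^h_{(\Gamma,p)}\subset Z_x$, whose complement $T^1_{p_1}\oplus\Def^{nh}_{(\Gamma,p)}$ has signs $\sgn(-b)$ and $\sgn(a)$. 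A short sign check then shows that no $(a,b)\in\bZ^2$ lets both $V_1$ and $V_2$ miss both $T^{>}$ and $T^{\leq}$.

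You also need the configuration in which $\Gamma_1$ has no odd singularity. Then $\Gamma_1$ is an even atom with an $A_{2k}$-singularity and $C=\Gamma\cup\Gamma_1$. The second subspace comes from $(t_r)$ applied to $\Gamma_1$ when $2k<r$, and from $(e_r)$ (a self-attached hyperelliptic curve) when $2k=r$.

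What you do handle correctly is the balanced case ($p$ an $A_{2k+1}$-singularity with $k\geq 1$) and the case $b=0$. There $W$ alone suffices, exactly as in \Cref{ex:easy-warm-up}.
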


\begin{proof}
Let $\Gamma \subset C$ be a hyperelliptic tail of genus $2h<r$, attached at an $A_{2k+1}$-singularity $p_0$. Note that $(\Gamma,p_0)$ is an honestly hyperelliptic curve by \Cref{lem:cyclic-covers} and \Cref{rem:cyclic-covers}. Let us call $C_0$ the subcurve $C-\Gamma$. Again, we have a decomposition
$$ T_{[C]}\cM_{g,n}^r \simeq \Def_{(C_0,p_0)} \oplus \Def_{p_0} \oplus \Def_{(\Gamma,p_0)}$$
where $\Def_{p_0}$ is the deformation space of the singularity $p_0$, which can be decomposed as the local deformations $T_{p_0}^1$ and the crimping space ${\rm Cr}_{p_0}$. Given a cocharacter $\Gm \rightarrow \Aut(C)$, we have an induced $\Gm$-action on the (one-dimensional) tangent space $T_{p_0}C_0$ (respectively $T_{p_0}\Gamma$) of the point $p_0$ in $C_0$ (respectively in $\Gamma$). Note that $p_0$ is indeed a smooth point of $C_0$. If $k\geq 1$, then the two $\Gm$-actions need to be balanced (i.e., have the same weight; see \cite[Lemma 3.37]{GMSArI}). Suppose this is the case and denote the weight by $a$. If $a=0$, we have
$$0=(T_{[C]}\cM_{g,n}^r)^{>} \subset \Def_{(C_0,p_0)}\oplus 0\oplus 0 \subset Z_{[C]}$$ 
thus we can assume $a\neq 0$. Therefore, we have that $(\Gamma,p_0)$ is actually a $1$-pointed even atom of genus $h$ attached to $C_0$ at $p_0$ by \cite[Corollary 3.50]{GMSArI}. Then it is clear that 
$$ \Def_{(C_0,p_0)}\oplus (0\oplus {\rm Cr}_{p_0}) \oplus \Def_{(\Gamma,p_0)}^h \subset Z_{[C]}$$

We can then repeat an argument similar to \Cref{ex:easy-warm-up} to show that for every integer $a$, either $(T_{[C]}\cM_{g,n}^r)^{>}$ or $(T_{[C]}\cM_{g,n}^r)^{\leq}$ is contained in $\Def_{(C_0,p_0)}\oplus (0\oplus {\rm Cr}_{p_0}) \oplus \Def_{(\Gamma,p_0)}^h$. Therefore, the same is true for $Z_{[C]}$.

We now address the case where the two $\Gm$-actions do not need to be balanced, meaning $p_0$ must be a node. If the action of $\Gm$ on $T_{p_0}C_0$ is trivial, we leave it to the reader to verify that the situation reduces to the one analyzed in \Cref{ex:easy-warm-up}. If this is not the case, \Cref{prop:gm-decomp-aut} (see also \cite[Proposition 4.23]{GMSArI}) shows that there exists a subcurve $\Gamma_1 \subset C_0 \subset C$ containing $p_0$ such that $(\Gamma_1,P_{\Gamma_1} \cup I_{\Gamma_1})$ is either a rosary of hyperelliptic $A$-singularities or does not have any odd $A$-singularities. Assuming the former case first, let $p_1$ be the $A_{2k+1}$-singularity (for $k\geq 1$) that lies in the same rational irreducible component of $\Gamma_1$ as $p_0$ (cf. \Cref{fig:Chapter 5_1}).
\begin{figure}[H]
        \caption{}
        \centering
        \includegraphics[width=0.6\textwidth]{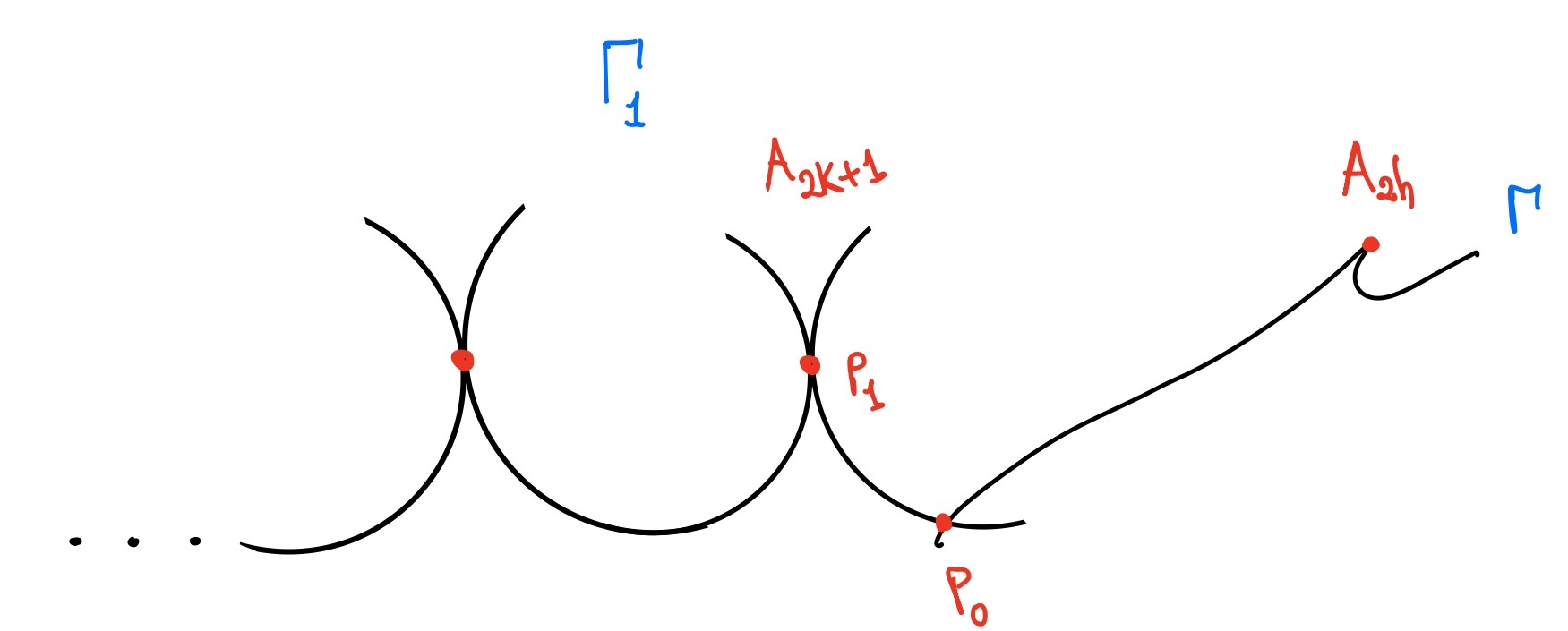}
        \label{fig:Chapter 5_1}
\end{figure}

Then we get the decomposition
$$ T_{[C]}\cM_{g,n}^r \simeq R \oplus \Def_{p_1} \oplus T^1_{p_0} \oplus \Def_{(\Gamma,p_0)}$$
where $R$ is the ($\Gm$-equivariant) complement of the right-most three spaces (i.e., the deformation space of the rest of the curve). As before, recall that we have the decomposition 
$$ \Def_{p_1}\simeq T_{p_1}^1 \oplus {\rm Cr}_{p_1}.$$
By construction, we have that $Z_{[C]}$ contains two linear subspaces, namely $V_1:=R\oplus \Def_{p_1} \oplus 0 \oplus \Def_{(\Gamma,p_0)}^h$ and $V_2:=R\oplus (0 \oplus {\rm Cr}_{p_1}) \oplus T^1_{p_0} \oplus \Def_{(\Gamma,p_0)}^h$, which correspond to the following two generic deformations:\begin{figure}[H]
        \caption{}
        \centering
        \includegraphics[width=0.9\textwidth]{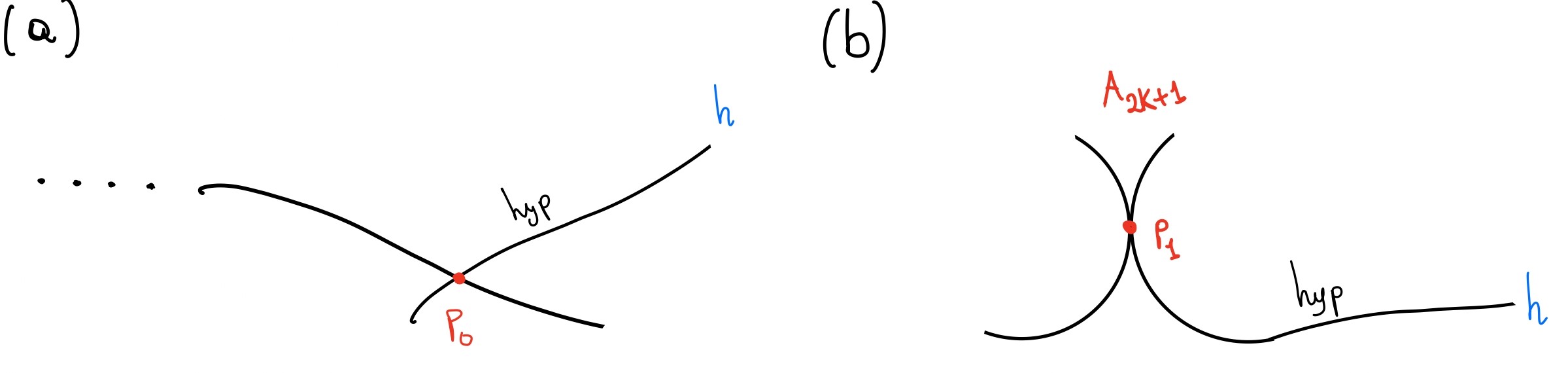}
        \label{fig:Chapter 5_2}
\end{figure}

Given a cocharacter $\Gm \rightarrow \Aut(C)$, we denote by the integer $a$ the weight induced on the (one dimensional) tangent space $T_{p_0}\Gamma$, and by $b$ the weight induced on the (one-dimensional) tangent space $T_{p_0}\Gamma_1$. If $a$ is zero, an easy verification shows that $(T_{[C]}\cM_{g,n}^r)^>$ is contained in either $V_1$ or $V_2$; otherwise, we would have $b>0$ and $b\leq 0$, which is a contradiction.
It then remains to study the case $a\neq 0$, which implies that $(\Gamma,p_0)$ is the $1$-pointed even atom of genus $h$.

We can now describe the action of $\Gm$ on $\Def_{p_1} \oplus T^1_{p_0} \oplus \Def_{(\Gamma,p_0)}$. We use the conventions introduced in \Cref{def:sign_of_rep} and we have the following:
\begin{itemize}
    \item $\Def_{p_1}\simeq T_{p_1}^1 \oplus {\rm Cr}_{p_1}$ with $s(T_{p_1}^1)=\sgn(-b)$ and $s({\rm Cr}_{p_1})=\sgn(b)$;
    \item $s(T^1_{p_0})=\sgn(b+a)$;
    \item $\Def_{(\Gamma,p_0)}\simeq \Def_{(\Gamma,p_0)}^h \oplus \Def_{(\Gamma,p_0)}^{nh}$ where we have $s(\Def_{(\Gamma,p_0)}^h)=\sgn(-a)$ while $s(\Def_{(\Gamma,p_0)}^{nh})=\sgn(a)$.
\end{itemize}

We will prove that either $(T_{[C]}\cM_{g,n}^r)^>$ or $(T_{[C]}\cM_{g,n}^r)^{\leq}$ is contained in either $V_1$ or $V_2$ for any $(a,b) \in \bZ^2$. Suppose this is not the case. Then we have the following implications:
\begin{itemize}
    \item $(T_{[C]}\cM_{g,n}^r)^> \nsubseteq V_1$ implies that $a+b>0$ or $a>0$;
    \item $(T_{[C]}\cM_{g,n}^r)^{\leq} \nsubseteq V_1$ implies that $a+b\leq 0$ or $a\leq 0$;
     \item $(T_{[C]}\cM_{g,n}^r)^> \nsubseteq V_2$ implies that $-b>0$ or $a>0$;
    \item $(T_{[C]}\cM_{g,n}^r)^{\leq} \nsubseteq V_2$ implies that $-b\leq 0$ or $a\leq 0$.
\end{itemize}
To help the reader, we explain the first implication. The others follow in the same manner. Suppose that $a+b\leq 0$ and $a\leq 0$, then the explicit description of (the signs of) the action gives us $$(T_{[C]}\cM_{g,n}^r)^>\subset R\oplus (T_{p_1}^1\oplus {\rm Cr}_{p_1}) \oplus 0 \oplus \Def_{(\Gamma,p_0)}^h = V_1$$
which corresponds to the claimed implication.
An easy verification shows that the intersection of the four conditions on $(a,b)$ above is empty.

Finally, suppose that $\Gamma_1$ does not have any odd $A$-singularity, which implies that it is actually an even atom as well. In fact, in this case \(C=\Gamma\cup \Gamma_1\), since the action on \(\Gamma_1\) is assumed to be non-trivial, and any new special point on \(\Gamma_1\), except the even singularity and the node attaching it to \(\Gamma\), would force the action at the node to be trivial.
\begin{figure}[H]
        \caption{}
        \centering
        \includegraphics[width=0.5\textwidth]{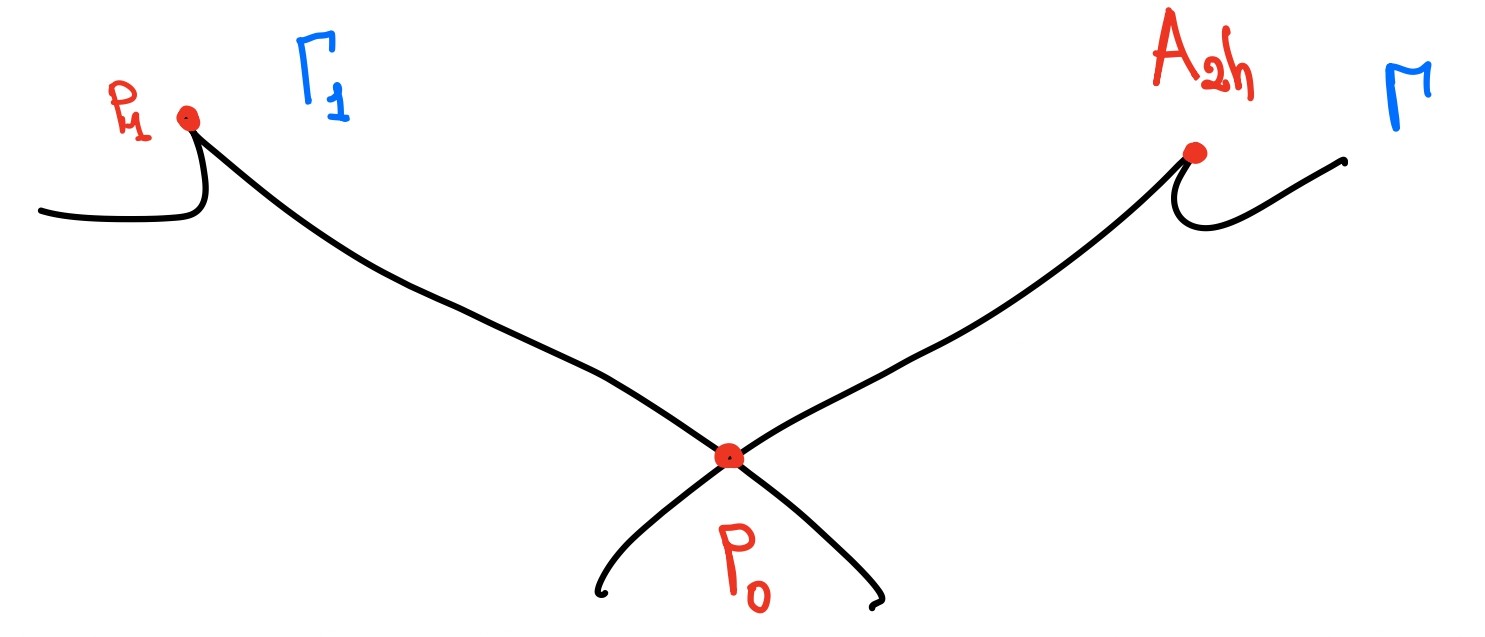}
        \label{fig:Chapter 5_3}
\end{figure}

We denote the irreducible component by $\Gamma_1$ and the even $A$-singularity (of type $A_{2k}$) by $p_1$. In this case, the deformation space is of the form
$$ \Def_{(\Gamma_1,p_0)}\oplus T_{p_0}^1 \oplus \Def_{(\Gamma,p_0)}$$
where we have the decomposition $\Def_{(\Gamma_1,p_0)}\simeq T_{p_1}^1 \oplus {\rm Cr}_{p_1}$. As before, $Z_{[C]}$ contains the representation $V_1:=\Def_{(\Gamma_1,p_0)}\oplus 0 \oplus \Def_{(\Gamma,p_0)}^h $ because of condition $(t_r)$ in \Cref{def:admin}. If $2k<r$, then condition $(t_r)$ gives us that it also contains the representation $V_2:=(T_{p_1}^1 \oplus 0) \oplus 0 \oplus \Def_{(\Gamma,p_0)};$
if $r=2k>2h$, then we have that $Z_{[C]}$ contains the representation
$V_2:=(0\oplus {\rm Cr}_{p_1}) \oplus T_{p_0}^1 \oplus \Def_{(\Gamma,p_0)}^{h}$ 
because of condition $(e_r)$ in \Cref{def:admin}. Indeed, the generic deformation in $V_2$ would give an $A_r$-self-attached hyperelliptic curve of genus $h+k$ with $r>h+k$. We leave it as an exercise to the interested reader to check that either $(T_{[C]}\cM_{g,n}^r)^>$ or $(T_{[C]}\cM_{g,n}^r)^{\leq}$ is contained in $V_1$ or $V_2$ in both cases.

Finally, we leave it to the reader to verify that if $r=2g+1$, $n=1$, and $C$ is an $A_0$-attached hyperelliptic tail of genus $g$, the same holds. The idea is the same, but the situation is simpler because the dimension of \(\Aut(C)\) is one.
\end{proof}

\subsection{Condition on chains}

It is now time to deal with the chain conditions in \Cref{def:admin}. Although the combinatorics is substantially more complicated, the general idea remains the same. To help the reader, we set up and solve the combinatorial problem in \Cref{sec:combinatorics}. We start by illustrating it with the following easy example, analogous to \Cref{ex:easy-warm-up}.

\begin{example}
    Consider a point of $\cM_{g,n}^r$ corresponding to a curve $C/k$ that is constructed by attaching a hyperelliptic chain $\Gamma$ of genus bounded by $(r-1)/2$ to a smooth curve $C_0$ at two nodes $p_1,p_2$. Assume that $\Aut(C)^{\circ}\simeq \Gm$, which implies that $\Gamma$ is a $2$-pointed rosary of hyperelliptic $A$-singularities of even length (since it is also a hyperelliptic chain), i.e., it has an odd number of singularities and thus an even number of components.
    
    We can describe the tangent space 
    $$T_{[C]}\cM_{g,n}^r \simeq \Def_{(C_0,p_1,p_2)} \oplus T_{p_1}^1\oplus T_{p_2}^1 \oplus \Def_{(\Gamma,p_1,p_2)}$$
    where $T_{p_1}^1$ (respectively $T_{p_2}^1$) is the deformation space of the node $p_1$ (respectively $p_2$). Thanks to \Cref{prop:defor-rational-chain}, we have a subrepresentation
    $ \Def_{(\Gamma,p_1,p_2)}^h \subset \Def_{(\Gamma,p_1,p_2)} $
    where $\Def_{(\Gamma,p_1,p_2)}^{h}$ represents the deformation space of $(\Gamma,p_1,p_2)$ that coincides with the deformations preserving the property of being a hyperelliptic chain. We denote by $\Def_{(\Gamma,p_1,p_2)}^{nh}$ the $\Gm$-equivariant complement of $\Def_{(\Gamma,p_1,p_2)}^{h}$ in $\Def_{(\Gamma,p_1,p_2)}$. The main property of this decomposition is that $\Gm$ acts with strictly positive weights on $\Def^h$ and strictly negative weights on $\Def^{nh} \oplus T_{p_1}^1\oplus T_{p_2}^1$, or vice versa. 
    
    Because of condition $(c_r)$ in \Cref{def:admin}, we have that $Z_{[C]}$ contains (in this specific example it is actually equal to) the linear subspace 
    $$\Def_{(C_0,p_1,p_2)} \oplus 0 \oplus 0 \oplus  \Def_{(\Gamma,p_1,p_2)}^h.$$
    Suppose there exists a cocharacter $\Gm \rightarrow \Gm\subset \Aut(C)$, i.e., an integer $a \in \bZ$, such that $Z_{[C]}$ contains neither $(T_{[C]}\cM_{g,n}^r)^{>}$ nor $(T_{[C]}\cM_{g,n}^r)^{\leq }$. Note that the action induced by the cocharacter will have weight $a$ on $T_{p_1}^1$ and $T_{p_2}^1$, act via strictly positive multiples of $a$ on $\Def_{(\Gamma,p_1,p_2)}^{nh}$, act via strictly negative multiples of $a$ on $\Def_{(\Gamma,p_1,p_2)}^{h}$, and act trivially on $\Def_{(C_0,p_1,p_2)}$, again by \Cref{prop:defor-rational-chain}. Exactly as in \Cref{ex:easy-warm-up}, the condition $(T_{[C]}\cM_{g,n}^r)^{>} \nsubseteq Z_{[C]} $ implies that $a > 0$, while the condition $(T_{[C]}\cM_{g,n}^r)^{\leq } \nsubseteq Z_{[C]} $ implies $a \leq 0$. Thus, we have a contradiction.
\end{example}

We are ready for the main lemma for hyperelliptic chains.

\begin{lemma}\label{lem:theta-chain}
    If $C: \spec k \rightarrow \cZ \subset \cM_{g,n}^r$ is a curve that does not satisfy conditions $(c_r)$ or $(d_r)$ of \Cref{def:admin}, then $C$ does satisfy condition $(\star)$.
\end{lemma}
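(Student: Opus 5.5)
We follow the template of \Cref{lem:theta-tails} and the warm-up example above. Fix a curve $C$ in $\cZ$ containing a non-separating $A_{k_1}/A_{k_2}$-attached hyperelliptic chain $\Gamma$ of genus bounded by $(r-1)/2$, with $k_i\in\{1,r\}$ ($r$ odd if some $k_i=r$). By openness and the $\Gm$-equivariance of $Z_{[C]}$, we may reduce to the case where $\Aut(C)^\circ$ acts non-trivially near $\Gamma$; otherwise $(T_{[C]}\cM_{g,n}^r)^>$ lies in the deformations preserving $\Gamma$ and hence in $Z_{[C]}$. The cases with nontrivial action are governed by \Cref{prop:gm-decomp-aut}: the torus $\Aut(C)^\circ$ is a product of copies of $\Gm$, one for each attached rosary of hyperelliptic $A$-singularities. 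By \Cref{prop:defor-rational-chain} and \Cref{prop:iso-deg-rosary}, each bridge of $\Gamma$ that is moved by the torus must be an odd atom, and consecutive atoms form rosaries.

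First we write $T_{[C]}\cM_{g,n}^r$ as a direct sum. It consists of a $\Gm$-trivial part $R$, and then, along the maximal rosary $\Gamma'\supseteq\Gamma$ (extended through the attaching singularities into adjacent rosaries, as in the $\Gamma_1$ step of \Cref{lem:theta-tails}), of pieces $T^1_{q_i}$ and ${\rm Cr}_{q_i}$ for each singularity $q_i$ of the rosary, including the two attaching points. A cocharacter is determined by integer weights $a_1,\dots,a_m$ on the tangent lines of the components. We read off the signs from \Cref{rem:alternating-def}. At an odd singularity $q_i$ joining components with weights $a_i,a_{i+1}$, the balancing condition forces $a_{i+1}=-a_i$ when $q_i$ is worse than nodal. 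Then $\sgn(T^1_{q_i})=\sgn(-a_i)$ and $\sgn({\rm Cr}_{q_i})=\sgn(a_i)$. At a node, $T^1_{q_i}$ has sign $\sgn(a_i+a_{i+1})$.

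Second, we exhibit the linear subspaces of $Z_{[C]}$. For every choice of an ``alternating'' pattern of singularities along $\Gamma'$ — deforming every other singularity and keeping the rest — the generic deformation replaces each deformed atom by an honestly hyperelliptic bridge. It therefore produces an $A_1/A_1$, $A_1/A_r$ or $A_r/A_r$-attached chain of genus $<(r+1)/2$, or an $A$-attached tail as in \Cref{lem:theta-tails}, or a self-attached curve via $(e_r)$. Each such pattern gives a linear subspace $V_S\subset Z_{[C]}$, by the closedness in \Cref{lemma:open_t_c_d} and \Cref{lem:open}. Condition $(\star)$ then reduces to a purely combinatorial claim: for every $(a_1,\dots,a_m)\in\bZ^m$, either $(T)^>$ or $(T)^{\le}$ lies in some $V_S$. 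Failure of this gives, for each $S$, two disjunctions of sign inequalities on the $a_i$, exactly as in the four implications in \Cref{lem:theta-tails}, and we must show that the total system is infeasible. We intend to isolate this as the statement proved in \Cref{sec:combinatorics}.

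The main obstacle is this combinatorial step, together with the bookkeeping at the ends of the chain. The attaching singularity may be an $A_r$ (the $(d_r)$ cases), which changes which alternating patterns land in $\cZ$. The ends may also be glued to further rosaries or even atoms, giving the chain-of-rosaries configurations in \Cref{fig:Chapter 5_1}. Our first attempt is an induction on the number of components of $\Gamma'$. Along a run of worse-than-nodal singularities the weights alternate in sign, so the problem collapses to sign data at the nodes. Given a sign vector, we choose $S$ so that at each node the kept singularity is the one whose deformation has the ``wrong'' sign, and we check that the resulting pattern still violates $(c_r)$ or $(d_r)$ because the genus bound is inherited by sub-chains. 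The closed-chain case is handled by \Cref{prop:iso-deg-rosary}, and it needs separate care because of the even-length parity constraint.
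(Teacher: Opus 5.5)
\textbf{There is a genuine gap: the subspaces of $Z_{[C]}$ are misidentified.} Your architecture is right: linearize at $[C]$, exhibit linear subspaces of $Z_{[C]}$ coming from preserved forbidden chains, and reduce to sign combinatorics for a cocharacter of $\prod\Gm$. But the step that carries the content of the lemma, namely deciding which subspaces actually lie in $Z_{[C]}$, is wrong as stated.

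It is false that every alternating pattern along the chain produces a forbidden configuration. Take the warm-up example: an odd atom attached to $C_0$ at two nodes. The alternating pattern that smooths both nodes and keeps the $A_{2h+1}$-singularity yields a curve with a non-separating $A_{2h+1}$-singularity, which is admissible. In fact, there $Z_{[C]}$ is just $\Def_{(C_0,p_1,p_2)}\oplus 0\oplus 0\oplus \Def^h_{(\Gamma,p_1,p_2)}$.

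Moreover, a subspace that only records which singularities are smoothed also allows crimping changes and non-hyperelliptic deformations of the smoothed atoms, so its generic point need not be a hyperelliptic chain. One must kill one of the two alternating halves $D(s)_h$, $D(s)_{nh}$ of the deformation space of each rosary. The whole content is singling out the right family, and the paper's family is local. Write $\Gamma$ as rosaries $\Gamma_1,\dots,\Gamma_k$ joined by nodes. For every sub-chain $\Gamma_i\cup\dots\cup\Gamma_j$ containing exactly one rosary $\Gamma_l$ with an odd number of singularities, the subspace $V_{i,j}(l)$:
\begin{itemize}
\item preserves the two end singularities $p_{i-1}$, $p_j$;
\item kills $D(s)_h$ for $i\le s\le l$ and $D(s)_{nh}$ for $l<s\le j$;
\item leaves the internal nodes, the crimping spaces and $C-\Gamma$ free.
\end{itemize}
These are exactly the $1$-minimal subsequences of \Cref{sec:combinatorics}, and conditions (1)--(4) there encode the failure of $T^{>}\subset V_{i,j}(l)$, respectively $T^{\le}\subset V_{i,j}(l)$, where $T=T_{[C]}\cM_{g,n}^r$. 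So you cannot simply cite the appendix for your formulation.

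\textbf{The choice of $\Gamma$ is also missing.} The paper takes a \emph{minimal} offending chain and extends it on each side only through components moved by $\Aut(C)^\circ$, stopping at the first $A_r$-singularity. This does three things:
\begin{itemize}
\item it yields property $(\bullet)$, i.e.\ the boundary conditions $a_0=a_{k+1}=0$ on which the combinatorics rests;
\item it covers the mixed case where the torus is trivial on the chain but not on its neighbours, which your ``maximal rosary containing $\Gamma$'' setup does not make sense for;
\item it ensures no $A_r$-singularity is smoothed inside $\Gamma$, so every bridge produced has genus $<(r-1)/2$.
\end{itemize}
Your bound ``genus $<(r+1)/2$'' admits bridges of genus $(r-1)/2$, which $(c_r)$ and $(d_r)$ do not forbid. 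Extending ``into adjacent rosaries'' past an $A_r$-singularity creates exactly such bridges.

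\textbf{The combinatorics is not proved.} Your greedy step (choosing $S$ so that the kept singularity at each node is the wrong-signed one) is not available, because the cocharacter, not you, decides which summands lie in $T^{>}$ and $T^{\le}$. What is needed is the following. A vector violating every containment is forced, by \Cref{lem:start} and \Cref{lem:alternating}, into alternating chains of inequalities between consecutive odd rosaries. These contradict $a_{k+1}=0$ at the last one. When $p_0=p_1$ there is no vanishing boundary weight, so the separate cyclic argument of \Cref{lem:cyc-alt} and \Cref{lem:cyc-not-alt} is required; the parity statement of \Cref{prop:iso-deg-rosary} does not replace it.
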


\begin{proof} 
    The idea of the proof is to reduce the statement to a purely combinatorial one, using the natural combinatorial structure of toric representations. We address the combinatorial problem in \Cref{sec:combinatorics}.
    
    Suppose $C$ does not satisfy one of the conditions, i.e., there exists an $A_1/A_1$- (or $A_1/A_r$- or $A_r/A_r$-) attached non-separating hyperelliptic chain of genus bounded by $(r-1)/2$. We will prove that $C$ satisfies condition $(\star)$.
   
    Let $(\Gamma',p_0',p_1')$ be a \emph{minimal} $A_i/A_j$-attached non-separating hyperelliptic chain of genus bounded by $(r-1)/2$ with $i,j\in \{1,r\}$. We define $\Gamma_0'\subset C-\Gamma'$ (respectively $\Gamma_1'\subset C-\Gamma'$) to be the \emph{maximal} connected subcurve of $C-\Gamma'$ that contains $p_0'$ (respectively $p_1'$) and such that $\Aut(C)^{\circ}$ acts non-trivially on every component. Note that $\Gamma_0'$ and/or $\Gamma_1'$ may be empty. We denote by $\Gamma_0$ (respectively $\Gamma_1$) the \emph{minimal} subcurve of $\Gamma_0'$ that contains $p_0'$ (respectively $p_1'$) and such that $\Gamma_0\cap (\Gamma_0'-\Gamma_0)$ (respectively $\Gamma_1\cap (\Gamma_1'-\Gamma_1)$) is an $A_r$-singularity. If $p_0'$ (respectively $p_1'$) is an $A_r$-singularity, we set $\Gamma_0=\emptyset$ (respectively $\Gamma_1=\emptyset$). Moreover, if $p_0'$ (respectively $p_1'$) is not an $A_r$-singularity and $\Gamma_0'$ (respectively $\Gamma_1'$) does not have $A_r$-singularities, we set $\Gamma_0'=\Gamma_0$ (respectively $\Gamma_1'=\Gamma_1$).  Finally, we define $\Gamma:=\Gamma_0 \cup \Gamma' \cup \Gamma_1 $ and we denote by $p_0$ (respectively $p_1$) the intersection $\Gamma_0 \cap (C-\Gamma)$ (respectively $\Gamma_1 \cap (C-\Gamma)$). If $\Gamma_0'$ (respectively $\Gamma_1'$) is empty, we set $p_0:=p_0'$ (respectively $p_1:=p_1'$). Note that because $\Gamma'$ is non-separating, $p_0$ and $p_1$ always exist. See the following picture for an instance where $\Gamma_0=\Gamma_0'$ and $\Gamma_1$ is strictly contained in $\Gamma_1'$ (thus $p_1$ is an $A_r$-singularity).

   \begin{figure}[H]
        \caption{}
        \centering
        \includegraphics[width=0.8\textwidth]{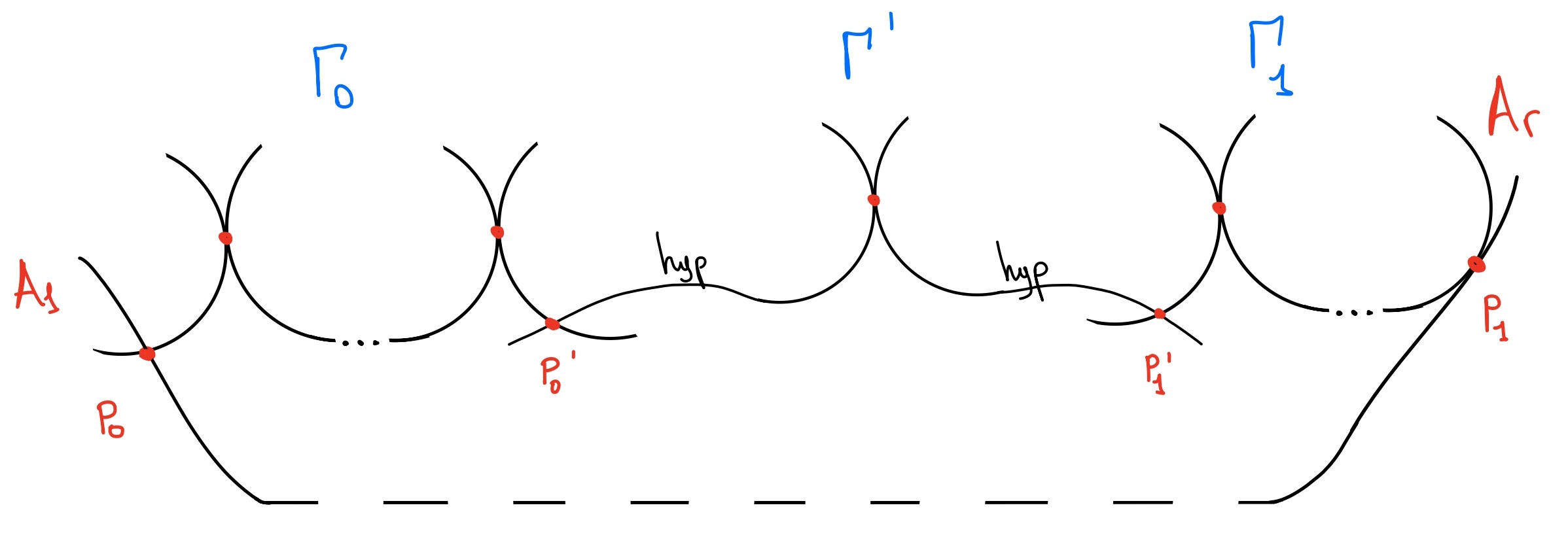}
        \label{fig:Chapter 5_4}
\end{figure}

    By construction, we have the following properties of $(\Gamma,p_0,p_1)$:
    \begin{itemize}
        \item[(a)] $\Gamma$ is a connected subcurve of $C$ intersecting its complement $C-\Gamma$ at $p_0$ and $p_1$; note that the two points are not guaranteed to be distinct (this happens exactly $\Gamma=C$);
        \item[(b)] there are no $A_r$-singularities of $C$ in $\Gamma$ except possibly $p_0$ and $p_1$; 
        \item[(c)] $\Gamma$ contains a non-separating hyperelliptic chain of genus bounded by $(r-1)/2$.
        \item[(d)] $\Gamma_0$ (respectively $\Gamma_1$) is a (possibly empty) sequence of $A_1/A_1$-attached rosaries of hyperelliptic $A$-singularities, except for the chain containing $p_0$ (respectively $p_1$), which could also be an $A_1/A_r$-attached rosary of hyperelliptic $A$-singularities. 
    \end{itemize}
    Note that while $A_1/A_1$-attached rosaries of $A$-singularities have length at least two (i.e., they have at least one $A$-singularity) due to the stability condition, the same is not true for $A_1/A_r$-attached rosaries: indeed, we can have a single $\bP^1$ attached at a node and at a (hyperelliptic) $A_r$-singularity to the rest of the curve. We say that such a chain has length $1$.
    
    Property $(d)$ allows us to describe $\Gamma$ as the sequence
    $$
    \begin{tikzcd}
    {} \arrow[r, "A_1/A_r", "p_0"', no head] & n_k \arrow[r, "A_1", no head] & \dots \arrow[r, "A_1", no head] & n_1 \arrow[r, "A_1", no head] & \Gamma' \arrow[r, "A_1", no head] & m_1 \arrow[r, "A_1", no head] & \dots \arrow[r, "A_1", no head] & m_h \arrow[r, "A_1/A_r", "p_1"', no head] & {}
\end{tikzcd}
    $$
    where the positive integers $n_i$ and $m_j$ indicate the number of hyperelliptic $A$-singularities in the corresponding rosaries (i.e., \(n_i+1\) and \(m_j+1\) indicate the corresponding lengths), while the edges represent the singularities used to connect such chains. The leftmost and rightmost edges represent the points $p_0$ and $p_1$ (possibly equal). Note that for every such rosary, there is a copy of $\Gm$ inside $\Aut(C)^{\circ}$ acting on it.

    We will first address the case where $p_0\neq p_1$.
    In this scenario, we claim the following property of $\Gamma$ holds:
    \begin{itemize}
        \item[$(\bullet)$] the action of $\Aut(C)^{\circ}$ on $T_{p_0}^1$ (respectively on $T_{p_1}^1$) is completely determined by the copy of $\Gm$ associated to the rosary of length $n_k+1$ (respectively $m_h+1$).
    \end{itemize}
    This follows by construction: if $p_0$ is a node, then we know that the action of $\Aut(C)^{\circ}$ on $T_{p_0}(C-\Gamma)$ is trivial because of the maximality of $\Gamma_0'$, which is equal to $\Gamma_0$ if $p_0$ is a node. If $p_0$ is an $A_r$-singularity, then we know that the action is either trivial, or $p_0$ belongs to some $A_1/A_1$-attached rosary of hyperelliptic $A$-singularities, which has a unique $\Gm$-action and is completely determined by the component. 
    
    The idea is now to study the deformation space of $\Gamma$ to verify that for every cocharacter $\Gm \rightarrow \Aut(C)^{\circ}$, $Z_{[C]}$ contains either $(T_{[C]}\cM_{g,n}^r)^{\leq}$ or $(T_{[C]}\cM_{g,n}^r)^{>}$.
    We first address the case where $\Aut(C)^{\circ}$ acts non-trivially on $\Gamma'$, and we will explain later why we can reduce to this case. Therefore, we can describe $\Gamma$ as the following sequence:
    $$
    \begin{tikzcd}
    {} \arrow[r, "A_1/A_r", "p_0"', no head] & n_1 \arrow[r, "A_1", "p_1"', no head] & \dots \arrow[r, "A_1", "p_{k-1}"', no head] & n_k \arrow[r, "A_1/A_r", "p_k"', no head] & {}
    \end{tikzcd}
    $$
    where $n_i$ is a nonnegative integer representing a rosary $\Gamma_i$ of hyperelliptic $A_r$-singularities of length $n_i+1$, for $i=1,\dots,k$. Again, by abuse of notation, we will denote by $p_i$ the node connecting $\Gamma_i$ with $\Gamma_{i+1}$ for $i=1,\dots,k-1$. We denote by $p_0$ the leftmost edge and by $p_k$ the rightmost one.

    To verify $(\star)$, we need to describe $Z_{[C]}$ inside the deformation space of $C$ in $\cM_{g,n}^r$. First of all, we have the following $\Aut(C)^{\circ}$-equivariant decomposition of $T_{[C]}\cM_{g,n}^r$:
    $$ \Def_{(C-\Gamma,p_0,p_1)}\oplus \bigoplus_{i=1}^k (\Def_{(\Gamma_i,p_{i-1},p_i)}^h \oplus \Def_{(\Gamma_i,p_{i-1},p_i)}^{nh}) \oplus \bigoplus_{i=0}^k (T^1_{p_i}\oplus {\rm Cr}_{p_i})$$
    where
    \begin{itemize}
        \item $\Def_{(\Gamma_i,p_{i-1},p_i)}^h$ parametrizes deformations of $\Gamma_i$ that preserve the property of being a chain of hyperelliptic curves;
        \item $\Def_{(\Gamma_i,p_{i-1},p_i)}^{nh}$ is the complementary representation with respect to the action of $\Aut(\Gamma_i,p_{i-1},p_i)^{\circ}\simeq \Gm$. 
    \end{itemize}
    See \Cref{prop:iso-deg-rosary}. Note that except for $i \in \{0,k\}$, ${\rm Cr}_{p_i}$ is zero and $T_{p_i}^1$ is one-dimensional. To ease the notation, we denote 
    \begin{itemize}
        \item by $D(i)_h$ (respectively $D(i)_{nh}$) the subspace $\Def_{(\Gamma_i,p_{i-1},p_i)}^h$ (respectively $\Def_{(\Gamma_i,p_{i-1},p_i)}^{nh}$) for $i=1,\dots,k$;
        \item by $T(i)$ the subspace $T_{p_i}^1$ and by $C(i)$ the subspace ${\rm Cr}_{p_i}$ for $i=0,\dots,k$;
        \item by $D(0)$ the subspace $\Def_{(C-\Gamma,p_0,p_1)}$.
    \end{itemize}
    Moreover, given a component of the given decomposition, say $V$, we denote by $V^{c}$ its complementary representation (i.e., the entire direct sum excluding the summand $V$).
    
    By definition, $Z_{[C]}$ contains the locus where the deformations preserve at least one hyperelliptic chain of genus bounded by $(r-1)/2$. Now pick an integer $1\leq l\leq k$ such that $n_{l}$ is odd, which we know exists by construction. Then we know that 
    $$ T(l-1)^c \cap D(l)_h^c\cap T(l)^c \subset Z_{[C]} $$
    because the left-hand side represents the deformations where $\Gamma_l$ is still an attached hyperelliptic chain of genus bounded by $(r-1)/2$.  More generally, we say that a triplet $1\leq i\leq l\leq j\leq k$ is \emph{odd-minimal} if $$n_t{\rm \; is \; odd \; for \;} i\leq t\leq j \iff  t=l;$$
    for every odd-minimal triplet $i\leq l\leq j$ we can define the subspace
    $$ V_{i,j}(l):= T(i-1)^c \cap \bigcap_{s=i}^{l}D(s)^c_h \cap \bigcap_{s=l+1}^jD(s)^c_{nh} \cap T(j)^c$$
    which will be contained inside $Z_{[C]}$, as $V_{i,j}(l)$ classifies the deformations of $\Gamma$ that make the union $\bigcup_{i\leq s\leq j} \Gamma_s\subset \Gamma$ an attached hyperelliptic chain of genus bounded by $(r-1)/2$ (note that due to condition (b) above, there cannot be hyperelliptic curves of genus $(r-1)/2$ in such a chain). 

     Due to property $(\bullet)$, we can ignore what happens to the subcurve $C-\Gamma$; i.e., it is sufficient to prove that for every cocharacter
     \begin{align}
        \label{eq:cocharacter}
         \chi: \Gm \rightarrow \prod_{i=1}^k \Gm=\Aut(\Gamma,p_0,p_k)^{\circ}\subset \Aut(C)^{\circ}
     \end{align}

     there exists a space $V_{i,j}(l)$ as above that contains either $(T_{[C]}\cM_{g,n}^r)^{\leq}$ or $(T_{[C]}\cM_{g,n}^r)^{>}$. Equivalently, we will prove that there is no cocharacter $\chi$ such that $V_{i,j}(l)$ contains neither $(T_{[C]}\cM_{g,n}^r)^{\leq}$ nor $(T_{[C]}\cM_{g,n}^r)^{>}$. 

    To do so, we need to understand the signs of the weights induced by any cocharacter $\chi:\Gm \rightarrow \Gm^k$. From now on, we will identify the cocharacters  $\chi \in \Hom(\Gm,\Gm^k)$ with vectors of integers $\mathbf{a}\in \bZ^k$. We will use the sign convention introduced in \Cref{def:sign_of_rep}. We have the following relations:
    \begin{itemize}
        \item $s(D(i)_h)=\sgn(a_i)$ while $s(D(i)_{nh})=\sgn(-a_i)$ for every $i=1,\dots,k$;
        \item $s(T(i))=\sgn(a_{i+1}-(-1)^{n_i}a_i)$ for $i=0,\dots,k$ (where we can set $a_{0}=a_{k+1}=0$ by property $(\bullet$)).
    \end{itemize}
    
    Therefore, for every odd-minimal triplet $i\leq l\leq j$ we have that 
    $$ (T_{[C]}\cM_{g,n}^r)^{>} \nsubseteq V_{i,j}(l)$$
    implies that one of the following is true:
    \begin{itemize}
        \item $a_{i}>(-1)^{n_{i-1}}a_{i-1}$;
        \item $a_s>0$ for some $i\leq s\leq l$;
        \item $a_s<0$ for some $l+1\leq s\leq j$;
        \item $a_{j+1}>(-1)^{n_j}a_j$.
    \end{itemize}
    The same is true for $(T_{[C]}\cM_{g,n}^r)^{\leq}$ with complementary inequality conditions. 
    
Note that, if instead $\Aut(C)^{\circ}$ acts trivially on $\Gamma'$, the combinatorial problem remains the same, except that we are verifying the containment only for cocharacters $\chi$ in \eqref{eq:cocharacter} with a trivial \(i\)-th component, where \(\Gamma'=\Gamma_i\). Thus, it is sufficient to solve it in the case where the action is non-trivial. Moreover, the only information we need about the $n_i$'s is their parity, both for the definition of the odd-minimal triplet and for the signs of the action. We can then reduce the problem by assuming that the $n_i$'s are either zeros or ones, depending on their parity. We have finally arrived at the core of the proof, which is a purely combinatorial statement. In \Cref{sec:combinatorics}, we rewrite the combinatorial setup, and \Cref{theo:combi} shows that for every vector of integers $\mathbf{a}\in \bZ^k$, there always exists an odd-minimal triplet $i\leq l\leq j$ such that none of the conditions above (or their respective counterparts with $\leq$) are satisfied. This implies that either $(T_{[C]}\cM_{g,n}^r)^{>}$ or $(T_{[C]}\cM_{g,n}^r)^{\leq}$ is contained in $V_{i,j}(l)$.

    To finish the proof of the theorem, we must consider the case $p_0=p_1$. The combinatorics of this problem is slightly more convoluted because we cannot set $a_{k+1}=0$ and $a_0=0$ by construction as before; indeed, the correct convention is $a_{k+1}:=a_1$ and $a_0:=a_k$, because the curve $\Gamma$ actually coincides with $C$ and is a cycle. Notice that this can only happen if $n=0$. We handle this as before in \Cref{sec:combinatorics}, specifically in \Cref{theo:cyc-combi}.
\end{proof}

\subsection{Condition \texorpdfstring{$(e_r)$}{(e\_r)}}

Finally, we conclude this section by addressing the last condition.

\begin{lemma}\label{lem:theta-n=0}
    If $C\in \cZ \subset \cM_{g,n}^r$ is a curve that does not condition $(e_r)$ of \Cref{def:admin}, then it satisfies condition $(\star)$.
\end{lemma}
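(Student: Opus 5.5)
We verify condition $(\star)$ of \Cref{rmk:condition_star} directly, in the spirit of \Cref{ex:easy-warm-up}. The hypothesis forces $n=0$ and $C$ to be an $A_h$-self-attached hyperelliptic curve with $h>g$. We may also assume that $C$ satisfies $(t_r)$, $(c_r)$ and $(d_r)$. Otherwise \Cref{lem:theta-tails} or \Cref{lem:theta-chain} already gives $(\star)$.

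\textbf{Step 1: structure of $C$ and its tangent space.} Let $q$ be the $A_h$-singularity and $(C_0,\ldots)$ the pointed partial normalization at $q$. By \Cref{lem:cyclic-covers} (or the argument in the proof of \Cref{lem:open}), $C_0$ is honestly hyperelliptic, possibly of genus $0$. The tangent space then decomposes $\Aut(C)^{\circ}$-equivariantly as
\[
T_{[C]}\cM_g^r\simeq \Def_{(C_0,\ldots)}\oplus T^1_q\oplus {\rm Cr}_q .
\]
If $\Aut(C)^{\circ}$ is trivial, then every cocharacter is trivial. In that case $(T_{[C]}\cM_g^r)^{>}=0\subset Z_{[C]}$, and $(\star)$ holds. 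Otherwise, by \Cref{prop:gm-decomp-aut} and the classification of positive-dimensional stabilizers, $\Aut(C)^{\circ}\simeq\Gm$. Then $C$ is a double cover of $\bP^1$ whose branch divisor is supported at two points: $q$, of multiplicity $h+1>g+1$, and a second point $p$ of multiplicity $2g+1-h$ (or the analogous configuration in the odd case). This is the $\Gm$-fixed curve $C_0$ of \Cref{lem:deg-self-attached}.

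\textbf{Step 2: the two linear subspaces inside $Z_{[C]}$.} By the description of cyclic covers in \Cref{sub:hyper-A-stable} and \Cref{rem:alternating-def}, a cocharacter of weight $a$ acts on the deformations of the branch divisor at $q$ and at $p$ with weights of opposite signs, $\sgn(-a)$ and $\sgn(a)$. Concretely, $T^1_q$ and the crimping-type directions at $p$ carry one sign, while $T^1_p$ and ${\rm Cr}_q$ carry the other. Consider
\begin{itemize}
\item[$V_1$,] the subspace of deformations preserving the $A_h$-singularity $q$ together with the self-attached hyperelliptic structure, and
\item[$V_2$,] the subspace preserving $p$ and keeping the curve a double cover.
\end{itemize}
Both are contained in $Z_{[C]}$ for the following reasons:
\begin{itemize}
\item[$V_1$:] the generic deformation in $V_1$ is again $A_h$-self-attached, so it violates $(e_r)$ by the closedness established in the proof of \Cref{lem:open};
\item[$V_2$:] deformations in $V_2$ keep the curve hyperelliptic. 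Their complement must then be handled using the fact that the hyperelliptic locus is closed, as in the proof of \Cref{lem:self-attached-condition}, together with the non-hyperelliptic directions $\Def_{(C_0,\ldots)}$, which carry weight $0$.
\end{itemize}

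\textbf{Step 3: sign bookkeeping.} With the sign data from Step 2, the argument of \Cref{ex:easy-warm-up} goes through. If $a\le 0$, then $(T_{[C]}\cM_g^r)^{>}\subset V_1$. If $a>0$, then $(T_{[C]}\cM_g^r)^{\le}\subset V_1$, because the weight-zero part consists of deformations that keep the singularity and lie in $Z$.

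\textbf{Main obstacle.} The hard step is identifying exactly which weight-zero and which negative directions lie in $Z_{[C]}$. The difficulty is that $(e_r)$ only excludes hyperelliptic self-attachment: weight-zero deformations making $C_0$ non-hyperelliptic while keeping $q$ do not lie in $Z$. My first attempt would be to show that all weight-zero deformations of a curve with $\Gm$-action are themselves $\Gm$-invariant double covers. This would give $T^0\subset V_1$. I would handle the genus-$0$ case, and case (3) of \Cref{def:self-attached}, by direct inspection.
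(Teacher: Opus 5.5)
Your plan is the paper's argument. The paper also places a single linear subspace inside $Z_{[C]}$, namely $T^1_p\oplus{\rm Cr}_q$ in your notation, written $(T_k^1\oplus 0)\oplus(0\oplus{\rm Cr}_h)$ there; this is your $V_1$. It then runs the sign bookkeeping of \Cref{ex:easy-warm-up}. As written, however, your proposal does not close, and the ``main obstacle'' comes from misdescribing the tangent space rather than from a real difficulty.

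When $\Aut(C)^{\circ}\simeq\Gm$, the curve $C$ is $\bP^1$ with $\Gm$-equivariantly crimped singularities at $0$ and $\infty$ (two copies of $\bP^1$ in the odd case). Its normalization is rigid, so $\Def_{(C_0,\ldots)}$ is nothing but $T^1_p\oplus{\rm Cr}_p$ (zero if $p$ is smooth). It does not carry weight $0$. Your own Step 2 assigns nonzero signs to $T^1_p$ and ${\rm Cr}_p$, which contradicts the weight-zero claim in your $V_2$ bullet. Write $T:=T_{[C]}\cM_g^r$. Then $T=T^1_p\oplus{\rm Cr}_p\oplus T^1_q\oplus{\rm Cr}_q$ with $s(T^1_p)=s({\rm Cr}_q)=-s({\rm Cr}_p)=-s(T^1_q)$. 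Moreover $V_1$ is exactly $T^1_p\oplus{\rm Cr}_q$: by the sign-preserving decomposition recalled in \Cref{sub:local-geometry}, the deformations of the atom $(C_0,p)$ that keep it hyperelliptic with Weierstrass marking are exactly $T^1_p$.

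In the even case every weight is nonzero for a nontrivial cocharacter. So $V_1$ equals either $T^{>}$ or $T^{<}=T^{\le}$, and for the trivial cocharacter $T^{>}=0$. The non-hyperelliptic deformations of $(C_0,p)$ that worry you are precisely ${\rm Cr}_p$. These have nonzero weight of the sign opposite to $V_1$, so they never need to lie in $Z_{[C]}$. In the odd case there is at most one weight-zero direction, the constant term of the odd crimping datum. It can be realized at $q$, where it leaves the partial normalization $C_0$ unchanged, so it too lies in $V_1\subset Z_{[C]}$. Finally, with your stated convention ($T^1_q$ of sign $\sgn(-a)$), the two cases of Step 3 are swapped: $a\ge 0$ gives $T^{>}\subset V_1$ and $a<0$ gives $T^{\le}=V_1$. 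This slip is harmless.

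Separately, your claim $V_2\subset Z_{[C]}$ is false and should be dropped; Step 3 never uses it. A generic deformation that smooths $q$ and keeps $p$ is a double cover whose only singularity is the $A_{2g-h}$-singularity at $p$. Since $2g-h<g$, it does not violate $(e_r)$. In the even case it is irreducible and has no odd singularities, hence no tails or chains, so it is admissible. For instance, for $h=2g$ the point $p$ is smooth and the generic point of $V_2$ is a smooth hyperelliptic curve. The proof of \Cref{lem:self-attached-condition} uses exactly this fact: one of the two basins of attraction at this curve meets $\cU_g^r$. That is why only one sign-component can sit inside $Z_{[C]}$, and why the argument must rest on $V_1$ alone.
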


\begin{proof}
    It is sufficient to verify the statement for those curves $C$ that have non-trivial, positive-dimensional stabilizers. We can explicitly describe such a curve $C$, which is constructed in one of the following ways (see \Cref{def:self-attached}):
    \begin{itemize}
        \item[(1)] $C$ is obtained by pinching a $\bP^1$ at $0$ and $\infty$, creating a hyperelliptic $A_{2k}$-singularity and a hyperelliptic $A_{2h}$-singularity with $k+h=g$ and $h>g$;
        \item[(2)] $C$ is obtained by gluing two $\bP^1$ at $0$ and $\infty$, creating an $A_{2k+1}$-singularity and a hyperelliptic $A_{2h+1}$-singularity, with $k+h=g$ and $h>g$;
        \item[(3)] $C$ is obtained by pinching a $\bP^1$ at $\infty$, creating an $A_{2g}$-singularity;
        \item[(4)] $C$ is obtained by gluing two $\bP^1$ at $\infty$, creating an $A_{2g+1}$-singularity.
    \end{itemize}
    We will show how case (1) works; we leave it to the reader to verify the remaining cases.
    We know that the deformation space decomposes as
    \[
    \Def_C = \Def_{k}\oplus \Def_h
    \]
    where $\Def_k \coloneq T_k^1 \oplus {\rm Cr}_k$, and analogously for $\Def_h$. The space $T^1_k$ is the deformation space of the $A_{2k+1}$-singularity, and ${\rm Cr}_k$ is the crimping space (and similarly for $h$). Moreover, $\Aut(C)^{\circ} \simeq \Gm$, where
    \[
    s(T_k^1) = s({\rm Cr}_h) = -s({\rm Cr}_k) = -s(T_h^1)
    \]
    with the conventions introduced in \Cref{def:sign_of_rep}.

    We have that $Z$ contains the subspace 
    \[
    V \coloneq (T_k^1 \oplus 0) \oplus (0 \oplus {\rm Cr}_h).
    \]
    It clearly follows that for every cocharacter $a \colon \Gm \rightarrow \Gm$, the subspace $V$ contains either $\Def_C^>$ or $\Def_C^{\leq}$, because otherwise we would have both $a > 0$ and $a \leq 0$.
\end{proof}

\appendix
\section{Combinatorics of problematic sequences}\label{sec:combinatorics}

In this appendix, we deal with the combinatorial problem arising from the local geometry of the open embedding $\cU_{g,n}^r \subset \cM_{g,n}^r$. See the proof of \Cref{theo:local-completeness}.

We will first set up the problem. Fix a positive integer $k$, and suppose we are given a binary sequence $\mathbf{b} \in \{0,1\}^k$ of length $k$. We will draw such a sequence as 
$$
\begin{tikzcd}
{} \arrow[r, no head] & b_1 \arrow[r, no head] & b_2 \arrow[r, no head] & \dots \arrow[r, no head] & b_k \arrow[r, no head] & {}
\end{tikzcd}
$$
where $b_i \in \{0,1\}$ for every $i=1,\dots,k$. We will denote by $e_i$ the edge in the previous picture between $b_i$ and $b_{i+1}$. The leftmost edge will be denoted by $e_0$, while the rightmost edge will be denoted by $e_k$. For every $i\leq j \in \{1,\dots,k\}$, we will denote by $\mathbf{b}(i,j)$ the subsequence in $\{0,1\}^{i-j+1}$ which starts with the edge $e_{i-1}$ and ends with the edge $e_j$.

\begin{definition}
    Fix $k$ a positive integer and a binary sequence $\mathbf{b}$ of length $k$. A subsequence $\mathbf{b}(i,j)$ (or equivalently the pair $i\leq j$) will be called \emph{$1$-minimal} if there exists a unique $h \in \{i,\dots,j\}$ such that $b_{h}$ is $1$.
\end{definition}

We will now introduce all the terminology needed to state the problem.

\begin{definition}\label{def:>}
    A vector of integers $\mathbf{a}\in \bZ^k$ will be called $>$-\emph{problematic} with respect to a $1$-minimal subsequence $\mathbf{b}(i,j)$ of $\mathbf{b}$ with $b_h=1$ ($i\leq h\leq j$) if one of the following conditions holds:
    \begin{itemize}
        \item[(1)] $a_{i}>(-1)^{b_{i-1}}a_{i-1}$;
        \item[(2)] $a_s>0$ for some $i\leq s\leq h$;
        \item[(3)] $a_s<0$ for some $h+1\leq s\leq j$;
        \item[(4)] $a_{j+1}>(-1)^{b_j}a_j$.
    \end{itemize}
    We will denote by $A_{i,j}(>)$ the set of $>$-problematic vectors for the subsequence $\mathbf{b}(i,j)$. A vector of integers $\mathbf{a}\in \bZ^k$ will be called $>$-\emph{problematic} with respect to $\mathbf{b}$ if it is $>$-problematic for every $1$-minimal subsequence. We will denote by $A(>)$ the set of $>$-problematic vectors for $\mathbf{b}$.
    
\end{definition} 

\begin{remark}
    We set $a_{0}$ and $a_{k+1}$ to be zero in the previous definition. For instance, if $i=1$, then condition $(1)$ becomes $a_i>0$, which is already included in condition $(2)$. Moreover, if $j=k$, then condition $(4)$ becomes $(-1)^{b_k}a_k<0$. If $b_k=1$, then $h=k$, and thus condition $(3)$ is empty, while condition $(4)$ coincides with $a_k>0$, which is already contained in $(2)$. If $b_k=0$, then $h<k$, and thus condition $(4)$ is included in condition $(3)$.
\end{remark}

\begin{definition}\label{def:leq}
    A vector of integers $\mathbf{a}\in \bZ^k$ will be called $\leq$-\emph{problematic} with respect to a minimal subsequence $\mathbf{b}(i,j)$ of $\mathbf{b}$ with $b_h=1$ ($i\leq h\leq j$) if one of the following conditions holds:
    \begin{itemize}
        \item[(1)] $a_{i}\leq(-1)^{b_{i-1}}a_{i-1}$;
        \item[(2)] $a_s\leq0$ for some $i\leq s\leq h$;
        \item[(3)] $a_s\geq 0$ for some $h+1\leq s\leq j$;
        \item[(4)] $a_{j+1}\leq (-1)^{b_j}a_j$.
    \end{itemize}
    We will denote by $A_{i,j}(\leq)$ the set of $\leq$-problematic vectors for the subsequence $\mathbf{b}(i,j)$.  A vector of integers $\mathbf{a}\in \bZ^k$ will be called $\leq$-\emph{problematic} with respect to $\mathbf{b}$ if it is $\leq$-problematic for every $1$-minimal subsequence. We will denote by $A(\leq)$ the set of $\leq$-problematic vectors for $\mathbf{b}$.
\end{definition}

\begin{remark}\label{rem:case-finite}
    As before, we set $a_{0}$ and $a_{k+1}$ to be zero in the previous definition. For instance, if $i=1$, then condition $(1)$ becomes $a_i\leq 0$, which is already included in condition $(2)$. Moreover, if $j=k$, then condition $(4)$ becomes $(-1)^{b_k}a_k\leq 0$. If $b_k=1$, then $h=k$, and thus condition $(3)$ is empty, while condition $(4)$ coincides with $a_k\leq 0$, which is already contained in $(2)$. If $b_k=0$, then $h<k$, and thus condition $(4)$ is included in condition $(3)$.
\end{remark}

\begin{definition}\label{def:prob}
    Let $k$ be a positive integer and $\mathbf{b}$ be a binary sequence of length $k$. Then a vector $\mathbf{a}\in \bZ^k$ is called \emph{problematic} (with respect to $\mathbf{b}$) if it is $>$-problematic and $\leq$-problematic. We will denote by $A\subset \bZ^k$ the subset of problematic vectors.
\end{definition}

\begin{remark}
    We have the following straightforward equalities:
    \begin{itemize}
        \item $ A(>)= \bigcap_{1-{\rm min}\, \mathbf{b}(i,j)} A_{i,j}(>);$
        \item $ A(\leq)= \bigcap_{1-{\rm min}\, \mathbf{b}(i,j)} A_{i,j}(\leq);$
        \item $ A = A(>) \cap A(\leq);$
    \end{itemize}
    where the intersection is taken over the $1$-minimal subsequences of $\mathbf{b}$.
\end{remark}

We are ready for the main theorem of the section.

\begin{theorem}\label{theo:combi}
    Let $k$ be a positive integer and $\mathbf{b}$ be a binary sequence of length $k$. There are no problematic vectors for $\mathbf{b}$.
\end{theorem}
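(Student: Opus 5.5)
The plan is to prove the contrapositive constructively. Given $\mathbf{b}$ and $\mathbf{a}\in\bZ^k$, I will exhibit one $1$-minimal pair $i\leq h\leq j$ for which either all four conditions of \Cref{def:>} fail or all four conditions of \Cref{def:leq} fail. Here $a_0=a_{k+1}=0$, and I assume $\mathbf{b}$ contains at least one $1$. This is guaranteed in the application, since $\Gamma$ contains a hyperelliptic chain. Otherwise no $1$-minimal subsequence exists and the statement is vacuous in the wrong direction, so this point has to be made explicit.

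Negating \Cref{def:>} for a pair $(i,j)$ with marked $1$ at $h$, the failure of $>$-problematicity reads
\[
a_i\leq(-1)^{b_{i-1}}a_{i-1},\quad a_s\leq 0\ (i\leq s\leq h),\quad a_s\geq 0\ (h<s\leq j),\quad a_{j+1}\leq(-1)^{b_j}a_j .
\]
Failure of $\leq$-problematicity is the same list with all inequalities strict and reversed. So I must find a "valley--hill" window: nonpositive on the left part up to $h$, nonnegative after $h$, with controlled boundary comparisons (respectively the strict opposite). These two requirements decouple into a left extension problem and a right extension problem around $h$.

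First step, the left extension inside a block of zeros. Fix $h$ with $b_h=1$ and suppose $a_h\leq 0$ (the $>$-case). I try $i=h$. If the left boundary inequality fails, then I move $i$ to $i-1$ and repeat, as long as $b_{i-1}=0$. In that situation failure means $a_{i-1}<a_i\leq 0$, so the interior condition $a_s\leq 0$ is preserved and the values strictly decrease. The process stops successfully either when the inequality holds, or when $i=1$ (since $a_0=0\geq a_1$). The right extension is symmetric: while $b_j=0$, failure of the right boundary condition gives $a_{j+1}>a_j\geq 0$ (for $j>h$; for $j=h$ the comparison is with $-a_h\geq 0$), so I extend $j$. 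The process terminates successfully at $j=k$ or when the inequality holds. The case $a_h>0$ is handled identically with the strict reversed inequalities, aiming at the $\leq$-failure. So within a single zero block, a window can always be grown.

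The main obstacle is when the extension runs into the neighbouring $1$, i.e.\ $b_{i-1}=1$ (resp.\ $b_{j+1}=1$), because $1$-minimality forbids crossing it. Then the boundary condition becomes $a_i\leq -a_{i-1}$, and its failure gives $a_{i-1}<-a_i=|a_i|$. I plan to handle this by a potential argument. When extension from $h$ is blocked at the $1$ at position $h'=i-1$, I restart the construction at $h'$, choosing the $>$- or $\leq$-version according to the sign of $a_{h'}$. I will show that blocking again forces the relevant absolute value $|a|$ at the blocking boundary to strictly increase, or the walk to move monotonically in one direction along $\mathbf{b}$. To get a clean termination argument, I will select a priori the index $h$ (among the $1$'s, together with the side and the sign) that maximizes $|a_h|$, with ties broken by a fixed orientation. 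I will then check that from this extremal starting point neither extension can be blocked. For example, $a_{i-1}<-a_i$ combined with the chain of strict monotonicity inside the zero block should contradict maximality or the tie-break. The delicate bookkeeping is the interplay between strict and non-strict inequalities at the $1$-interfaces, since the $>$- and $\leq$-versions are not exactly symmetric under $\mathbf{a}\mapsto-\mathbf{a}$. That is the step I expect to require a careful case split: the sign of $a_h$, together with whether the blocking occurs on the left or the right.
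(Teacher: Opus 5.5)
Your reduction is sound. The negated conditions are right, and the left and right requirements around $h$ decouple. The greedy extension through a block of zeros works as you describe, and the two ends never block since $a_0=a_{k+1}=0$. Your caveat that $\mathbf{b}$ must contain a $1$ is also correct: for $\mathbf{b}=0$ every vector is vacuously problematic, and the paper's proof silently uses that $h_1$ exists.

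The gap is the decisive step, the extremal choice of $h$. You leave it unverified, and it fails as specified.

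\textbf{First, the inequality at a blocked interface is reversed.} If $a_h\le 0$ and $b_{i-1}=1$, failure of $a_i\le -a_{i-1}$ means $a_{i-1}>-a_i=|a_i|$. Since the greedy chain gives $a_i\le a_h\le 0$, the adjacent $1$ at $h'=i-1$ satisfies $a_{h'}>|a_h|$, and the same holds on the right. So in the $>$-case, blocking does force a strictly larger $|a|$ at a neighbouring $1$. With your inequality it would force nothing.

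\textbf{Second, the $\le$-case is asymmetric.} When $a_h>0$, blocking only yields $a_{h'}\le -a_h$, a \emph{weak} inequality. So a maximiser of $|a_h|$ can be blocked, and a tie-break ``by a fixed orientation'' does not help. Take $\mathbf{b}=(1,1)$ and $\mathbf{a}=(1,-1)$: both $1$'s maximise $|a_h|$. At $h=1$ only a $\le$-failure is possible, and it fails because $a_2>-a_1$ is false. The only good window is $(i,j)=(2,2)$, as a $>$-failure, so a leftmost tie-break picks the wrong index.

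\textbf{The missing idea is to break ties by sign.} Take $h$ with $b_h=1$ maximising $|a_h|$ and, among maximisers, one with $a_h\le 0$ whenever possible. Equivalently, maximise $2|a_h|+\varepsilon_h$, where $\varepsilon_h=1$ if $a_h\le 0$ and $\varepsilon_h=0$ otherwise. This quantity strictly increases along every blocking step, which is the correct form of your restart walk. If $a_h\le 0$, blocking would give an adjacent $1$ with strictly larger $|a|$. If $a_h>0$, then no maximiser is nonpositive, yet blocking would give an adjacent $1$ with $a_{h'}<0$ and $|a_{h'}|\ge|a_h|$. Hence both extensions succeed and $\mathbf{a}$ is not problematic.

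Once repaired, your route genuinely differs from the paper's, which sweeps left to right (\Cref{lem:start}, \Cref{lem:alternating}). There, problematicity forces the first $1$ to be blocked on the right. This propagates alternating monotone chains between consecutive $1$'s, and the boundary value $a_{k+1}=0$ gives a contradiction at the last $1$.

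Your extremal argument needs no induction. Because the potential is $k$-periodic, it also proves \Cref{theo:cyc-combi} verbatim: a blocked maximiser would need an adjacent $1$, possibly a translate of itself, of strictly larger potential. This replaces \Cref{lem:cyc-alt} and \Cref{lem:cyc-not-alt}.
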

The proof of the theorem follows from a straightforward iterative argument, which consists of two lemmas. In the first, we prove the starting point of the iterative argument. In the second, we describe how the iteration proceeds.

\begin{lemma}\label{lem:start}
   Suppose we are under the hypotheses of \Cref{theo:combi} and let $\mathbf{a}\in \bZ^k$ be a problematic vector of integers. Let $h_1<h_2<\dots<h_r$ be the indices such that $b_{h_t}=1$ for $t=1,\dots,r$.  Then 
   $$ a_{h_1}>0 \iff a_{h_2}\leq a_{h_2-1}\leq a_{h_2-2} \leq \dots \leq a_{h_1+1}\leq -a_{h_1}< 0.$$
   The same holds for the opposite sign case, namely:
   $$ a_{h_1}\leq 0 \iff a_{h_2}>a_{h_2-1}> a_{h_2-2} > \dots > a_{h_1+1}> -a_{h_1}\geq 0.$$
\end{lemma}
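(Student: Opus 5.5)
The plan is a direct induction that only uses the $1$-minimal subsequences $\mathbf{b}(i,j)$ with $1\le i\le h_1\le j<h_2$. Each of these contains exactly one $1$, namely $b_{h_1}$, and all other entries are $0$. For $h_1\le s<h_2$ the edge $e_s$ gives the comparison $a_{s+1}$ versus $(-1)^{b_s}a_s$. This equals $-a_{h_1}$ for $s=h_1$ and $a_s$ for $h_1<s<h_2$. So the claimed chain is exactly the list of inequalities $a_{s+1}\le (-1)^{b_s}a_s$ for $s=h_1,\dots,h_2-1$, each of which is condition (4) of \Cref{def:leq} for the subsequence ending at $j=s$.

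The implication ``$\Leftarrow$'' is trivial, since the chain ends with $-a_{h_1}<0$. For ``$\Rightarrow$'', assume $a_{h_1}>0$ and prove the inequalities $a_{t+1}\le(-1)^{b_t}a_t$ by induction on $t=h_1,\dots,h_2-1$. Along the way we also get that $a_{h_1+1},\dots,a_{t}$ are all negative, since each is at most $-a_{h_1}<0$.

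Fix such a $t$ and apply $\leq$-problematicity to $\mathbf{b}(i,t)$, using a downward induction on $i$ starting at $i=h_1$.
\begin{itemize}
\item Condition (3), namely $a_s\ge 0$ for some $h_1<s\le t$, fails by the inductive hypothesis.
\item For $i=h_1$, condition (2) would require $a_{h_1}\le 0$, which fails. So either (4) holds, and we are done, or (1) holds, which reads $a_{h_1}\le a_{h_1-1}$ because $b_{h_1-1}=0$. In the latter case $a_{h_1-1}>0$.
\item Move to $i=h_1-1$. Condition (2) fails because $a_{h_1-1},a_{h_1}>0$. So again either (4) holds or $a_{h_1-1}\le a_{h_1-2}$.
\item Iterating, if (4) never holds we obtain $0<a_{h_1}\le a_{h_1-1}\le\dots\le a_1$. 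But for $i=1$, condition (1) reads $a_1\le a_0=0$, a contradiction.
\end{itemize}
Hence (4) holds, i.e.\ $a_{t+1}\le (-1)^{b_t}a_t$. When $t=h_1$ this gives $a_{h_1+1}\le -a_{h_1}<0$; for larger $t$ it gives $a_{t+1}\le a_t<0$, which keeps the inductive hypothesis alive.

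The opposite-sign statement is proved symmetrically, using $>$-problematicity (\Cref{def:>}) instead. Assume $a_{h_1}\le 0$.
\begin{itemize}
\item Condition (2) fails at $i=h_1$, and condition (3), namely $a_s<0$ for $h_1<s\le t$, fails by induction since those entries are $>-a_{h_1}\ge 0$.
\item Condition (1) for $\mathbf{b}(i,t)$ gives $a_i>a_{i-1}$, so a downward run produces $0\ge a_{h_1}>a_{h_1-1}>\dots>a_1$.
\item This is incompatible with condition (1) at $i=1$, which reads $a_1>0$.
\end{itemize}
So condition (4) must hold at each step, yielding the strict chain.

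The only delicate point I expect is bookkeeping at the boundary. If $h_1$ is the last $1$, we should read $h_2=k+1$ with $a_{k+1}=0$. In that case the final step $t=k$ is exactly the content of \Cref{rem:case-finite}, and produces the contradiction $0=a_{k+1}\le a_k<0$, respectively $0>a_k\ge0$. This is presumably how the lemma feeds into the iteration proving \Cref{theo:combi}.
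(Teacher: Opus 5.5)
Your proof is correct and follows essentially the same route as the paper. In both, failure of the next inequality in the chain forces condition (1) of \Cref{def:leq} on the $1$-minimal subsequences $\mathbf{b}(i,t)$ for $i=h_1,h_1-1,\dots$, which produces $a_1\ge\dots\ge a_{h_1}>0$ and a contradiction at $\mathbf{b}(1,h_1)$; the bound is then propagated up to $h_2$. You also write out the opposite-sign case via \Cref{def:>}, which the paper leaves to the reader. One trivial slip in your boundary remark: when $h_1=k$, the contradiction reads $0\le -a_k<0$ rather than $0\le a_k<0$.
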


\begin{proof}
    The ``only if" statement is clear. Assume then that $a_{h_1}>0$, and suppose $a_{h_1+1}+a_{h_1}>0$. If we consider the $1$-minimal subsequence $\mathbf{b}(h_1,h_1)$, condition $(1)$ of \Cref{def:leq} must hold (because the others are not met), and thus $a_{h_1}\leq a_{h_{1}-1}$, which means $a_{h_1-1}>0$. Therefore, we can apply condition $(1)$ of \Cref{def:leq} again for the subsequence $\mathbf{b}(h_1-1,h_1)$, which gives us $a_{h_1-1}\leq a_{h_1-2}$, and this again implies $a_{h_1-2}>0$. We can apply this idea iteratively to get that 
    $$ a_{h_1+1}+a_{h_1}>0 \implies a_1\geq a_2\geq \dots \geq a_{h_1-1}\geq a_{h_1}>0.$$
    This is impossible, as in this case the vector $\mathbf{a}$ would not satisfy any of the conditions of \Cref{def:leq} for the subsequence $\mathbf{b}(1,h_1)$. Thus, we have that $a_{h_1+1}\leq -a_{h_1}<0$. 

    We will now repeat the same idea to get $a_{h_1+2}\leq a_{h_1+1}$. Suppose $a_{h_1+2}> a_{h_1+1}$; then condition $(1)$ of \Cref{def:leq} must hold for the subsequence $\mathbf{b}(h_1,h_1+1)$, giving us again $a_{h_1}\leq a_{h_{1}-1}$, and thus $a_{h_1-1}>0$. Iterating the same idea as before, we reach a contradiction. Thus, $a_{h_1+2}\leq a_{h_1+1}$. We can iterate this process until we meet the index $h_2$, because of the $1$-minimality. We leave it to the reader to check the remaining case.
\end{proof}

\begin{lemma}\label{lem:alternating}
    Suppose we are under the hypotheses of \Cref{theo:combi} and let $\mathbf{a}\in \bZ^k$ be a problematic vector of integers. Let $h_1<h_2<\dots<h_r$ be the indices such that $b_{h_t}=1$ for $t=1,\dots,r$.  Then for every $s=2,\dots,k-1$, we get that 
   $$a_{h_s}> a_{h_s-1}> a_{h_s-2} > \dots >a_{h_{s-1}+1}> -a_{h_{s-1}}\geq 0$$
   implies (and is in fact equivalent to)
   $$a_{h_{s+1}}\leq a_{h_{s+1}-1}\leq a_{h_{s+1}-2} \leq \dots \leq a_{h_{s}+1}\leq -a_{h_s}< 0.$$
   The same holds for the opposite sign case, namely:
   $$a_{h_s}\leq  a_{h_s-1}\leq a_{h_s-2} \leq \dots \leq a_{h_{s-1}+1} \leq -a_{h_{s-1}} < 0$$
   implies (and is in fact equivalent to)
   $$a_{h_{s+1}}>a_{h_{s+1}-1}> a_{h_{s+1}-2}> \dots > a_{h_{s}+1}> -a_{h_s}\geq  0.$$
\end{lemma}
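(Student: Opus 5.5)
The plan is to argue exactly as in \Cref{lem:start}, but the iteration toward the left end of the sequence gets replaced by the inductive hypothesis on the block between $h_{s-1}$ and $h_s$. This hypothesis already records everything we need about the indices to the left of $h_s$. Assume $\mathbf{a}$ is problematic and satisfies
\[
a_{h_s}>a_{h_s-1}>\dots>a_{h_{s-1}+1}>-a_{h_{s-1}}\geq 0 .
\]
In particular $a_{h_s}>0$. The key observation is that condition $(1)$ of \Cref{def:leq} fails for every $1$-minimal subsequence starting at $h_s$. Indeed, if $h_s-1>h_{s-1}$ then $b_{h_s-1}=0$, and condition $(1)$ reads $a_{h_s}\leq a_{h_s-1}$, which contradicts the strict chain. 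If instead $h_s-1=h_{s-1}$ then $b_{h_s-1}=1$, and condition $(1)$ reads $a_{h_s}\leq -a_{h_{s-1}}$, which again contradicts the hypothesis. Condition $(2)$ fails too for such subsequences, since its only index in the range $[h_s,h]$ with $h=h_s$ is $s=h_s$ and $a_{h_s}>0$.

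\textbf{First step.} We show $a_{h_s+1}\leq -a_{h_s}$. Look at the $1$-minimal subsequence $\mathbf{b}(h_s,h_s)$. Condition $(3)$ is empty. Condition $(4)$ reads $a_{h_s+1}\leq (-1)^{b_{h_s}}a_{h_s}=-a_{h_s}$. Since $\mathbf{a}\in A_{h_s,h_s}(\leq)$ and $(1),(2)$ fail by the observation above, $(4)$ must hold. Hence $a_{h_s+1}\leq -a_{h_s}<0$.

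\textbf{Inductive step.} We show $a_{h_s+t+1}\leq a_{h_s+t}$ for every $t\geq 1$ with $h_s+t<h_{s+1}$. Argue by induction on $t$ and use the subsequence $\mathbf{b}(h_s,h_s+t)$. It is $1$-minimal with distinguished index $h_s$, because $b_u=0$ for $h_s<u<h_{s+1}$. Conditions $(1)$ and $(2)$ fail as before. Condition $(3)$ asks for $a_u\geq 0$ with $h_s+1\leq u\leq h_s+t$, which fails because by induction all these entries are $\leq -a_{h_s}<0$. So condition $(4)$ must hold, namely $a_{h_s+t+1}\leq (-1)^{b_{h_s+t}}a_{h_s+t}=a_{h_s+t}$. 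The induction stops once the index $h_{s+1}$ is reached, which yields the whole chain ending in $a_{h_{s+1}}$.

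\textbf{Opposite sign and equivalence.} The opposite-sign statement is proved by the same argument with the roles of \Cref{def:>} and \Cref{def:leq} exchanged. Now $a_{h_s}\leq 0$, the strict and non-strict inequalities swap, and one uses $>$-problematicity of $\mathbf{a}$ for the same subsequences $\mathbf{b}(h_s,h_s+t)$. For the claimed equivalence, note that the two possible conclusions force $a_{h_s}>0$ and $a_{h_s}\leq 0$ respectively, so they are mutually exclusive. The two possible hypotheses are also mutually exclusive, since their last terms force opposite signs for $a_{h_{s-1}}$. By \Cref{lem:start} and induction on $s$, exactly one hypothesis holds. The converse implications then follow formally.

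The only delicate point is the bookkeeping at the boundary of the block. One must check that $\mathbf{b}(h_s,h_s+t)$ is really $1$-minimal, and treat separately the case $h_s-1=h_{s-1}$ (adjacent ones), where the sign $(-1)^{b_{h_s-1}}$ changes. The last block also needs care: if $h_{s+1}$ does not exist, one should run the same induction up to index $k$ with the convention $a_{k+1}=0$.
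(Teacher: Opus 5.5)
Your proposal is correct and follows the paper's own argument. The paper's proof says only to check the conditions of \Cref{def:leq} (and \Cref{def:>} in the opposite-sign case) on the $1$-minimal subsequences $\mathbf{b}(h_s,j)$ for $j=h_s,\dots,h_{s+1}-1$, which is exactly your first step plus induction, with the hypothesis chain ruling out conditions $(1)$ and $(2)$ via $a_{h_s}>(-1)^{b_{h_s-1}}a_{h_s-1}$ and $a_{h_s}>0$ (cf.\ \Cref{rem:simplify}); you fill in the details the paper leaves to the reader and add a valid argument for the claimed equivalence using \Cref{lem:start} and induction on $s$.
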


\begin{proof}
    The proof works exactly as in the previous case. Indeed, it is enough to check the conditions of \Cref{def:leq} for the subsequences $\mathbf{b}(h_s,j)$ for $j=h_s,\dots,h_{s+1}-1$. We leave it to the reader to verify the details.
\end{proof}

\begin{remark}\label{rem:simplify}
    Notice that the proof only uses the inequalities $a_{h_s}>a_{h_s-1}$ and $a_{h_s}>0$ to deduce the statement.
\end{remark}
\begin{proof}[Proof of \Cref{theo:combi}]
    Suppose there exists $\mathbf{a}\in \bZ^k$, a problematic vector of integers. Let $h_1<h_2<\dots<h_r$ be all the indices such that $b_{h_t}=1$ (for $t=1,\dots,r$). Suppose that $a_{h_1}>0$. By \Cref{lem:start}, this implies
    $$ a_{h_2}\leq a_{h_2-1}\leq a_{h_2-2} \leq \dots \leq a_{h_1+1}\leq -a_{h_1}< 0.$$
    By applying \Cref{lem:alternating} iteratively, we get that either 
    $$ a_{h_r}\leq a_{h_r-1}\leq a_{h_r-2} \leq \dots \leq a_{h_{r-1}+1}\leq -a_{h_{r-1}}< 0$$
    or 
    $$a_{h_{r}}>a_{h_{r}-1}> a_{h_{r}-2}> \dots > a_{h_{r-1}+1}> -a_{h_{r-1}}\geq  0.$$
    In both cases, we reach a contradiction, confirming that $\mathbf{a}$ does not satisfy the conditions in \Cref{def:>} (or in \Cref{def:leq}) for the subsequence $\mathbf{b}(h_r,k)$.
\end{proof}

\addtocontents{toc}{\SkipTocEntry}
\subsection*{Cyclic case}
We will generalize the statement of \Cref{theo:combi} to the case where the sequence is actually a circuit, i.e., in the picture 
$$
\begin{tikzcd}
{} \arrow[r, no head] & b_1 \arrow[r, no head] & b_2 \arrow[r, no head] & \dots \arrow[r, no head] & b_k \arrow[r, no head] & {}
\end{tikzcd}
$$
the leftmost and rightmost edges coincide. To do this formally, we extend the binary sequence $\mathbf{b}\in \{0,1\}^k$ to an infinite binary sequence $\mathbf{b}^{\circ} \in \{0,1\}^{\bZ}$ which is $k$-periodic, i.e., $\mathbf{b}^{\circ}(n):=\mathbf{b}([n])$ where $[n]$ is the equivalence class of the integer $n$ modulo $k$. The definition of problematic vectors remains the same, but \Cref{rem:case-finite} is no longer applicable. Notice that \Cref{lem:alternating} is still valid, as the proof does not rely on the finiteness of $k$. Given a vector of integers $\mathbf{a}\in \bZ^k$ of length $k$, we denote by $\mathbf{a}^{\circ}$ its periodic extension:
$$
\begin{tikzcd}
\bZ \arrow[r, "\pi_k"', two heads] \arrow[rr, "\mathbf{a}^{\circ}", bend left] & \bZ/(k) \arrow[r, "\mathbf{a}"'] & {\{0,1\}}
\end{tikzcd}
$$
where $\pi_k$ is the usual quotient map. We will denote by $a_i^{\circ}$ the integer $\mathbf{a}^{\circ}(i)$ for some index $i \in \bZ$.

\begin{definition}
    We say that $\mathbf{a}\in \bZ^k$ is problematic for $\mathbf{b}^{\circ}$ if $\mathbf{a}^{\circ}$ satisfies the conditions of \Cref{def:prob} for the periodic sequence $\mathbf{b}^{\circ}$.
\end{definition}

\begin{lemma}\label{lem:cyc-alt}
    Let $\mathbf{a}\in \bZ^k$ be a vector of integers such that 
    there exists an index $1\leq i\leq k$ with the following properties:
    \begin{itemize}
        \item $b_i=1$;
        \item $a_i^{\circ}>0$;
        \item $a_i^{\circ}>(-1)^{b_{i-1}^{\circ}}a_{i-1}^{\circ}$ and $a_i^{\circ}>-a_{i+1}^{\circ}$.
    \end{itemize}
    Then $\mathbf{a}$ is not problematic for $\mathbf{b}^{\circ}$. The same statement holds for $\leq$.
\end{lemma}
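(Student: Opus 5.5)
The plan is to exhibit one explicit $1$-minimal subsequence of $\mathbf{b}^{\circ}$ for which $\mathbf{a}^{\circ}$ satisfies none of the four conditions of \Cref{def:leq}. Since being problematic (\Cref{def:prob}) requires being both $>$-problematic and $\leq$-problematic, and each of these requires the respective conditions for \emph{every} $1$-minimal subsequence, this is enough. No iteration through \Cref{lem:alternating} is needed: the three hypotheses are tailored to the shortest possible subsequence around the index $i$.

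Concretely, I would take the subsequence $\mathbf{b}^{\circ}(i,i)$, consisting of the single entry $b^{\circ}_i$ together with the edges $e_{i-1}$ and $e_i$. Since $b_i=1$, the index $h=i$ is the unique index in $\{i\}$ with $b^{\circ}_h=1$, so this subsequence is $1$-minimal. In the periodic setting the boundary convention $a_0=a_{k+1}=0$ of \Cref{rem:case-finite} is dropped; the neighbours are simply $a^{\circ}_{i-1}$ and $a^{\circ}_{i+1}$, with indices read modulo $k$. This also covers the degenerate case $k=1$, where both neighbours equal $a^{\circ}_i$.

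I would then check the conditions of \Cref{def:leq} for $(i,j,h)=(i,i,i)$ one by one:
\begin{itemize}
\item[(1)] $a^{\circ}_i\leq(-1)^{b^{\circ}_{i-1}}a^{\circ}_{i-1}$ fails, by the third hypothesis.
\item[(2)] Here $s$ ranges over $\{i\}$ only, so the condition reads $a^{\circ}_i\leq 0$, which fails because $a^{\circ}_i>0$.
\item[(3)] The range $h+1\leq s\leq j$ is empty, so this condition is vacuous.
\item[(4)] Since $b^{\circ}_i=1$, the condition reads $a^{\circ}_{i+1}\leq -a^{\circ}_i$, which fails because $a^{\circ}_i>-a^{\circ}_{i+1}$.
\end{itemize}
Hence $\mathbf{a}^{\circ}\notin A_{i,i}(\leq)$, so $\mathbf{a}$ is not $\leq$-problematic and in particular not problematic for $\mathbf{b}^{\circ}$.

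For the ``same statement for $\leq$'' I read the reversed hypotheses as $a^{\circ}_i\leq 0$, $a^{\circ}_i\leq(-1)^{b^{\circ}_{i-1}}a^{\circ}_{i-1}$ and $a^{\circ}_i\leq -a^{\circ}_{i+1}$. The identical check against \Cref{def:>} on $\mathbf{b}^{\circ}(i,i)$ shows that conditions (1), (2) and (4) fail and (3) is again vacuous. So $\mathbf{a}$ is not $>$-problematic, hence not problematic.

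The only point I expect to need care is the bookkeeping of the periodic indexing. One must make sure that $\mathbf{b}^{\circ}(i,i)$ is a legitimate $1$-minimal subsequence for the circuit, which it is since its single entry is $b_i=1$. One must also make sure that condition (4) uses the sign $(-1)^{b^{\circ}_i}=-1$, which is exactly what turns the hypothesis $a^{\circ}_i>-a^{\circ}_{i+1}$ into the failure of (4).
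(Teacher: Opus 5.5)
Your proof is correct, and it takes a more direct route than the paper's. You notice that the three hypotheses are exactly the negations of conditions (1), (2) and (4) of \Cref{def:leq} for the $1$-minimal subsequence $\mathbf{b}^{\circ}(i,i)$, with (3) vacuous. So $\mathbf{a}$ already fails to be $\leq$-problematic on that single subsequence, and the reversed-sign case is handled the same way.

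The paper instead assumes $\mathbf{a}$ is problematic and normalizes to $i=1$. It then propagates the alternating inequalities of \Cref{lem:alternating} once around the whole cycle, through every index with $b^{\circ}=1$. It ends with either $a_i^{\circ}<a_i^{\circ}$ or $a_i^{\circ}\leq -a_i^{\circ}$. Your check is exactly the first step of that propagation: the first inequality it produces, $a^{\circ}_{i+1}\leq -a^{\circ}_i$, already contradicts your hypothesis $a_i^{\circ}>-a_{i+1}^{\circ}$.

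What the longer route buys is generality. By \Cref{rem:simplify}, the iteration only uses $a_i^{\circ}>0$ and $a_i^{\circ}>(-1)^{b^{\circ}_{i-1}}a^{\circ}_{i-1}$, so it proves the lemma without the third hypothesis. That stronger form is what the proof of \Cref{lem:cyc-not-alt} actually relies on: there only $a_j\leq 0$ and $a_j\leq a_{j-1}$ are established before this lemma is invoked. Your argument fully covers the statement as written, but it would not support that downstream application.
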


\begin{proof}
    Suppose that $\mathbf{a}$ is problematic. By the periodicity of $\mathbf{b}^{\circ}$ and $\mathbf{a}^{\circ}$, we can assume $i=1$ without loss of generality. If $1= h_1 < h_2 < \dots < h_r\leq k$ are the indices where $b_{h_t}=1$, then we can iteratively apply \Cref{lem:alternating} (see \Cref{rem:simplify}) to get that 
    \begin{itemize}
        \item if $r$ is even, then $$a_{h_1}\leq -a_{h_2} < a_{h_3}\leq \dots \leq -a_{h_r} < a_{h_1+k}^{\circ}=a_{h_1};$$
        \item if $r$ is odd, then 
        $$ a_{h_1}\leq -a_{h_2} < a_{h_3}\leq \dots < a_{h_r} \leq  -a_{h_1+k}^{\circ}=-a_{h_1}.$$
    \end{itemize}
    In both cases, we reach a contradiction.
\end{proof}

\begin{lemma}\label{lem:cyc-not-alt}
    Let $\mathbf{a}\in \bZ^k$ be a vector of integers such that 
    there exists an index $1\leq i\leq k$ with the following properties:
    \begin{itemize}
        \item $b_i=1$;
        \item $a_i^{\circ}>0$;
        \item $a_i^{\circ}\leq (-1)^{b_{i-1}^{\circ}}a_{i-1}^{\circ}$ and $a_i^{\circ}\leq -a_{i+1}^{\circ}$. 
    \end{itemize}
    Then $\mathbf{a}$ is not problematic for $\mathbf{b}^{\circ}$. The same statement holds for $\leq$.
\end{lemma}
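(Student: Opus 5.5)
We argue by contradiction, mirroring the proof of \Cref{lem:cyc-alt}. Suppose $\mathbf{a}$ is problematic for $\mathbf{b}^{\circ}$. By periodicity of $\mathbf{a}^{\circ}$ and $\mathbf{b}^{\circ}$ we may assume $i=1$, so $h_1=1<h_2<\dots<h_r\leq k$ are the indices with $b_{h_t}=1$, and $h_{r+1}:=h_1+k$. The hypotheses then read
\[
a_1>0,\qquad a_1\leq (-1)^{b^{\circ}_0}a^{\circ}_0,\qquad a_2\leq -a_1<0 .
\]
The last inequality is exactly the initial step $a_{h_1+1}\leq -a_{h_1}<0$ appearing in \Cref{lem:start}. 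The difference with \Cref{lem:cyc-alt} is that $h_1$ now behaves ``in the $\leq$ way'' on both sides instead of the $>$ way.

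First, we complete the chain between $h_1$ and $h_2$:
\[
a_{h_2}\leq a_{h_2-1}\leq\dots\leq a_{h_1+1}\leq -a_{h_1}<0,
\]
or we obtain a contradiction directly. Subsequences $\mathbf{b}(1,j)$ give nothing here, since condition $(1)$ of \Cref{def:leq} and condition $(2)$ of \Cref{def:>} hold automatically by hypothesis. So we instead test the $1$-minimal subsequences $\mathbf{b}(h_1+1,j)$ for $h_2\leq j<h_3$, whose distinguished index is $h_2$.
\begin{itemize}
\item Condition $(1)$ of \Cref{def:>} fails, because $a_{h_1+1}\leq -a_{h_1}$.
\item Hence $>$-problematicity forces a positive entry among $a_{h_1+1},\dots,a_{h_2}$, or a violation further right.
\item Tracking the first index where the monotone chain breaks, we argue as in \Cref{lem:start}, but propagating to the right rather than to the left.
\end{itemize}
The outcome should be that either the chain above holds, or we land in the situation of \Cref{lem:cyc-alt} at some other index $h_t$, with the roles of $>$ and $\leq$ swapped. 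In the latter case we conclude by \Cref{lem:cyc-alt}, or rather by its $\leq$-version, up to the symmetry $\mathbf{a}\mapsto-\mathbf{a}$ combined with swapping $>$ and $\leq$.

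Second, assuming the chain between $h_1$ and $h_2$, we apply \Cref{lem:alternating} iteratively around the circle. As noted before \Cref{lem:cyc-alt}, this lemma does not use finiteness of $k$. Each step alternates between the ``$\leq$-chain'' and the ``$>$-chain'' on consecutive intervals $(h_t,h_{t+1}]$, and we continue until we return to $h_{r+1}=h_1+k$, i.e. to the index $1$. Depending on the parity of $r$, we obtain one of two outcomes.
\begin{itemize}
\item The $\leq$-chain ending at $a_{h_1+k}^{\circ}=a_1$ gives $a_1<0$, contradicting $a_1>0$.
\item The $>$-chain ending at $a_1$ gives $a_1>(-1)^{b^{\circ}_0}a^{\circ}_0$. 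Here we use that $a_0^{\circ}=a_{h_r}$ if $b_0^{\circ}=1$, and that $a_1>a^{\circ}_0$ if $b^{\circ}_0=0$. This contradicts the hypothesis $a_1\leq(-1)^{b^{\circ}_0}a^{\circ}_0$.
\end{itemize}
In both cases we reach a contradiction.

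The statement for $\leq$ will follow by the same argument with all inequalities reversed, i.e. by the symmetry exchanging the roles of \Cref{def:>} and \Cref{def:leq}.

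The main obstacle is the first step. In \Cref{lem:start} the chain is forced by propagating leftward until hitting the boundary value $a_0=0$; in the cyclic setting there is no boundary. Moreover, the left-hand hypothesis makes the subsequences starting at $h_1$ useless. I would first try to force the chain using only subsequences starting strictly after $h_1$ together with the hypothesis $a_2\leq -a_1$. If this does not close, I would fall back on a minimal counterexample argument: choose, among all indices $h_t$ with $a_{h_t}>0$, one where the local pattern is as in \Cref{lem:cyc-alt}, and reduce to that lemma.
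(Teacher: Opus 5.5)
Your Step 2 is sound. It is the iteration from the proof of \Cref{lem:cyc-alt} (via \Cref{rem:simplify}), and ending by contradicting either $a_1>0$ or $a_1\le(-1)^{b^\circ_0}a^\circ_0$ works once the chain $a_{h_2}\le\dots\le a_{h_1+1}\le -a_{h_1}<0$ is known. The gap is Step 1, which you leave at ``the outcome should be'', and the windows you propose cannot close it.

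For $\mathbf{b}(h_1+1,j)$, condition (1) of \Cref{def:leq} reads $a_{h_1+1}\le -a_{h_1}$, which is your hypothesis. So these windows are automatically $\le$-problematic. Their $>$-problematicity only asks for a positive entry among $a_{h_1+1},\dots,a_{h_2}$, a negative one in $[h_2+1,j]$, or $a_{j+1}>(-1)^{b_j}a_j$; none of this is a monotonicity statement. In \Cref{lem:start}, the step $a_{t+1}\le a_t$ comes from the $\le$-problematicity of $\mathbf{b}(h_1,t)$, via condition (4) once (1)--(3) fail. Here (1) holds at $h_1$ by hypothesis, so that computation only bites for windows $\mathbf{b}(i',t)$ with $i'<h_1$, i.e.\ windows containing $h_1$ in their interior. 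Moreover, the rightward chain need not be the one that survives. Your fallback (``choose an index $h_t$ with the pattern of \Cref{lem:cyc-alt}'') presupposes exactly the existence that has to be proved.

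The missing idea, which is the paper's proof, is a two-sided expansion inside the maximal $1$-minimal window around $i$.
\begin{itemize}
\item Consider $\mathbf{b}(i_1-1,i_2+1)$ with $i_1\le i\le i_2$ and unique $1$ at $i$. It is automatically $>$-problematic, since $a_i>0$.
\item Suppose inductively that $a_{i_1-1}\ge\dots\ge a_i>0$ and $0>-a_i\ge a_{i+1}\ge\dots\ge a_{i_2+1}$. Then conditions (2) and (3) of \Cref{def:leq} fail, so (1) or (4) must hold. That is, one of the two monotone chains extends by one step; the first instance is $\mathbf{b}(i-1,i+1)$.
\item Hence one side reaches a neighbouring index with $b=1$. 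If it is $h_2$, you get $a_{h_2}<0$ and $a_{h_2}\le(-1)^{b_{h_2-1}}a_{h_2-1}$, which is exactly the input of your Step 2.
\item If it is the previous index $h_r$, you only get $a_{h_r}\le -a_{h_r+1}<0$. This is a relation with the \emph{right} neighbour, to which \Cref{lem:alternating} does not apply, so you need its mirror image. That follows from the reflection $b'_t=b_{-t}$, $a'_t=(-1)^{b_{-t}+1}a_{-t}$. It preserves every edge quantity $a_{t+1}-(-1)^{b_t}a_t$, both problematicity conditions and the hypotheses of the lemma, and it turns this case into the previous one. The paper uses this silently in its ``without loss of generality''.
\end{itemize}
Finally, $\mathbf{a}\mapsto-\mathbf{a}$ exchanges $>$ with $<$ rather than with $\le$. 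So your one-line treatment of the $\le$ statement is a heuristic, and the argument has to be rerun.
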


\begin{proof}
    Suppose $\mathbf{a}$ is problematic and consider the maximal $1$-minimal sequence $\mathbf{b}(j_1,j_2)$ containing $i$. By periodicity, we can assume $j_1=1$, and we set $j:=j_2$. Because of the hypotheses, we know that $a_{i-1}>0$ and $a_{i+1}<0$. Because $\mathbf{a}$ is problematic, the condition on the subsequence $\mathbf{b}(i-1,i+1)$ gives us that either $a_{i+2}\leq a_{i+1}< 0$ or $a_{i-2}\geq a_{i-1}>0$. Now take the largest interval $2 \leq i_1\leq i\leq i_2\leq j-1$ such that for every $i_1\leq t\leq i$ we have 
    $$a_t\leq (-1)^{b_{t-1}}a_{t-1}$$ and for every $i\leq t\leq i_2$ we have 
    $$a_t\geq a_{t+1}.$$
    Then we claim that either $i_1=2$ or $i_2=j-1$. In fact, we get that $a_t>0$ for every $i_1-1\leq t\leq i$ and $a_t<0$ for every $i\leq t\leq i_2+1$. If we consider the sequence $\mathbf{b}(i_1-1,i_2+1)$, then one of the two relations above must be true for either $t=i_1-1$ or $t=i_2+1$. This contradicts the maximality of the interval $[i_1,i_2]$. Without loss of generality, we can assume $i_2=j$, which implies $a_j\leq a_{j-1}<0$. But now we have 
    \begin{itemize}
        \item $b_j=1$, because of the maximality of the $1$-minimal sequence $\mathbf{b}(1,j)$;
        \item $a_j\leq 0$;
        \item $a_j\leq a_{j-1}$.
    \end{itemize}
    Therefore, we can apply \Cref{lem:cyc-alt} to get the result. 
\end{proof}

\begin{theorem}\label{theo:cyc-combi}
    Let $\mathbf{b}\in \{0,1\}^k$ be a binary sequence of length $k$. If there exists an index $1\leq i \leq k$ such that $b_i=1$, then there are no problematic vectors of integers $\mathbf{a}\in \bZ^k$ for the periodic infinite binary sequence $\mathbf{b}^{\circ}$.
\end{theorem}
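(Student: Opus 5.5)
Suppose, for contradiction, that $\mathbf{a}\in\bZ^k$ is problematic for $\mathbf{b}^{\circ}$. The plan is to use \Cref{lem:cyc-alt} and \Cref{lem:cyc-not-alt} as the two halves of a case distinction at a single index $i$ with $b_i=1$. By hypothesis such an index exists, and by periodicity I may rotate so that it is convenient.

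The first step is to find an index $i$ with $b_i=1$ and $a_i^{\circ}>0$, or else to pass to the $\leq$-version. There are two cases.

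\begin{itemize}
\item If some $a_i^{\circ}>0$ with $b_i=1$, I compare $a_i^{\circ}$ with the two neighbouring quantities $(-1)^{b^{\circ}_{i-1}}a^{\circ}_{i-1}$ and $-a^{\circ}_{i+1}$. If $a_i^{\circ}$ exceeds both, \Cref{lem:cyc-alt} applies. If it exceeds neither, \Cref{lem:cyc-not-alt} applies.
\item If instead $a_i^{\circ}\leq 0$ for every $i$ with $b_i=1$, I use the ``same statement holds for $\leq$'' clauses of the two lemmas. These are the symmetric versions obtained by exchanging the roles of $>$ and $\leq$, i.e.\ replacing the hypothesis $a_i^{\circ}>0$ by $a_i^{\circ}\leq 0$ and flipping the neighbour inequalities.
\end{itemize}

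The main obstacle is the mixed case, where $a_i^{\circ}$ exceeds exactly one of its two neighbour quantities. Here I use problematicity on the $1$-minimal subsequence $\mathbf{b}(i,i)$, whose condition (2) resp.\ (3) is governed by $a_i$ alone.

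Take $a_i^{\circ}>0$. Then condition (2) of \Cref{def:leq} fails and condition (3) is empty. So $\leq$-problematicity forces condition (1) or condition (4) to hold. Each of these says $a_i^{\circ}$ is $\leq$ one of the two neighbour quantities. Hence, in the mixed situation, exactly one side is ``$\leq$'', and the other side is ``$>$''.

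I then walk along the cyclic sequence in the ``$>$'' direction, using the propagation of \Cref{lem:alternating} (valid in the periodic setting by \Cref{rem:simplify}). The aim is to reach the next index $h$ with $b_h=1$. At that index the hypotheses of \Cref{lem:cyc-alt} hold, possibly in the $\leq$-form after a sign flip. If they do not, the monotone run produced by \Cref{lem:alternating} gives the inequalities needed for \Cref{lem:cyc-not-alt} at $h$.

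Because the sequence is cyclic and contains at least one $1$, this walk cannot continue forever without closing up. Closing up gives a chain of strict and weak inequalities returning to $a_{i}$, exactly as in the proof of \Cref{lem:cyc-alt}. Tracking the sign alternation along this chain is what would break the periodic pattern and yield a contradiction.

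The second case ($a_i^{\circ}\leq 0$ at every $1$-entry) is handled symmetrically, swapping the roles of \Cref{def:>} and \Cref{def:leq}. In both cases $\mathbf{a}$ cannot be problematic, which proves the theorem. The care needed throughout is the bookkeeping of the parities $b^{\circ}_{t}$ along the walk, and checking the degenerate case $k$ small (e.g.\ $k=1$), where $i-1\equiv i+1\equiv i$ and the neighbour conditions collapse. I expect this small case to need a direct check: for $k=1$, $b_1=1$, both neighbour quantities equal $-a_1$, and then one of the two lemmas applies trivially.
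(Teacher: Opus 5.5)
You follow the paper's route: the paper just says the theorem follows by combining \Cref{lem:cyc-alt} and \Cref{lem:cyc-not-alt}. You are right that, at a single $1$-index $i$, the two lemmas as stated leave the mixed configurations open. However, the walk you propose for them does not work as described.

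\Cref{lem:alternating}, together with \Cref{rem:simplify}, propagates in one direction only. It starts from a $1$-index $h$ with $a_h>0$ and $a_h>(-1)^{b_{h-1}}a_{h-1}$, i.e.\ strict on the \emph{left}. Then $\le$-problematicity of the windows $\mathbf b(h,j)$ forces $a_{h+1}\le -a_h$, then $a_{h+2}\le a_{h+1}$, and so on to the \emph{right}. So the walk always runs toward the side carrying $\le$, not toward the side carrying $>$.

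This has two consequences.
\begin{itemize}
\item In the subcase $a_i>(-1)^{b_{i-1}}a_{i-1}$, $a_i\le -a_{i+1}$, no case split is needed. The proof of \Cref{lem:cyc-alt} uses only the left inequality and applies verbatim.
\item In the subcase $a_i>0$, $a_i\le(-1)^{b_{i-1}}a_{i-1}$, $a_i>-a_{i+1}$, the hypothesis of \Cref{lem:alternating} fails at $i$. Every window $\mathbf b(i,j)$ is already $\le$-problematic by condition (1) and $>$-problematic by condition (2), so these windows impose nothing and the rightward walk cannot be started at $i$.
\end{itemize}
You also cannot reflect the picture. \Cref{def:>} and \Cref{def:leq} are not invariant under reversing the order: the edge quantity $a_{t+1}-(-1)^{b_t}a_t$ sees only the parity of its left endpoint, and the $1$ sits at the right end of the first sign block.

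Your fallback is also backwards. A rightward run ends at the next $1$-index $h$ with $a_h<0$ and $a_h\le(-1)^{b_{h-1}}a_{h-1}$. This contradicts the hypothesis of the $\le$-form of \Cref{lem:cyc-not-alt} rather than supplying it; it merely lets the rightward walk continue.

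The missing ingredient is a leftward propagation, which has to be proved directly. Start in the bad subcase at $i$.
\begin{itemize}
\item Since $a_{i+1}>-a_i$, condition (4) of \Cref{def:leq} fails on every window $\mathbf b(t,i)$. So $\le$-problematicity forces condition (1) for $t=i,i-1,\dots$.
\item This gives $a_{h'+1}\ge\dots\ge a_i>0$ and $a_{h'}\le -a_{h'+1}\le -a_i<0$, where $h'$ is the previous $1$-index.
\item Then $>$-problematicity on $\mathbf b(h',h')$ forces $a_{h'}>(-1)^{b_{h'-1}}a_{h'-1}$. So $h'$ sits in the mirror configuration, with the opposite sign.
\item The same argument with $>$ and $\le$ exchanged gives, at the next $1$-index $h''$ to the left, $a_{h''}>-a_{h'}\ge a_i$. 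Moreover $h''$ is again in the original configuration.
\end{itemize}
Going once around a period, an odd number of $1$'s contradicts the sign alternation, and an even number gives $a_i<a_i$.

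With this leftward walk in hand, the rightward walk of \Cref{lem:cyc-alt} and the case where both neighbour inequalities are $\le$ (\Cref{lem:cyc-not-alt}) cover every configuration. The paper's one-line combination does not spell this out either. The leftward case is exactly what the ``without loss of generality'' step in the proof of \Cref{lem:cyc-not-alt} sets aside.
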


\begin{proof}
    The theorem follows by combining \Cref{lem:cyc-alt} and \Cref{lem:cyc-not-alt}.
\end{proof}

\begin{remark}
    Notice that \Cref{theo:cyc-combi} actually implies \Cref{theo:combi}. This can be seen by enlarging the binary sequence $\mathbf{b} \in \{0,1\}^k$ by adding one element to it and setting it to $0$. We denote by $\mathbf{b}'\in \{0,1\}^{k+1}$ the new binary sequence of length $k+1$. Now the problematic vectors of integers with respect to $\mathbf{b'}^{\circ}$ which have $a_{k+1}=0$ coincide exactly with the problematic vectors with respect to $\mathbf{b}$. 
\end{remark}
\bibliographystyle{alpha}
\bibliography{References.bib}

@article{StructureOfInstability,
  title={On the structure of instability in moduli theory},
  author={Daniel Halpern-Leistner},
  journal={arXiv: Algebraic Geometry},
  year={2014},
  url={https://api.semanticscholar.org/CorpusID:117058823}
}

@phdthesis{VanDerWyck,
    author={van der Wyck, Frederick D. W.},
    year={2010},
    title={Moduli of singular curves and crimping},
    journal={ProQuest Dissertations and Theses},
    pages={144},
    isbn={978-1-124-09171-6},
    school={Harvard University},
    url={https://www.proquest.com/dissertations-theses/moduli-singular-curves-crimping/docview/613377231/se-2},
}

@article{Viviani,
    title={ON THE FIRST STEPS OF THE MINIMAL MODEL PROGRAM FOR THE MODULI SPACE OF STABLE POINTED CURVES},
    DOI={10.1017/S1474748021000116},
    journal={Journal of the Institute of Mathematics of Jussieu},
    publisher={Cambridge University Press},
    author={Codogni, Giulio and Tasin, Luca and Viviani, Filippo},
    year={2023},
    pages={145–211}
}

@article{hassett2013log,
  title={Log minimal model program for the moduli space of stable curves: the first flip},
  author={Hassett, Brendan and Hyeon, Donghoon},
  journal={Annals of Mathematics},
  pages={911--968},
  year={2013},
  publisher={JSTOR}
}

@article{AlpFedSmyWyck,

title={Second flip in the Hassett-Keel program: A local description }, 
volume={153},  
journal={Compositio Mathematica}, 
publisher={London Mathematical Society}, 
author={Alper, J. and Fedorchuck, M. and Smyth, D.I. and van der Wyck, F.}, 
year={2017}, 
pages={1547-1583}}

@article {ExistenceOfModuli,
    AUTHOR = {Alper, Jarod and Halpern-Leistner, Daniel and Heinloth,
              Jochen},
     TITLE = {Existence of moduli spaces for algebraic stacks},
   JOURNAL = {Invent. Math.},
  FJOURNAL = {Inventiones Mathematicae},
    VOLUME = {234},
      YEAR = {2023},
    NUMBER = {3},
     PAGES = {949--1038},
      ISSN = {0020-9910,1432-1297},
   MRCLASS = {14D23 (14A20)},
  MRNUMBER = {4665776},
       DOI = {10.1007/s00222-023-01214-4},
       URL = {https://doi.org/10.1007/s00222-023-01214-4}}

@Article{Per1,
  author  = {Pernice, M.},
  title   = {{The moduli stack of $A_r$-stable curves}},
  journal = {Preprint on arXiv},
  year    = {2023},
  url     = {https://arxiv.org/abs/2302.10877},
}

@Article{AlpHalRyd,
      title={The \'etale local structure of algebraic stacks}, 
      author={Alper, J. and Hall, J. and Rydh, D.},
      year={2023},
      eprint={1912.06162},
      archivePrefix={arXiv},
      primaryClass={math.AG},
      url={https://arxiv.org/abs/1912.06162}, 
      journal={arXiv}
}

@Article{Per2,
  author  = {Pernice, M.},
  title   = {{Hyperelliptic $A_r$-stable curves (and their moduli stack)}},
  journal = {Transactions of the American Mathematical Society},
  year    = {2024},
  volume = {377},
  pages = {4133-4169}
}

@article{Cat,
author={Catanese, F.},
title={{Pluricanonical Gorenstein Curves}},
journal={Enumerative Geometry and Classical Algebraic Geometry},
year={1982},
publisher={Birkh{\"a}user Boston},
address={Boston, MA},
pages={51--95},
isbn={978-1-4684-6726-0},
doi={10.1007/978-1-4684-6726-0_4},
url={https://doi.org/10.1007/978-1-4684-6726-0_4}
}

@book{LauMorBai,
	Author={Laumon, G. and  Moret-Bailly, L.},
	Title={{Champs algébriques}},
	Year={2000},
	Publisher={Springer-Verlag, Berlin},	
}

@article{CasmarLaza,
 ISSN = {00029947},
 URL = {http://www.jstor.org/stable/23513483},
 author = {Casalaina Martin, S. and Laza, R.},
 journal = {Transactions of the American Mathematical Society},
 number = {5},
 pages = {2271--2295},
 publisher = {American Mathematical Society},
 title =  {{Simultaneous semi-stable reduction for curves with ADE singularities}},
 volume = {365},
 year = {2013}
}

@article{Fedor,
  title = {{Moduli spaces of hyperelliptic curves with A and D singularities}},
  volume = {276},
  ISSN = {1432-1823},
  url = {http://dx.doi.org/10.1007/s00209-013-1201-6},
  DOI = {10.1007/s00209-013-1201-6},
  number = {1–2},
  journal = {Mathematische Zeitschrift},
  publisher = {Springer Science and Business Media LLC},
  author = {Fedorchuk,  M.},
  year = {2013},
  pages = {299–328}
}

@article{DeligneMumford,
 author = {Deligne, Pierre and Mumford, D.},
 title = {The irreducibility of the space of curves of a given genus},
 fjournal = {Publications Math{\'e}matiques},
 journal = {Publ. Math., Inst. Hautes {\'E}tud. Sci.},
 issn = {0073-8301},
 volume = {36},
 pages = {75--109},
 year = {1969},
 language = {English},
 doi = {10.1007/BF02684599},
 url = {https://eudml.org/doc/103899},
 zbMATH = {3289080},
 Zbl = {0181.48803}
}

@article{PseudostableSchubert,
 author = {Schubert, David},
 title = {A new compactification of the moduli space of curves},
 fjournal = {Compositio Mathematica},
 journal = {Compos. Math.},
 issn = {0010-437X},
 volume = {78},
 number = {3},
 pages = {297--313},
 year = {1991},
 language = {English},
 url = {https://eudml.org/doc/90093},
 zbMATH = {9343},
 Zbl = {0735.14022}
}

@article{AlpFedSmyExistence,
 author = {Alper, Jarod and Fedorchuk, Maksym and Smyth, David Ishii},
 title = {Second flip in the {Hassett}-{Keel} program: existence of good moduli spaces},
 fjournal = {Compositio Mathematica},
 journal = {Compos. Math.},
 issn = {0010-437X},
 volume = {153},
 number = {8},
 pages = {1584--1609},
 year = {2017},
 language = {English},
 doi = {10.1112/S0010437X16008289},
 zbMATH = {6764719},
 Zbl = {1403.14038}
}

@article{AlpFedSmyProjectivity,
 author = {Alper, Jarod and Fedorchuk, Maksym and Smyth, David Ishii},
 title = {Second flip in the {Hassett}-{Keel} program: projectivity},
 fjournal = {IMRN. International Mathematics Research Notices},
 journal = {Int. Math. Res. Not.},
 issn = {1073-7928},
 volume = {2017},
 number = {24},
 pages = {7375--7419},
 year = {2017},
 language = {English},
 doi = {10.1093/imrn/rnw216},
 zbMATH = {7004507},
 Zbl = {1405.14063}
}

@article{smyth2011modular,
  title={Modular compactifications of the space of pointed elliptic curves II},
  author={Smyth, David Ishii},
  journal={Compositio Mathematica},
  volume={147},
  number={6},
  pages={1843--1884},
  year={2011},
  publisher={London Mathematical Society}
}

@article{hassett2009log,
  title={Log canonical models for the moduli space of curves: the first divisorial contraction},
  author={Hassett, Brendan and Hyeon, Donghoon},
  journal={Transactions of the American Mathematical Society},
  volume={361},
  number={8},
  pages={4471--4489},
  year={2009}
}

@phdthesis{AltCompClustAlg, 
author = {Gori, Davide},
title = {Alternative Compactifications of $M_{g,n}$ via Cluster Algebras and their Birational Geometry},
year = {2026},
url = {https://iris.uniroma1.it/handle/11573/1759004},
note = {\url{https://iris.uniroma1.it/handle/11573/1759004}},
school = {Universit\`a La Sapienza, Roma},
language = {English}
}

@misc{GMSArI,
 author = {Davide Gori and Ludvig Modin and Michele Pernice},
 title = {The local geometry of the stack of {$A_r$}-stable curves},
 year = {2026},
 howpublished = {Preprint, {arXiv}:2603.29853 [math.{AG}] (2026)},
 url = {https://arxiv.org/abs/2603.29853},
 arXiv = {arXiv:2603.29853}
}

@incollection {GMS-VectBun,
    AUTHOR = {Alper, Jarod and Belmans, Pieter and Bragg, Daniel and Liang,
              Jason and Tajakka, Tuomas},
     TITLE = {Projectivity of the moduli space of vector bundles on a curve},
 BOOKTITLE = {Stacks {P}roject {E}xpository {C}ollection ({SPEC})},
    SERIES = {London Math. Soc. Lecture Note Ser.},
    VOLUME = {480},
     PAGES = {90--125},
 PUBLISHER = {Cambridge Univ. Press, Cambridge},
      YEAR = {2022},
      ISBN = {978-1-009-05485-0},
   MRCLASS = {14H60 (14D20)},
  MRNUMBER = {4480534},
MRREVIEWER = {Leticia\ Brambila-Paz},
}

@misc{AlpHalLei,
      title={{The intrinsic approach to moduli theory}}, 
      author={Alper, Jarod and Halpern-Leistner, Daniel},
      year={2026},
      eprint={2603.21412},
      archivePrefix={arXiv},
      primaryClass={math.AG},
      url={https://arxiv.org/abs/2603.21412}, 
}

@article{hassett2003moduli,
  title={Moduli spaces of weighted pointed stable curves},
  author={Hassett, Brendan},
  journal={Advances in Mathematics},
  volume={173},
  number={2},
  pages={316--352},
  year={2003},
  publisher={Elsevier}
}
\end{document}